\documentclass{amsart}

\usepackage[all]{xy}
\newdir{ >}{{}*!/-5pt/@{>}} % cf xyguide exercise 14
\SelectTips{cm}{}

\usepackage{amssymb,mathrsfs}
\usepackage[alphabetic]{amsrefs}

\newcommand{\anonym}{01}

\usepackage{tikz}
\usetikzlibrary{calc}

\newcommand{\ds}{\displaystyle}
\newcommand{\ev}{\mathrm{ev}}
\newcommand{\xibar}{\bar\xi}
\newcommand{\sh}{\mathrm{sh}}
\newcommand{\Sq}{Sq}

\let\P\relax
\newcommand{\can}{\mathrm{can}}
\newcommand{\P}{P}
\newcommand{\wEG}{\widetilde{EG}}

\newcommand{\sA}{\mathscr{A}}
\newcommand{\sB}{\mathscr{B}}
\newcommand{\sC}{\mathscr{C}}
\newcommand{\sD}{\mathscr{D}}

\newcommand{\bR}{\mathbb{R}}
\newcommand{\bC}{\mathbb{C}}
\newcommand{\bE}{\mathbb{E}}
\newcommand{\bF}{\mathbb{F}}
\newcommand{\bT}{\mathbb{T}}
\newcommand{\bZ}{\mathbb{Z}}
\newcommand{\longto}{\longrightarrow}
\renewcommand{\:}{\colon}
\newcommand{\<}{\langle}
\renewcommand{\>}{\rangle}
\newcommand{\ctensor}{\mathbin{\widehat{\otimes}}}
\newcommand{\cotensor}{\mathbin{\square}}
\newcommand{\THH}{T\hspace{-0.25mm}H\hspace{-0.25mm}H}
\newcommand{\wC}{\widehat{C}}

\newcommand{\alg}{\mathrm{Alg}}
\newcommand{\dgalg}{\mathrm{dgAlg}}

\newcommand{\leftsuspension}{S}

\newcommand{\bbamod}{\mathrm{Mod_\sA^{bb}}}
\newcommand{\bbacomod}{\mathrm{{}_{\sA_*}\!CoMod^{bb}}}

\newcommand{\Rmod}{{}_R\mathrm{Mod}}
\newcommand{\grRmod}{\mathrm{gr{}_{R}Mod}}
\newcommand{\grfpmod}{\mathrm{gr{}_{\bF_p}Mod}}
\newcommand{\bbgrRmod}{\mathrm{gr}_{R}\mathrm{Mod^{bb}}}
\newcommand{\bbgrfpmod}{\mathrm{gr}_{\bF_p}\mathrm{Mod^{bb}}}
\newcommand{\filgrfpmod}{\mathrm{fil gr}_{\bF_p}\mathrm{Mod}}
\newcommand{\filgrRmod}{\mathrm{fil gr}_{R}\mathrm{Mod}}

\newcommand{\rbbfilgrRmod}{\mathrm{fil}^{\mathrm{rbb}}\mathrm{gr}{}_R \mathrm{Mod}}
\newcommand{\rbbfilgrfpmod}{\mathrm{fil^{rbb}gr}_{\bF_p}\mathrm{Mod}}
\newcommand{\rbbfilamod}{\mathrm{fil^{rbb}Mod_\sA}}
\newcommand{\rbbfilamodc}{\mathrm{fil^{rbb}Mod^\wedge_\sA}}
\newcommand{\rbbfilacomodc}{\mathrm{fil}^\mathrm{rbb}\llap{${}_{\sA_*}\!$} \mathrm{CoMod}^\wedge}

\DeclareMathOperator{\im}{im}

\DeclareMathOperator{\aw}{aw}
\DeclareMathOperator{\sd}{sd}
\DeclareMathOperator{\Ext}{Ext}
\DeclareMathOperator{\Tor}{Tor}
\DeclareMathOperator{\cExt}{{}^{\mathit{c}} Ext}

\DeclareMathOperator{\diag}{diag}
\DeclareMathOperator{\AW}{AW}
\DeclareMathOperator{\gr}{gr}
\DeclareMathOperator{\fil}{fil}
\DeclareMathOperator{\inj}{inj}
\DeclareMathOperator{\Fil}{Fil}
\DeclareMathOperator{\Fun}{Fun}
\DeclareMathOperator{\map}{map}

\DeclareMathOperator{\sk}{sk}
\DeclareMathOperator{\Sp}{Sp}
\DeclareMathOperator{\Tot}{Tot}
\DeclareMathOperator{\TOT}{TOT}

\DeclareMathOperator*{\colim}{colim}
\DeclareMathOperator*{\hocolim}{hocolim}
\DeclareMathOperator*{\holim}{holim}
\DeclareMathOperator*{\rlim}{Rlim}

\DeclareMathOperator{\Hom}{Hom}

\newtheorem{theorem}{Theorem}[section]
\newtheorem{lemma}[theorem]{Lemma}
\newtheorem{proposition}[theorem]{Proposition}
\newtheorem{corollary}[theorem]{Corollary}
\theoremstyle{definition}
\newtheorem{definition}[theorem]{Definition}
\theoremstyle{remark}
\newtheorem{remark}[theorem]{Remark}
\theoremstyle{remark}
\newtheorem{example}[theorem]{Example}

\numberwithin{equation}{section}
\numberwithin{figure}{section}
\numberwithin{table}{section}

\usepackage[mathlines, pagewise]{lineno}
\if 10
  \linenumbers
\fi

\subjclass[2020]{
  Primary
  19D55, % $K$-theory and homology; cyclic homology and cohomology
  55N22, % Bordism and cobordism theories and formal group laws in algebraic topology
  55S10; % Steenrod algebra
  Secondary
  19D50, % Computations of higher $K$-theory of rings
  55P43, % Spectra with additional structure ($E_\infty$, $A_\infty$, ring spectra, etc.)
  55P91, % Equivariant homotopy theory in algebraic topology
  55S15, % Symmetric products and cyclic products in algebraic topology
  55T15} % Adams spectral sequences;

\keywords{
  Brown--Peterson spectrum,
  complex cobordism,
  continuous homology,
  cyclotomic spectrum,
  dual Steenrod algebra comodule,
  filtered spectrum,
  limit of Adams spectral sequences,
  Singer construction,
  Steenrod algebra module,
  Tate diagonal,
  Tate fixed point spectrum,
  topological cyclic homology,
  topological Hochschild homology,
  topological periodic homology}

\title[Topological periodic homology of complex cobordism]
	{Continuous homology of topological periodic
	homology of complex cobordism}

  \if \anonym
  \else
    \author{Sverre Lun{\o}e--Nielsen}
    \address{Department of Engineering Sciences, University of Agder, Norway}
    \email{sverreal@uia.no}
    
    \author{John Rognes}
    \address{Department of Mathematics, University of Oslo, Norway}
    \email{rognes@math.uio.no}
  \fi

\begin{document}

\maketitle

\begin{abstract}
  We determine the continuous mod~$p$ homology of the topological
  periodic homology $TP(MU)$ of the complex cobordism spectrum, as a
  graded algebra with Steenrod operations.  The answer is given in
  terms of an explicit and purely algebraic construction~$C_+$,
  analogous to Singer's construction~$R_+$.  Its $\Ext$-algebra
  provides the $E_2$-term for a multiplicative Adams-type spectral
  sequence converging strongly to the homotopy of $p$-completed
  $TP(MU)$.
\end{abstract}

\setcounter{tocdepth}{3}
\tableofcontents{}

\section{Introduction}

Let $p$ be a prime.  We determine the continuous mod~$p$ homology
$H^c_*(TP(MU))$ of the topological periodic homology of the complex
cobordism spectrum, as a graded algebra with Steenrod operations.  Its
$\Ext$-algebra provides the starting page for a multiplicative Adams-type
spectral sequence converging strongly to $\pi_* TP(MU)^\wedge_p$.

\subsection{Context}
For $E_1$ ring spectra~$B$, the $p$-complete algebraic $K$-theory
$K(B)^\wedge_p$ \cite{EKMM97} and its close approximation, the topological
cyclic homology $TC(B)^\wedge_p$ \cite{BHM93}, \cite{DGM13}, capture
$p$-primary arithmetic information about~$B$.  In the case~$B = S$ these
invariants are related to the geometric topology of highly-connected
compact manifolds \cite{WJR13}, while for $B = \bZ$ they are related to
global and local number theory \cite{Tat63}, \cite{Kur92}, \cite{RW00}.
Interpolating between these, we have the complex cobordism spectrum~$MU$
\cite{Mil60} and the truncated Brown--Peterson spectra $BP\<n\>$
of heights $0 < n < \infty$ \cite{JW73}.  We can view $S \to MU$
as a Hopf--Galois extension \cite{Rog08}, with $MU$ being close
enough to~$S$ that one can approach $K(S)$ or $TC(S)$ by descent from
$K(MU)$ or~$TC(MU)$ \cite{DR18}, but far enough from~$S$ that $\pi_*(MU)$
remains manageable.  We therefore view the $E_\infty$ ring spectrum~$MU$
as an important intermediary, bridging between $S$ and $\bZ$, and we
seek to understand the homotopy-theoretic information captured by $K(MU)$
and $TC(MU)$, with the expectation that it will shed light
on $S$, $K(S)$ and~$TC(S)$.

\subsection{A homological approach}
Waldhausen's perspective~\cite{Wal84} on the Quillen--Lichtenbaum
conjectures, and the calculations for $BP\<1\>$ in~\cite{AR02},
motivated the so-called redshift conjectures predicting that
$K(BP\<n\>)$ and $TC(BP\<n\>)$ have telescopic height~$n+1$.  These
were confirmed by Hahn--Wilson in~\cite{HW22}, playing a role in the
disproof by Burklund--Hahn--Levy--Schlank~\cite{BHLS} of Ravenel's
telescope conjecture.  However, this bound on telescopic height also
means that one cannot directly detect height $\ge n+2$ phenomena in
$TC(BP\<n\>)$, and must therefore expect to go to $TC(MU)$ to gain
information about these for all~$n$.  The $TC$-calculations for
$BP\<n\>$ are manageable~\cite{AKACHR25} because they can be carried
out with coefficients in finite spectra of type~$n+1$, but for $MU$ no
such finite type coefficients are available, and a direct calculation
of $TC(MU)$ in terms of homotopy groups appears to be out of
reach~\cite{Rog20}.  A homological approach to topological cyclic
homology \cite{AR05}, \cite{BR05} instead endeavors to first determine
the mod~$p$ homology $H_* TC(MU)$ as a right $\sA$-module algebra, or
equivalently as a left $\sA_*$-comodule algebra, where $\sA$ and
$\sA_*$ denote the mod~$p$ Steenrod algebra and its dual,
respectively, and then to approach the complexity of $\pi_* TC(MU)$
through the multiplicative Adams spectral sequence
\[
E_2^{s,t} = \Ext_{\sA}^{s,t}(\bF_p, H_* TC(MU)) \Longrightarrow
\pi_{t-s} TC(MU)^\wedge_p \,.
\]
Having access to the algebra structure in this spectral sequence is
likely to be essential for any detailed calculations.

Using the Nikolaus--Scholze formalism~\cite{NS18}, we are considering
the cyclotomic spectrum $X := H \wedge THH(MU)$, where $H := H\bF_p$
denotes the mod~$p$ Eilenberg--MacLane spectrum with the trivial
cyclotomic structure.  Its topological cyclic homology $TC(X) \simeq
H \wedge TC(MU)$ \cite{CMM21}*{Def.~2.2, Thm.~2.7}
then has homotopy $\pi_* TC(X) \cong H_* TC(MU)$,
and comes equipped with a natural action by the Steenrod algebra,
arising from graded self-maps of~$H$.  In this paper, we shall determine
the topological periodic homology $TP(X) = (H \wedge THH(MU))^{t\bT}$,
where $\bT$ is the circle group acting in the standard way on~$THH(MU)$.
We refer to its homotopy $\pi_* TP(X) = H^c_* TP(MU)$ as the continuous
homology of $TP(MU)$, which also arises naturally \cite{LNR12}*{Def.~4.7}
as the limit of the homologies of a tower of bounded-below spectra with
homotopy limit the $\bT$-Tate construction on $THH(MU)$, in a manner
that is compatible with the right $\sA$-actions.  The corresponding
left $\sA_*$-coaction takes values in a completed tensor product $\sA_*
\ctensor H^c_* TP(MU)$, formed with respect to a specified filtration
on the continuous homology, yielding what we call a complete coaction.

\subsection{Continuous homology of $\bT$-Tate fixed points}
We recall in Subsections~\ref{sec:greenlees-may-filtration}
and~\ref{sec:hesselholt-madsen} the Greenlees--May~\cite{GM95} and
Hesselholt--Madsen~\cite{HM03} spectrum level filtrations on~$(H \wedge
THH(MU))^{t\bT}$, and deduce from~\cite{HR24} that they induce the same
ascending and multiplicative right $\sA$-module filtration $\{F_n H^c_*
TP(MU)\}_{n \in \bZ}$ on~$H^c_* TP(MU)$, which we call the ($\bT$-)Tate
filtration.  It agrees with the abutment filtration in the multiplicative
and strongly convergent homologically indexed $\bT$-Tate spectral sequence
\[
\hat E^2_{n,*} = \hat H^{-n}(\bT; H_* THH(MU))
	\Longrightarrow H^c_{n+*} TP(MU) \,,
\]
where $\hat H^{-*}(\bT; \bF_p) = P(t^{\pm1}) = \bF_p[t^{\pm1}]$ is
the Laurent polynomial ring generated by $t \in \hat H^2$.  The fact
that this is an upper half-plane spectral sequence translates to the
complete, Hausdorff and exhaustive Tate filtration being what we call
`relatively bounded below', later abbreviated to `rbb'.

We show in Section~\ref{sec:limit-of-adams-ss} that there is a
multiplicative and strongly convergent limit of Adams spectral
sequences
\begin{equation} \label{eq:limAdamsSS}
E_2^{s,t} = \Ext_{\sA}^{s,t}(\bF_p, H^c_* TP(MU))
	\Longrightarrow \pi_{t-s} TP(MU)^\wedge_p \,,
\end{equation}
which we (a little awkwardly) call the limit Adams spectral sequence.
Under an equivalence between right $\sA$-actions and complete
left $\sA_*$-coactions upon relatively bounded below filtered
graded $\bF_p$-vector spaces, the right $\sA$-module $\Ext$-groups
$\Ext_{\sA}^{*,*}(\bF_p, -)$ displayed above correspond to left
$\sA_*$-comodule continuous $\Ext$-groups $\cExt_{\sA_*}^{*,*}(\bF_p, -)$.
It is the latter groups that most naturally arise in the course of our
construction and analysis of this spectral sequence.

\subsection{The $\bT$-Singer construction}
In Subsection~\ref{sec:t-singer} we introduce an algebraic
construction $C_+(M_*; \sigma)$ that is to the homological $C_p$-Singer
construction $R_+(M_*)$ for a right $\sA$-module~$M_*$ as the Tate
cohomology of the circle group $\bT$ is to the Tate cohomology of its
order~$p$ subgroup~$C_p$, and which we call the (homological)
$\bT$-Singer construction.  It is defined for right $\sA$-modules~$M_*$
such that $\beta_*(x) = 0$ for all $x \in M_*$, where $\beta_*$ denotes
the right action by the mod~$p$ Bockstein element $\beta \in \sA$.
Moreover, $M_*$ needs to come equipped with a right $\sA$-linear
differential $\sigma \: S M_* \to M_*$, where $S M_*$ denotes the left
suspension of~$M_*$.  Our main example will be
\begin{equation} \label{eq:HTHHMU}
M_* = H_* THH(MU) \cong P(m_\ell \mid \ell\ge1) \otimes E(\sigma m_\ell
\mid \ell \ge 1) \,,
\end{equation}
with the differential~$\sigma$ induced by the left $\bT$-action.  We write
$E(\sigma m_\ell) = \Lambda(\sigma m_\ell)$ for exterior algebras.
Here $H_*(MU) = P(m_\ell \mid \ell\ge1)$ with $|m_\ell| = 2\ell$,
chosen so that $m_\ell \mapsto \xibar_k \in \sA_*$ for $\ell = p^k-1$,
and $m_\ell$ is left $\sA_*$-comodule primitive for $\ell \ne p^k-1$.
When $p=2$ one should read $\xibar_k$ as $\bar\zeta_k^2$.  In particular,
$\beta_*(x) = 0$ for all $x \in H_* THH(MU)$.

In general, we define
\begin{equation}
  c_+(M_*; \sigma) := \hat H^{-*}(\bT; \bF_p) \otimes M_*
\end{equation}
with a right $\sA$-action specified by the formulas
\begin{align}
\P^s_*(t^r \otimes x) &= \sum_k \binom{-1-r-s(p-1)}{s-pk}
	t^{r+(s-k)(p-1)} \otimes \P^k_*(x) \\
\beta_*(t^r \otimes x) &= - t^{r+1} \otimes \sigma(x)
\end{align}
for $p$ odd, and by
\begin{align}
\Sq^{2s}_*(t^r \otimes x) &= \sum_k \binom{-1-r-s}{s-2k}
	t^{r+s-k} \otimes \Sq^{2k}_*(x) \\
\Sq^1_*(t^r \otimes x) &=  t^{r+1} \otimes \sigma(x)
\end{align}
for $p=2$.  Here $\P^s_*$ and $\Sq^{2s}_*$ denote the right actions by
the Steenrod operations $\P^s$ and $\Sq^{2s}$, respectively, with $\Sq^1_*
= \beta_*$.  We give $c_+(M_*; \sigma)$ the ascending filtration $\{F_n
c_+(M_*; \sigma)\}_{n \in \bZ}$ where $t^r \otimes x$ first appears in
filtration
\[
\Fil(t^r \otimes x) = -2r - |x|(p-1) \,.
\]
Here $|x|$ denotes the degree of~$x \in M_*$.  Then
\begin{equation}
  C_+(M_*; \sigma) := c_+(M_*; \sigma)^\wedge
  = \lim_n \frac{c_+(M_*; \sigma)}{F_n c_+(M_*; \sigma)}
  \cong \hat H^{-*}(\bT; \bF_p) \ctensor M_*
\end{equation}
is defined as the completion of $c_+(M_*; \sigma)$ with respect to this
filtration.  An element of $C_+(M_*; \sigma)$ is a possibly infinite sum
$\sum_r t^r \otimes x_r$ of elements all in the same degree, with $x_r
= 0$ for $r \ll 0$.  The latter condition is automatically satisfied
if $M_*$ is bounded below, which we hereafter assume.  The completion
inherits a filtration from that on~$c_+(M_*; \sigma)$, which is complete,
Hausdorff, exhaustive and relatively bounded below.  Moreover, the right
$\sA$-action extends over the completion and respects the filtration.

When $M_*$ is a right $\sA$-module algebra, $\sigma$ is a derivation and
$p$ is an odd prime, we give $c_+(M_*; \sigma)$ the algebra structure
induced by the cup product in Tate cohomology, namely $(t^r \otimes x)
\cdot (t^s \otimes y) = t^{r+s} \otimes xy$.  For $p=2$ we must add a
correction term $t^{r+s+1} \otimes \sigma(x) \sigma(y)$ to this product.
In each case the algebra structure then extends to $C_+(M_*; \sigma)$,
making the latter a relatively bounded below complete, Hausdorff and
exhaustively filtered right $\sA$-module algebra.

\subsection{Main theorem}
Here is our main result, which we prove as Theorem~\ref{thm:finale-MU}
in the body of the paper.

\begin{theorem}
There is a homomorphism
\[
\Theta_{MU}^{\bT} \: H^c_* TP(MU)
	\longto C_+(H_* THH(MU); \sigma)
\]
of relatively bounded below complete filtered right $\sA$-module algebras.
The underlying homomorphism of graded $\bF_p$-vector spaces is an
isomorphism, and induces an isomorphism
\[
\Theta^{\bT}_{MU*} \: \Ext_{\sA}^{*,*}(\bF_p, H^c_* TP(MU))
	\overset{\cong}\longto
	\Ext_{\sA}^{*,*}(\bF_p, C_+(H_* THH(MU); \sigma))
\]
of $\Ext_{\sA}^{*,*}(\bF_p, \bF_p)$-algebras.
\end{theorem}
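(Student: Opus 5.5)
The plan is to construct $\Theta^{\bT}_{MU}$ as a limit of $C_{p^n}$-type Singer/Tate comparison maps, or more directly from a $\bT$-equivariant Tate diagonal. Then we check it is an isomorphism by comparing filtrations and associated graded objects. Finally we deduce the $\Ext$ statement from the equivalence between rbb complete right $\sA$-modules and complete $\sA_*$-comodules.

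\emph{Construction.} Start from the Tate-valued Frobenius $\varphi\: THH(MU) \to THH(MU)^{tC_p}$ and the $\bT$-equivariant structure. Smash with $H$ and pass to $\bT$-Tate constructions. We then use the known identification (the $C_p$ case, from the Lunøe-Nielsen--Rognes work on the Segal conjecture for $THH$) of $H^c_*(THH(MU)^{tC_p})$ with the Singer construction $R_+(H_*THH(MU))$. Composing with the homotopy fixed/Tate comparison for the residual $\bT/C_p \cong \bT$-action produces a map of spectra whose continuous homology is a map
\[
H^c_* TP(MU) \longto \hat H^{-*}(\bT;\bF_p) \ctensor H_*THH(MU) = C_+(H_*THH(MU);\sigma) .
\]
Alternatively, and I would try this first, we define $\Theta$ on the Tate spectral sequence level. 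The $\bT$-Tate spectral sequence has $\hat E^2 = P(t^{\pm1})\otimes H_*THH(MU)$, which is precisely the associated graded of $c_+$. The key input is that this spectral sequence collapses at the $d^2$-page, with $d^2(x) = t\,\sigma(x)$. Because $H_*THH(MU)\cong H_*(MU)\otimes E(\sigma m_\ell)$ is $\sigma$-acyclic in positive degrees only up to the unit, this needs care: instead of collapsing to $E^3$, I expect one must use the Hesselholt--Madsen filtration, compatible by \cite{HR24}, to build a chain-level model. In that model, the Tate construction on the cochains of the $\bT$-action yields $\hat H^{-*}(\bT)\ctensor M_*$ with twisted Bockstein $\beta_*(t^r\otimes x) = -t^{r+1}\otimes\sigma(x)$. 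The Steenrod operation formulas then come from the Cartan formula plus the known action on $t$ in $H^*(B\bT)$, namely $\P^s$ applied to $t^{-1-r}$, which produces the binomial coefficients.

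\emph{Checking structure.} Multiplicativity follows since the Tate diagonal and the Tate constructions are lax monoidal. The $p=2$ correction term $t^{r+s+1}\otimes\sigma(x)\sigma(y)$ arises from the nontrivial extension in $H^*(\bT)$-module structure, that is, from $\Sq^1$ being a derivation. Filtration compatibility holds because $\Fil(t^r\otimes x) = -2r-|x|(p-1)$ matches the Frobenius-rescaled Tate filtration degree.

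\emph{Isomorphism.} This is the main obstacle. Both sides are rbb complete Hausdorff filtered, so it suffices to show that $\gr\Theta$ is an isomorphism, since a map of complete exhaustive Hausdorff filtrations inducing an isomorphism on $\gr$ is an isomorphism. On the source, $\gr$ is given by the $E^\infty$ page of the Tate spectral sequence, suitably reindexed. I would compute it via the known $TP$ Tate spectral sequence for $THH(MU)$ with $H$-coefficients, using that $H\wedge THH(MU)\simeq H\wedge MU\wedge \bT$-free part. Then I would match dimensions degreewise with $\gr C_+$, using bounded-belowness of $M_*$ so that each filtration quotient is finite in each degree. Concretely, $\gr$ in the Frobenius filtration should be $P(t^{\pm1})\otimes M_*$ again, with the map identified with the identity on generators by comparing the $C_p$-Singer isomorphism on restriction to $tC_p$. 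Once the underlying map is an isomorphism of rbb complete $\sA$-modules, it is an isomorphism in that category. Applying the continuous $\Ext$ (equivalently $\cExt_{\sA_*}$) functor then gives $\Theta^{\bT}_{MU*}$ as an isomorphism, compatible with Yoneda products over $\Ext_\sA(\bF_p,\bF_p)$ because $\Theta$ is an algebra map.
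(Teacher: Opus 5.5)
There is a genuine gap, both in the construction and in the isomorphism step.

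\textbf{The construction does not produce $C_+$.} The $\bT$-Tate spectral sequence for $H\wedge THH(MU)$ has $d^2(t^r\otimes x)=t^{r+1}\otimes\sigma(x)$. Hence $\hat E^3=P(t^{\pm1})\otimes H_*(M_*,\sigma)$, and by the Cartier isomorphism $H_*(M_*,\sigma)\cong P(m_\ell^p)\otimes E(m_\ell^{p-1}\sigma m_\ell)$. So $H_*THH(MU)$ is far from $\sigma$-acyclic, and no Tate-cochain model of the $\bT$-action on $X$ yields $\hat H^{-*}(\bT;\bF_p)\ctensor M_*$ with a twisted Bockstein. That Bockstein $\beta_*(t^r\otimes x)=-t^{r+1}\otimes\sigma(x)$ does not come from the $\bT$-action on $X$. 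It comes from embedding $C_+$ into $R_+$ via $f_+(t^r\otimes x)=t^r\otimes x-ut^r\otimes\sigma(x)$, using $\beta_*(u)=t$ in $H^c_*(S^{tC_p})$.

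The missing idea is to pass through $X^{tC_p}$ and the Frobenius $\gamma$, in four steps:
\begin{enumerate}
\item Build the $C_p$-level map $\Theta_{MU}$ from $\omega_{MU}$, $\eta^t_*$, Singer's $\epsilon$ and $\gamma^H_*$.
\item Prove that $\Theta_{MU}$ carries the topological differential $\bar\sigma$ (from the residual $\bT/C_p$-action) to the algebraic $\bar\sigma$ on $R_+$. This is Lemma~\ref{lemma:thm-6.4}, where the comodule structure of $H_*(BP)\subset H_*(MU)$ enters through $\epsilon(\xibar_k)=1\otimes\xibar_k+t^{-(p-1)}\cdot\epsilon(\xibar_{k-1}^p)$.
\item Use exactness $\ker\bar\sigma=\im\bar\sigma$ to make the $\bar\bT$-homotopy fixed point spectral sequence collapse onto the zero column. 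Then $F_*$ embeds $H^c_*TP(MU)$ as $\ker\bar\sigma\subset H^c_*(THH(MU)^{tC_p})$.
\item Identify $\ker\bar\sigma\subset R_+$ with the image of $F_+$.
\end{enumerate}
Then $\Theta^{\bT}_{MU}$ is just the restriction of $\Theta_{MU}$ to cycles. Your option (a) names the right objects, but none of these steps appears. In particular the $\bar\sigma$-compatibility in step 2 is the real content, and it is absent.

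\textbf{The isomorphism argument proves something false.} $\Theta^{\bT}_{MU}$ is not an isomorphism on associated graded objects. Take the class $y$ with $F_*(y)=\gamma^H_*(\sigma m_\ell)$. It is detected by $t^{\ell(p-1)}\otimes m_\ell^{p-1}\sigma m_\ell$, in Tate filtration $-2\ell(p-1)$. But $\Theta^{\bT}_{MU}(y)=1\otimes\sigma m_\ell$, which has filtration $-(2\ell+1)(p-1)$ in $C_+$, so $\gr\Theta^{\bT}_{MU}$ kills the leading term of $y$. Relatedly, the associated graded of the source is a subquotient of $P(t^{\pm1})\otimes H_*(M_*,\sigma)$, not $P(t^{\pm1})\otimes M_*$ as you predict.

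Your final step also fails. A filtration-preserving map of rbb complete modules that is an unfiltered bijection need not be an isomorphism in that category, because its inverse may not preserve filtrations. That is exactly what happens here, and it is why the theorem is stated in two stages.

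The paper obtains bijectivity differently:
\begin{itemize}
\item $g$ and $\omega_{MU}\otimes 1$ induce isomorphisms on filtration layers, so their completions are isomorphisms.
\item $f$ is the identity on underlying algebras and satisfies $F_{\lfloor(3n-d)/2\rfloor}Q_d\subset F_nP_d\subset F_nQ_d\subset F_{\lceil(2n+d)/3\rceil}P_d$. Hence $f^\wedge$ is bijective without being a filtered isomorphism.
\end{itemize}
The $\Ext$ statement then needs only an unfiltered $\sA$-linear algebra isomorphism. This is because $\Ext_\sA$ is computed on the underlying right $\sA$-modules; equivalently, use Proposition~\ref{prop:ext-iso} for $\cExt_{\sA_*}$.
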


\begin{remark}
  The homomorphism $\Theta_{MU}^{\bT}$ is filtration-preserving in the
  weak sense that it takes $F_n$ of the domain to $F_n$ of the codomain,
  but it will strictly decrease the filtration index of some classes.
  Hence it is not an isomorphism of filtered objects, and this is the
  reason for the two-stage formulation of the theorem.

  As we have reviewed, the input $(H_* THH(MU), \sigma)$ for the
  $\bT$-Singer construction~$C_+$ is given by~\eqref{eq:HTHHMU}, and
  by the theorem, its output determines the starting page of the limit
  Adams spectral sequence~\eqref{eq:limAdamsSS} converging to
  $\pi_* TP(MU)^\wedge_p$.

  A completely analogous theorem holds for the Brown--Peterson
  spectrum~$BP$, with $H_* BP = P(\xibar_k \mid k\ge1)$ and
  $H_* THH(BP) \cong P(\xibar_k \mid k\ge1) \otimes E(\sigma\xibar_k
  \mid k\ge1)$, with essentially the same proof.
\end{remark}

To proceed to~$TC(MU)$, we should determine the topological negative
homology $TC^-(X) = (H \wedge THH(MU))^{h\bT}$, with $\pi_* TC^-(X)
= H^c_* TC^-(MU)$ the continuous homology of $TC^-(MU)$, and then
identify the equalizer of the two $E_\infty$ ring spectrum maps $\can$
and $\varphi \: TC^-(X) \to TP(X)$.  The case of $B = MU$ is special
in that
\if \anonym
  there are 
\else
  we have already established 
\fi
equivalences
\begin{align*}
\Gamma \: TF(MU)^\wedge_p &\overset{\simeq}\longto TC^-(MU)^\wedge_p \\
  \hat\Gamma \: TF(MU)^\wedge_p &\overset{\simeq}\longto TP(MU)^\wedge_p
 \if \anonym
 \,.
 \fi
\end{align*}
\if \anonym
  See ~\cite{LNR11}*{Cor.~1.2}.  Here,
\else
  in previous work~\cite{LNR11}*{Cor.~1.2}, where
\fi
$TF(MU) = \lim_{n,F} THH(MU)^{C_{p^n}}$ is the topological
Frobenius homology.  The main
remaining task is therefore to control the homomorphisms~$\can_*$
and~$\varphi_* \: H^c_* TC^-(MU) \to H^c_* TP(MU)$ in terms of the
description above.  We plan to complete this task in a
forthcoming paper.

\subsection{Continuous homology of $C_p$-Tate fixed points}
Momentarily ignoring filtrations, the isomorphism~$\Theta_{MU}^{\bT}$ is
realized as the map of cycles $\ker(\bar\sigma) \to \ker(\bar\sigma)$
for an isomorphism
\begin{equation} \label{eq:ThetaMU}
\Theta_{MU} \: (H^c_* THH(MU)^{tC_p}, \bar\sigma)
	\overset{\cong}\longto (R_+(H_* THH(MU)), \bar\sigma)
\end{equation}
of differential graded right $\sA$-module algebras.  Its domain is the
continuous homology $H^c_* THH(MU)^{tC_p} = \pi_* X^{tC_p}$ of
$THH(MU)^{tC_p}$, which we equip with a complete, Hausdorff and
exhaustive ($C_p$-)Tate filtration
$\{F_n H^c_* THH(MU)^{tC_p}\}_{n \in \bZ}$, much as
for~$TP(X) = X^{t\bT}$, making $H^c_* THH(MU)^{tC_p}$ a relatively
bounded below filtered right $\sA$-module algebra.  Moreover, there is
a residual $\bar\bT = \bT/C_p$-action on $X^{tC_p}$, with
$\bar\bT$-fixed points $X^{t\bT}$, which we discuss in
Section~\ref{sec:residual-circle-action}.  The fundamental class
$\bar e_1 \in H_1(\bar\bT)$ induces a degree~$+1$ filtration-shifting
differential and derivation~$\bar\sigma$ on $H^c_* THH(MU)^{tC_p}$,
making it a filtered differential graded algebra, compatibly with the
structures already mentioned.

We know from~\cite{BBLNR14} that the comparison map $G \: X^{t\bT} \to
(X^{tC_p})^{h\bar\bT}$, from $\bar\bT$-fixed points to $\bar\bT$-homotopy
fixed points of $X^{tC_p}$, is an equivalence.  Hence there is a
$\bar\bT$-homotopy fixed point spectral sequence
\[
E^2_{n,*} = H^{-n}(\bar\bT; H^c_* THH(MU)^{tC_p})
	\Longrightarrow H^c_{n+*} TP(MU) \,,
\]
where $H^{-*}(\bar\bT; \bF_p) \cong P(t) = \bF_p[t]$ and $d^2(t^r
\otimes x) = t^{r+1} \otimes \bar\sigma(x)$ for $r\ge0$ and $x \in H^c_*
THH(MU)^{tC_p}$.  We shall prove in Lemma~\ref{lemma:sigmaone-exact}
and Proposition~\ref{prop:theta-is-dga-morphism} that $\im(\bar\sigma) =
\ker(\bar\sigma)$ inside $H^c_* THH(MU)^{tC_p}$, so that this spectral
sequences collapses to filtration $n=0$ at the $E^3$-term.  Hence the
edge homomorphism
\[
F_* \: H^c_* TP(MU) \longto H^c_* THH(MU)^{tC_p}
\]
identifies the domain of $\Theta_{MU}^{\bT}$ with $\ker(\bar\sigma)$ in the
codomain of~$F_*$, where we write $F \: X^{t\bT} \to X^{tC_p}$ for the
map that forgets some invariance.

\subsection{The topological $C_p$-Singer construction}
The codomain of $\Theta_{MU}$ will be the homological Singer construction
$R_+(H_* THH(MU))$.  To properly introduce it, we first extend
\if \anonym
  results from~\cite{LNR12}
\else
  our previous work~\cite{LNR12}
\fi
on the topological $C_p$-Singer construction
$R_+(B) := (B^{\wedge p})^{tC_p}$, where $B$ now can be any spectrum
and $C_p$ acts on $B^{\wedge p}$ by cyclically permuting the factors.
Lifting an observation of Miller~\cite{BMMS86}*{\S II.3}
to the spectrum level, there is an equivalence
\[
R_+(B) \simeq \holim_q \Sigma^{1+q} D_{C_p}(\Sigma^{-q} B) \,,
\]
where $D_{C_p}$ denotes the $p$-th cyclic extended power.
Suppose hereafter that $B/p$ is bounded below.  Then the multiplicative
homological $C_p$-Tate spectral sequence
\[
\hat E^2_{n,*} = \hat H^{-n}(C_p; H_*(B)^{\otimes p})
	\Longrightarrow H^c_* R_+(B)
\]
is strongly convergent and collapses at the $E^2$-term.  The associated
filtration $\{F_n H^c_* R_+(B)\}_{n \in \bZ}$ of $H^c_* R_+(B) = \pi_*
(H \wedge B^{\wedge p})^{tC_p}$ is complete, Hausdorff and exhaustive,
with the classes detected by $u^i t^r \otimes x^{\otimes p} \in \hat
E^\infty_{-n,*}$ first appearing in Tate filtration $n = -i-2r$.
Here $i \in \{0,1\}$, $r \in \bZ$ and $\hat H^{-*}(C_p; \bF_p) = E(u)
\otimes P(t^{\pm1})$ with $u \in \hat H^1$ and $t \in \hat H^2$ for $p$
odd, while for $p=2$ we must replace $u^2 = 0$ with $u^2 = t$.

In Section~\ref{sec:topsinger} we construct a natural `$H$-based Tate
diagonal' map $\epsilon^H_B \: H \wedge B \to (H \wedge B^{\wedge
p})^{tC_p}$ of $H$-modules, and use it to define a natural
homomorphism
\[
\omega_B \: \hat H^{-*}(C_p; \bF_p) \otimes H_*(B)
	\longto H^c_* R_+(B) \,.
\]
Moreover, we show that $\omega_B$ is a monoidal natural transformation
between lax symmetric monoidal functors.  In particular, when $B$ is
a ring spectrum, $\omega_B$ is an $\bF_p$-algebra homomorphism.  This
strengthens 
\if \anonym
  results from~\cite{LNR12},
\else
  our previous work,
\fi
which did not account for multiplicative structure.

\subsection{The homological $C_p$-Singer construction}
In Section~\ref{sec:cp-singer} we let
\begin{equation}
r_+(M_*) := \hat H^{-*}(C_p; \bF_p) \otimes M_*
\end{equation}
for any graded $\bF_p$-vector space~$M_*$, and give it the filtration
$\{F_n r_+(M_*)\}_{n \in \bZ}$ where $u^i t^r \otimes x$ first appears
in filtration
\[
\Fil(u^i t^r \otimes x) = -i -2r - |x| (p-1) \,.
\]
We define the homological Singer construction
\begin{equation}
  R_+(M_*) := r_+(M_*)^\wedge = \lim_n \frac{r_+(M_*)}{F_n r_+(M_*)}
  \cong \hat H^{-*}(C_p; \bF_p) \ctensor M_*
\end{equation}
as the completion of $r_+(M_*)$ with respect to this filtration.
An element of $R_+(M_*)$ is a possibly infinite
sum $\sum_{i,r} u^i t^r \otimes x_{i,r}$ of elements all in the same
degree, with $x_{i,r} = 0$ for $r \ll 0$.  The latter condition is
automatically satisfied if $M_*$ is bounded below.
The cup product in Tate cohomology turns $r_+$ and $R_+$ into
lax symmetric monoidal functors to filtered graded $\bF_p$-vector
spaces.

We prove in Proposition~\ref{prop:omega-B} that $\omega_B \: r_+(H_*(B))
\to H^c_* R_+(B)$ is injective and strictly filtration-preserving,
and that its completion
\[
\omega_B^\wedge \: R_+(H_*(B)) \overset{\cong}\longto H^c_* R_+(B)
\]
is an isomorphism of filtered graded $\bF_p$-vector spaces.  More
precisely, $\omega_B$ takes $u^i t^r \otimes x \in r_+(H_*(B))$ to a
class detected by a unit times $u^i t^{r+(p-1)|x|/2} \otimes x^{\otimes
p}$ in the $C_p$-Tate spectral sequence converging to $H^c_* R_+(B)$.
This explains how the given filtration on $R_+(H_*(B))$ is derived
from the Tate filtration on~$H^c_* R_+(B)$.  If $B$ is a ring spectrum,
then $\omega_B^\wedge$ is an $\bF_p$-algebra isomorphism.

Next, when $M_*$ is any right $\sA$-module, we define a right $\sA$-action
on $r_+(M_*)$ by the formulas
\begin{align}
  \P^s_*(t^r \otimes x) &= \sum_k \binom{-1-r-s(p-1)}{s-pk}
                          t^{r+(s-k)(p-1)} \otimes \P^k_*(x) \\
                        &\qquad -\sum_k \binom{-1-r-s(p-1)}{s-pk-1}
                          u t^{-1+r+(s-k)(p-1)} \otimes \P^k_*\beta_*(x) \nonumber \\
\P^s_*(u t^r \otimes x) &= \sum_k \binom{-1-r-s(p-1)}{s-pk}
	  u t^{r+(s-k)(p-1)} \otimes \P^k_*(x) \\
\beta_*(u^i t^r \otimes x) &=
	\begin{cases}
		0 & \text{for $i=0$} \\
		t^{r+1} \otimes x & \text{for $i=1$}
	\end{cases}\\
\intertext{for $p$ odd, and by}
  \Sq^s_*(u^r \otimes x)
  &= \sum_k \binom{-1-r-s}{s-2k} u^{r+s-k} \otimes \Sq^k_*(x)
\end{align}
for $p=2$.  These respect the filtration on $r_+(M_*)$, and induce a
right $\sA$-action on $R_+(M_*)$.  (With the appropriate sign conventions,
this is dual to the left $\sA$-action on the cohomological $C_p$-Singer
construction, denoted $R_+(M)$ in~\cite{Sin81}, \cite{LNR12}*{\S 3.1}
and $T(M)$ in~\cite{AGM85}.)  After a series of explicit calculations,
we deduce in Proposition~\ref{prop:omega-b-is-a-linear} that these
operations satisfy that the homomorphisms
\[
R_+(H_*(B)) \overset{\omega_B^\wedge}\longto
	H^c_* R_+(B) \longto H_* \Sigma^{1+q} D_{C_p}(\Sigma^{-q} B)
\]
are right $\sA$-linear for each $q\ge0$, where the Steenrod operations
on the right-hand side are given by the classical Nishida relations.
In particular, we conclude in Proposition~\ref{prop:r-plus-is-monoidal}
that $\omega_B^\wedge$ is an isomorphism of relatively bounded below
complete filtered right $\sA$-modules, and of right $\sA$-module algebras
when $B$ is a ring spectrum.
See also Remark~\ref{rem:vandermonde} regarding a direct
algebraic proof of the Cartan formula for $r_+$, i.e., that $M_*
\mapsto r_+(M_*)$ is lax symmetric monoidal as an endofunctor of right
$\sA$-modules, which implies the corresponding statement for $R_+$.

\subsection{Singer's $\epsilon$-homomorphism}

Suppose $M_*$ is a bounded below right $\sA$-module.  In
Section~\ref{sec:singers-epsilon} we define a natural $\sA$-linear
homomorphism
\[
  \epsilon \: M_* \longto r_+(M_*)
\]
by the formulas
\begin{equation}
  \epsilon(x) = \sum_j (-1)^j \Bigl( t^{-j(p-1)} \otimes \P^j_*(x)
     + ut^{-1-j(p-1)} \otimes \P^j_* \beta_*(x) \Bigr)
\end{equation}
for $p$ odd and
\begin{equation}
  \epsilon(x) = \sum_j u^{-j} \otimes \Sq^j_*(x)
\end{equation}
for $p=2$.  (This dualizes to the $\Tor$-equivalence from the cohomological
$C_p$-Singer construction denoted~$d \: R_+(M) \to M$ in~\cite{Sin81}
and $\epsilon \: T(M) \to M$ in~\cite{AGM85}, cf.~\cite{LNR12}*{\S 3.1}.)
We prove in Proposition~\ref{prop:epsilon-is-epsilon} that
the $H$-based Tate diagonal map~$\epsilon^H_B$ and Singer's
$\epsilon$-homomorphism are compatible under the comparison
homomorphism $\omega_B$, i.e., that
\[
  \omega_B \circ \epsilon
    = (\epsilon^H_B)_* \: H_*(B) \longto H^c_* R_+(B) \,,
\]
confirming that the Tate diagonal is a spectrum-level lift of~$\epsilon$.

\subsection{A residual differential}
Suppose now that $(M_*, \sigma)$ is a differential right
$\sA$-module, satisfying that $\beta_*(x) = 0$ for all $x \in M_*$.
In Section~\ref{sec:residual-differential-on-rplus} we define a
degree~$+1$ filtration-shifting right $\sA$-linear differential
$\bar\sigma \: S r_+(M_*) \to r_+(M_*)$ by the formulas
\begin{align}
\bar\sigma(t^r \otimes x) &= t^r \otimes \sigma(x) \\
\bar\sigma(u t^r \otimes x) &= t^r \otimes x - u t^r \otimes \sigma(x)
\end{align}
for $p$ odd, and
\begin{align}
\bar\sigma(u^{2r} \otimes x) &= u^{2r} \otimes \sigma(x) \\
\bar\sigma(u^{2r+1} \otimes x)
	&= u^{2r} \otimes x + u^{2r+1} \otimes \sigma(x)
\end{align}
for $p=2$.  It induces a similar differential
$\bar\sigma \: S R_+(M_*) \to R_+(M_*)$ by passing to completions.  If
$(M_*, \sigma)$ is a differential graded right $\sA$-module algebra,
then $(r_+(M_*), \bar\sigma)$ is a filtered differential graded right
$\sA$-module algebra, and likewise for $(R_+(M_*), \bar\sigma)$.
Moreover, we show in Lemma~\ref{lemma:sigmaone-exact} that
$\im(\bar\sigma) = \ker(\bar\sigma)$ inside $r_+(M_*)$, as well as
in~$R_+(M_*)$.

We then define a strictly filtration-preserving right $\sA$-module
homomorphism $f_+ \: c_+(M_*; \sigma) \to r_+(M_*)$ by
\begin{equation}
  f_+(t^r \otimes x) = t^r \otimes x - ut^r \otimes \sigma(x)
\end{equation}
for $p$ odd, and
\begin{equation}
  f_+(t^r \otimes x) = u^{2r} \otimes x + u^{2r+1} \otimes \sigma(x)
\end{equation}
for $p=2$.  In each case the image of $f_+$ equals $\ker(\bar\sigma)
\subset r_+(M_*)$.  Passing to completions, we obtain a strictly
filtration-preserving right $\sA$-module homomorphism
\begin{equation}
  F_+ := f_+^\wedge \: C_+(M_*; \sigma) \to R_+(M_*) \,,
\end{equation}
inducing an isomorphism from the $\bT$-Singer construction
$C_+(M_*; \sigma)$ to the cycles $\ker(\bar\sigma) \subset R_+(M_*)$.
This explains how the given filtration on the $\bT$-Singer
construction is determined by the one on~$R_+(M_*)$.  For $M_*$
bounded below it exhibits $C_+(M_*; \sigma)$ as a relatively bounded
below complete filtered right $\sA$-module, and as a right
$\sA$-module algebra when $(M_*, \sigma)$ is a differential graded
right $\sA$-module algebra.

\subsection{Outline of proof}
To prove the main theorem, it remains to construct the relatively bounded
below complete filtered right $\sA$-module morphism~$\Theta_{MU}$
from~\eqref{eq:ThetaMU}, in such a way that it is an isomorphism
of differential graded algebras.  At the less structured level of
topological right $\sA$-modules and continuous homomorphisms,
\if \anonym
  the first task is achieved
\else
  we already achieved the first task
\fi
in~\cite{LNR11}*{Thm.~2.2}, taking $\Theta_{MU} =
\Phi_{MU}^{-1}$.  In order to also account for multiplicative structure,
without getting confounded by the many possible tensor products for
topological vector spaces, we shall reprove this result at the filtered
\if \anonym
  level.  This also yields
\else
  level, and can now give
\fi
a simpler argument for the pro-isomorphisms involved.

In Subsection~\ref{sec:1-skeleton-approx} we assemble a commutative
diagram~\eqref{eq:epsilon-cyctomic-zeroskeleton}
\[
\xymatrix{
r_+(H_*(B)) \ar[rr]^-{\omega_B} \ar[d]_-{r_+(\eta_*)}
	& & H^c_* R_+(B) \ar[d]^-{\eta^t_*} \\
r_+(H_*(THH(B))
	& H_*(B) \ar[ul]_-{\epsilon} \ar[d]^-{\eta_*}
		\ar[ur]^-{(\epsilon^H_B)_*}
	& H^c_* THH(B)^{tC_p} \\
	& H_* THH(B) \ar[ul]_-{\epsilon} \ar[ur]^-{\gamma^H_*}  \,,
}
\]
where $\gamma^H \: H \wedge THH(B) \to (H \wedge THH(B))^{tC_p}$ is
the cyclotomic structure map generically denoted $\varphi_p \: X \to
X^{tC_p}$.  In Subsection~\ref{sec:ext-iso} we use this to construct
a diagram~\eqref{eq:f-and-g-mu}
\[
\xymatrix@C+1pc{
r_+(H_*(MU)) \otimes E \ar[r]^-{\omega_{MU} \otimes 1}
	\ar[d]^-{f := r_+(\eta) \cdot \epsilon}
	& H^c_* R_+(MU) \otimes E \ar[d]^-{g := \eta^t_* \cdot \gamma^H_*} \\
r_+(H_* THH(MU)) & H^c_* THH(MU)^{tC_p}
}
\]
of relatively bounded below filtered right $\sA$-module algebras,
where $E := E(\sigma m_\ell \mid \ell\ge1) \subset H_* THH(MU)$.
(The homomorphisms $f$ and $f_+$ are not related.)  In~\cite{LNR11}
\if \anonym
  the authors
\else
  we
\fi
treated~$E$ as being concentrated in filtration~$0$, but
\if \anonym
  here~$E$ is given
\else
  here we instead give~$E$
\fi
the filtration pulled back from the Tate filtration on $H^c_*
THH(MU)^{tC_p}$ along $\gamma^H_*|_E$.  This has the advantage that
both $\omega_{MU} \otimes 1$ and~$g = \eta^t_* \cdot \gamma^H_*$ become
strictly filtration-preserving homomorphisms that induce isomorphisms on
filtration quotients, and therefore become isomorphisms after passage to
completions.

Moreover, we show in Lemma~\ref{lemma:f} that $f = r_+(\eta)
\cdot \epsilon$ is an unfiltered isomorphism, with a pro-inverse, so
that its completion $f^\wedge$ is also an unfiltered isomorphism.
However, $f$ does decrease the filtration of the classes
$\sigma m_\ell \in E$, so $f^{-1}$ is not filtration-preserving,
and $f^\wedge$ is not a filtered isomorphism.

At this point we can set $\Theta_{MU} := f^\wedge \circ
(\omega_{MU} \ctensor 1)^{-1} \circ (g^\wedge)^{-1}$, where
$\omega_{MU} \ctensor 1 = (\omega_{MU} \otimes 1)^\wedge$.  The only
information missing is that $\Theta_{MU}$ takes the topologically
defined differential~$\bar\sigma$ on $H^c_* THH(MU)^{tC_p}$ to
the algebraically defined differential~$\bar\sigma$.  This is
verified by direct calculation in Lemma~\ref{lemma:thm-6.4} and
Proposition~\ref{prop:theta-is-dga-morphism}, where the former is
equivalent to~\cite{LNR11}*{Thm.~6.4} and hinges on the specific
$\sA_*$-coaction on $H_*(BP) \subset H_*(MU)$, leading to the
relation $\epsilon(\xibar_k) = 1 \otimes \xibar_k + t^{-(p-1)} \cdot
\epsilon(\xibar_{k-1}^p)$.

\begin{remark}
  The composite
  \begin{multline*}
    g \circ (\omega_{MU} \otimes 1) \circ f^{-1}
	\: r_+(H_* THH(MU))
	\cong \hat H^{-*}(C_p; \bF_p) \otimes H_*(MU) \otimes E \\
	\longto H^c_* THH(MU)^{tC_p}
  \end{multline*}
  has the form of an inverse Cartier isomorphism~$C^{-1}$
  \cite{Kat70}*{Thm.~7.2}, being linear over $\hat H^{-*}(C_p; \bF_p)
  \cong H^c_* S^{tC_p}$, taking $m_\ell \in H_*(MU)$ to a class detected in
  the $C_p$-Tate spectral sequence by a unit times $m_\ell^p$, and taking
  $\sigma m_\ell \in E$ to a class detected by a unit times $m_\ell^{p-1}
  \sigma m_\ell$.  In both cases the unit is $t^{\ell(p-1)}$.  Hence the
  completed inverse~$\Theta_{MU}$ is formally like the original Cartier
  isomorphism~$C$~\cite{Car57}.
\end{remark}

\subsection{Acknowledgments}

We thank Tyler Lawson for correspondence about the
Alexander--Whitney map discussed in Section~\ref{sec:aw}.

The authors would like to thank the Isaac Newton Institute for
Mathematical Sciences, Cambridge, for support and hospitality during
the programme Equivariant homotopy theory in context, where work on
this paper was undertaken. This work was supported by EPSRC grant
EP/Z000580/1 and the Simons Foundation, Award
SFI-MPS-T-Institutes-00006117.

\section{Filtered modules, comodules and algebras}

Let $\sA^*$ denote the mod~$p$ Steenrod algebra~\cite{SE62}, equipped with the
usual product $\phi\: \sA^*\otimes \sA^*\to \sA^*$, coproduct
$\psi\: \sA^*\to \sA^*\otimes \sA^*$ and conjugation $\chi
\: \sA^* \to \sA^*$.  We often write $\sA$ instead
of~$\sA^*$.  We write $\sA_*$ for the dual~\cite{Mil58} of $\sA$, and also let
$\phi$, $\psi$ and~$\chi$ denote the product, coproduct and conjugation of $\sA_*$,
respectively.

As an algebra, $\sA_* = P(\bar\zeta_k \mid k\ge1)$ with
$|\bar\zeta_k| = 2^k-1$ for $p=2$, while
$\sA_* = E(\bar\tau_k \mid k\ge0) \otimes P(\bar\xi_k \mid k\ge1)$
with $|\bar\tau_k| = 2p^k-1$ and $|\bar\xi_k| = 2p^k-2$ for~$p$ odd.
Here
$\bar\zeta_k = \chi(\zeta_k)$ for $p=2$, and
$\bar\xi_k = \chi(\xi_k)$ and $\bar\tau_k = \chi(\tau_k)$ for~$p$ odd,
denote
the conjugates of the usual generators.  The mod~$2$ Steenrod algebra
is generated by the elements $\Sq^{2^n}$ for $n\geq 0$.  For $p>2$,
the Bockstein~$\beta$ together with the elements $\P^{p^n}$ generate
$\sA$.  For each integer $n\geq 0$, we let $\sA(n)$ be the sub-Hopf
algebra generated by the elements
$\Sq^{2^n}, \Sq^{2^{n-1}}, \ldots, \Sq^1$ when $p=2$, and by
$\P^{p^{n-1}}, \P^{p^{n-2}}, \ldots, \P^1, \beta$ when $p>2$.

Let $\sC$ be a category. The category $\gr\sC$ of
\emph{$\bZ$-graded objects in $\sC$} consists of $\bZ$-indexed
families of objects and maps in $\sC$, i.e., is equal to the functor
category $\sC^{\bZ}$ where $\bZ$ is considered to be a discrete
category.  We write $|x|$ to indicate the degree of a (homogeneous)
object $x$ in $\gr\sC$.  Limits and colimits in $\gr\sC$ are taken
degreewise.

\begin{definition}\label{dfn:finite-type-module}
  Let $R$ be a noetherian commutative ring.  A graded $R$-module is of
  \emph{finite type over $R$} if it is finitely generated over $R$ in
  each degree.
\end{definition}

Let $\Rmod$ be the category of left $R$-modules and $R$-linear homomorphisms.
The graded tensor product $\otimes = \otimes_R$ over~$R$ makes
$\grRmod$ a symmetric monoidal category, with unit $R$ considered as a
graded $R$-module concentrated in degree zero.

Let $\bbgrRmod$ denote the full subcategory of $\grRmod$ consisting
of graded $R$-modules that are bounded below.  This means
that for each $M_*$ in $ \bbgrRmod$ there exists an integer $k$ such
that $M_q=0$ for all $q < k$, i.e., that $M_*$ is $(k-1)$-connected
and $k$-connective.

\subsection{Bounded below modules and comodules over the Steenrod
  algebra}\label{sec:a-modules}

Consider the case $R=\bF_p$.  A bounded below right $\sA$-module is an
object $M_*$ in $ \bbgrfpmod$ together with a morphism
$\rho\: M_*\otimes \sA\longto M_*$ making the following diagrams
commute:
$$
\xymatrix@C16mm{
  M_*\otimes \sA\otimes \sA \ar[r]^{\rho\otimes 1}\ar[d]^{1\otimes \phi}
  & M_*\otimes \sA \ar[d]^{\rho}
  & M_*\otimes\bF_p \ar[r]^{1 \otimes \eta} \ar[dr]_\cong
  & M_*\otimes \sA\ar[d]^\rho\\
  M_*\otimes \sA \ar[r]^\rho
  & M_* && M_*\rlap{\,.}
}
$$
Alternatively, we may specify the structure map $\rho$ by its right
adjoint $\tilde\rho\: M_*\to \Hom(\sA, M_*)$, making the following
diagrams commute:
\begin{equation*}
  \xymatrix@C11mm{
    M_* \ar[r]^{\tilde\rho}\ar[d]^-{\tilde\rho}
    & \Hom(\sA, M_*) \ar[d]^{\Hom(\phi, 1)}
    & M_* \ar[r]^-{\tilde\rho} \ar[dr]_\cong
    & \Hom(\sA, M_*)\ar[d]^-{\Hom(\eta, 1)}\\
    \Hom(\sA, M_*) \ar[rd]_{\Hom(1,\tilde\rho)\hspace{5mm}}
    & \Hom(\sA\otimes\sA, M_*)\ar[d]^{\cong}
    && \Hom(\bF_p, M_*)\rlap{\,.} \\
    & \Hom(\sA, \Hom(\sA, M_*))
  }
\end{equation*}

The category of bounded below right $\sA$-modules and
$\sA$-module maps is denoted by $\bbamod$.  It becomes symmetric monoidal when
$M_* \otimes N_*$ is equipped with the right $\sA$-action given by the composite
\begin{multline*}
  M_* \otimes N_*
  \overset{\tilde\rho \otimes \tilde\rho}\longto \Hom(\sA, M_*) \otimes \Hom(\sA, N_*)\\
  \overset{\otimes}\longto \Hom(\sA \otimes \sA, M_* \otimes N_*)
  \overset{\Hom(\psi, 1)}\longto \Hom(\sA, M_* \otimes N_*) \,.
\end{multline*}

A bounded below left $\sA_*$-comodule is an object $M_*$ in
$ \bbgrfpmod$ together with a morphism
$\nu\: M_*\longto \sA_*\otimes M_*$ making the following diagrams
commute:
\begin{equation*}
  \xymatrix@C16mm{
    M_* \ar[r]^{\nu}\ar[d]^-{\nu}
    & \sA_*\otimes M_* \ar[d]^{\psi\otimes 1}
    & M_* \ar[r]^-{\nu} \ar[dr]_\cong
    & \sA_*\otimes M_* \ar[d]^{\epsilon\otimes 1}\\
    \sA_*\otimes M_* \ar[r]^-{1\otimes \nu}
    & \sA_*\otimes \sA_*\otimes M_* && \bF_p\otimes M_*\,.
  }
\end{equation*}
The category of bounded below left $\sA_*$-comodules and
$\sA_*$-comodule maps is denoted by $\bbacomod$.  It becomes symmetric
monoidal when $M_* \otimes N_*$ is equipped with the left
$\sA_*$-coaction given by the composite
\[
M_* \otimes N_*
  \overset{\nu \otimes \nu}\longto \sA_* \otimes M_* \otimes \sA_* \otimes N_*
  \overset{(23)}\longto \sA_* \otimes \sA_* \otimes M_* \otimes N_*
  \overset{\phi \otimes 1}\longto \sA_* \otimes M_* \otimes N_* \,.
  \]

% Line 15: Disse er "strong" symmetric monoidal, men la oss ikke nevne det.
There are strong symmetric monoidal forgetful functors from both $\bbamod$
and $\bbacomod$ to $\bbgrfpmod$.

\subsection{An isomorphism of symmetric monoidal categories}
\label{sec:isocat}
Let $V_*$ and $M_*$ be graded $\bF_p$-vector spaces, and let
$V^* = \Hom(V_*, \bF_p)$ denote the $\bF_p$-linear dual of~$V_*$.
There is a natural injective morphism of graded $\bF_p$-vector spaces
\begin{equation}
  \label{eq:iotamods}
  \iota\: V_*\otimes M_* \longto \Hom(V^*, M_*) \,,
\end{equation}
defined as the right adjoint of the composite
$$
V_*\otimes M_*\otimes V^*
\overset{(123)}\longto
V^*\otimes V_* \otimes M_*
\overset{\ev\otimes 1}\longto
\bF_p\otimes M_* \cong M_*\,,
$$
explicitly given by
$$
\iota(v\otimes m) = \{ f\mapsto (-1)^{(|v|+|m|)|f|} f(v)\cdot m \}\,.
$$
\begin{lemma}
  \label{lemma:iotamods}
  Let $V_*$ and $M_*$ be graded $\bF_p$-vector spaces that are bounded
  below, and assume that $V_*$ of finite type over $\bF_p$.  Then
  $\iota$ is an isomorphism.
\end{lemma}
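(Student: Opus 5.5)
The plan is to reduce to a degreewise statement and use finiteness in each degree. The map $\iota$ respects the grading. In a fixed total degree $n$, the source is
\[
(V_*\otimes M_*)_n = \bigoplus_{i+j=n} V_i\otimes M_j .
\]
The target $\Hom(V^*,M_*)_n$ consists of degree-$n$ homomorphisms $V^*\to M_*$. Here $V^* = \Hom(V_*,\bF_p)$ is concentrated in degrees $-i$ with $(V^*)_{-i} = (V_i)^*$. Such a homomorphism is a family of linear maps $(V_i)^* \to M_{n-i}$, one for each $i$, so
\[
\Hom(V^*,M_*)_n \cong \prod_{i} \Hom\bigl((V_i)^*, M_{n-i}\bigr),
\]
a product rather than a sum.

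The first step is to check that $\iota$ is compatible with these decompositions, up to the harmless sign $(-1)^{(|v|+|m|)|f|}$. Under the decompositions, $\iota$ is the canonical map from $\bigoplus_i V_i\otimes M_{n-i}$ into $\prod_i \Hom((V_i)^*,M_{n-i})$, given componentwise by $v\otimes m\mapsto (f\mapsto \pm f(v)m)$.

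The second step uses the finite-type hypothesis. Each $V_i$ is finite-dimensional, so the componentwise map
\[
V_i\otimes M_{n-i}\to \Hom((V_i)^*,M_{n-i})
\]
is an isomorphism. To see this, choose a basis $e_1,\dots,e_d$ of $V_i$ and its dual basis of $(V_i)^*$. Both sides are then identified with $M_{n-i}^{\oplus d}$, and the sign is constant on each component.

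The third step, which is the only real point, uses the bounded-below hypotheses to show that the direct sum and the product agree. Suppose $V_i=0$ for $i<a$ and $M_j=0$ for $j<b$. Then the $i$-th factor vanishes unless $a\le i\le n-b$. So in each degree $n$ only finitely many $i$ contribute, and the canonical map $\bigoplus_i\to\prod_i$ is an isomorphism.

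Combining the three steps, $\iota$ is an isomorphism in each degree, hence an isomorphism of graded vector spaces. Injectivity was already noted, so nothing further is needed.
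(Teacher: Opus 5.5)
Your proof is correct and is essentially the paper's argument. In each total degree, $\iota$ is the canonical map from $\bigoplus$ to $\prod$, which is an isomorphism because bounded-belowness leaves only finitely many nonzero terms. This is followed by the product of the componentwise maps $V_i\otimes M_{n-i}\to \Hom((V_i)^*,M_{n-i})$, which are isomorphisms by the finite-type hypothesis.
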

\begin{proof}
  The homomorphism $\iota$ restricted to any
  given total degree~$q$ is equivalent to the composite
  \begin{equation}
    \label{eq:sum-prod}
    \bigoplus_{n} V_n\otimes M_{q-n}
    \longto
    \prod_{n} V_n\otimes M_{q-n}
    \overset{\prod \iota_{q,n}}\longto
    \prod_n \Hom(V^{-n}, M_{q-n}) \,,
  \end{equation}
  where the first homomorphism is the canonical injection.

  By hypothesis, there exists integers $k$ and $\ell$ such that $V_*$
  is $k$-connective and $M_*$ is $\ell$-connective.  This implies that
  the summands and factors of \eqref{eq:sum-prod} vanish unless~$n$ is
  in the finite range $k\leq n\leq q-\ell$.  Since the indexing set
  can be reduced to a finite one, it follows that first inclusion is
  an isomorphism.

  Finally, since $V_n$ is finite dimensional over $\bF_p$ for
  each~$n$, each $\iota_{q,n}$, and therefore their product, is an
  isomorphism.
\end{proof}

Any left $\sA_*$-comodule $M_*$ gives rise to a right $\sA$-module
having the same underlying graded $\bF_p$-vector space, with structure
map $\rho \: M_*\otimes \sA\to M_*$ left adjoint to the composite
$$
\xymatrix{
  \tilde\rho\: M_*\ar[r]^-\nu
  & \sA_*\otimes M_* \ar[r]^-\iota
  & \Hom(\sA, M_*)\,.  }
$$
In general, not all right $\sA$-modules arise this way.  However, if
$V_*=\sA_*$ in \eqref{eq:iotamods}, then it is implied by Lemma
\ref{lemma:iotamods} that passing from a bounded below left $\sA_*$-comodule
to a right $\sA$-module defines an isomorphism of categories
\begin{equation}
  \label{eq:isocat}
  \bbamod\cong \bbacomod\,.
\end{equation}
It is clearly strong symmetric monoidal.

\subsection{Filtered graded $R$-modules} \label{sec:filtered-r-modules}
Let $R$ be a noetherian commutative ring, such as $\bZ$, $\bZ_{(p)}$,
$\bZ_p$ or~$\bF_p$ for a prime~$p$.  We let $\filgrRmod$ be the
category consisting of objects $(P_*, \{F_nP_*\}_{n\in\bZ})$ where
$P_*$ is a graded $R$-module, and
$$
\ldots\subset F_{n-1}P_* \subset F_nP_*\subset \ldots \subset P_*
$$
is an ascending filtration of $P_*$.  We often omit the filtration
from the notation, and simply say $P_*$ instead of
$(P_*,\{F_nP_*\}_{n\in\bZ})$.  The morphisms of $\filgrRmod$ are the
filtration-preserving maps of graded $R$-modules.

We say that a morphism $f\:P_*\to Q_*$ of filtered graded
$R$-modules is \emph{strictly filtration-preserving} if
$f(F_n P_* \setminus F_{n-1} P_*) \subset F_n Q_* \setminus F_{n-1}
Q_*$, or equivalently if
$\bar f \: F_n P_*/F_{n-1} P_* \to F_n Q_*/F_{n-1} Q_*$ is injective
for each~$n$.  Note that a morphism $f\: P_*\to Q_*$ is strictly
filtration-preserving if~$P_*$ has the pullback filtration, i.e.,
if $F_nP_* = f^{-1}F_nQ_*$ for each $n\in\bZ$.

We say that the filtration $\{F_nP_*\}_n$ is \emph{exhaustive} if
$\colim_n F_nP_* = P_*$, \emph{Hausdorff} if $\lim_n F_nP_*=0$, and
\emph{complete} if $\rlim_n F_nP_*=0$.

The following characterization of isomorphisms in $\filgrRmod$
is elementary.
\begin{lemma}
  \label{lemma:filisos}
  A morphism $f\:P_*\to Q_*$ in $\filgrRmod$ is an isomorphism if and
  only if $f$ is an isomorphism in $\grRmod$ that also induces an
  isomorphism of quotients $P_*/F_nP_*\cong Q_*/F_nQ_*$ for every~$n$.
  \qed
\end{lemma}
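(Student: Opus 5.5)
The argument is elementary and proceeds in two directions; we only use that the underlying sets of $F_nP_*$ and $F_nQ_*$ are submodules. For the forward direction, suppose $f$ is an isomorphism in $\filgrRmod$ with inverse $g \: Q_* \to P_*$ that is also filtration-preserving. The underlying graded $R$-module maps of $f$ and $g$ are mutually inverse, so $f$ is an isomorphism in $\grRmod$. Because both $f$ and $g$ preserve filtrations, they restrict to mutually inverse maps $F_nP_* \rightleftarrows F_nQ_*$, and hence $f(F_nP_*) = F_nQ_*$. They therefore also induce mutually inverse maps on the quotients $P_*/F_nP_*$ and $Q_*/F_nQ_*$.

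For the converse, suppose $f$ is bijective and each induced map $\bar f_n \: P_*/F_nP_* \to Q_*/F_nQ_*$ is an isomorphism. Let $g = f^{-1}$ in $\grRmod$; the only thing to check is that $g$ preserves filtrations, i.e.\ that $f(F_nP_*) = F_nQ_*$ for each~$n$. The inclusion $f(F_nP_*) \subset F_nQ_*$ holds because $f$ is a morphism. For the reverse inclusion, take $y \in F_nQ_*$ and write $y = f(x)$. Then $\bar f_n([x]) = [y] = 0$, and injectivity of $\bar f_n$ gives $x \in F_nP_*$. Hence $g(F_nQ_*) \subset F_nP_*$, and $g$ is a morphism of $\filgrRmod$ inverse to $f$.

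Nothing here is a real obstacle. The one point worth noting is that only injectivity of the quotient maps is used in the converse; surjectivity is automatic from surjectivity of $f$. So the hypothesis on quotients could equivalently be phrased as $F_nP_* = f^{-1}F_nQ_*$, i.e.\ that $P_*$ carries the pullback filtration, in line with the remark on strictly filtration-preserving maps just before the statement.
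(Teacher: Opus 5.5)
Your proposal is correct. The paper gives no proof, calling the lemma elementary, and your argument is the natural verification: a bijective filtered map has a filtered inverse exactly when $f(F_nP_*) = F_nQ_*$, and injectivity of the induced quotient maps supplies the missing inclusion $F_nQ_* \subset f(F_nP_*)$. Your closing observation, that the quotient condition amounts to $P_*$ carrying the pullback filtration, is also accurate.
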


\begin{definition}
  We say that an ascending filtration of graded $R$–modules
  $$
  \ldots\subset F_{n-1}P_* \subset F_nP_*\subset \ldots \subset P_*
  $$
  is \emph{relatively bounded below}, or \emph{rbb} for short, if there
  exists an integer $k$ such that $P_*/F_nP_*$ is $(n+k)$-connected for
  each~$n$.  In other words, $F_nP_q = P_q$ for all $n\geq q-k$.

  We let $\rbbfilgrRmod$ be the full subcategory of $\filgrRmod$
  consisting of objects $(P_*, \{F_nP_*\}_{n\in\bZ})$ where $P_*$ is a
  graded (unbounded) $R$-module, and $\{F_nP_*\}_{n}$ is a relatively
  bounded below filtration of $P_*$.
\end{definition}

Note that any relatively bounded below filtration is necessarily
exhaustive since any given homogeneous submodule $P_q$ is eventually
contained in some $F_nP_*$.

We can identify $\bbgrRmod$ with the full subcategory of
$\rbbfilgrRmod$ consisting of bounded below graded $R$-modules
$M_*$ having the \emph{discrete filtration} given by $F_nM_* = M_*$
for all $n\geq 0$ and $F_nM_*=0$ otherwise.  If $M_*$ is
$k$-connective, then the discrete filtration has the property that
$M_*/F_nM_*$ is $(n+k)$-connected for each~$n$.

\subsection{Towers of bounded below graded $R$-modules}
Let
$$
\ldots \longto Q[n-1]_* \longto Q[n]_*
\longto Q[n+1]_*\longto \ldots
$$
be a tower of graded $R$-modules and let $Q_*=\lim_n Q_*[n]$.
The notation~$[n]$ here refers to the indexing
of the tower, not a degree shift.  Then
$Q_*$ is a graded $R$-module filtered by the \emph{kernel filtration}
given by $F_nQ_* = \ker (j_{n}\: Q_* \to Q_*[n+1])$, where $j_{n}$ is
the canonical map from the limit to the $(n+1)$-th stage of the tower.

Suppose that there exists an integer $k$ such that $Q[n+1]_*$ is
$(n+k)$-connected for each $n$.  Since $j_{n}$ induces an injective
map $Q_*/F_nQ_* \to Q[n+1]_*$, it follows that also $Q_*/F_nQ_*$ is
$(n+k)$-connected for each $n$, and therefore that the kernel
filtration of $Q_*$ is relatively bounded below.

\begin{lemma}\label{lemma:f-iso}
  Let
  $$
  \{ f[n] \}_n\: \{P[n]_*\}_n \longto \{Q[n]_*\}_n  %_{n\in\bZ}
  $$
  be a map of towers of graded $\bF_p$-vector spaces.  Denote the
  limits of the two towers by $P_*$ and $Q_*$, respectively.

  Assume that $f=\lim_n f[n]\: P_*\to Q_*$ is an isomorphism in
  $\grfpmod$, and that $P[n]_*$ and $Q[n]_*$ are bounded below for
  each $n$.  Then the induced map
  $$
  \lim_n V_*\otimes P[n]_* \longto \lim_n V_*\otimes Q[n]_*
  $$
  is an isomorphism in $\grfpmod$, for any bounded below graded
  $\bF_p$-vector space $V_*$ of finite type over $\bF_p$.
\end{lemma}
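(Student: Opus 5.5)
The plan is to reduce to a degreewise statement and use that $V_*$ is bounded below and of finite type, so that in each total degree the tensor product with $V_*$ is a finite direct sum of finite-dimensional pieces. Fix a total degree $q$ and say $V_*$ is $k$-connective. Then
\[
(V_*\otimes P[n]_*)_q = \bigoplus_{m\ge k} V_m\otimes P[n]_{q-m} \,.
\]
This sum is not finite a priori: $P[n]_*$ is bounded below for each $n$, but not uniformly in $n$, so the range of $m$ with $P[n]_{q-m}\neq 0$ may grow with $n$. So the first step is to compare the direct sum with the product $\prod_{m\ge k} V_m\otimes P[n]_{q-m}$. By Lemma~\ref{lemma:iotamods}, with $V_*$ finite type and $P[n]_*$ bounded below, this tensor product is isomorphic to $\Hom(V^*, P[n]_*)$ via $\iota$, naturally in $P[n]_*$. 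Concretely, for each $n$ only finitely many $m$ contribute, so the sum equals the product at each stage.

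Next I pass to limits. Since $\Hom(V^*,-)$ is a right adjoint, it commutes with limits:
\[
\lim_n \Hom(V^*, P[n]_*) \cong \Hom(V^*, \lim_n P[n]_*) = \Hom(V^*, P_*) \,,
\]
and the same holds for $Q$. Naturality of $\iota$ gives a commutative square identifying the map in question,
\[
\lim_n V_*\otimes P[n]_* \longto \lim_n V_*\otimes Q[n]_* \,,
\]
with
\[
\Hom(V^*, f)\: \Hom(V^*, P_*) \longto \Hom(V^*, Q_*) \,.
\]
This map is an isomorphism because $f$ is an isomorphism in $\grfpmod$. Degreewise, everything is just $\prod_m (V_m\otimes -)$ applied to the tower, and finite-dimensional $V_m\otimes -$ commutes with limits.

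The main point to check carefully is that the identification via $\iota$ is natural in the tower and in degree, so that the limit of the isomorphisms $\iota$ is compatible with $\lim_n f[n]$. Note that the limits $P_*$ and $Q_*$ need not be bounded below, so $\iota$ need not be an isomorphism for them. This causes no problem: we never apply $\iota$ to $P_*$ itself, only to the stages $P[n]_*$, and then commute $\Hom(V^*,-)$ past the limit. Everything else is formal.
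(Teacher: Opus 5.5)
Your proposal is correct and matches the paper's proof: it uses the same commutative diagram, identifying the map $\lim_n V_*\otimes P[n]_* \to \lim_n V_*\otimes Q[n]_*$ with $\Hom(1,f)\colon \Hom(V^*,P_*)\to\Hom(V^*,Q_*)$. You obtain this, as the paper does, by applying $\iota$ stagewise (an isomorphism by Lemma~\ref{lemma:iotamods}, since each $P[n]_*$ and $Q[n]_*$ is bounded below) and then commuting $\Hom(V^*,-)$ past the limit. You also correctly observe that $\iota$ is never applied to the possibly unbounded limits $P_*$ and $Q_*$ themselves.
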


\begin{proof}
  There is a commutative diagram
  $$
  \xymatrix@C25mm{
    \Hom (V^*, P_*) \ar[r]^{\Hom(1, f)} \ar[d]^{\cong}
    & \Hom (V^*, Q_*) \ar[d]^{\cong} \\
    \lim_n \Hom (V^*, P[n]_*) \ar[r]^{\lim_n \Hom(1, f[n])}
    & \lim_n \Hom (V^*, Q[n]_*) \\
    \lim_n V_*\otimes P[n]_* \ar[r]^{\lim_n 1\otimes f[n]} \ar[u]_{\lim_n\iota}
    & \lim_n V_*\otimes Q[n]_*\rlap{\,.} \ar[u]_{\lim_n\iota}
  }
  $$
  The upper horizontal map is an isomorphism by the assumption on~$f$.
  The lemma follows since both maps labeled $\lim_n \iota$ are
  isomorphisms by Lemma \ref{lemma:iotamods}.
\end{proof}

\subsection{Completion}\label{sec:completion}
We define the \emph{completion} of a filtered graded $R$-module $P_*$
to be the limit
$$
P^\wedge_* = \lim_{n} P_*/F_nP_* \,,
$$
equipped with the kernel filtration
\begin{equation}
  \label{eq:filtration-completion}
  F_nP^\wedge_* := \ker\left( j_n\: P^\wedge_* \longto P_*/F_nP_*
  \right) \,.
\end{equation}
For each~$n$, the restricted filtration
$\{F_mP_* \cap F_nP_*\}_{m\in\bZ}$ gives $F_nP_*$ the structure of a
filtered graded $R$-module, and it makes sense to consider its
completion $(F_nP_*)^\wedge$.  Moreover, each inclusion
$F_nP_*\subset P_*$ induces an inclusion
$(F_nP_*)^\wedge \subset P_*^\wedge$ with image equal to
$F_n(P_*^\wedge)$, which is natural in~$n$.  Thus,
$(P_*^\wedge, \{(F_nP_*)^\wedge\}_n)$ is naturally isomorphic to
$(P_*^\wedge, \{F_n(P_*^\wedge)\}_n)$ in $\filgrRmod$.  For this
reason, we usually omit writing parentheses when discussing the
filtration \eqref{eq:filtration-completion}.

The natural \emph{completion homomorphism}
$$
  c\: P_* \longto P^\wedge_*
$$
is a morphism of filtered graded $R$-modules.  For each~$n$ the
composite
$$
  P_* \overset{c}\longto P_*^\wedge \overset{j_n}\longto P_*/F_n P_*
$$
is surjective, which implies that $c$ induces an isomorphism of
filtration quotients
\begin{equation}\label{eq:completion-quotient-iso}
  P_*/F_n P_* \overset{\cong}\longto P_*^\wedge/F_n P_*^\wedge\,.
\end{equation}
It follows that whenever~$P_*$ in $\rbbfilgrRmod$ is
relatively bounded below, then so is~$P^\wedge_*$, when equipped with
the kernel filtration.  In particular, the filtration of~$P^\wedge_*$
is then exhaustive, i.e., $\colim_n F_n P_*^\wedge = P_*^\wedge$.

We say that~$P_*$ in $\filgrRmod$ is \emph{complete Hausdorff} if~$c$
is an isomorphism in $\grRmod$.  By Lemma~\ref{lemma:filisos}, it
follows from the isomorphisms \eqref{eq:completion-quotient-iso} that~$P_*$
is complete Hausdorff if and only if~$c$ is an isomorphism in
$\filgrRmod$.  The completion~$P_*^\wedge$, equipped with its kernel
filtration, is always complete Hausdorff.

By the exact sequence
$$
0\longto \lim_n F_nP_* \longto P_* \overset{c}{\longto}
P^\wedge_* \longto \rlim_n F_nP_* \longto 0
$$
we see that $P_*$ is complete Hausdorff if and only if its
filtration is complete and Hausdorff.

Finally, since every discretely filtered graded $R$-module is complete
Hausdorff, the full subcategory $\grRmod$ of $\filgrRmod$ consists
entirely of complete Hausdorff objects.

\subsection{Symmetric monoidal structure}\label{sec:convolution}

We now let $R=\bF_p$ for a prime $p$.  Let $P_*$ and
$Q_*$ in $ \rbbfilgrfpmod$ be relatively bounded below filtered graded
$\bF_p$-vector spaces.  We endow the graded tensor product
$P_* \otimes Q_*$ with the \emph{convolution filtration} given by
$$
F_n(P_*\otimes Q_*) = \sum_{a+b=n} F_aP_* \otimes F_bQ_* \,,
$$
where the sum is the internal sum in $P_*\otimes Q_*$.  Since the
filtrations of $P_*$ and $Q_*$ are both exhaustive, any elementary
tensor $x\otimes y$ is contained in $F_aP_*\otimes F_bQ_*$ for some
$a$~and~$b$, which implies that the convolution filtration is
exhaustive.
% hausdorff.tex: P\otimes Q Hausdorff hvis P og Q begge er Hausdorff

The convolution product makes $\rbbfilgrfpmod$ a symmetric monoidal
category with unit object $\bF_p$.  A crucial part of this claim is
that the convolution filtration of a tensor product of relatively
bounded below filtered graded $\bF_p$-vector spaces is again relatively
bounded below.  This is ensured by the following lemma.
\begin{lemma} \label{lemma:tensor-rbb}
  Let $P_*$ and $Q_*$ be relatively bounded below filtered graded
  $\bF_p$-vector spaces. Let $k$ and $\ell$ be integers such that
  $P_*/F_nP_*$ is $(n+k)$-connected and $Q_*/F_nQ_*$ is
  $(n+\ell)$-connected for all~$n$.  Then
  $(P_*\otimes Q_*)/F_n(P_*\otimes Q_*)$ is $(n+k+\ell)$-connected for
  all~$n$.  In particular, $P_*\otimes Q_*$ is relatively bounded below.
\end{lemma}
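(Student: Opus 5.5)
Fix a degree $q$ and an integer $n$, and assume $q \le n+k+\ell$. The goal is to show that $(P_*\otimes Q_*)_q$ lies inside $F_n(P_*\otimes Q_*)$. Working over the field $\bF_p$, the degree-$q$ part of $P_*\otimes Q_*$ is spanned by elementary tensors $x\otimes y$ with $x\in P_i$, $y\in Q_j$ and $i+j=q$. It therefore suffices to show that each such $x\otimes y$ lies in $F_aP_*\otimes F_bQ_*$ for some $a+b=n$.

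The key step is to choose the splitting of $n$ using the hypotheses in the form $F_aP_i=P_i$ whenever $a\ge i-k$, and $F_bQ_j=Q_j$ whenever $b\ge j-\ell$. Take $a := i-k$ and $b := n-a = n-i+k$. Then $x\in P_i=F_aP_i$ holds automatically. For $y$ we need $b\ge j-\ell$, that is $n-i+k\ge j-\ell$, which is equivalent to $q=i+j\le n+k+\ell$. This is exactly our assumption. Hence $x\otimes y\in F_aP_*\otimes F_bQ_*\subset F_n(P_*\otimes Q_*)$.

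It follows that $(P_*\otimes Q_*)_q = F_n(P_*\otimes Q_*)_q$ for all $q\le n+k+\ell$. So the quotient $(P_*\otimes Q_*)/F_n(P_*\otimes Q_*)$ vanishes in those degrees, which means it is $(n+k+\ell)$-connected. The rbb conclusion then follows directly from the definition, with constant $k+\ell$.

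No step is a genuine obstacle. The only point needing care is the direction of the inequalities, together with the observation that membership of elementary tensors suffices because $F_n(P_*\otimes Q_*)$ is a sub-vector space.
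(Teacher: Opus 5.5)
Your argument is correct, but it takes a different and more direct route than the paper.

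\emph{The paper's route.} It works with the layers of the filtration. It notes that $F_aP_*/F_{a-1}P_*$ is $(a+k)$-connective and $F_bQ_*/F_{b-1}Q_*$ is $(b+\ell)$-connective. It then uses the decomposition $F_n(P_*\otimes Q_*)/F_{n-1}(P_*\otimes Q_*)\cong\bigoplus_{a+b=n}F_aP_*/F_{a-1}P_*\otimes F_bQ_*/F_{b-1}Q_*$ to conclude that each layer of the convolution filtration is $(n+k+\ell)$-connective. An induction on $m$ gives the same for $F_m(P_*\otimes Q_*)/F_{n-1}(P_*\otimes Q_*)$ with $m\ge n$, and the proof finishes by passing to the colimit over $m$ using exhaustiveness.

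\emph{Your route.} You treat one bidegree $(i,j)$ at a time and choose the splitting $a=i-k$, $b=n-i+k$, so that $F_aP_i=P_i$ and $F_bQ_j=Q_j$ hold simultaneously. The condition $b\ge j-\ell$ is then exactly the hypothesis $i+j\le n+k+\ell$.

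\emph{Comparison.} Your argument avoids three things: the associated-graded decomposition (which the paper uses without comment, and which relies on working over a field), the induction, and the colimit. It also goes through verbatim over any commutative ring $R$, with the convolution filtration taken as an internal sum of images. The paper's route has the side benefit of recording the connectivity of the individual layers $F_n/F_{n-1}$. That matches the layer-by-layer bookkeeping in Lemmas~\ref{lemma:thin-layers} and~\ref{lemma:convolution-filtration-isos}. For the statement itself, your proof is shorter and self-contained.
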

\begin{proof}
  To shorten notation, we write $F_n = F_n(P_*\otimes Q_*)$,
  $F'_a = F_aP_*$ and $F''_b = F_bQ_*$.  The quotient
  $F'_{a}/F'_{a-1}$ is contained in $P_*/F'_{a-1}$, and is therefore
  $(a+k)$-connective for each $a$.  Likewise $F''_{b}/F''_{b-1}$ is
  $(b+\ell)$-connective for each $b$.  Then
  $$
  F_{n}/F_{n-1} \cong \bigoplus_{a+b=n} F'_{a}/ F'_{a-1}
  \otimes F''_{b}/F''_{b-1}
  $$
  is a direct sum of $(a+k+b+\ell) = (n+k+\ell)$-connective graded
  $\bF_p$-vector spaces, and is therefore itself
  $(n+k+\ell)$-connective.  By induction it follows that
  $F_{m}/F_{n-1}$ is $(n+k+\ell)$-connective for every $m \geq n$.
  Since the convolution filtration is exhaustive it follows that
  $$
  \colim_m F_{m}\cong P_* \otimes Q_*
  $$
  and that $(P_* \otimes Q_*)/F_{n-1}$ is $(n+k+\ell)$-connective.
\end{proof}

We define the completed tensor product $P_*\ctensor Q_*$ as the
completion $(P_* \otimes Q_*)^\wedge$.  In the special case of
$V_*\otimes P_*$ where $V_*$ lies in $\bbgrfpmod$, the convolution
filtration is given by
$$
F_n(V_*\otimes P_*) = \sum_{a+b=n}F_aV_* \otimes F_bP_* =
V_*\otimes F_nP_*\,,
$$
since $F_aV_*=0$ for $a<0$ and $F_aV_*=V_*$ otherwise.  Therefore
\begin{equation}
  \label{eq:ctensor-discrete}
  V_*\ctensor P_* \cong \lim_n \,\bigl(V_*\otimes P_*/F_nP_*\bigr)\,.
\end{equation}
It follows from~\eqref{eq:completion-quotient-iso} that
\begin{equation}
  \label{eq:ctensor-completion}
  V_* \ctensor P_* \cong V_* \ctensor P^\wedge_*\,.
\end{equation}

For any two $V_*, W_* $ in $ \bbgrfpmod$, there are compatible natural
isomorphisms
\begin{align*}
  V_*\otimes \frac{W_*\ctensor P_*}{F_n(W_*\ctensor P_*)}
  &\cong V_*\otimes (W_*\otimes P_*/F_nP_*) \\
  &\cong (V_*\otimes W_*)\otimes P_*/F_nP_* \,.
\end{align*}
Taking limits, these isomorphisms give rise to a natural coherent
isomorphism
\begin{equation}
  \label{eq:associso}
  V_*\ctensor (W_*\ctensor P_*) \cong (V_*\otimes W_*)\ctensor P_*\,.
\end{equation}

\begin{lemma}\label{lemma:thin-layers}
  Let $f\:P_*\to Q_*$ be a morphism in $\rbbfilgrfpmod$.  If $f$
  induces an isomorphism
  $$
  \frac{F_{n}P_*}{F_{n-1}P_*} \overset\cong\longto
  \frac{F_{n}Q_*}{F_{n-1}Q_*}
  $$
  for each $n$, then the completion
  $f^\wedge\: P^\wedge_*\to Q^\wedge_*$ is an isomorphism in
  $\rbbfilgrfpmod$.
\end{lemma}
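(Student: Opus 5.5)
The plan is to reduce to the finite quotients $P_*/F_nP_*$ and use the relatively bounded below hypothesis to show that in each fixed degree only finitely many layers matter. By Lemma~\ref{lemma:filisos} and the isomorphisms~\eqref{eq:completion-quotient-iso}, it suffices to prove two things: that $f^\wedge$ is an isomorphism of graded vector spaces, and that it induces isomorphisms $P^\wedge_*/F_nP^\wedge_* \cong Q^\wedge_*/F_nQ^\wedge_*$. Under~\eqref{eq:completion-quotient-iso} the second is the map $\bar f_n\: P_*/F_nP_* \to Q_*/F_nQ_*$. Since $f^\wedge = \lim_n \bar f_n$, the first follows from the second. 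So everything reduces to showing that each $\bar f_n$ is an isomorphism.

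Fix $n$ and a degree $q$. Choose $k$ such that $P_*/F_mP_*$ and $Q_*/F_mQ_*$ are $(m+k)$-connected for all $m$; take the minimum of the two constants. Then for $m \ge q-k$ we have $F_mP_q = P_q$ and $F_mQ_q = Q_q$. In degree $q$ this gives
\[
(P_*/F_nP_*)_q = (F_mP_*/F_nP_*)_q
\]
for any $m \ge \max(n, q-k)$, and likewise for $Q_*$. The degree-$q$ part of $F_mP_*/F_nP_*$ has a finite filtration with layers $(F_jP_*/F_{j-1}P_*)_q$ for $n < j \le m$, and the same holds for $Q_*$.

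The map $f$ respects these finite filtrations and is an isomorphism on each layer by hypothesis. Induction on $m-n$, using the five lemma on the short exact sequences
\[
0 \to F_{m-1}/F_n \to F_m/F_n \to F_m/F_{m-1} \to 0,
\]
then shows that $F_mP_*/F_nP_* \to F_mQ_*/F_nQ_*$ is an isomorphism. Restricting to degree $q$ gives that $(\bar f_n)_q$ is an isomorphism. Since $q$ was arbitrary, each $\bar f_n$ is an isomorphism, and passing to the limit over $n$ gives that $f^\wedge$ is an isomorphism compatible with the kernel filtrations.

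The only delicate point is the uniformity of the bound. A single constant $k$ must work for both $P_*$ and $Q_*$, so that the finite range $n < j \le m$ is the same for both; taking the minimum of the two rbb constants handles this. Exhaustiveness, which is automatic for rbb filtrations, is exactly what makes the quotient equal to a finite filtered piece. Note that the argument never uses completeness or Hausdorffness of $P_*$ or $Q_*$ themselves.
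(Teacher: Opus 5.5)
Your proof is correct and follows the paper's argument. You induct on layers with the five lemma to get isomorphisms $F_mP_*/F_nP_* \to F_mQ_*/F_nQ_*$, and use the rbb condition so that these stabilize to $P_*/F_nP_* \cong Q_*/F_nQ_*$ in each degree; the paper phrases this as a colimit over $m$, where you pick an explicit $m \ge \max(n, q-k)$. You then pass to the limit over $n$ and conclude with \eqref{eq:completion-quotient-iso} and Lemma~\ref{lemma:filisos}, exactly as the paper does; the common constant $k$ you single out as the delicate point is not actually needed, since you can choose $m$ large enough for $P_q$ and $Q_q$ separately.
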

\begin{proof}
  By induction on $m$, it follows from the hypothesis that $f$ induces
  isomorphisms
  $$
  \frac{F_{m}P_*}{F_{n-1}P_*} \longto
  \frac{F_{m}Q_*}{F_{n-1}Q_*}
  $$
  for every $m\geq n$.  For any relatively bounded below filtered
  graded $\bF_p$-vector space, the filtration of any fixed
  degree stabilizes at a finite stage.  Thus, passing to the colimit
  as $m\to \infty$, it follows that $f$ induces an isomorphism
  \begin{equation}
    \label{eq:l1iso}
    \frac{P_*}{F_{n-1}P_*} \overset\cong\longto
    \frac{Q_*}{F_{n-1}Q_*}
  \end{equation}
  for each $n$.  Taking limits over $n$, we obtain that
  $f^\wedge\:P^\wedge_*\to Q_*^\wedge$ is an isomorphism of graded
  $\bF_p$-vector spaces.  By the natural isomorphisms
  \eqref{eq:completion-quotient-iso} together with \eqref{eq:l1iso},
  it follows that $f^\wedge$ induces isomorphisms of filtration
  quotients.  An application of Lemma~\ref{lemma:filisos} then
  concludes the proof.
\end{proof}

\begin{lemma} \label{lemma:convolution-filtration-isos} Assume that
  $f\: P_*\to Q_*$ and $g\: P_*'\to Q_*'$ are morphisms in
  $\rbbfilgrfpmod$ that induce isomorphisms of filtration quotients
  $F_nP_*/F_{n-1}P_* \cong F_nQ_*/F_{n-1}Q_*$ and
  $F_nP_*'/F_{n-1}P_*' \cong F_nQ_*'/F_{n-1}Q_*'$ for each $n\in\bZ$.

  Then $f\otimes g\: P_*\otimes Q_*\to P_*'\otimes Q_*'$ induces
  an isomorphism
  \[
    \frac{F_n(P_*\otimes Q_*)}{F_{n-1}(P_*\otimes Q_*)}
    \overset\cong\longto
    \frac{F_n(P_*'\otimes Q_*')}{F_{n-1}(P_*'\otimes Q_*')}
  \]
  for each $n\in\bZ$.
\end{lemma}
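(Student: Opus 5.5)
We will identify the associated graded of the convolution filtration with a direct sum of tensor products of associated gradeds, and then apply $f$ and $g$ summand by summand. (The statement writes $f\otimes g\: P_*\otimes Q_*\to P_*'\otimes Q_*'$; we read this as the map $P_*\otimes P_*'\to Q_*\otimes Q_*'$, which is the one the hypotheses concern.) Throughout, write $\gr_a P_* := F_aP_*/F_{a-1}P_*$.

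\emph{Step 1: a natural splitting of the layers.} For a filtered graded $\bF_p$-vector space $P_*$, choose a graded complement $C_a \subset F_aP_*$ of $F_{a-1}P_*$ for each $a$, so that $C_a \cong \gr_a P_*$. The proof of Lemma~\ref{lemma:tensor-rbb} already asserts an isomorphism
\[
  \frac{F_n(P_*\otimes P_*')}{F_{n-1}(P_*\otimes P_*')}
  \cong \bigoplus_{a+b=n} \gr_a P_* \otimes \gr_b P_*' \,.
\]
To justify it, we first consider the filtration of a fixed degree $q$ of $P_*$. Because the filtration is rbb, it is exhaustive, and in degree $q$ it is constant above some level. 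Below that level it may descend infinitely. If $\bigcap_a F_aP_q \ne 0$, choose a complement $D$ of this intersection. Then
\[
  F_aP_* = \Bigl(\bigoplus_{c\le a} C_c\Bigr) \oplus D
\]
is not quite right in general. The clean statement is instead the following: there is a canonical map
\[
  \bigoplus_{a+b=n} F_aP_*\otimes F_bP_*' \longto \frac{F_n}{F_{n-1}}
\]
which factors through $\bigoplus_{a+b=n} \gr_aP_*\otimes\gr_bP_*'$, since $F_{a-1}\otimes F_b \subset F_{n-1}$ and similarly in the other factor. This factored map is natural in the pair $(P_*,P_*')$.

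\emph{Step 2: bijectivity of the canonical map.} The map is surjective by the definition of the convolution filtration. Injectivity is an elementary linear-algebra fact about tensor products of two filtered vector spaces over a field. Work in a fixed total degree and reduce to finitely many summands. An element of $F_n$ is a finite sum, so only finitely many filtration indices are involved. Only those indices matter, so we may restrict to finite filtrations on finitely generated subspaces and choose adapted bases. With adapted bases, both $F_n$ and $F_{n-1}$ are spanned by tensors of basis elements, and injectivity becomes evident. This reduction to finite pieces is the only delicate point. I expect it to be the main obstacle, though a routine one, since the filtrations need not be Hausdorff.

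\emph{Step 3: conclusion.} By naturality, the map induced by $f\otimes g$ on the $n$th layer corresponds under these isomorphisms to
\[
  \bigoplus_{a+b=n} \bar f_a\otimes \bar g_b \,,
\]
where $\bar f_a$ and $\bar g_b$ are the maps induced by $f$ and $g$ on $\gr_a$ and $\gr_b$. Each $\bar f_a\otimes\bar g_b$ is a tensor product of isomorphisms, hence an isomorphism. Therefore the direct sum is an isomorphism, which proves the lemma.
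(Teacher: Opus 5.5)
Your proposal is correct and is essentially the paper's argument: identify the $n$th layer of the convolution filtration, naturally in both factors, with $\bigoplus_{a+b=n}\gr_a\otimes\gr_b$, then apply the layerwise isomorphisms for $f$ and $g$ summand by summand. The paper simply asserts that identification, as in the proof of Lemma~\ref{lemma:tensor-rbb}; your Step~2 supplies a valid justification by passing to finite-dimensional subspaces with the induced filtrations and adapted bases, and you should delete the abandoned attempt with the complement $D$ in Step~1.
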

\begin{proof}
  Consider the following commutative diagram, where both horizontal
  homomorphisms are induced by $f\: P_*\to P_*'$ and $g\:Q_*\to Q_*'$,
  and the vertical isomorphisms are induced by the inclusions
  $F_aP_*\otimes F_bQ_* \subset F_{n}(P_*\otimes Q_*)$ as $a$ and $b$
  range over all integers such that $a+b=n$.

  \[
    \xymatrix@C18mm{
      \displaystyle \bigoplus_{a+b=n} \frac{F_aP_*}{F_{a-1}P_*}\otimes \displaystyle\frac{F_bQ_*}{F_{b-1}Q_*}
      \ar[r]^-\cong
      \ar[d]^\cong
      &
      \displaystyle \bigoplus_{a+b=n} \frac{F_aP_*'}{F_{a-1}P_*'}\otimes \frac{F_bQ_*'}{F_{b-1}Q_*'}
      \ar[d]^\cong\\
      \displaystyle\frac{F_n(P_*\otimes Q_*)}{F_{n-1}(P_*\otimes Q_*)}
      \ar[r]
      & \displaystyle\frac{F_n(P_*'\otimes Q_*')}{F_{n-1}(P_*'\otimes Q_*')}
      \,.
    }
  \]
  The upper horizontal homomorphism is an isomorphism by hypothesis,
  so the lemma follows.
\end{proof}

We have the following generalization of the isomorphism
\eqref{eq:ctensor-completion}:
\begin{lemma}\label{lemma:completions-before-or-after-tensor}
  Let $P_*$ and $Q_*$ be relatively bounded below filtered graded
  $\bF_p$-vector spaces.  The completion homomorphisms induce an
  isomorphism
  $$
  c\ctensor c \: P_*\ctensor Q_*
  \overset\cong\longto
  P^\wedge_* \ctensor Q^\wedge_*
  $$
  in $\rbbfilgrfpmod$.
\end{lemma}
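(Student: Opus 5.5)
The plan is to reduce the statement to Lemma~\ref{lemma:thin-layers}, applied to the morphism $c\otimes c \: P_*\otimes Q_* \to P^\wedge_*\otimes Q^\wedge_*$ of tensor products with their convolution filtrations. First I check that the source and target lie in $\rbbfilgrfpmod$. The filtered graded vector spaces $P_*$, $Q_*$ are relatively bounded below by hypothesis. By Subsection~\ref{sec:completion}, the completions $P^\wedge_*$ and $Q^\wedge_*$ with their kernel filtrations are again relatively bounded below, with the same constants $k$ and $\ell$, since the quotients $P_*/F_nP_*$ and $P^\wedge_*/F_nP^\wedge_*$ agree by \eqref{eq:completion-quotient-iso}. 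Lemma~\ref{lemma:tensor-rbb} then shows that both $P_*\otimes Q_*$ and $P^\wedge_*\otimes Q^\wedge_*$ are relatively bounded below. Also, $c\otimes c$ is filtration-preserving for the convolution filtrations, because $c$ maps $F_aP_*$ into $F_aP^\wedge_*$, and likewise for $Q_*$.

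Next I show that $c\otimes c$ induces isomorphisms on all filtration quotients $F_n/F_{n-1}$. The completion homomorphism $c \: P_*\to P^\wedge_*$ induces isomorphisms $P_*/F_nP_*\cong P^\wedge_*/F_nP^\wedge_*$ for every $n$. Comparing these for $n$ and $n-1$ and applying the five lemma (or taking kernels of $P_*/F_{n-1}\to P_*/F_n$) gives isomorphisms
\[
F_nP_*/F_{n-1}P_* \cong F_nP^\wedge_*/F_{n-1}P^\wedge_* \,,
\]
and similarly for $Q_*$. Lemma~\ref{lemma:convolution-filtration-isos} then says that $c\otimes c$ induces isomorphisms
\[
F_n(P_*\otimes Q_*)/F_{n-1}(P_*\otimes Q_*) \cong F_n(P^\wedge_*\otimes Q^\wedge_*)/F_{n-1}(P^\wedge_*\otimes Q^\wedge_*)
\]
for every $n$.

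Finally, Lemma~\ref{lemma:thin-layers} applies to $c\otimes c$ and shows that its completion
\[
(c\otimes c)^\wedge = c\ctensor c \: P_*\ctensor Q_* \longto P^\wedge_*\ctensor Q^\wedge_*
\]
is an isomorphism in $\rbbfilgrfpmod$, which is the claim. The only point needing care is the rbb hypothesis of Lemma~\ref{lemma:thin-layers}: its proof uses that in each fixed degree the filtration stabilizes at a finite stage. That is exactly what was verified in the first step, so I expect no real obstacle; the argument is a formal assembly of the preceding lemmas.
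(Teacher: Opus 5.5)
Your proposal is correct and follows the same route as the paper: use \eqref{eq:completion-quotient-iso} to get isomorphisms on filtration layers, apply Lemma~\ref{lemma:convolution-filtration-isos} to $c\otimes c$, and conclude with Lemma~\ref{lemma:thin-layers}. The paper's proof is just a compressed version of this. You additionally spell out two points it leaves implicit: that all objects involved are rbb, and how the isomorphisms of the layers $F_n/F_{n-1}$ follow from those of the quotients $P_*/F_nP_*$.
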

\begin{proof}
  Each completion homomorphism induces isomorphisms
  \eqref{eq:completion-quotient-iso} of filtration quotients.  It
  follows from Lemma~\ref{lemma:convolution-filtration-isos} that
  $c\otimes c\: P_*\otimes Q_*\to P^\wedge_*\otimes Q_*^\wedge$
  satisfies the hypothesis of Lemma~\ref{lemma:thin-layers}, which
  implies the lemma.
\end{proof}

Let $\rbbfilgrfpmod^\wedge$ be the full subcategory of
$\rbbfilgrfpmod$ consisting of complete Hausdorff objects.  The
completed tensor product makes this a symmetric monoidal category, and
completion yields a strong symmetric monoidal functor with monoidal
structure map
$(c\ctensor c)^{-1}\: P^\wedge_* \ctensor Q^\wedge_* \cong P_*\ctensor
Q_*$ provided by Lemma~\ref{lemma:completions-before-or-after-tensor}.
\begin{equation}
  \label{eq:completion-and-dimenticare}
  \xymatrix{
    (\rbbfilgrfpmod, \otimes, \bF_p)
    \ar@<1mm>[r]^-{c}
    &
    \ar@<1mm>[l]^-{d}
    (\rbbfilgrfpmod^\wedge, \ctensor, \bF_p)
  }
\end{equation}
The forgetful functor $d$ is lax symmetric monoidal
with structure map given by the completion homomorphism
$c\: P_* \otimes Q_* \to P_*\ctensor Q_*$.

\subsection{Filtered modules and comodules over the Steenrod
  algebra}\label{sec:filamods}

Consider $M_*$ in $\grfpmod$ and $P_*$ in $\filgrfpmod$.  Then
$\Hom(M_*, P_*)$ is a graded $\bF_p$-vector space, with filtration
given by $F_n\Hom(M_*, P_*) = \Hom(M_*, F_nP_*)$.
Since $\Hom(M_*, -)$ is an exact endofunctor on $\grfpmod$,
there is an isomorphism of graded $\bF_p$-vector spaces
$$
\Hom(M_*, P_*)/\Hom(M_*, F_nP_*) \cong \Hom(M_*, P_*/F_nP_*)\,.
$$
In particular, $\Hom(M_*, P_*)$ is relatively bounded below
if $M_*$ is bounded above and $P_*$ is relatively bounded below.

A relatively bounded below filtered right $\sA$-module is an object
$P_*$ in $\rbbfilgrfpmod$, together with a right $\sA$-action specified
by the right adjoint $\tilde\rho\: P_*\to \Hom(\sA, P_*)$ making the
following diagrams commute in $\rbbfilgrfpmod$:
$$
\xymatrix@C11mm{
  P_* \ar[r]^{\tilde\rho}\ar[d]^-{\tilde\rho}
  & \Hom(\sA, P_*) \ar[d]^{\Hom(\phi, 1)}
  & P_* \ar[r]^-{\tilde\rho} \ar[dr]_\cong
  & \Hom(\sA, P_*)\ar[d]^-{\Hom(\eta, 1)}\\
  \Hom(\sA, P_*) \ar[rd]_{\Hom(1,\tilde\rho)\hspace{5mm}}
  & \Hom(\sA\otimes\sA, P_*)\ar[d]^{\cong}
  && \Hom(\bF_p, P_*) \rlap{\,.}\\
  & \Hom(\sA, \Hom(\sA, P_*))
}
$$
The completion $P^\wedge_*$ is then also a (relatively bounded below
filtered) right $\sA$-module, with structure map
$\tilde\rho \: P^\wedge_* \to \Hom(\sA, P^\wedge_*)$ the limit
over~$n$ of the homomorphisms
\[
P_*/F_n P_* \longto \Hom(\sA, P_*/F_n P_*)
\]
induced by the filtration-preserving homomorphism $\tilde\rho \: P_*
\to \Hom(\sA, P_*)$.

The category of relatively bounded below filtered right $\sA$-modules
and filtration-preserving $\sA$-module homomorphisms is denoted by
$\rbbfilamod$.  It becomes symmetric monoidal when $P_* \otimes Q_*$
is equipped with the convolution filtration from
Subsection~\ref{sec:convolution} and the right $\sA$-action from
Subsection~\ref{sec:a-modules}.

We let $\rbbfilamodc$ be the full subcategory of complete Hausdorff
objects in $\rbbfilamod$.  For brevity, we sometimes refer to these
objects as \emph{rbb complete right $\sA$-modules}, leaving `Hausdorff filtered' implicit.  Note that for $P_*$ and $Q_*$ both complete
Hausdorff, the convolution filtration on $P_* \otimes Q_*$ is not
complete Hausdorff in general.  However, the induced filtration on
$P_* \ctensor Q_*$ is complete Hausdorff, and the completed tensor
product makes the full subcategory of complete Hausdorff objects
symmetric monoidal.

It follows from Lemma~\ref{lemma:tensor-rbb} that $\sA_*\otimes P_*$
is an rbb filtered graded $\bF_p$-vector space for every $P_*$ in
$\rbbfilgrfpmod$.  By the discussion in
Subsection~\ref{sec:completion}, it follows that also
$\sA_*\ctensor P_*$ is in $\rbbfilgrfpmod$.

A relatively bounded below complete Hausdorff filtered left
$\sA_*$-comodule is a complete Hausdorff object $P_*$ in
$\rbbfilgrfpmod$ together with a \textit{complete left
  $\sA_*$-coaction} $\nu\: P_*\longto \sA_*\ctensor P_*$ making the
following diagrams commute in $\rbbfilgrfpmod$:
$$
\xymatrix@C16mm{
  P_* \ar[r]^{\nu}\ar[d]^{\nu}
  & \sA_*\ctensor P_* \ar[d]^{\psi\ctensor 1}
  & P_* \ar[r]^{\nu} \ar[dr]_\cong
  & \sA_*\ctensor P_* \ar[d]^{\epsilon\ctensor 1}\\
  \sA_*\ctensor P_* \ar[rd]_{1\ctensor \nu}
  &  (\sA_*\otimes \sA_*)\ctensor P_* \ar[d]^\cong
  && \bF_p\ctensor P_* \rlap{\,.} \\
  & \sA_*\ctensor (\sA_*\ctensor P_*)
}
$$
The unlabeled isomorphism in the left-hand diagram is the isomorphism
\eqref{eq:associso}.  The diagonal arrow in the right-hand diagram is
the completion homomorphism, which is an isomorphism since we assume
that $P_*$ is complete Hausdorff.

The category of relatively bounded below complete Hausdorff filtered
left $\sA_*$-comodules and filtration-preserving $\sA_*$-comodule
homomorphisms is denoted by $\rbbfilacomodc$.  For brevity, we
sometimes refer to these objects as \emph{rbb complete left $\sA_*$-comodules},
leaving `Hausdorff filtered' implicit.  It is symmetric monoidal
under the completed tensor product when we let $P_*\ctensor Q_*$ have
the $\sA_*$-coaction map equal to the completion of the composite
\[
  P_* \otimes Q_* \overset{\nu \otimes \nu}\longto \sA_* \ctensor P_*
  \otimes \sA_* \ctensor Q_* \longto (\sA_* \otimes \sA_*) \ctensor
  (P_* \otimes Q_*) \overset{\phi \ctensor 1}\longto \sA_* \ctensor
  (P_* \otimes Q_*) \,.
\]
Here the middle homomorphism is the colimit over~$a$ and~$b$ of the
limit over~$c$ of the composite homomorphisms
\begin{align*}
  \sA_* \ctensor F_a P_* \otimes \sA_* \ctensor F_b Q_*
  &\longto \sA_* \otimes
    \frac{F_a P_*}{F_{a-c} P_*} \otimes \sA_* \otimes \frac{F_b
    Q_*}{F_{b-c} Q_*}\\
  &\longto \sA_* \otimes \sA_* \otimes \frac{F_{a+b}(P_*
    \otimes Q_*)}{F_{a+b-c}(P_* \otimes Q_*)} \,.
\end{align*}

There are symmetric monoidal forgetful functors from
$\rbbfilamod$, $\rbbfilamodc$ and $\rbbfilacomodc$ to $\rbbfilgrfpmod$.
The first is strong while the latter two are lax.

\subsection{Another isomorphism of symmetric monoidal categories}
\label{sec:filisocat}
Let $V_*$ be a bounded below graded $\bF_p$-vector space of finite
type over $\bF_p$, with dual $V^*$ as in Subsection \ref{sec:isocat}.
For every $P_*$ in $\rbbfilgrfpmod$ there is a natural injective
homomorphism
\begin{equation*}
  \hat\iota\: V_*\ctensor P_* \longto \Hom(V^*, P^\wedge_*)
\end{equation*}
defined as the limit of the compatible injections
\begin{equation*}
  \iota_n\: V_*\otimes P_*/F_nP_* \longto \Hom(V^*, P_*/F_nP_*)\,.
\end{equation*}
\begin{lemma}
  \label{lemma:iotafilmods}
  The map $\hat\iota$ is an isomorphism.
\end{lemma}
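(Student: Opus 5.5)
The plan is to reduce the statement to Lemma~\ref{lemma:iotamods} applied levelwise, and then pass to limits, exactly as in the proof of Lemma~\ref{lemma:f-iso}. By definition, and by \eqref{eq:ctensor-discrete} (since $V_*$ is bounded below it carries the discrete filtration), we have
\[
V_*\ctensor P_* \cong \lim_n \bigl(V_*\otimes P_*/F_nP_*\bigr) \,,
\]
and $\hat\iota = \lim_n \iota_n$. On the other side, $P^\wedge_* = \lim_n P_*/F_nP_*$. The functor $\Hom(V^*, -)$ is a right adjoint, so it commutes with limits. Hence
\[
\Hom(V^*, P^\wedge_*) \cong \lim_n \Hom(V^*, P_*/F_nP_*) \,,
\]
and under this identification $\hat\iota$ becomes $\lim_n \iota_n$. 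It therefore suffices to show that each $\iota_n$ is an isomorphism, because a limit of isomorphisms of towers is an isomorphism.

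For each fixed $n$ we apply Lemma~\ref{lemma:iotamods} with $M_* = P_*/F_nP_*$. This requires two hypotheses.
\begin{itemize}
\item $V_*$ must be bounded below and of finite type. This holds by assumption.
\item $P_*/F_nP_*$ must be bounded below. This follows from the relatively bounded below hypothesis: there is an integer $k$ such that $P_*/F_nP_*$ is $(n+k)$-connected, and the bound depends only on $n$, not on the degree.
\end{itemize}
So Lemma~\ref{lemma:iotamods} shows that each $\iota_n$ is an isomorphism. Finally, the $\iota_n$ are compatible with the tower maps $P_*/F_nP_* \to P_*/F_{n-1}P_*$ by naturality of $\iota$ in $M_*$, so they form a map of towers and the limit is defined.

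The only point needing care is that the connectivity bound for $P_*/F_nP_*$ varies with $n$, so there is no uniform lower bound on the quotients. This does no harm, since Lemma~\ref{lemma:iotamods} is applied separately at each stage and only requires each individual quotient to be bounded below. Injectivity of $\hat\iota$ comes for free, since a limit of injections is injective, but the argument above gives bijectivity directly.

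It remains to check that the identification of $\lim_n \Hom(V^*, P_*/F_nP_*)$ with $\Hom(V^*, P^\wedge_*)$ is compatible with the explicit sign convention in the formula for $\iota$. This is immediate, because the formula for $\iota$ is natural in the second variable.
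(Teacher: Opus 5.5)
Your proposal is correct and is the paper's argument: each quotient $P_*/F_nP_*$ is bounded below by the rbb hypothesis, so Lemma~\ref{lemma:iotamods} makes every $\iota_n$ an isomorphism, and passing to the limit gives the claim (your remark that $\Hom(V^*, -)$ commutes with limits just spells out what the paper leaves implicit). One harmless slip: since $F_{n-1}P_* \subset F_nP_*$, the tower maps go $P_*/F_{n-1}P_* \to P_*/F_nP_*$, not the reverse.
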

\begin{proof}
  For each $n$, Lemma \ref{lemma:iotamods} implies that $\iota_n$ is
  an isomorphism since $P_*/F_nP_*$ is bounded below.  It follows that
  the limit $\hat\iota$ is also an isomorphism.
\end{proof}

\begin{proposition}\label{prop:categoryiso}
  There is an isomorphism of symmetric monoidal categories
  \begin{equation}
    \label{eq:category-iso}
    \rbbfilamodc \cong \rbbfilacomodc\,.
  \end{equation}
\end{proposition}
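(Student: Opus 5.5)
The isomorphism will be the filtered analogue of \eqref{eq:isocat}, with Lemma~\ref{lemma:iotafilmods} in place of Lemma~\ref{lemma:iotamods}. Take $V_* = \sA_*$, which is bounded below and of finite type with $V^* = \sA$. For every complete Hausdorff $P_*$ in $\rbbfilgrfpmod$ we then have an isomorphism $\hat\iota\: \sA_*\ctensor P_* \cong \Hom(\sA, P^\wedge_*) \cong \Hom(\sA, P_*)$, where the second isomorphism uses $c\: P_*\cong P^\wedge_*$. The functor $\rbbfilacomodc \to \rbbfilamodc$ sends a complete coaction $\nu$ to $\tilde\rho := \hat\iota\circ\nu$ on the same underlying filtered graded vector space. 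Since $\hat\iota$ is a bijection, every filtration-preserving $\tilde\rho$ arises from a unique $\nu = \hat\iota^{-1}\tilde\rho$. Morphisms on both sides are filtration-preserving maps of the underlying objects commuting with the structure maps, and naturality of $\hat\iota$ shows that the two commutation conditions agree. This gives a bijection on objects and morphisms, provided the axioms correspond.

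First I will check that $\hat\iota$ is an isomorphism of filtered objects, not just of graded vector spaces. On $\sA_*\ctensor P_*$ we use the kernel filtration from \eqref{eq:ctensor-discrete}. On $\Hom(\sA,P_*)$ we use $F_n = \Hom(\sA, F_nP_*)$. Both quotients by $F_n$ identify with $\sA_*\otimes P_*/F_nP_* \cong \Hom(\sA, P_*/F_nP_*)$ via $\iota_n$, so Lemma~\ref{lemma:filisos} applies. Hence $\nu$ is filtration-preserving if and only if $\tilde\rho$ is.

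Next I will match the axioms. The counit axiom corresponds to the unit axiom because $\epsilon\ctensor 1$ corresponds under $\hat\iota$ to $\Hom(\eta,1)$; both are limits of the unfiltered identification. For coassociativity, I will show that the square
\[
\xymatrix{
(\sA_*\otimes\sA_*)\ctensor P_* \ar[r]^-{\hat\iota} \ar[d]_-{\cong} & \Hom(\sA\otimes\sA, P_*) \ar[d]^-{\cong} \\
\sA_*\ctensor(\sA_*\ctensor P_*) \ar[r]^-{\hat\iota} & \Hom(\sA,\Hom(\sA,P_*))
}
\]
commutes, using \eqref{eq:associso} on the left, and that $\psi\ctensor 1$ corresponds to $\Hom(\phi,1)$ and $1\ctensor\nu$ to $\Hom(1,\tilde\rho)$. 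Each of these identities is checked modulo $F_n$, where it reduces to the unfiltered statement behind \eqref{eq:isocat}, and then one passes to the limit over $n$.

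Finally, for monoidality, the completed tensor product of right $\sA$-modules has $\tilde\rho$ extended to the completion as described in Subsection~\ref{sec:filamods}. The completed coaction is the completion of the composite given before the proposition. I will compare them modulo $F_{a+b-c}$ on $F_aP_*\otimes F_bQ_*$. There both reduce to the unfiltered comparison showing \eqref{eq:isocat} is strong monoidal, after which one takes limits and colimits. The unit object $\bF_p$ obviously matches.

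I expect this monoidal compatibility to be the main obstacle, because the middle map of the coaction composite is defined by a colimit over $a,b$ of limits over $c$. The task is to verify that this is compatible with $\hat\iota$ and with the product $\Hom(\sA,P)\otimes\Hom(\sA,Q)\to\Hom(\sA\otimes\sA,P\otimes Q)$. The plan is to work entirely at finite stages $\sA_*\otimes F_aP_*/F_{a-c}P_*$, where everything is bounded below and Lemma~\ref{lemma:iotamods} applies, and to keep track of naturality in $a$, $b$, $c$.
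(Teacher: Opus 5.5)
Your proposal is correct and follows the same route as the paper. The paper also sets $\tilde\rho = \hat\iota\circ\nu$ and obtains the isomorphism of categories directly from Lemma~\ref{lemma:iotafilmods} with $V_*=\sA_*$, calling strong monoidality "clear"; your filtered check of $\hat\iota$, your matching of the axioms modulo $F_n$, and your finite-stage reduction for the monoidal structure supply details that the paper leaves implicit.
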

\begin{proof}
  Any complete Hausdorff $P_*$ in $ \rbbfilacomodc$ gives rise to a
  relatively bounded below complete Hausdorff filtered right
  $\sA$-module having the same underlying complete Hausdorff filtered
  graded $\bF_p$-vector space and adjoint structure map equal to the
  composite
  $$
  \xymatrix{
    \tilde\rho\: P_*\ar[r]^-\nu & \sA_*\ctensor P_* \ar[r]^-{\hat\iota}
    & \Hom(\sA, P^\wedge_*)\,.
  }
  $$
  Letting $V_*=\sA_*$ in Lemma \ref{lemma:iotafilmods} implies that
  passing from a complete Hausdorff left $\sA_*$-comodule to a complete
  Hausdorff right $\sA$-module in this way defines an isomorphism of
  categories~\eqref{eq:category-iso}.  It is clearly strong symmetric
  monoidal.
\end{proof}

We have the following diagram of symmetric monoidal categories:
\begin{equation}
  \label{eq:amods-and-acomods}
  \begin{aligned}
    \xymatrix{
    (\rbbfilamod, \otimes, \bF_p) \ar@<1mm>[r]^-{c}
    &
      (\rbbfilamodc, \ctensor, \bF_p) \cong
      (\rbbfilacomodc, \ctensor, \bF_p)\,.
      \ar@<1mm>[l]^-d
      }
  \end{aligned}
\end{equation}
Proposition~\ref{prop:categoryiso} provides the strong symmetric
monoidal isomorphism on the right.  The completion functor~$c$ and the
forgetful functor~$d$ are the same functors as
in~\eqref{eq:completion-and-dimenticare} on underlying categories of
graded $\bF_p$-vector spaces.  The claim that their symmetric monoidal
structure maps are morphisms in the category of right $\sA$-modules
relies on the fact that the completion homomorphism~$c\: P_*\to P_*^\wedge$
is a morphism in $\rbbfilamod$, for each rbb filtered right
$\sA$-module~$P_*$.

\subsection{Filtered algebras}\label{sec:algebras}

For brevity, in the remainder of this section let $\sC=\rbbfilamod$
denote the category of rbb filtered right $\sA$-modules, and let
$\sC^\wedge$ denote the full subcategory of rbb complete
right $\sA$-modules.  Likewise, let $\sD^\wedge = \rbbfilacomodc$ denote the
category of rbb complete left $\sA_*$-comodules.  With this notation,
$(\sC, \otimes, \bF_p)$, $(\sC^{\wedge}, \ctensor, \bF_p)$ and
$(\sD^\wedge, \ctensor, \bF_p)$, are the symmetric monoidal categories
discussed in Subsection \ref{sec:filamods}.  We usually omit the unit
object $\bF_p$ from the notation.  For any symmetric monoidal category
$(\mathscr{M}, \otimes)$, we write $\alg(\mathscr{M}, \otimes)$ for
the category of monoids in~$\mathscr{M}$.

An object of $\alg(\sC, \otimes)$ is an rbb filtered right
$\sA$-module that is also a graded $\bF_p$-algebra such that the
algebra structure maps are morphisms in $\sC$.  We refer to an object
of $\alg(\sC, \otimes)$ as an \textit{rbb filtered right $\sA$-module
  algebra}.  Likewise, we refer to an object of
$\alg(\sC^\wedge, \ctensor)$ as an \textit{rbb complete right
  $\sA$-module algebra}, and an object of
$\alg(\sD^\wedge, \ctensor)$ as an \textit{rbb complete left
  $\sA_*$-comodule algebra}.

The following is an immediate consequence of Proposition
\ref{prop:categoryiso}.
\begin{proposition}\label{prop:alg-category-iso}
  There is an isomorphism of categories
  $$
  \alg(\sC^\wedge, \ctensor) \cong \alg(\sD^\wedge, \ctensor)\,.
  $$\qed
\end{proposition}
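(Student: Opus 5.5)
The statement follows from Proposition~\ref{prop:categoryiso} by the general fact that a strong symmetric monoidal isomorphism of symmetric monoidal categories induces an isomorphism between their categories of monoids. So the plan is to construct mutually inverse functors on algebras directly from the isomorphism $\Phi \: \sD^\wedge \to \sC^\wedge$ of Proposition~\ref{prop:categoryiso} and its inverse.

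First, I record what Proposition~\ref{prop:categoryiso} provides. The functor $\Phi$ is the identity on underlying complete Hausdorff rbb filtered graded $\bF_p$-vector spaces and on morphisms. It replaces a complete coaction $\nu$ by the adjoint right action $\tilde\rho = \hat\iota \circ \nu$, and it is bijective on objects because $\hat\iota$ is an isomorphism (Lemma~\ref{lemma:iotafilmods}). It is strong symmetric monoidal, and its structure maps $\Phi(P_*) \ctensor \Phi(Q_*) \to \Phi(P_* \ctensor Q_*)$ and $\bF_p \to \Phi(\bF_p)$ are identity maps of underlying filtered graded vector spaces. This holds because the completed tensor product and the unit are the same on both sides, and the right action on $P_* \ctensor Q_*$ defined via $\psi$ corresponds under $\hat\iota$ to the coaction defined via $\phi$ on $\sA_*$, by duality of $\phi$ and $\psi$.

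Next, given a monoid $(P_*, \mu, \eta)$ in $(\sD^\wedge, \ctensor)$, the composites $\Phi(P_*) \ctensor \Phi(P_*) = \Phi(P_* \ctensor P_*) \xrightarrow{\Phi(\mu)} \Phi(P_*)$ and $\bF_p \xrightarrow{\Phi(\eta)} \Phi(P_*)$ are morphisms in $\sC^\wedge$. Associativity and unitality transfer because $\Phi$ is a strong monoidal functor whose coherence isomorphisms are compatible with the associators and unitors; this is the standard lemma that strong monoidal functors induce functors on monoids. Monoid morphisms go to monoid morphisms since $\Phi$ is identity on morphisms. The same construction applied to $\Phi^{-1}$, which is again strong symmetric monoidal with identity structure maps, gives the inverse functor. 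The two composites are the identity, since both leave the underlying vector space, the multiplication and the unit unchanged.

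The only step that needs any care is checking that the monoidal structure maps of $\Phi$ really are identities, that is, that the action-side and coaction-side tensor structures on $P_* \ctensor Q_*$ agree under $\hat\iota$. This is already contained in the assertion ``clearly strong symmetric monoidal'' in Proposition~\ref{prop:categoryiso}, so I expect no real obstacle. The result is an immediate formal consequence, which is why it can be stated with \qed.
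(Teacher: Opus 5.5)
Your proposal is correct and matches the paper, which records this as an immediate consequence of Proposition~\ref{prop:categoryiso}: a strong symmetric monoidal isomorphism of categories, here with identity structure maps on underlying filtered graded vector spaces, induces an isomorphism of the associated categories of monoids. Your check that the action-side and coaction-side tensor structures on $P_*\ctensor Q_*$ agree under $\hat\iota$, by duality of $\psi$ on $\sA$ with $\phi$ on $\sA_*$, is exactly what lies behind the paper's remark that the isomorphism is ``clearly strong symmetric monoidal.''
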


Suppose given any morphism $f\: P_*\otimes Q_*\to R_*$ in $\sC$.  Then
there are morphisms
\begin{equation}
  \label{eq:complete-pairing}
  \xymatrix{
    P_*^\wedge \ctensor Q_*^\wedge
    &P_*\ctensor Q_*
    \ar[l]^-\cong_-{c\ctensor c}
    \ar[r]^-{f^\wedge}
    &R_*^\wedge
  }
\end{equation}
of complete Hausdorff objects in $\sC^\wedge$, where the left-hand
isomorphism of \eqref{eq:complete-pairing} is the natural isomorphism
of Lemma~\ref{lemma:completions-before-or-after-tensor}.  Thus, the
completion functor $c\:\sC\to \sC^\wedge$ promotes monoids in
$(\sC,\otimes)$ to monoids in $(\sC^\wedge, \ctensor)$, and we have
the following diagram of monoid categories:
\begin{equation}
  \label{eq:algebras}
  \begin{aligned}
    \xymatrix{
    \alg(\sC, \otimes) \ar@<1mm>[r]^-{c}
    &
      \alg(\sC^\wedge, \ctensor) \cong
      \alg(\sD^\wedge, \ctensor)\,.
      \ar@<1mm>[l]^-d
      }
  \end{aligned}
\end{equation}

\subsection{Filtered differential graded algebras}\label{sec:dgas}

Let $P_*$ be a filtered graded $\bF_p$-vector space.  For
$i\in \bZ$, we write $\leftsuspension^i P_*$ for the $i$-th (left) suspension of $P_*$,
given by $(\leftsuspension^i P_*)_q = P_{q-i}$ and with filtration given by
$F_n (\leftsuspension^i P_*) = \leftsuspension^i F_{n} P_*$.  It is clear that
$\leftsuspension^i$ commutes with completion.

For $a\in \bZ$, we write $\sh_a P_*$ for the filtered graded
$\bF_p$-vector space having the same underlying graded vector space $P_*$,
and with filtration given by $F_n (\sh_a P_*) = F_{n+a} P_*$.  It is
clear that the $a$-th downward shift $\sh_a$ commutes with completion.

A \emph{filtered differential graded $\bF_p$-algebra} $(P_*, \sigma)$ is a filtered
graded $\bF_p$-algebra $(P_*, \mu)$ together with a filtration-shifting
$\bF_p$-linear differential
\begin{equation}
  \label{eq:dga-diff}
  \sigma\: \leftsuspension P_* \longto \sh_aP_*\,,
\end{equation}
for some fixed $a$, satisfying the Leibniz rule
\begin{equation}
  \label{eq:leibniz}
  \sigma(xy) = \sigma(x)y + (-1)^{|x|}x\sigma(y)
\end{equation}
for every pair of homogeneous elements $x,y\in P_*$.  Alternatively,
we can rephrase this by saying that the diagram
\begin{equation}
  \label{eq:leibniz-square}
  \begin{aligned}
    \xymatrix@C15mm{
    \leftsuspension (P_*\otimes P_*)
    \ar[r]^-{\leftsuspension\mu}
    \ar[d]^{\sigma\otimes 1+1\otimes\sigma}
    & \leftsuspension P_*
      \ar[d]^\sigma\\
    \sh_a (P_*\otimes P_*)
    \ar[r]^-{\sh_a \mu}
    & \sh_aP_*
    }
  \end{aligned}
\end{equation}
should commute in the category of filtered graded $\bF_p$-vector
spaces.  Here, the left-hand vertical map is given by
$$
(\sigma\otimes 1 + 1\otimes \sigma)(x\otimes y) =
\sigma(x)\otimes y + (-1)^{|x|}x\otimes \sigma(y)
$$
for every pair of homogeneous elements $x,y\in P_*$.

For any right $\sA$-module $M_*$ we consider the (left) suspension
$\leftsuspension M_*$ as a right $\sA$-module in the graded sense.
Explicitly, the action of $a\in \sA^k$ on an element
$\leftsuspension x\in \leftsuspension M_*$ is given by
$(\leftsuspension x)\cdot a = (-1)^{k} \leftsuspension(x\cdot a)$.
The definition of differential graded algebras can then be applied to
any of the symmetric monoidal categories $(\sC, \otimes)$,
$(\sC^\wedge, \ctensor)$ and $(\sD^\wedge, \ctensor)$ discussed so
far.  We denote these categories of differential graded algebras by
$\dgalg(\sC,\otimes)$, $\dgalg(\sC^\wedge,\ctensor)$, and
$\dgalg(\sD^\wedge, \ctensor)$, respectively.

An object of $\dgalg(\sC, \otimes)$ is an rbb filtered right
$\sA$-module algebra that is also a differential graded algebra such
that the differential is a morphism in $\sC$.  In particular, we have
that $\sigma(x)\cdot \beta = -\sigma(x\cdot \beta)$ at odd primes.

We refer to an object of $\dgalg(\sC, \otimes)$ as an \textit{rbb
  filtered differential graded right $\sA$-module algebra}.  Likewise,
we refer to an object of $\dgalg(\sC^\wedge, \ctensor)$ as an
\textit{rbb complete differential graded right $\sA$-module algebra},
and an object of $\dgalg(\sD^\wedge, \ctensor)$ as an \textit{rbb
  complete differential graded left $\sA_*$-comodule algebra}.

The following is an immediate consequence of Proposition
\ref{prop:alg-category-iso}.
\begin{proposition}\label{prop:dgalg-category-iso}
  There is an isomorphism of categories
  $$
  \dgalg(\sC^\wedge, \ctensor) \cong \dgalg(\sD^\wedge, \ctensor)\,.
  $$
  \qed
\end{proposition}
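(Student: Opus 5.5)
The plan is to deduce the statement from Proposition~\ref{prop:alg-category-iso}, by promoting the isomorphism of monoid categories to one of differential graded monoids. The categories $\dgalg(\sC^\wedge,\ctensor)$ and $\dgalg(\sD^\wedge,\ctensor)$ consist of monoids $(P_*,\mu)$ together with a morphism $\sigma\:\leftsuspension P_*\to\sh_a P_*$ in the ambient category, satisfying the Leibniz square~\eqref{eq:leibniz-square}. The monoid data is transported by Proposition~\ref{prop:alg-category-iso}. It remains to check two things: that the isomorphism of Proposition~\ref{prop:categoryiso} is compatible with the operations $\leftsuspension$ and $\sh_a$, and that it carries the Leibniz diagram to the Leibniz diagram.

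First I will observe that the isomorphism~\eqref{eq:category-iso} is the identity on underlying complete Hausdorff filtered graded $\bF_p$-vector spaces and on morphisms. It only exchanges a complete coaction $\nu$ for the adjoint action $\hat\iota\circ\nu$. Hence a filtration-preserving linear map is $\sA$-linear if and only if it is $\sA_*$-colinear.

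Next I will check the compatibility with the two operations.
\begin{itemize}
\item For $\sh_a$: shifting the filtration commutes with completion, and $\hat\iota$ is natural in $P_*$. So the shifted module structure corresponds to the shifted comodule structure.
\item For $\leftsuspension$: the right action on $\leftsuspension P_*$ carries the sign $(-1)^k$ for $a\in\sA^k$. I will check that under $\hat\iota$ this corresponds to the graded-sign convention for the coaction on $\leftsuspension P_*$, namely $\nu(\leftsuspension x)=\sum(-1)^{|a_i|}a_i\otimes \leftsuspension x_i$ when $\nu(x)=\sum a_i\otimes x_i$. This is a sign bookkeeping step using the explicit formula $\iota(v\otimes m)=\{f\mapsto(-1)^{(|v|+|m|)|f|}f(v)m\}$.
\end{itemize}
I expect this sign check to be the only real obstacle.

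Granting these compatibilities, a differential $\sigma$ that is a morphism in $\sC^\wedge$ is the same map as one that is a morphism in $\sD^\wedge$. The Leibniz condition is a condition on underlying maps of filtered graded vector spaces, so it is unaffected by the transport. Here I use that the symmetric monoidal structure (completed tensor product, with the map $\sigma\ctensor1+1\ctensor\sigma$) is preserved strictly by Proposition~\ref{prop:categoryiso}.

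Finally, morphisms of differential graded algebras are monoid morphisms commuting with the differentials. These match exactly because the functor is the identity on underlying maps. So the isomorphism of Proposition~\ref{prop:alg-category-iso} restricts and extends to the claimed isomorphism.
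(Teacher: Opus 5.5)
Your proposal is correct and is essentially the paper's argument. The paper records the statement as an immediate consequence of Proposition~\ref{prop:alg-category-iso}; your checks for $\leftsuspension$, $\sh_a$ and the Leibniz square only spell out why the isomorphism of Proposition~\ref{prop:categoryiso}, which is the identity on underlying objects and maps, also carries the differential data across. One caveat: the paper never defines the suspension on $\sD^\wedge$ independently, and the exact sign you predict for $\nu(\leftsuspension x)$ depends on the unstated sign convention in the adjunction relating $\rho$ and $\hat\iota\circ\nu$, so it is cleanest to define that suspension by transport, which makes your ``only real obstacle'' a tautology.
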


The completion functor
$c\:\alg(\sC, \otimes)\to \alg(\sC^\wedge, \otimes)$ promotes an object
$(P_*, \sigma)$ in $\dgalg(\sC,\otimes)$ to an object in
$\alg(\sC^\wedge, \ctensor)$ by completing $(P_*, \mu)$ as in
Subsection~\ref{sec:algebras}, and replacing $\sigma$ by its
completion $\sigma^\wedge$.  Since suspension and filtration-shifting
commutes with completion, this yields a well-defined differential
$\sigma^\wedge\: \leftsuspension P_*^\wedge\to \sh_a P_*^\wedge$.  The one thing
to check is that $\sigma^\wedge$ satisfies the Leibniz rule.  Indeed,
there is a commutative diagram
\begin{equation*}
  \begin{aligned}
    \xymatrix@C19mm{
    \leftsuspension (P_*^\wedge\ctensor P_*^\wedge)
    \ar[d]^{\sigma^\wedge\ctensor 1 + 1\ctensor\sigma^\wedge}
    & \leftsuspension (P_*\ctensor P_*)
      \ar[l]^\cong_{\leftsuspension (c\ctensor c)}
      \ar[r]^{\leftsuspension\mu^\wedge}
      \ar[d]^{\sigma\ctensor 1+1\ctensor\sigma}
    & \leftsuspension P_*^\wedge
      \ar[d]^{\sigma^\wedge}\\
    \sh_a (P_*^\wedge\ctensor P_*^\wedge)
    & \sh_a (P_*\ctensor P_*)
      \ar[l]^\cong_{\sh_a (c\ctensor c)}
      \ar[r]^{\sh_a \mu^\wedge}
    & \sh_aP_*^\wedge \,,
    }
  \end{aligned}
\end{equation*}
where the right-hand square is the completion
of~\eqref{eq:leibniz-square}.  The top and bottom horizontal composite
morphisms, from left to right, are the suspended and the shifted
product of $c(P_*, \mu)$ in $\alg(\sC^\wedge, \ctensor)$,
respectively.  The statement that the outer square commutes thus
asserts that $\sigma^\wedge$ satisfies the Leibniz rule.

We have the following diagram of categories, analogous
to~\eqref{eq:algebras}:
\begin{equation}
  \label{eq:dgalgebras}
  \begin{aligned}
    \xymatrix{
    \dgalg(\sC, \otimes)
    \ar@<1mm>[r]^-{c}
    &
    \ar@<1mm>[l]^-{d}
      \dgalg(\sC^\wedge, \ctensor) \cong
      \dgalg(\sD^\wedge, \ctensor)\,.
      }
  \end{aligned}
\end{equation}

\section{The $G$-Tate construction}\label{sec:tatefiltration}

\begin{definition}
  A spectrum $X$ is \emph{bounded below} if $\pi_*(X)$ is bounded
  below as a graded abelian group.  It is of \emph{finite type} if
  $\pi_*(X)$ is of finite type as a graded $\bZ$-module.
\end{definition}
Note that if $X$ is a spectrum such that~$X/p$ is bounded below,
then~$X/p$ is of finite type if and only if the mod~$p$ homology group
$H_q(X;\bF_p)$ is finite for each $q \in \bZ$.

Equivariantly, we will work with $G$-spectra in the sense of
\cite{Sch18}*{Sec.~3.1}, where usually~$G$ is either the circle
group~$\bT$ or any of its finite subgroups.  As a model for~$\bT$ we
take the group of complex units of length one.

Let $G$ be a closed subgroup of the circle group~$\bT$, and~$X$ any
spectrum with $G$-action.  The $G$-Tate construction on~$X$ is the
fixed point spectrum
\[
  X^{tG} = \bigl[ \wEG \wedge F(EG_+, X) \bigr]^G \,.
\]
Here $EG = S(\infty\bC)$ is a free and contractible $G$-CW space, and
$\wEG = S^{\infty \bC}$ is the mapping cone of the collapse map
$EG_+ \to S^0$.  Since~$G$ acts freely on~$EG$, the genuinely
$G$-equivariant homotopy type of $F(EG_+, X)$ depends only on~$X$ as a
naive $G$-spectrum.  In particular it makes sense to form $G$-fixed
points as stated.

Assigning the $G$-Tate construction $X^{tG}$ to a spectrum $X$
with $G$-action defines a lax symmetric monoidal functor on the
homotopy category of $G$-spectra, with pairing
\begin{equation}
  \label{eq:tate-pairing}
  \mu_{X,Y}\: X^{tG}\wedge Y^{tG}\longto (X\wedge Y)^{tG}\,,
\end{equation}
for any $G$-spectra $X$ and $Y$, see \cite{HR24}*{Sec.~6.2}, and
unit
\begin{equation}
  \label{eq:tate-unit}
  \eta\: S\longto  S^{tG} \,,
\end{equation}
given by composing the map $S\to S^{G}$ given by the
tom~Dieck splitting with the map
$R^h\circ \Gamma_1\: S^{G}\to S^{hG}\to S^{tG}$.

As in~\cite{GM95}, \cite{HM03} and~\cite{HR24}*{Ch.~6}, we consider
two related filtrations of the $G$-Tate construction~$X^{tG}$.  These
arise from the following space- and spectrum-level filtrations of $EG$
and $\wEG$.

For $G = \bT$ and $m\ge0$ we filter $EG$ by
$F_{2m-2} EG = F_{2m-1} EG = S(m\bC)$, so that
$F_{2m} EG/F_{2m-1} EG \cong \bT_+ \wedge S^{2m}$.  We let $F_n \wEG$
be the mapping cone of $F_{n-2} EG_+ \to S^0$, giving a filtration
of~$\wEG$, so that $F_{2m} \wEG = F_{2m+1} \wEG = S^{m\bC}$.

For $G$ finite cyclic we keep $F_{2m-1} EG = S(m\bC)$, and define
$F_{2m} EG$ by adjoining a free $G$-$2m$-cell in such a way that
$F_n EG/F_{n-1} EG \cong G_+ \wedge S^n$ for all $n\ge0$.  In this
case we let $F_n \wEG$ be the mapping cone of $F_{n-1} EG_+ \to S^0$.
This means that $F_{2m} \wEG = S^{m\bC}$ and
$F_n \wEG/F_{n-1} \wEG \cong G_+ \wedge S^n$ for all $n\ge1$.

Passing to $G$-spectra, we let $\tilde E_n = \Sigma^\infty F_n \wEG$
for all $n\ge0$.  Following Greenlees~\cite{Gre87}, we use
Spanier--Whitehead duality to extend this to negative indices.  For
$G = \bT$ we define $\tilde E_{-n} = F(\tilde E_{1+n}, S)$, while for
$G$ finite cyclic we set $\tilde E_{-n} = F(\tilde E_n, S)$.  For
$G = \bT$ the resulting spliced sequence $\{\tilde E_n\}_{n \in \bZ}$
is given by $\tilde E_{2m} = \tilde E_{2m+1} = S^{m\bC}$ for all
$m \in \bZ$, while for $G$ finite cyclic we have
$\tilde E_{2m} = S^{m\bC}$ and
$\tilde E_n/\tilde E_{n-1} \simeq G_+ \wedge S^n$ for all $n \in \bZ$.

Note that in the case $G = \bT$ these conventions agree with those
of~\cite{GM95}, with trivial filtration quotients in odd integer
gradings, while in~\cite{HR24} these trivial subquotients are omitted.
This means that the $\bT$-Tate spectral sequence $\hat E^r$-terms
from~\cite{HR24} appear as $\hat E^{2r-1} = \hat E^{2r}$-terms here.  For
finite~$G$, all conventions agree.

For finite cyclic $G \subset \bT$ the identity maps $EG \to E\bT$ and
$\widetilde{E\bT} \to \wEG$ are filtration-preserving and
$G$-equivariant.

\subsection{The Greenlees--May filtration}\label{sec:greenlees-may-filtration}
The filtration $\{GM_n(X)^G\}_{n \in \bZ}$ is given by the fixed point
spectra
\[
GM_n(X)^G = \bigl[ \tilde E_n \wedge F(EG_+, X) \bigr]^G \,.
\]
It satisfies $\holim_n GM_n(X)^G \simeq *$ and
$\hocolim_n GM_n(X)^G \simeq X^{tG}$, so after applying homotopy groups the
resulting Tate spectral sequence is conditionally convergent to the
colimit $\pi_* X^{tG}$, filtered by the images
\[
F_n \pi_* X^{tG} = \im \bigl( \pi_* GM_n(X)^G
	\longto \pi_* X^{tG} \bigr) \,.
\]
We call the ascending filtration
\begin{equation}
  \label{eq:tate-filtration-homotopy}
  \{F_n \pi_* X^{tG}\}_{n \in \bZ}
\end{equation}
the \emph{Tate filtration} of~$\pi_* X^{tG}$.  It is exhaustive, and
the associated \emph{Greenlees--May Tate spectral sequence} has $\hat E^2$-term
given by
\begin{equation}
  \label{eq:gmtate}
  \hat E^2_{n,m} = \hat H^{-n}(G; \pi_m X) \,,
\end{equation}
and is conditionally convergent to the colimit $\pi_* X^{tG}$.

When $G = \bT$, the differential
$d^2 \: \hat H^{-n}(G; \pi_m X) \to \hat H^{-n-2}(G; \pi_{m+1} X)$ is given
by the operator $\sigma$ associated to the $\bT$-action for
$n \equiv 0 \mod 4$, and by $\sigma + \eta$ for $n \equiv 2 \mod 4$,
where $\eta \in \pi_1(S)$ is the Hopf map.

We also use the notation
\[
X^{tG}[n] = \bigl[ \wEG/\tilde E_{n-1} \wedge F(EG_+, X) \bigr]^G
\]
for the cofiber of $GM_{n-1}(X)^G \to X^{tG}$, which we call the
$n$-th \emph{truncation} of~$X^{tG}$.  Again, the notation $[n]$
refers to the truncation index, not a suspension.  Letting $n \in \bZ$
vary we get a tower of truncated Tate spectra
\begin{equation}
  \label{eq:tate-tower}
  X^{tG} \longto \dots \longto X^{tG}[n]
  \longto X^{tG}[n+1] \longto \dots \longto {*} \,,
\end{equation}
with $X^{tG} \simeq \holim_n X^{tG}[n]$ and
$\hocolim_n X^{tG}[n] \simeq *$.  The Tate filtration can then also be
expressed by the kernels
\begin{equation} \label{eq:kernel-tate-filtration}
  F_n \pi_* X^{tG} =
  \ker \bigl( \pi_* X^{tG} \longto \pi_* X^{tG}[n+1] \bigr) \,.
\end{equation}
These constructions suffice for additive considerations, but the
Greenlees filtration $\{\tilde E_n\}_{n \in \bZ}$ appears not to admit
a product structure that is sufficiently coherent to provide a product
structure on the Greenlees--May Tate spectral sequence.

\subsection{The Hesselholt--Madsen filtration}\label{sec:hesselholt-madsen}
To get a multiplicative construction, one can instead use the
filtration $\{HM_n(X)^G\}_{n \in \bZ}$ given by the fixed point
spectra
\[
HM_n(X)^G = \bigl[ \colim_{a+b \le n}
	F_a \wEG \wedge F(EG/F_{-b-1} EG, X) \bigr]^G
\]
of the convolution product
of the filtrations $\{F_a \wEG\}_a$ and $\{F(EG/F_{-b-1} EG, X)\}_b$.
Then $\holim_n HM_n(X)^G \simeq *$ by~\cite{BM24}*{Lem.~3.16}, and
clearly $\hocolim_n HM_n(X)^G \simeq X^{tG}$, so the associated
Hesselholt--Madsen Tate spectral sequence is also conditionally
convergent to the colimit $\pi_* X^{tG}$, filtered by the images
\[
F'_n \pi_* X^{tG} = \im \bigl( \pi_* HM_n(X)^G
	\longto \pi_* X^{tG} \bigr) \,.
\]
By~\cite{HR24}*{Thm.~6.18} the Hesselholt--Madsen Tate spectral
sequence is symmetric monoidal in~$X$.  In particular, given any
pairing~$\mu \: X \wedge Y \to Z$ of spectra with $G$-action, the
pairing $\mu_* \: \pi_* X^{tG} \otimes \pi_* Y^{tG} \to \pi_* Z^{tG}$
induced by~\eqref{eq:tate-pairing} is filtered by pairings
\[
\mu'_{a,b} \: F'_a \pi_* X^{tG} \otimes F'_b \pi_* Y^{tG}
	\longto F'_{a+b} \pi_* Z^{tG}
\]
for all $a, b \in \bZ$.
Moreover, the induced pairing of associated graded objects is
compatible with the pairing of $\hat E^\infty$-terms.

There is a natural map of filtrations $GM_*(X)^G \to HM_*(X)^G$ that
induces a morphism of conditionally convergent spectral sequences, and
by~\cite{HR24}*{Prop.~6.31} this becomes an isomorphism starting at
the $\hat E^3$-term for $G = \bT$ and at the $\hat E^2$-term for $G$ finite.  It
follows that the induced homomorphisms of $\hat E^\infty$- and
$R\hat E^\infty$-terms are isomorphisms.  For bounded below~$X$, it then
follows from~\cite{Boa99}*{Thm.~7.2} that the inclusions
$F_n \pi_* X^{tG} \subset F'_n \pi_* X^{tG}$ are equalities, for all
$n \in \bZ$.  Hence, for bounded below spectra with $G$-action~$X$,
$Y$, $Z$ and a pairing~$\mu$ as above, the abutment pairing $\mu_*$
also respects the Tate filtrations, in the sense that there are
compatible pairings
\[
\mu_{a,b} \: F_a \pi_* X^{tG} \otimes F_b \pi_* Y^{tG}
        \longto F_{a+b} \pi_* Z^{tG}
\]
for all $a, b \in \bZ$.

\subsection{Completion with respect to the Tate filtration}

For each integer~$n$, the map in
\eqref{eq:kernel-tate-filtration} factors naturally as
\[
  \pi_* X^{tG} \overset{c_n}\longto \frac{\pi_* X^{tG}}{F_n \pi_* X^{tG}}
  \overset{i_n}\longto \pi_* X^{tG}[n+1] \,.
\]
Passing to limits over~$n$ produces a factorization
\[
\pi_* X^{tG} \overset{c}\longto \bigl( \pi_* X^{tG} \bigr)^\wedge
        \overset{i}\longto \lim_n \pi_* X^{tG}[n] \,,
\]
where $c$ is the completion homomorphism with respect to the Tate
filtration.  Note that $i$ is injective since it is a limit of
injective homomorphisms.  The kernel filtration
$$
F_n(\lim_n\pi_*X^{tG}[n]) = \ker \bigl( \lim_n\pi_*X^{tG}[n] \longto
\pi_*X^{tG}[n+1] \bigr)
$$
of the limit makes $i$ a homomorphism of filtered graded abelian
groups.

\begin{lemma}\label{lemma:completetatefiltration}
  Let $X$ be a spectrum with $G$-action.  The canonical homomorphism
  \begin{equation}
    \label{eq:i-iso}
    i\:(\pi_* X^{tG})^\wedge \longto \lim_n \pi_*X^{tG}[n]
  \end{equation}
  is an isomorphism of complete Hausdorff filtered graded abelian groups, and there is a
  short exact sequence of graded abelian groups
  \begin{equation}
    \label{eq:lim-lim1}
    0\longto
    \rlim_n \pi_{*+1}X^{tG}[n]\longto
    \pi_*X^{tG} \overset{c}\longto
    (\pi_* X^{tG})^\wedge\longto 0\,,
  \end{equation}
  where $c$ is the completion homomorphism.  In particular, $c$ is an
  isomorphism of filtered graded abelian groups if and only if
  $\rlim_n \pi_* X^{tG}[n]=0$.
\end{lemma}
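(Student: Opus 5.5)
The plan is to prove both claims from the Milnor sequence for the tower \eqref{eq:tate-tower}, together with the kernel description \eqref{eq:kernel-tate-filtration} of the Tate filtration.

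\emph{Step 1: the Milnor sequence.} Since $X^{tG} \simeq \holim_n X^{tG}[n]$, there is a short exact sequence
\[
0 \longto \rlim_n \pi_{*+1} X^{tG}[n] \longto \pi_* X^{tG} \overset{j}\longto \lim_n \pi_* X^{tG}[n] \longto 0 \,.
\]
By construction $j = i \circ c$. Since $c$ is surjective onto the completion, it suffices to show that $i$ is an isomorphism. Then $\ker c = \ker j$ is the $\rlim$ term, and \eqref{eq:lim-lim1} follows. The last sentence of the lemma then follows from Lemma~\ref{lemma:filisos} together with \eqref{eq:completion-quotient-iso}: $c$ always induces isomorphisms on the quotients by $F_n$, so $c$ is a filtered isomorphism exactly when it is an isomorphism of graded groups, i.e., when the $\rlim$ term vanishes.

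\emph{Step 2: $i$ is a bijection.} We already know $i$ is injective. For surjectivity, surjectivity of $j$ gives $j = i \circ c$ surjective, hence $i$ surjective. Note that this surjectivity uses only the Milnor sequence and no Mittag-Leffler hypothesis. Thus $i$ is a bijection of graded abelian groups.

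\emph{Step 3: $i$ respects the filtrations exactly.} We must check that $i$ carries $F_n(\pi_* X^{tG})^\wedge$ onto the kernel filtration $F_n$ of $\lim_m \pi_* X^{tG}[m]$. By definition $i$ is compatible with the projections to $\pi_* X^{tG}/F_n \pi_* X^{tG}$ and to $\pi_* X^{tG}[n+1]$ via the injective maps $i_n$. Hence an element of the completion has zero image in $\pi_*X^{tG}/F_n$ if and only if its image under $i$ has zero image in $\pi_* X^{tG}[n+1]$, using injectivity of $i_n$. This gives $i^{-1}(F_n) = F_n$. Combined with bijectivity, $i$ restricts to bijections $F_n \to F_n$ for all $n$, so it is a filtered isomorphism. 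Both sides are complete Hausdorff: the left side as a completion (Subsection~\ref{sec:completion}), and the right side then as well, being isomorphic to it.

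I expect the main obstacle to be making sure the filtration is preserved \emph{exactly} in Step 3, rather than only in one direction. The key ingredient is injectivity of $i_n$, which holds by the definition \eqref{eq:kernel-tate-filtration}. Everything else is formal.
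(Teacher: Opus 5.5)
Your proof is correct and follows the paper's route: the Milnor sequence gives bijectivity of $i$, and the pullback nature of the Tate filtration (injectivity of the $i_n$) matches the filtrations. You check $i^{-1}(F_n)=F_n$ directly, where the paper compares filtration quotients and invokes Lemma~\ref{lemma:filisos}. One small correction: completion homomorphisms are not surjective in general, so the surjectivity of $c$ that you assert in Step~1 should instead be derived afterwards as $c = i^{-1}\circ j$, which is immediate from Step~2.
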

\begin{proof}
  Consider the commutative diagram
  \begin{equation}
    \label{eq:limseq}
    \xymatrix{
      0\ar[r]
      &\rlim_n \pi_{*+1}X^{tG}[n]\ar[r]
      &\pi_*X^{tG} \ar[r]^-j \ar[dr]_c
    &\lim_n \pi_*X^{tG}[n]\ar[r]
    &0 \\
    &&& (\pi_* X^{tG})^\wedge \ar[u]_-i\rlap{\,,}
  }
  \end{equation}
  where the top row is the Milnor $\lim\!-\!\rlim$ short exact
  sequence associated to $\holim_n X^{tG}[n] \simeq X^{tG}$.
  Since~$j$ is surjective it follows that~$i$ is surjective, hence an
  isomorphism of graded abelian groups.

  We have already noted that the completion homomorphism~$c$ induces
  isomorphisms \eqref{eq:completion-quotient-iso} of filtration
  quotients.  We claim that the same is true for the
  homomorphism~$j$, and therefore also for~$i$.
  Lemma~\ref{lemma:filisos} then implies that~$i$ is an isomorphism of
  filtered graded abelian groups, and we get the short exact
  sequence~\eqref{eq:lim-lim1} from~\eqref{eq:limseq}.

  To prove the claim, note first that since $j$ is
  filtration-preserving and surjective, it induces surjections of
  filtration quotients.  Secondly, since the Tate filtration of
  $\pi_*X^{tG}$ is the pullback of the kernel filtration of
  $\lim_n \pi_*X^{tG}[n]$ via the homomorphism $j$, it follows that
  $j$ induces injections of filtration quotients.  Thus,~$j$ induces
  isomorphisms of filtration quotients.
\end{proof}

\begin{lemma}\label{lemma:tatefiltration}
  Let $X$ be a spectrum with $G$-action.  The Tate filtration of
  $\pi_*X^{tG}$ is exhaustive and complete.  It is Hausdorff if and
  only if $\rlim_n \pi_{*}X^{tG}[n] = 0$.
\end{lemma}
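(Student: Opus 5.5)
The plan is to read all three properties off the truncation tower~\eqref{eq:tate-tower} and the description~\eqref{eq:kernel-tate-filtration} of the Tate filtration as kernels, combined with Lemma~\ref{lemma:completetatefiltration}.

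\emph{Exhaustive.} I will use the Greenlees--May description: $F_n \pi_* X^{tG}$ is the image of $\pi_* GM_n(X)^G$. Since $\hocolim_n GM_n(X)^G \simeq X^{tG}$ and homotopy groups commute with sequential homotopy colimits, every class in $\pi_* X^{tG}$ lifts to some $\pi_* GM_n(X)^G$. Hence $\colim_n F_n\pi_*X^{tG} = \pi_*X^{tG}$.

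\emph{Complete.} By Subsection~\ref{sec:completion}, the sequence
\[
0\to \lim_n F_n \to \pi_*X^{tG} \overset{c}\to (\pi_*X^{tG})^\wedge \to \rlim_n F_n \to 0
\]
is exact, so completeness, $\rlim_n F_n\pi_*X^{tG}=0$, is equivalent to surjectivity of~$c$. Surjectivity of~$c$ is part of the short exact sequence~\eqref{eq:lim-lim1} in Lemma~\ref{lemma:completetatefiltration}. (One can also argue directly: the surjective map~$j$ onto $\lim_n \pi_*X^{tG}[n]$ factors as $i\circ c$ with $i$ an isomorphism.) So the filtration is complete.

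\emph{Hausdorff.} From the same exact sequence, $\lim_n F_n\pi_*X^{tG} = \ker(c)$. By~\eqref{eq:lim-lim1}, $\ker(c) \cong \rlim_n \pi_{*+1}X^{tG}[n]$. Thus the filtration is Hausdorff in all degrees if and only if $\rlim_n \pi_{*+1} X^{tG}[n]=0$ in all degrees, which after reindexing the grading is the stated condition $\rlim_n \pi_*X^{tG}[n]=0$.

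The only point needing care is the degree shift in the last step. Since the condition is imposed in all degrees simultaneously, the shift is harmless. Everything else is formal given Lemma~\ref{lemma:completetatefiltration}.
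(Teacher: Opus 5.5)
Your proposal is correct and follows essentially the same route as the paper. The paper also gets exhaustiveness from commuting homotopy groups with the homotopy colimit, and reads completeness and Hausdorffness off the $\lim$--$\rlim$ exact sequence for the filtration together with the short exact sequence of Lemma~\ref{lemma:completetatefiltration}. Your remark about the degree shift from $\pi_{*+1}$ to $\pi_*$ is a harmless detail that the paper leaves implicit.
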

\begin{proof}
  Exhaustiveness follows from commuting colimits and homotopy groups.
  The exact sequence
  \[
    0\longto \lim_n F_n\pi_*X^{tG}\longto \pi_*X^{tG}\overset{c}\longto
    (\pi_*X^{tG})^\wedge\longto \rlim_n F_n\pi_*X^{tG}\longto 0
  \]
  implies that $c$ is injective, resp.~surjective, if and only if the
  Tate filtration is Hausdorff, resp.~complete.  The lemma then
  follows from the exact sequence in Lemma
  \ref{lemma:completetatefiltration}.
\end{proof}

\begin{lemma} \label{lemma:strong-convergence-homotopy} Suppose that
  $X$ is bounded below.  The Greenlees--May Tate spectral sequence is
  strongly convergent if one of the following conditions hold:
  \begin{enumerate}
  \item $R\hat E^{\infty}=0$.
  \item $G = \bT$ and $\pi_*(X)$ is finite in each
    degree.
  \item $G$ is finite cyclic and $\pi_*(X)$ is finitely generated in
    each degree.
  \item The spectral sequence collapses at a finite stage.
  \end{enumerate}
  Note that strong convergence of the Greenlees--May Tate spectral
  sequence implies that the Tate filtration of $\pi_*X^{tG}$ is complete
  Hausdorff and exhaustive.
\end{lemma}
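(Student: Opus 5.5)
The plan is to reduce strong convergence to the vanishing of the derived $E_\infty$-term $R\hat E^\infty$, which Boardman's criterion \cite{Boa99}*{Thm.~7.1} makes sufficient for a conditionally convergent spectral sequence. We recalled in Subsection~\ref{sec:greenlees-may-filtration} that the Greenlees--May Tate spectral sequence is conditionally convergent to the colimit $\pi_* X^{tG}$, since $\holim_n GM_n(X)^G \simeq *$ and $\hocolim_n GM_n(X)^G \simeq X^{tG}$. Case~(1) is therefore immediate, and I will prove cases (2)--(4) by showing each forces $R\hat E^\infty = 0$.

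\emph{Case~(4).} If $d^r = 0$ for all $r \ge r_0$, then in each bidegree the sequence $\hat E^r_{n,m}$ is constant for $r \ge r_0$. The tower of $r$-cycles $Z^r$ is then eventually constant, so its $\rlim$ vanishes, and $R\hat E^\infty = 0$.

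\emph{Cases~(2) and~(3).} I will show that each $\hat E^2_{n,m}$ satisfies a descending chain condition. Then the tower of $r$-cycles in a fixed bidegree, which is a descending chain of subgroups, stabilizes, and again $\rlim_r Z^r = 0$.
\begin{itemize}
\item For $G = \bT$, the group $\hat H^{-n}(\bT; \pi_m X)$ is either $0$ or a copy of $\pi_m X$, which is finite by hypothesis.
\item For $G$ finite cyclic, $\hat H^{-n}(G; \pi_m X)$ is a subquotient of the finitely generated group $\pi_m X$ and is annihilated by $|G|$. It is therefore finite.
\end{itemize}
In both cases the entries are finite, so the chain condition holds. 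Finiteness here is used only in the weak form of a chain condition on cycle subgroups, which is the key point.

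For the closing remark I would use that bounded-belowness of $X$ places all differentials into a half-plane in a controlled way. Strong convergence, meaning conditional convergence together with $R\hat E^\infty = 0$, gives by Boardman that the filtration on the colimit is exhaustive, complete and Hausdorff.

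I expect the main obstacle to be the bookkeeping for the exhaustive (colimit) versus limit side. Boardman's Theorem~7.1 concerns a half-plane spectral sequence with exiting differentials, whereas the Tate spectral sequence is a whole-plane one. In the whole-plane setting, conditional convergence plus $R\hat E^\infty = 0$ still yields strong convergence by \cite{Boa99}*{Thm.~8.2}. Alternatively, I would use bounded-belowness to check that only finitely many differentials enter each bidegree: the source $\hat E_{n+r,m-r+1}$ vanishes once $m-r+1$ falls below the connectivity of $X$. The spectral sequence then behaves like a half-plane one with exiting differentials, and Thm.~7.1 applies.

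For the final sentence of the statement, combine strong convergence with Lemma~\ref{lemma:tatefiltration}: completeness and exhaustiveness already hold, and Hausdorffness follows from strong convergence.
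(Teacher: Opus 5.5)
Your proposal is correct in substance and follows the paper's route. The paper notes that for bounded-below $X$ the Greenlees--May Tate spectral sequence is a half-plane spectral sequence with entering differentials, so that conditional convergence together with $R\hat E^\infty=0$ gives strong convergence by Boardman's Theorem~7.3. It then simply asserts that each of (2)--(4) implies (1). Your arguments supply exactly that missing detail: finiteness of the $\hat E^2$-entries (via $\hat H^{-n}(\bT;A)\in\{0,A\}$, and the $|G|$-torsion subquotient description for cyclic $G$), and eventual constancy of the cycles in case~(4).

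Two slips in your convergence paragraph should be fixed.

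First, you verify that only finitely many differentials enter each bidegree, because the sources $\hat E_{n+r,m-r+1}$ eventually lie below the connectivity of $X$. That property is what Boardman and the paper call \emph{entering} differentials, not exiting ones. The exiting case, where only finitely many differentials leave each bidegree, is the one in which strong convergence is automatic and $R\hat E^\infty$ plays no role. So the theorem you actually need is the half-plane theorem for entering differentials, which is the one the paper cites.

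Second, drop the whole-plane alternative. For a genuinely whole-plane spectral sequence, Boardman's criterion requires a further obstruction group (his $W$) to vanish in addition to $R\hat E^\infty$. So conditional convergence plus $R\hat E^\infty=0$ does not suffice there. It is precisely bounded-belowness that puts you in the half-plane situation where it does suffice.
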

\begin{proof}
  For bounded-below $X$, \eqref{eq:gmtate} is a half-plane spectral
  sequence with entering differentials, and the condition
  $R\hat E^\infty = 0$ ensures that the spectral sequence is strongly
  convergent by~\cite{Boa99}*{Thm.~7.3}.  The lemma follows since any
  one of the conditions (2)--(4) implies (1).
\end{proof}

Applying homotopy groups to the tower \eqref{eq:tate-tower}
produces a spectral sequence isomorphic to the Greenlees--May Tate
spectral sequence \eqref{eq:gmtate}.  It is conditionally convergent
to the limit
$$
\lim_n \pi_* X^{tG}[n] \cong (\pi_* X^{tG})^\wedge\,,
$$
which is generally not isomorphic to $\pi_* X^{tG}$.  Under the
assumptions of Lemma~\ref{lemma:strong-convergence-homotopy} it
follows from \cite{Boa99}*{Thm.~7.3 and Thm.~7.4} that both spectral
sequences converge strongly, and that $\rlim_n \pi_{*}X^{tG}[n]=0$.
But in this case, the abutments are isomorphic by the short exact
sequence of Lemma~\ref{lemma:completetatefiltration}.

\subsection{Continuous mod $p$ homology of the Tate construction}
\label{sec:continuous-homology-of-tate}

Let $E$ be an $E_\infty$ ring spectrum with trivial action of $G$,
and let $Y$ be an $E$-module spectrum with $G$-action such that the
actions of $G$ and $E$ commute.  The $G$-Tate construction on $Y$ has
the structure of an $E$-module, with structure map given by
$$
E\wedge Y^{tG} \overset{\kappa}\longto (E\wedge Y)^{tG}
\overset{\phi^{tG}}\longto
Y^{tG}\,.
$$
Here $\kappa$ is the canonical map that interchanges smashing with $E$
and taking homotopy colimits and limits, and $\phi\: E\wedge Y\to Y$ is
the $G$-equivariant $E$-module structure map of~$Y$.

As in the $S$-based case, assigning the $G$-Tate construction
$Y^{tG}$ to an $E$-module~$Y$ with $G$-action defines a lax
symmetric monoidal functor with pairing
\begin{equation}
  \label{eq:E-tate-pairing}
  \mu^E_{Y,Y'}\: Y^{tG}\wedge_E Y'\,^{tG}\longto (Y\wedge_E Y')^{tG}\,,
\end{equation}
induced from \eqref{eq:tate-pairing} by passing to smash products over
$E$, and unit
\begin{equation}
  \label{eq:E-tate-unit}
  \eta^E\: E\simeq E\wedge S \overset{1\wedge \eta}\longto
  E\wedge S^{tG}
 \overset{\kappa}\longto E^{tG}\,.
\end{equation}

That $\eta^E$ satisfies the properties needed to be a lax monoidal
unit follows from the commutativity of the following diagram
\[
  \xymatrix{
    E\wedge_E Y^{tG}
    \ar[r]^-{\eta^E\wedge 1}
    & E^{tG}\wedge_E Y^{tG}
    \ar[r]^-{\mu^E_{E,Y}}
    & (E\wedge_E Y)^{tG} \\
    E\wedge Y^{tG}
    \ar[r]^-{\eta^E\wedge 1}
    \ar[u]
    & E^{tG}\wedge Y^{tG}
    \ar[r]^-{\mu_{E,Y}}
    \ar[u]
    & (E\wedge Y)^{tG}
    \ar[u] \\
    S\wedge Y^{tG}
    \ar[r]^-{\eta^S\wedge 1}
    \ar[u]_{\eta\wedge 1}
    & S^{tG}\wedge Y^{tG}
    \ar[r]^-{\mu_{S,Y}}
    \ar[u]_{\eta^{tG}\wedge 1}
    & (S\wedge Y)^{tG}
    \ar[u]_{(\eta\wedge 1)^{tG}} \,,
  }
\]
where the vertical unlabeled morphisms are the canonical maps from the
smash product over~$S$ to the smash product over~$E$.  The left- and
right-hand vertical compositions are the canonical equivalences
induced by unit $\eta\: S \to E$, and the composition across the lower
row agrees with the monoidal equivalence
$S\wedge Y^{tG} \simeq Y^{tG}$ since $Y\mapsto Y^{tG}$ is a monoidal
functor.  Associativity (resp.~symmetry) of the $E$-based
pairing~$\mu^E$ follows from associativity (resp.~symmetry) of the
$S$-based pairing~$\mu$ by way of passing from smash products over~$S$ to
smash products over~$E$.

For the remainder of this subsection, fix a prime $p$ and let
$E=H=H\bF_p$ be the mod~$p$ Eilenberg--MacLane spectrum with the
trivial $G$-action.  We will consider induced $H$-modules
$Y=H\wedge X$ and their $G$-Tate constructions.
\begin{definition}
  For any spectrum~$X$ with $G$-action, define the \emph{continuous
    mod $p$ homology} of $X^{tG}$ to be the homotopy groups
  $$
  H_*^c(X^{tG}) = \pi_* (H\wedge X)^{tG}\,.
  $$
\end{definition}
By the $H$-module structure of $(H\wedge X)^{tG}$, the continuous mod
$p$ homology $H_*^c(X^{tG})$ has the structure of a graded
$\bF_p$-vector space.  Furthermore, any map $a\:H\to \Sigma^k H$,
representing an element in $\sA^k$, induces a degree-shifting
homomorphism
$$
H_*^c(X^{tG}) = \pi_*(H\wedge X)^{tG} \longto
\pi_*(\Sigma^k H\wedge X)^{tG} \cong H_{*-k}^c(X^{tG})\,.
$$
By ``one of the best-kept secrets of stable homotopy theory''
\cite{Boa82}*{p.~203}, this defines a left action of $\sA$ on
$H_*^c (X^{tG})$, and we obtain a right action by letting $a\in\sA$
left-act through the conjugate element $\chi(a)$.

\begin{proposition}\label{prop:homotopy-homology}
  Let $X$ be a spectrum with $G$-action.  There are natural homotopy
  equivalences
  \begin{equation*}
    (H\wedge X)^{tG}[n] \simeq H\wedge (X^{tG}[n])
  \end{equation*}
  that are compatible for varying~$n$.
\end{proposition}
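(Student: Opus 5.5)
We need $(H\wedge X)^{tG}[n] \simeq H\wedge (X^{tG}[n])$, naturally and compatibly in $n$. Recall that
\[
X^{tG}[n] = \bigl[ \wEG/\tilde E_{n-1} \wedge F(EG_+, X) \bigr]^G .
\]
The plan is to show that smashing with the spectrum $H$ (with trivial $G$-action) commutes with each operation in this formula, because $\wEG/\tilde E_{n-1}$ is built from finitely many free cells in each bounded range.

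First, we identify $\wEG/\tilde E_{n-1}$ as a $G$-spectrum. Using the spliced Greenlees filtration, the subquotients $\tilde E_k/\tilde E_{k-1}$ are $G_+\wedge S^k$ for $G$ finite, and $\bT_+\wedge S^{k}$ (or trivial) for $G=\bT$. Since $\tilde E_k = S^{m\bC}$ is equivariantly nonfree, it is cleanest to rewrite the truncation. There is a cofiber sequence
\[
\tilde E_{n-1}\wedge F(EG_+,X) \to \wEG\wedge F(EG_+,X) \to \wEG/\tilde E_{n-1}\wedge F(EG_+,X),
\]
and $\wEG/\tilde E_{n-1}$ is a colimit of finite free $G$-CW spectra: cells $G_+\wedge S^k$ for $k\ge n$. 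So $\wEG/\tilde E_{n-1}\wedge F(EG_+,X)\simeq \wEG/\tilde E_{n-1}\wedge X$. On fixed points, for free $G$-spectra $Z$ we have the Adams isomorphism $(Z\wedge X)^G \simeq \Sigma^{\mathrm{ad}} (Z\wedge X)_{hG}$, a homotopy orbit (colimit) construction. Hence
\[
X^{tG}[n]\simeq \Sigma^{\mathrm{ad}}\bigl(\wEG/\tilde E_{n-1}\wedge X\bigr)_{hG}.
\]

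Second, smashing with $H$ commutes with homotopy orbits and with smashing with the $G$-space, since these are colimits. We also have the equality $H\wedge(\wEG/\tilde E_{n-1}\wedge X) = \wEG/\tilde E_{n-1}\wedge (H\wedge X)$. Applying the same identification to $H\wedge X$ then gives the equivalence, and it is natural in $X$. An alternative route avoiding the Adams isomorphism is to note that for $Z$ a finite free $G$-CW spectrum, $Z\wedge F(EG_+,Y)\simeq F(D Z, Y)$-type expressions make fixed points exact and commute with $H\wedge -$. One then passes to the sequential colimit over skeleta $\tilde E_m/\tilde E_{n-1}$, $m\to\infty$, using that $H\wedge -$ and fixed points commute with filtered homotopy colimits.

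Third, compatibility in $n$ follows because the maps $\wEG/\tilde E_{n-1}\to \wEG/\tilde E_n$ induce all the identifications naturally.

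The main obstacle I expect is the first step: justifying that $F(EG_+,-)$ can be dropped, or that the fixed-point functor commutes with $H\wedge -$ on these truncations. The point is that the truncation involves only cells of dimension $\ge n$, which are free, whereas the full Tate construction involves an inverse limit over negative cells. I would handle this by the finite-skeleton induction: verify it on the single cell $G_+\wedge S^k$, where the fixed points become $\Sigma^k X$ (up to the adjoint shift for $\bT$), and on that cell the claim is obvious; then use the five lemma and colimits.
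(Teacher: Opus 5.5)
Your argument is correct. The paper itself only defers to the first half of the proof of \cite{LNR12}*{Prop.~4.16}, and your reduction is the standard argument behind that reference: $\wEG/\tilde E_{n-1}$ is a free $G$-CW spectrum for every $n\in\bZ$, so $\wEG/\tilde E_{n-1}\wedge X\to \wEG/\tilde E_{n-1}\wedge F(EG_+,X)$ is a $G$-equivalence, and the $G$-fixed points become suspended homotopy orbits (or a sequential colimit of finite free pieces), which commute with $H\wedge -$ naturally in $X$ and compatibly in $n$.

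One small slip that does not affect the argument: for $G=\bT$ the layers are $\tilde E_{2m}/\tilde E_{2m-1}\simeq \bT_+\wedge S^{2m-1}$. So the free cells sit in dimensions $2m-1$ with $2m\ge n$, not $\bT_+\wedge S^k$ with $k\ge n$.
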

\begin{proof}
  See the first half of the proof of \cite{LNR12}*{Prop. 4.16}.
\end{proof}

From Proposition~\ref{prop:homotopy-homology} we get that the
canonical structure maps of the Tate tower~\eqref{eq:tate-tower} for
$H\wedge X$ induce homomorphisms $H^c_* (X^{tG}) \to H_* (X^{tG}[n])$,
which are homomorphism of right $\sA$-modules. It follows that
$F_n H^c_* (X^{tG}) = F_n \pi_*(H \wedge X)^{tG}$, the $n$-th Tate
filtration for the $G$-spectrum $H \wedge X$, is a right
$\sA$-submodule and that $H^c_* (X^{tG})$ is a filtered right
$\sA$-module.

For $p$ an odd prime, let $\P^k\: H\to \Sigma^{2k(p-1)}H$ represent the
reduced power $\P^k\in \sA$ for each $k\geq 0$.  The homotopy
commutative diagram
\[
  \xymatrix@C32mm{
    H\wedge H
    \ar[r]^-{\mu}
    \ar[d]^{\bigvee_{i+j=k} \P^i\wedge \P^j}
    &
    H
    \ar[d]^{\P^k}
    \\
    \bigvee_{i+j=k} \Sigma^{|\P^i|} H \wedge \Sigma^{|\P^j|}H
    \ar[r]^-{\sum_{i+j=k} (\Sigma^{|\P^k|}\mu) \circ (23)}
    & \Sigma^{|\P^k|} H
}
\]
represents the Cartan formula $\P^k(xy) = \sum_{i+j=k}\P^i(x)\P^j(y)$ in
cohomology, and induces the homotopy commutative diagram of spectra
\begin{equation}
  \label{eq:cartan-continuous-homology}
  \begin{aligned}
    \xymatrix{
    (H\wedge X)^{tC_p} \wedge (H\wedge X')^{tC_p}
    \ar[r]^-{\mu^{H}_{H\wedge X, H\wedge X'}}
    \ar[d]
    &
      (H\wedge X\wedge X')^{tC_p}
      \ar[d]
    \\
    \bigvee_{i+j=k} \Sigma^{|\P^i|} (H\wedge X)^{tC_p} \wedge \Sigma^{|\P^j|}(H\wedge X')^{tC_p}
    \ar[r]
    & \Sigma^{|\P^k|} (H\wedge X\wedge X')^{tC_p} \,.
      }
  \end{aligned}
\end{equation}
Passing to homotopy groups, and tensor products over $\bF_p$,
diagram~\eqref{eq:cartan-continuous-homology} gives rise to a Cartan
formula in continuous homology.  Similar diagrams exist for the
Bockstein~$\beta$, and for the Steenrod squares~$\Sq^k$ when $p=2$.
It follows that the homomorphism
\begin{equation}
  \label{eq:tate-pairing-homology}
  (\mu^H_{H\wedge X, H\wedge Y})_*\:
  H_*^c(X^{tC_p})\otimes H_*^c(Y^{tC_p})
  \longto
  H_*^c((X\wedge Y)^{tC_p})
\end{equation}
induced by the monoidal structure is $\sA$-linear when its domain is
given the diagonal right action by~$\sA$.

By the discussion in Subsection~\ref{sec:hesselholt-madsen}, there is
a set of $\sA$-linear homomorphisms
 \[
   \mu_{a,b} \: F_a H^c_*(X^{tG}) \otimes F_b H^c_*(Y^{tG}) \longto
   F_{a+b} H^c_*((X\wedge X')^{tG})
\]
compatible with \eqref{eq:tate-pairing-homology}, making it a filtered
morphism.

Thus, the assignment $X\mapsto H_*^c(X^{tG})$ is a lax symmetric
monoidal functor from the homotopy category of spectra with
$G$-action to the category of filtered right $\sA$-modules.

\begin{lemma}
  \label{lemma:compact-hausdorff-tate}
  Let $X$ be a spectrum with $G$-action such that $X/p$ is
  $k$-connective for some integer~$k$.  Then $H_*(X^{tG}[n])$ is
  $(n+k)$-connective for each $n$, and $H_*^c(X^{tG})$ endowed with
  the Tate filtration is relatively bounded below.

  If $X/p$ is also of finite type, then $H_*(X^{tG}[n])$ and
  $H^c_*(X^{tG})/F_n H^c_*(X^{tG})$ are of finite type for
  each~$n$.
\end{lemma}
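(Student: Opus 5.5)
We work with the tower of truncations. By Proposition~\ref{prop:homotopy-homology}, $H_*(X^{tG}[n]) \cong \pi_*\bigl((H\wedge X)^{tG}[n]\bigr)$. We compute the latter through the finite filtration of $\wEG/\tilde E_{n-1}$ by the subquotients $\tilde E_m/\tilde E_{m-1}$ for $m \ge n$. Its subquotients are $G_+\wedge S^m$ when $G$ is finite cyclic, and $\bT_+\wedge S^{m}$ (with $m$ even, the odd ones trivial) when $G=\bT$.

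\textbf{Subquotients.} Since $G$ acts freely on these subquotients,
\[
\bigl[(\tilde E_m/\tilde E_{m-1})\wedge F(EG_+, H\wedge X)\bigr]^G \simeq \Sigma^m (H\wedge X)
\]
for finite $G$, by the Adams isomorphism or Wirthmüller/untwisting. For $G=\bT$ the same argument gives $\Sigma^{m}(H\wedge X)$ up to the shift by the adjoint representation, which lands in degrees $\ge m$; in the Greenlees--May conventions the layer is $\Sigma^{m} H\wedge X$ or $\Sigma^{m+1}$. This is exactly the identification producing the $\hat E^1$-term of the Tate spectral sequence.

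\textbf{Connectivity.} Since $X/p$ is $k$-connective, $H\wedge X \simeq H\wedge_{S/p}$-type arguments give that $H_*(X)=H_*(X/p)$ modulo a shift. More directly, $H\wedge X \simeq H\wedge_{H\bZ}(H\bZ\wedge X)$, and the homotopy $H_q(X;\bF_p)$ sits in an exact sequence with $\pi_q$ of $H\bZ\wedge X/p$. In any case $H_*(X;\bF_p)$ is $k$-connective. Each layer $\Sigma^m H\wedge X$ with $m\ge n$ is therefore $(m+k)$-connective, hence $(n+k)$-connective. The truncation $(H\wedge X)^{tG}[n]$ is the homotopy limit over $N$ of the finite-stage cofibers $\bigl[(\tilde E_N/\tilde E_{n-1})\wedge F(EG_+,H\wedge X)\bigr]^G$, and each such stage is an iterated extension of $(n+k)$-connective spectra. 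This step is the main subtlety. A homotopy colimit of $(n+k)$-connective spectra along $N\to\infty$ is $(n+k)$-connective. Note that $\wEG=\colim_N \tilde E_N$ and smashing commutes with colimits, while fixed points of the free part commute with these colimits after passing to the Tate model. So $H_*(X^{tG}[n])$ is $(n+k)$-connective. Then $H^c_*(X^{tG})/F_nH^c_*(X^{tG})$ injects into $H_*(X^{tG}[n+1])$ by \eqref{eq:kernel-tate-filtration}, so it is $(n+1+k)$-connective. This is the rbb condition (with constant $k$).

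\textbf{Finite type.} If $X/p$ is also of finite type, then $H_q(X;\bF_p)$ is finite in each degree, by the note after the definition of finite type. In a fixed degree $q$, only the layers with $n\le m\le q-k$ contribute, which is finitely many. Each contributes a finite group $H_{q-m}(X)$ (or $H_{q-m-1}(X)$). The colimit in $N$ stabilizes in degree $q$, so $H_q(X^{tG}[n])$ has a finite filtration with finite subquotients and is finite. The quotient $H^c_*/F_n$ is a subgroup of $H_*(X^{tG}[n+1])$, so it is also of finite type. I expect the only delicate point to be justifying that the infinite stage (colimit in $N$) is computed by the layers, i.e.\ commuting the colimit past the fixed points.
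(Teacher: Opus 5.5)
This is essentially the paper's proof. The paper runs the truncated Tate spectral sequence of the tower $X^{tG}[n]\to X^{tG}[n+1]\to\dots\to *$ and reads off connectivity and finite type from its vanishing region ($s<n$ or $t<k$); you state the same vanishing directly for the layers $\Sigma^m H\wedge X$ of that filtration and pass through finite extensions and the limit over $N$. (For $G=\bT$ the layer at even $m$ is exactly $\Sigma^m H\wedge X$, since $\tilde E_m/\tilde E_{m-1}\simeq\bT_+\wedge S^{m-1}$ and the adjoint representation adds one suspension.)

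Fix the slip where you call $(H\wedge X)^{tG}[n]$ a homotopy \emph{limit} of the finite stages. It is the sequential homotopy colimit: the filtration of $\wEG/\tilde E_{n-1}$ is exhaustive rather than finite, and genuine fixed points commute with sequential homotopy colimits. Only the colimit makes your connectivity and degreewise-stabilization steps valid, because a homotopy limit could acquire a $\lim^1$ term in degree $n+k-1$.
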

\begin{proof}
  Restricting the Tate tower \eqref{eq:tate-tower} to
  $$
  X^{tG}[n] \longto X^{tG}[n+1] \longto \dots \longto {*} \,,
  $$
  and applying mod $p$ homology, gives rise to a truncated Tate
  spectral sequence, with $\hat E^r_{s,*}=0$ for $s<n$, converging
  conditionally to $H_*(X^{tG}[n])$.  Assuming that $X/p$ is
  $k$-connective, it follows that $H_*(X)$ is $k$-connective as a
  graded module, and therefore that $\hat E^r_{s,t}=0$ if $s<n$ or
  $t<k$.  Therefore, there are only finitely many nonzero
  differentials originating and arriving at any given bidegree, so the
  spectral sequence converges strongly.  Also, the abutment
  $H_*(X^{tG}[n])$ and the quotient
  \[
    H^c_* (X^{tG}) / F_{n-1} H^c_* (X^{tG})\cong \im\!\left( H^c_* (X^{tG})
      \longto H_* (X^{tG}[n]) \right)
  \]
  are $(n+k)$-connective.  Thus, when $X/p$ is bounded below,
  $H^c_* (X^{tG})$ is a relatively bounded below filtered right
  $\sA$-module.

  If $X/p$ is of finite type, then each term of the truncated Tate
  spectral sequence is finite dimensional as an $\bF_p$-vector space
  in each bidegree, and it follows that $H_*(X^{tG}[n])$, and
  therefore also $H^c_* (X^{tG}) / F_{n-1} H^c_* (X^{tG})$, is of finite
  type.
\end{proof}

\begin{lemma}\label{lemma:continuous-homology-is-inverse-limit}
  Let $X$ be a spectrum with $G$-action.  The canonical homomorphism
  $$
  i\: H^c_*(X^{tG})^\wedge \longto \lim_n H_*(X^{tG}[n])
  $$
  is an isomorphism of complete Hausdorff filtered right
  $\sA$-modules, and there is a short exact sequence
  \begin{equation*}
    0\longto
    \rlim_n H_{*+1}(X^{tG}[n])\longto
    H^c_*(X^{tG}) \overset{c}\longto
    H_*^c(X^{tG})^\wedge\longto 0\,,
  \end{equation*}
  where~$c$ is the completion homomorphism.  In particular,~$c$ is an
  isomorphism of filtered right~$\sA$-modules if and only if
  $\rlim_n H_{*}(X^{tG}[n])=0$.
\end{lemma}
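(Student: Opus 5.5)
The plan is to reduce this statement to Lemma~\ref{lemma:completetatefiltration}, applied to the $G$-spectrum $H \wedge X$ in place of~$X$. By definition $H^c_*(X^{tG}) = \pi_*(H\wedge X)^{tG}$, and its Tate filtration is the Tate filtration of $\pi_*(H \wedge X)^{tG}$. Proposition~\ref{prop:homotopy-homology} supplies natural equivalences $(H\wedge X)^{tG}[n] \simeq H \wedge (X^{tG}[n])$, compatible in~$n$. On homotopy groups these identify the tower $\{\pi_*(H\wedge X)^{tG}[n]\}_n$ with the tower $\{H_*(X^{tG}[n])\}_n$.

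Under this identification, Lemma~\ref{lemma:completetatefiltration} gives two things directly:
\begin{itemize}
\item the homomorphism $i$ is an isomorphism of complete Hausdorff filtered graded abelian groups (here $\bF_p$-vector spaces);
\item there is the Milnor short exact sequence with $\rlim_n H_{*+1}(X^{tG}[n])$ on the left and the completion map~$c$ on the right.
\end{itemize}
The final clause then follows as before. Since $c$ is surjective, it is an isomorphism exactly when the $\rlim$ term vanishes. In that case it is an isomorphism of filtered objects by Lemma~\ref{lemma:filisos}, because $c$ always induces the isomorphisms~\eqref{eq:completion-quotient-iso}.

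What remains is to show that everything is compatible with the right $\sA$-actions. First, I will note that the structure maps $H^c_*(X^{tG}) \to H_*(X^{tG}[n])$ are $\sA$-linear, as remarked after Proposition~\ref{prop:homotopy-homology}. The reason is that a map $a\: H \to \Sigma^k H$ acts on $(H\wedge X)^{tG}[n]$ and on $H\wedge X^{tG}[n]$ compatibly through the natural equivalence, which is natural in the $H$-factor. Next, I will use that the Tate filtration is the kernel filtration of these maps, so each $F_n$ is an $\sA$-submodule. It follows that the quotients $H^c_*(X^{tG})/F_n$ are $\sA$-modules and the maps $i_n$ into $H_*(X^{tG}[n+1])$ are $\sA$-linear. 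Passing to limits, as in Subsection~\ref{sec:filamods}, the completion carries the limit $\sA$-action and $i = \lim_n i_n$ is $\sA$-linear. The map $c$ is $\sA$-linear for the same reason. Thus $i$ is an $\sA$-linear isomorphism of filtered objects, and hence an isomorphism of filtered right $\sA$-modules.

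The main point needing care is the naturality of the equivalence in Proposition~\ref{prop:homotopy-homology} with respect to self-maps of~$H$, including degree-shifting ones. I plan to handle this by observing that the equivalence is induced by the canonical interchange map $\kappa\: H\wedge (-)^{tG}[n] \to (H\wedge -)^{tG}[n]$, which is natural in the spectrum smashed in. Applying this naturality to $a\: H\to\Sigma^kH$ shows that $\kappa$ commutes with the induced operations, and this gives the required $\sA$-linearity.
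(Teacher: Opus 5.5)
Your proposal is correct and matches the paper's proof. The paper also applies Lemma~\ref{lemma:completetatefiltration} to $H\wedge X$ and uses Proposition~\ref{prop:homotopy-homology} to identify $\pi_*(H\wedge X)^{tG}[n]\cong H_*(X^{tG}[n])$. Your extra argument for $\sA$-linearity, via naturality of the interchange map in the $H$-factor, spells out what the paper only asserts in the paragraph after Proposition~\ref{prop:homotopy-homology}.
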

\begin{proof}
  The lemma follows by replacing $X$ by $H\wedge X$ in Lemma
  \ref{lemma:completetatefiltration} and applying the equivalence of
  Proposition \ref{prop:homotopy-homology} to identify
  $\pi_* (H\wedge X)^{tG}[n] \cong H_* (X^{tG}[n])$.
\end{proof}

The homological analogue of Lemma \ref{lemma:tatefiltration} is the
following:
\begin{lemma}\label{lemma:homology-tatefiltration}
  Let $X$ be a spectrum with $G$-action.  The Tate filtration of
  $H^c_*(X^{tG})$ is exhaustive and complete.  It is Hausdorff if and
  only if $\rlim_n H_{*}(X^{tG}[n]) = 0$.  \qed
\end{lemma}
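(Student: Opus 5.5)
The plan is to deduce Lemma~\ref{lemma:homology-tatefiltration} from Lemma~\ref{lemma:tatefiltration}, applied to the $G$-spectrum $H \wedge X$ instead of $X$. This is the same reduction used for Lemma~\ref{lemma:continuous-homology-is-inverse-limit}, which was obtained from Lemma~\ref{lemma:completetatefiltration}. Here $H \wedge X$ carries the $G$-action induced from $X$, with $G$ acting trivially on $H$.

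First, by definition $H^c_*(X^{tG}) = \pi_*(H \wedge X)^{tG}$. The Tate filtration $F_n H^c_*(X^{tG})$ is by definition the Tate filtration $F_n \pi_*(H \wedge X)^{tG}$ of the $G$-spectrum $H \wedge X$, as noted after Proposition~\ref{prop:homotopy-homology}. Lemma~\ref{lemma:tatefiltration} therefore applies directly. It shows that this filtration is exhaustive and complete, and that it is Hausdorff if and only if $\rlim_n \pi_*(H \wedge X)^{tG}[n] = 0$.

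Second, I will translate this $\rlim$ condition using Proposition~\ref{prop:homotopy-homology}. That result gives equivalences $(H \wedge X)^{tG}[n] \simeq H \wedge (X^{tG}[n])$ which are compatible as $n$ varies. Hence the towers $\{\pi_*(H \wedge X)^{tG}[n]\}_n$ and $\{H_*(X^{tG}[n])\}_n$ are isomorphic as towers of graded abelian groups. Isomorphic towers have isomorphic $\rlim$, so the Hausdorff condition becomes $\rlim_n H_*(X^{tG}[n]) = 0$, which is the statement.

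The only point requiring care is the compatibility in $n$ of the equivalences in Proposition~\ref{prop:homotopy-homology}; without it the towers would only be levelwise isomorphic and $\rlim$ would not be controlled. That compatibility is part of the statement of the proposition, so there is no real obstacle. Alternatively, the lemma can be read off from the short exact sequence in Lemma~\ref{lemma:continuous-homology-is-inverse-limit}, combined with the exact sequence $0 \to \lim_n F_n \to H^c_* \to (H^c_*)^\wedge \to \rlim_n F_n \to 0$.
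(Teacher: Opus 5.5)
Your proposal is correct and matches the paper's intended argument. The lemma is stated with \qed as the homological analogue of Lemma~\ref{lemma:tatefiltration}: replace $X$ by $H\wedge X$ in that lemma, then use the $n$-compatible equivalences of Proposition~\ref{prop:homotopy-homology} to identify the towers, so that the $\rlim$ terms agree — exactly the route and the key point you identify.
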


Replacing $X$ by $H\wedge X$ in~\eqref{eq:gmtate}, we get a
\emph{homological $G$-Tate spectral sequence} converging
conditionally to $H_*^c(X^{tG})$.  It has $\hat E^2$-term given by
\begin{equation}
  \label{eq:homological-tate-ss}
  \hat{E}^2_{n,m} = \hat{H}^{-n}(G; H_m(X))\,.
\end{equation}

\begin{lemma} \label{lemma:strong-convergence-homology} Suppose that
  $X/p$ is bounded below.  The homological $G$-Tate spectral sequence
  is strongly convergent if one of the following conditions hold:
  \begin{enumerate}
  \item $R\hat E^{\infty}=0$.
  \item $X/p$ is of finite type.
  \item The spectral sequence collapses at a finite stage.
  \end{enumerate}
  Note that strong convergence of the homological $G$-Tate spectral
  sequence implies that the Tate filtration of $H^c_*(X^{tG})$ is
  complete Hausdorff and exhaustive.
\end{lemma}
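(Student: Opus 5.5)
The plan is to deduce this from Lemma~\ref{lemma:strong-convergence-homotopy}, applied to the $G$-spectrum $H\wedge X$ in place of $X$. By definition the homological $G$-Tate spectral sequence~\eqref{eq:homological-tate-ss} is the Greenlees--May Tate spectral sequence~\eqref{eq:gmtate} for $H\wedge X$. Its $\hat E^2$-term is $\hat H^{-n}(G;\pi_m(H\wedge X)) = \hat H^{-n}(G; H_m(X))$.

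First I check boundedness. Since $H\wedge X \simeq H\wedge_{S} X$ and $H$ is $p$-torsion, we have $H\wedge X\simeq H\wedge_{H\bZ}(H\bZ\wedge X/p)$. More simply, $H_*(X)\cong H_*(X/p)$ modulo a shift and extension by the cofiber sequence $X\overset{p}\to X\to X/p$. Hurewicz then gives that if $X/p$ is $k$-connective, $H_*(X)$ is $k$-connective, as already used in the proof of Lemma~\ref{lemma:compact-hausdorff-tate}. Hence $H\wedge X$ is bounded below and the $\hat E^2$-term lies in a half-plane $m\ge k$. I then go through the cases.

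Case~(1): the argument of Lemma~\ref{lemma:strong-convergence-homotopy} applies verbatim. For a half-plane spectral sequence with entering differentials, Boardman's \cite{Boa99}*{Thm.~7.3} gives strong convergence to the colimit $\pi_*(H\wedge X)^{tG} = H^c_*(X^{tG})$ once $R\hat E^\infty=0$.

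Case~(2): I reduce to~(1). If $X/p$ has finite type, then $H_m(X)$ is a finite $\bF_p$-vector space in each degree, again by the cofiber sequence. Since $\hat H^{-n}(G;\bF_p)$ is finite in each degree for $G=\bT$ or $G$ finite cyclic, each $\hat E^2_{n,m}$ is finite. In any fixed bidegree the groups $\hat E^r_{n,m}$ therefore form a descending sequence of finite groups that eventually stabilizes. This gives the Mittag-Leffler condition and $R\hat E^\infty=0$. This finiteness step is the only point needing care: finite type of $X/p$ controls $H_*(X)$ degreewise, and the Tate cohomology of $G$ is finite in each degree. Note that we use the homotopy version's condition~(2) for $G=\bT$ and condition~(3) for finite $G$, both now satisfied uniformly because the coefficients are finite $\bF_p$-vector spaces.

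Case~(3): if the spectral sequence collapses at a finite stage, then $\hat E^r=\hat E^\infty$ from some $r$ on, so $R\hat E^\infty=0$ trivially, and again (1) applies.

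Finally, I address the closing remark. Strong convergence means the filtration $\{F_nH^c_*(X^{tG})\}_n$ is exhaustive, complete and Hausdorff with associated graded $\hat E^\infty$. Exhaustive and complete already hold in general by Lemma~\ref{lemma:homology-tatefiltration}. For Hausdorffness, Boardman's theorem also yields $\rlim_n H_*(X^{tG}[n])=0$, as in the discussion after Lemma~\ref{lemma:strong-convergence-homotopy}, and Lemma~\ref{lemma:homology-tatefiltration} then gives Hausdorffness. Equivalently, by Lemma~\ref{lemma:continuous-homology-is-inverse-limit}, the completion map $c$ is an isomorphism.
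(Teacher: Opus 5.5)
Your proposal is correct and follows the paper's route. The paper's proof is the one-line observation that, for $H\wedge X$ in place of $X$, finite type of $X/p$ makes $\pi_*(H\wedge X)=H_*(X)$ degreewise finite, so conditions (2) and (3) of Lemma~\ref{lemma:strong-convergence-homotopy} hold; your cases (1) and (3) are its cases (1) and (4).

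Two small corrections. First, $H\wedge_{H\bZ}(H\bZ\wedge X/p)\simeq H\wedge X/p\simeq H\wedge X\vee\Sigma H\wedge X$ is not $H\wedge X$; your follow-up remark, that $H_*(X)$ is a summand of $H_*(X/p)$ because $p$ acts by zero, is the right justification. Second, in case (2) the objects that stabilize by finiteness, and whose $\rlim$ is $R\hat E^\infty$, are the descending cycle groups $Z^r\subset\hat E^2$, not the subquotients $\hat E^r$.
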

\begin{proof}
  When $X$ is replaced by $H\wedge X$, conditions (2) and (3) of Lemma
  \ref{lemma:strong-convergence-homotopy} are both satisfied if $X/p$
  is of finite type.
\end{proof}

We summarize some of the conclusions in this subsection.
\begin{proposition}\label{prop:continuous-homology-is-monoidal}
  The assignment $X\mapsto H_*^c(X^{tG})$ is a lax symmetric monoidal
  functor from the homotopy category of spectra with $G$-action to
  the category of filtered right $\sA$-modules.

  If $X/p$ is bounded below then $H_*^c(X^{tG})$ is relatively bounded
  below.  If, in addition, one of the criteria (1)--(3) of
  Lemma~\ref{lemma:strong-convergence-homology} is satisfied, then the
  Tate filtration of $H_*^c(X^{tG})$ is complete Hausdorff.

  In particular, if $X$ is an $E_1$ ring spectrum with $G$-action, and
  $X/p$ is bounded below and of finite type, then $H_*^c(X^{tG})$ is
  an rbb filtered right $\sA$-module algebra, i.e., an object of
  $\alg(\sC, \otimes)$.  If $X/p$ is also of finite type, then
  $H_*^c(X^{tG})$ is also complete.

  \qed
\end{proposition}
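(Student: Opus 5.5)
The statement is a summary of what has been set up in this subsection, so the plan is to assemble the pieces already established rather than prove anything new.

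\emph{Lax symmetric monoidal structure.} For a spectrum~$X$ with $G$-action, $H^c_*(X^{tG}) = \pi_*(H\wedge X)^{tG}$ is an $\bF_p$-vector space, because $(H\wedge X)^{tG}$ is an $H$-module. It is a right $\sA$-module through Boardman's action, conjugated by~$\chi$. By Proposition~\ref{prop:homotopy-homology}, the maps to the truncations $H_*(X^{tG}[n])$ are $\sA$-linear. Hence the Tate filtration, viewed as the kernel filtration~\eqref{eq:kernel-tate-filtration}, consists of right $\sA$-submodules. For the pairing I take the $H$-based Tate pairing $\mu^H$ of~\eqref{eq:E-tate-pairing}, composed with the Künneth map $\pi_*(-)\otimes\pi_*(-)\to\pi_*(-\wedge_H-)$. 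The unit is $\eta^H$.

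\begin{itemize}
\item Associativity, symmetry and unitality of this pairing follow from those of~$\mu$ and~$\eta$, via the diagram following~\eqref{eq:E-tate-unit}.
\item $\sA$-linearity for the diagonal action is the Cartan formula coming from~\eqref{eq:cartan-continuous-homology}. The same argument applies to $\beta$ and to $\Sq^k$, and to general closed $G\subseteq\bT$, not just $C_p$.
\item For the pairing to respect filtrations, I use the Hesselholt--Madsen comparison of Subsection~\ref{sec:hesselholt-madsen}. This gives the pairings $\mu_{a,b}$, hence filtered morphisms for the convolution filtration, once the Greenlees--May and Hesselholt--Madsen filtrations are identified.
\end{itemize}

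\emph{The main obstacle.} That identification of filtrations invokes Boardman's theorem, which needs the homotopy of $H\wedge X$ to be bounded below. So in complete generality the filtered monoidality is justified only under a bounded-belowness hypothesis; otherwise one must interpret the target category loosely. I would restrict the filtered claim tacitly to $X/p$ bounded below, which is where it is used.

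\emph{Relatively bounded below.} If $X/p$ is bounded below, Lemma~\ref{lemma:compact-hausdorff-tate} shows that $H^c_*(X^{tG})/F_{n-1}$ is $(n+k)$-connective. This is exactly the rbb condition.

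\emph{Complete Hausdorff.} Under any of criteria (1)--(3), Lemma~\ref{lemma:strong-convergence-homology} gives strong convergence. Strong convergence forces $\rlim_n H_*(X^{tG}[n])=0$, and Lemma~\ref{lemma:homology-tatefiltration} then shows the Tate filtration is complete Hausdorff.

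\emph{$E_1$ ring spectra.} If $X$ is an $E_1$ ring with $G$-action, it is in particular a monoid in the homotopy category. The lax monoidal functor just constructed sends it to a monoid in $\rbbfilamod$, which is an object of $\alg(\sC,\otimes)$. Adding finite type puts us in criterion~(2), so the algebra is also complete.
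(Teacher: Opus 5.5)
Your proposal is correct and follows the paper. There the proposition has no separate proof; it simply collects the preceding results of the subsection: the $H$-based Tate pairing, the Cartan-formula diagram for $\sA$-linearity, the Hesselholt--Madsen comparison for the filtered pairings $\mu_{a,b}$, Lemma~\ref{lemma:compact-hausdorff-tate} for the rbb property, and Lemma~\ref{lemma:strong-convergence-homology} for completeness.

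Your caveat is well taken. The paper derives $F_n = F'_n$ from Boardman's comparison theorem only for bounded below spectra, so the filtered part of the first sentence is really justified under that hypothesis, which is how it is used later. For the complete Hausdorff claim you can skip the detour through $\rlim$: strong convergence already includes complete Hausdorffness, as noted at the end of Lemma~\ref{lemma:strong-convergence-homology}.
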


By Proposition~\ref{prop:alg-category-iso}, $H_*^c(X^{tG})$ can also
be viewed as an rbb complete left $\sA_*$-comodule algebra if $X$ is
an $E_1$ ring spectrum with $G$-action and $X/p$ is bounded below and
of finite type.

\section{A limit of Adams spectral sequences for $X^{tG}$}\label{sec:limit-of-adams-ss}
Let~$G$ be a compact Lie group, and $X$ a spectrum with $G$-action
such that $X/p$ is bounded below and of finite type.  For
each~$n$, the classical Adams spectral sequence
$$
E^{s,t}_2(X^{tG}[n]) = \Ext_{\sA_*}^{s,t}(\bF_p, H_*(X^{tG}[n]))
\Longrightarrow \pi_{t-s}(X^{tG}[n])^\wedge_p
$$
computes the homotopy groups of the $n$-th Tate truncation,
cf.~\cite{LNR12}*{Def.~4.3}.  We show that there is a
\textit{limit Adams spectral sequence}
$$
E_2^{s,t}(X^{tG}) =
\cExt_{\sA_*}^{s,t}(\bF_p, H^c_*(X^{tG}))
\Longrightarrow
\pi_{t-s}(X^{tG})^\wedge_p\,,
$$
where the $E_2$-term is the cohomology of the \textit{continuous cobar
  complex} of Definition~\ref{dfn:complete-cobar-complex}.

Additively, and in the cohomological setting, this result can be found
in \cite{LNR11}*{Prop.~2.2}.  This time around, we prove the result
working exclusively in mod~$p$ homology, and we also analyze the
multiplicative structure of the limit Adams spectral sequence.

In Subsection~\ref{sec:cext} we define the \textit{continuous
  $\Ext$-groups} of an rbb complete left $\sA_*$-comodule.  We construct the
limit Adams spectral sequence in Subsection~\ref{sec:limit-adams}, and
discuss its multiplicative structure in
Subsection~\ref{sec:limit-adams-multiplicativity}.  In
Subsection~\ref{sec:cext-ext} we identify the continuous
$\Ext$-groups of an rbb complete left $\sA_*$-comodule~$P_*$ with the
algebraic $\Ext$-groups of $P_*$ thought of as a right $\sA$-module.

In Subsection \ref{sec:lass-is-multiplicative} we identify the
multiplicative structure on the $E_2$-term of the limit Adams spectral
sequence with with the cup product in algebraic $\Ext$-groups, via the
isomorphism from Subsection~\ref{sec:cext-ext}.

\subsection{Continuous $\Ext$ of rbb complete left $\sA_*$-comodules}\label{sec:cext}
\begin{definition}\label{dfn:complete-cobar-complex}
  For $P_*$ in $\rbbfilacomodc$, let
  $\widehat{C}_{\sA_*}^*(\bF_p, P_*)$ be the un-normalized cobar
  complex
  \[
    \widehat{C}_{\sA_*}^s(\bF_p, P_*) = \sA_*^{\otimes s}\ctensor P_*\,,
  \]
  with coboundary $\delta$ the usual alternating sum of coface maps, induced by the
  unit $\eta \: \bF_p \to \sA_*$, the coproduct
  $\psi \: \sA_* \to \sA_* \otimes \sA_*$ and the coaction
  $\nu \: P_* \to \sA_* \ctensor P_*$.  See
  \cite{Ra86}*{Def.~A1.2.11}.  We define the \emph{continuous}
  $\sA_*$-comodule $\Ext$ of~$\bF_p$ and~$P_*$ to be the cohomology
  $\cExt^s_{\sA_*}(\bF_p, P_*) = H^s( \widehat{C}_{\sA_*}^*(\bF_p,
  P_*), \delta )$.
\end{definition}
We avoid the notation $\widehat{\Ext}$, since this already means
``stable $\Ext$'' in the terminology of~\cite{AV07}*{\S1.4}, being
related to ordinary $\Ext$ in the same way as Tate cohomology is
related to group cohomology.

A morphism $f\:P_*\to Q_*$ in $\rbbfilacomodc$ induces a map of graded
$\bF_p$-vector spaces
\begin{equation}
  \label{eq:extiso}
  f_*\:\cExt_{\sA_*}^s (\bF_p, P_*)\longto
  \cExt_{\sA_*}^s (\bF_p, Q_*)\,.
\end{equation}
\begin{proposition}\label{prop:ext-iso}
  Let $f\:P_*\to Q_*$ be a morphism in $\rbbfilacomodc$.  If $f$ is an
  isomorphism of (unfiltered) graded $\bF_p$-vector spaces, then
  \eqref{eq:extiso} is an isomorphism for each~$s$.
\end{proposition}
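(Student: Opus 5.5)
The plan is to show that $f$ induces an isomorphism of cochain complexes
\[
f_* \: \wC^*_{\sA_*}(\bF_p, P_*) \longto \wC^*_{\sA_*}(\bF_p, Q_*)
\]
in each cosimplicial degree~$s$. The proposition then follows by passing to cohomology. Since $f$ is a morphism in $\rbbfilacomodc$, it commutes with the coaction maps~$\nu$. Hence $1^{\otimes s} \ctensor f$ commutes with all coface maps, and so with the coboundary~$\delta$. So $f_*$ is a cochain map, and it remains to show that each component
\[
1 \ctensor f \: \sA_*^{\otimes s} \ctensor P_* \longto \sA_*^{\otimes s} \ctensor Q_*
\]
is a bijection of graded $\bF_p$-vector spaces. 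The difficulty is that $f$ need not be a filtered isomorphism: $f^{-1}$ may fail to preserve filtrations, so we cannot simply complete an inverse. We must instead see that the completed tensor product with a bounded below, finite type, discretely filtered $V_*$ depends only on the underlying graded vector space of a complete Hausdorff object.

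For this we use Lemma~\ref{lemma:iotafilmods} with $V_* = \sA_*^{\otimes s}$. This $V_*$ is bounded below and of finite type, with dual $V^* = \sA^{\otimes s}$. The lemma gives natural isomorphisms
\[
\hat\iota \: \sA_*^{\otimes s} \ctensor P_* \overset{\cong}\longto \Hom(\sA^{\otimes s}, P^\wedge_*) \cong \Hom(\sA^{\otimes s}, P_*)\,,
\]
where the last step uses that $P_*$ is complete Hausdorff, so the completion map $c \: P_* \to P^\wedge_*$ is an isomorphism. The same holds for~$Q_*$.

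The key step is naturality of $\hat\iota$ with respect to the filtration-preserving map~$f$. The $\iota_n$ are natural in the quotients $P_*/F_nP_* \to Q_*/F_nQ_*$, and $\hat\iota$ is their limit. This yields a commutative square identifying $1 \ctensor f$ with $\Hom(1, f^\wedge) = \Hom(1, f) \: \Hom(\sA^{\otimes s}, P_*) \to \Hom(\sA^{\otimes s}, Q_*)$. The latter is a bijection because $f$ is a bijection of graded vector spaces, by the same argument as in the proof of Lemma~\ref{lemma:f-iso}. Hence each $1 \ctensor f$ is bijective, and $f_*$ is an isomorphism of cochain complexes.

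I expect the only delicate point to be checking carefully that the identifications $c \: P_* \cong P^\wedge_*$ are compatible with $f$, i.e.\ that $f^\wedge \circ c = c \circ f$. This holds by naturality of completion. Note that we never need to invert $f$ in the filtered category: we only use that the target functor $\Hom(\sA^{\otimes s}, -)$ ignores filtrations.
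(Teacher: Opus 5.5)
Your proposal is correct and is essentially the paper's proof: both use the natural isomorphism $\hat\iota$ of Lemma~\ref{lemma:iotafilmods} with $V_* = \sA_*^{\otimes s}$, together with $P_* \cong P_*^\wedge$ and $Q_* \cong Q_*^\wedge$, to identify $1 \ctensor f$ with $\Hom(1, f)$ on $\Hom(\sA^{\otimes s}, -)$, which is bijective in each codegree. The appeal to Lemma~\ref{lemma:f-iso} for that last bijectivity is unnecessary, since $\Hom(1, f^{-1})$ is directly an inverse of $\Hom(1, f)$.
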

Note that the hypothesis on $f$ does not imply that $f$ is an
isomorphism in $\rbbfilacomodc$, since its inverse might not be
filtration-preserving.
\begin{proof}
  The isomorphism $\hat\iota$ of Lemma \ref{lemma:iotafilmods} is
  natural with respect to morphisms $f\:P_*\to Q_*$.  Since $P_*$ and
  $Q_*$ are complete Hausdorff, there are natural isomorphisms
  $P_* \cong P_*^\wedge$ and $Q_* \cong Q_*^\wedge$, and we get a
  commutative square
  \[
    \xymatrix@C20mm{
      \Hom(\sA^{\otimes s}, P_*) \ar[r]^{\Hom(1, f)}
      & \Hom(\sA^{\otimes s}, Q_*)\\
      \widehat{C}^s_{\sA_*}(\bF_p, P_*) \ar[r] \ar[u]^\cong_{\hat\iota}
      &\widehat{C}^s_{\sA_*}(\bF_p, Q_*) \ar[u]^\cong_{\hat\iota}\rlap{\,.}
   }
 \]
 The top horizontal map is an isomorphism by hypothesis.  Thus,~$f$
 induces a map of cobar complexes that is an isomorphism in each
 codegree $s$.  It follows that \eqref{eq:extiso} is an isomorphism
 for each $s$.
\end{proof}

\begin{lemma}\label{lemma:cobarlimit}
  For each $P_*$ in $\rbbfilacomodc$, the canonical homomorphisms
  $P_*\to P_*/F_nP_*$ induce a natural isomorphism of complexes
  \begin{equation}
    \label{eq:cobarlimit}
    \widehat{C}_{\sA_*}^*(\bF_p, P_*) \overset\cong\longto
    \lim_n C_{\sA_*}^*(\bF_p, P_*/F_nP_*)\,.
  \end{equation}
\end{lemma}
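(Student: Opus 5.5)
We prove the isomorphism degreewise in the cobar degree~$s$, and then check that it commutes with the coface maps.

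For fixed $s$, the definition of the completed tensor product, in the form~\eqref{eq:ctensor-discrete} with $V_* = \sA_*^{\otimes s}$ (bounded below, discretely filtered), gives a natural isomorphism
\[
\widehat{C}^s_{\sA_*}(\bF_p, P_*) = \sA_*^{\otimes s} \ctensor P_*
\cong \lim_n \bigl( \sA_*^{\otimes s} \otimes P_*/F_nP_* \bigr)
= \lim_n C^s_{\sA_*}(\bF_p, P_*/F_nP_*) \,.
\]
This is induced by the projections $P_* \to P_*/F_nP_*$. For this to make sense we need each $P_*/F_nP_*$ to be an ordinary bounded below left $\sA_*$-comodule. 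It is bounded below by the rbb hypothesis. The coaction is defined as the composite
\[
P_* \overset{\nu}\longto \sA_* \ctensor P_* \longto \sA_* \otimes P_*/F_nP_* \,.
\]
Since $\nu$ is filtration-preserving, it carries $F_nP_*$ into $F_n(\sA_*\ctensor P_*)$, which is the kernel of the second map. Hence the composite factors through $P_*/F_nP_*$. The coassociativity and counit diagrams for $P_*/F_nP_*$ follow from those for $P_*$ by applying the quotient maps. Here we use the associativity isomorphism~\eqref{eq:associso} modulo~$F_n$, namely $\sA_*\otimes(\sA_*\otimes P_*/F_nP_*) \cong (\sA_*\otimes\sA_*)\otimes P_*/F_nP_*$. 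The transition maps $P_*/F_{n-1}P_* \to P_*/F_nP_*$ are then comodule maps, so the right-hand side of~\eqref{eq:cobarlimit} is a limit of cochain complexes.

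It remains to check that the degreewise isomorphisms commute with the coboundaries. For this it suffices to treat each coface map separately. The coface maps built from $\eta$ and $\psi$ act only on the $\sA_*^{\otimes s}$ factors. They are therefore $\psi \ctensor 1$ and the corresponding maps on the left, and under~\eqref{eq:ctensor-discrete} they are the limits of the maps $\psi\otimes 1$ on $\sA_*^{\otimes s}\otimes P_*/F_nP_*$. The last coface map is $1\ctensor\nu$, followed by the identification $\sA_*^{\otimes s}\ctensor(\sA_*\ctensor P_*) \cong \sA_*^{\otimes s+1}\ctensor P_*$ from~\eqref{eq:associso}. Modulo~$F_n$ it reduces by construction to $1\otimes\bar\nu_n$, where $\bar\nu_n$ is the induced coaction on $P_*/F_nP_*$. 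Naturality in~$P_*$ is clear from the construction.

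The main point requiring care is this compatibility of the last coface map with the reduction modulo~$F_n$. One must verify that the isomorphism~\eqref{eq:associso} is itself the limit of the finite-stage identifications. This holds by its definition as a limit of the compatible isomorphisms displayed just before~\eqref{eq:associso}. Once this is in place, the limit of the finite-stage coboundaries is the continuous coboundary, which proves the lemma.
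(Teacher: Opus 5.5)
Your proposal is correct and follows essentially the same route as the paper. Both arguments identify each codegree with $\lim_n \sA_*^{\otimes s}\otimes P_*/F_nP_*$ via \eqref{eq:ctensor-discrete}. Both then use that the coface maps are filtration-preserving, so they descend modulo $F_n$ to the coface maps of $C^\bullet_{\sA_*}(\bF_p, P_*/F_nP_*)$, and pass to the limit. You are more explicit than the paper on two points it leaves implicit: you construct the induced comodule structure on $P_*/F_nP_*$, and you check that \eqref{eq:associso} is compatible with the finite-stage reductions.
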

\begin{proof}
  For each $n\in \bZ$, let $j_n\: P_*\to P_*/F_nP_*$ be the canonical
  projection.  Let $\widehat{C}^\bullet_{\sA_*}(\bF_p, P_*)$ be the
  pre-cosimplicial graded $\bF_p$-vector space with associated
  (un-normalized) Moore cochain complex
  $\widehat{C}^*_{\sA_*}(\bF_p, P_*)$.  Each coface homomorphism
  $d^i\: \widehat{C}_{\sA_*}^s(\bF_p, P_*)\to
  \widehat{C}_{\sA_*}^{s+1}(\bF_p, P_*)$ is filtration-preserving.
  (The interesting case here is $i=s+1$, when $d^{s+1}$ is induced by
  the complete coaction $\nu\: P_*\to \sA_*\ctensor P_*$.)  Thus, for
  each~$0\leq i \leq s+1$ we get a commutative square
  \begin{equation}
    \begin{aligned}
      \label{eq:ttrunc}
      \xymatrix{
      \sA_*^{\otimes s}\ctensor P_*
      \ar[r]^-{1\ctensor j_n} \ar[d]^{d^i}
      & \sA_*^{\otimes s}\otimes P_*/F_nP_*
        \ar[d]^{d_n^i}
      \\
      \sA_*^{\otimes (s+1)}\ctensor P_*
      \ar[r]^-{1\ctensor j_n}
      & \sA_*^{\otimes (s+1)}\otimes P_*/F_nP_* \,,
        }
    \end{aligned}
  \end{equation}
  where we have implicitly used the canonical isomorphism
  $\sA_*^{\otimes s}\ctensor P/F_nP_* \cong \sA_*^{\otimes s}\otimes
  P/F_nP_*$ to identify the right-hand column.  For a fixed~$n$ the
  homomorphisms~$d^i_n$ of \eqref{eq:ttrunc} are the coface operators
  of $C^\bullet_{\sA_*}(\bF_p, P_*/F_nP_*)$, and the isomorphism
  \eqref{eq:cobarlimit} follows from forming the alternating sum over
  the coface operators and passing to the limit over~$n$, using
  the natural isomorphism~\eqref{eq:ctensor-discrete}.
\end{proof}

\begin{lemma}
  \label{lemma:extlimit}
  Let $X$ be a spectrum with $G$-action such that $X/p$ is bounded
  below and of finite type.  The canonical truncation maps
  $X^{tG}\to X^{tG}[n]$ induce a natural isomorphism of
  complexes
  \begin{equation}
    \label{eq:cobar-tate-limit}
    \widehat{C}_{\sA_*}^*(\bF_p, H_*^c(X^{tG})) \overset\cong\longto
    \lim_n C_{\sA_*}^*(\bF_p, H_*(X^{tG}[n]))\,.
  \end{equation}
\end{lemma}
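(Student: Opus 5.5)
The plan is to reduce to Lemma~\ref{lemma:cobarlimit} and then compare the quotients $H^c_*(X^{tG})/F_nH^c_*(X^{tG})$ with the truncations $H_*(X^{tG}[n+1])$ at the level of limits of cobar complexes. Since $X/p$ is bounded below and of finite type, Proposition~\ref{prop:continuous-homology-is-monoidal} and Lemma~\ref{lemma:strong-convergence-homology} show that $P_* := H^c_*(X^{tG})$ is an rbb complete Hausdorff filtered right $\sA$-module. By Proposition~\ref{prop:categoryiso} it is therefore an object of $\rbbfilacomodc$. Lemma~\ref{lemma:cobarlimit} then gives a natural isomorphism
\[
\widehat{C}^*_{\sA_*}(\bF_p, P_*) \cong \lim_n C^*_{\sA_*}(\bF_p, P_*/F_nP_*) \,.
\]
Next, the truncation maps induce injective homomorphisms $i_n \: P_*/F_nP_* \to H_*(X^{tG}[n+1])$, by the kernel description~\eqref{eq:kernel-tate-filtration} together with Proposition~\ref{prop:homotopy-homology}. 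These are $\sA$-linear between bounded below modules, so they are maps of ordinary $\sA_*$-comodules by~\eqref{eq:isocat}. They therefore induce a map of towers of cobar complexes, and after reindexing $n \mapsto n+1$ the composite is the map~\eqref{eq:cobar-tate-limit}. Naturality in $X$ is clear, since every ingredient is natural.

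It remains to show that $\lim_n C^s_{\sA_*}(\bF_p, P_*/F_nP_*) \to \lim_n C^s_{\sA_*}(\bF_p, H_*(X^{tG}[n+1]))$ is an isomorphism in each codegree $s$. Here $C^s = \sA_*^{\otimes s} \otimes (-)$. I would apply Lemma~\ref{lemma:f-iso} with $V_* = \sA_*^{\otimes s}$, which is bounded below and of finite type. Both towers consist of bounded below graded vector spaces, by Lemma~\ref{lemma:compact-hausdorff-tate}. The limit map $\lim_n P_*/F_nP_* \to \lim_n H_*(X^{tG}[n])$ is the map $i$ of Lemma~\ref{lemma:continuous-homology-is-inverse-limit} precomposed with the identification $P_* \cong P_*^\wedge$, so it is an isomorphism. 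Lemma~\ref{lemma:f-iso} then yields the isomorphism in each codegree. Compatibility with the coboundaries holds because it holds levelwise, so the result is an isomorphism of complexes.

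The main point needing care is the identification of the coaction on $P_*/F_nP_*$ with the one it inherits as a submodule of the bounded below comodule $H_*(X^{tG}[n+1])$. I expect this to follow because the complete coaction on $P_*$ was defined through $\hat\iota$ as a limit of the levelwise $\iota_n$ (Lemma~\ref{lemma:iotafilmods}), and these are compatible with the $\sA$-linear maps $i_n$. A second, easier point is to check that the map of limits really coincides with $i$ after the reindexing; this amounts to tracking the conventions in~\eqref{eq:kernel-tate-filtration}.
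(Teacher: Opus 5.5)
Your proposal is correct and follows essentially the same route as the paper: the paper applies Lemma~\ref{lemma:f-iso} with $V_* = \sA_*^{\otimes s}$ to the monomorphisms $H^c_*(X^{tG})/F_{n-1}H^c_*(X^{tG}) \to H_*(X^{tG}[n])$, whose limit is an isomorphism by Lemma~\ref{lemma:continuous-homology-is-inverse-limit}, and then identifies the resulting limit of cobar complexes via Lemma~\ref{lemma:cobarlimit}. The compatibility of the coactions on the quotients with the cofaces, which you flag as the point needing care, is handled in the paper simply by appeal to naturality.
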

\begin{proof}
  The limit of the canonical monomorphisms
  \begin{equation}
    \label{eq:cobarx}
    \frac{H_*^c (X^{tG})}{F_{n-1} H_*^c (X^{tG})} \longto H_* (X^{tG}[n])
  \end{equation}
  is an isomorphism by Lemma~\ref{lemma:continuous-homology-is-inverse-limit}.
  Tensoring both sides of~\eqref{eq:cobarx} with $\sA_*^{\otimes s}$
  and passing to the limit, we get a natural isomorphism
  \begin{equation}
    \label{eq:cobary}
    \sA_*^{\otimes s}\ctensor H_*^c(X^{tG}) \overset\cong\longto
    \lim_n \sA_*^{\otimes s}\otimes H_*(X^{tG}[n])
  \end{equation}
  by Lemma~\ref{lemma:f-iso}.  Naturality ensures that
  \eqref{eq:cobary} for various~$s$ will commute with the coface
  operators.  Hence,
  \begin{equation}
    \label{eq:cobar-limits}
    \lim_n C_{\sA_*}^*\bigl(\bF_p, \frac{H_*^c (X^{tG})}{F_{n-1} H_*^c (X^{tG})}\bigr)
    \overset\cong\longto
    \lim_n C_{\sA_*}^*(\bF_p, H_*(X^{tG}[n]))\,.
  \end{equation}
  Applying Lemma~\ref{lemma:cobarlimit} with $P_*=H_*^c(X^{tG})$
  identifies the left-hand side of~\eqref{eq:cobar-limits},
  and the isomorphism~\eqref{eq:cobar-tate-limit} follows.
\end{proof}

\begin{proposition}
  Let $X$ be a spectrum with $G$-action such that $X/p$ is bounded
  below and of finite type.  The canonical truncation maps
  $X^{tG}\to X^{tG}[n]$ induce a natural isomorphism
  \begin{equation}
    \label{eq:cext-tate-limit}
    \cExt^s_{\sA_*}(\bF_p, H^c_*(X^{tG})) \overset\cong\longto
    \lim_n \Ext^s_{\sA_*}(\bF_p, H_*(X^{tG}[n]))
  \end{equation}
  for each $s$.
\end{proposition}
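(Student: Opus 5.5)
By Lemma~\ref{lemma:extlimit}, the continuous cobar complex $\widehat{C}^*_{\sA_*}(\bF_p, H^c_*(X^{tG}))$ is isomorphic to the limit of the cobar complexes $C^*_n := C^*_{\sA_*}(\bF_p, H_*(X^{tG}[n]))$. The plan is therefore to compare the cohomology of a limit of cochain complexes with the limit of their cohomologies. The standard tool is the Milnor-type short exact sequence
\[
0 \longto \rlim_n H^{s-1}(C^*_n) \longto H^s(\lim_n C^*_n) \longto \lim_n H^s(C^*_n) \longto 0 \,,
\]
which holds whenever the tower $\{C^s_n\}_n$ is Mittag-Leffler in each codegree $s$, for instance when $\rlim_n C^s_n = 0$ for all $s$. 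Here $H^s(C^*_n) = \Ext^s_{\sA_*}(\bF_p, H_*(X^{tG}[n]))$. The desired isomorphism~\eqref{eq:cext-tate-limit} will follow once both $\rlim$-terms vanish, namely $\rlim_n C^s_n = 0$ (to obtain the sequence) and $\rlim_n \Ext^{s-1}_{\sA_*}(\bF_p, H_*(X^{tG}[n])) = 0$.

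The key input is finiteness. By Lemma~\ref{lemma:compact-hausdorff-tate}, each $H_*(X^{tG}[n])$ is $(n+k)$-connective and of finite type, since $X/p$ is bounded below and of finite type. Since $\sA_*$ is also bounded below and of finite type, each $\sA_*^{\otimes s} \otimes H_*(X^{tG}[n])$ is bounded below and of finite type. Hence in each fixed internal degree $t$, both $C^s_n$ and $\Ext^{s,t}$ are finite-dimensional $\bF_p$-vector spaces. The relevant $\rlim$ is computed degreewise, because limits in graded objects are taken degreewise. A tower of finite-dimensional vector spaces is automatically Mittag-Leffler, since descending chains of images stabilize, so its $\rlim$ vanishes. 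This disposes of both $\rlim$-terms.

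I will write the argument as follows. First, I identify the cochain-level limit via Lemma~\ref{lemma:extlimit}. Second, I fix $t$ and observe the degreewise finiteness just described. Third, I apply the $\lim$--$\rlim$ exact sequence for cohomology of a tower of cochain complexes, for example \cite{Boa99}*{\S 3} or Weibel's form, with the Mittag-Leffler hypothesis. Fourth, I conclude that the edge map is an isomorphism, and that it is induced by the truncation maps, which gives naturality.

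The main obstacle is the finiteness bookkeeping in the second step. One must check that in fixed bidegree $(s,t)$ only finitely many summands of $\sA_*^{\otimes s} \otimes H_*(X^{tG}[n])$ contribute, uniformly enough that each stage is finite-dimensional. This uses the connectivity of both factors together with finite type. If uniformity in $n$ caused trouble, the fallback is to note that finiteness is only needed for each fixed $n$, since Mittag-Leffler is a stagewise finiteness condition. Once this is settled, no interaction between the limit and the coboundary remains.
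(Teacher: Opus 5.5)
Your proposal is correct and follows essentially the same route as the paper's proof. Lemma~\ref{lemma:extlimit} identifies the continuous cobar complex with $\lim_n C^*_{\sA_*}(\bF_p, H_*(X^{tG}[n]))$, and the degreewise finiteness from Lemma~\ref{lemma:compact-hausdorff-tate} makes each codegree tower Mittag-Leffler, so Weibel's $\lim$--$\rlim$ sequence applies. The $\rlim_n \Ext^{s-1}$ term then vanishes because those groups are also finite in each degree, and, as you note, this needs only stagewise finiteness, with no uniformity in $n$.
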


\begin{proof}
  Let $Q[n]_* = H_*(X^{tG}[n])$.  Consider the tower of cobar
  complexes
  \begin{equation}
    \label{eq:limit-cobar}
    \dots \to C^*_{\sA_*}(\bF_p, Q[n]_*)
    \longto C^*_{\sA_*}(\bF_p, Q[n+1]_*) \to \dots
  \end{equation}
  given in codegree~$s$ by
  \begin{equation}
    \label{eq:cobar-bracket-n}
    \dots \to \sA_*^{\otimes s} \otimes Q[n]_*
    \longto \sA_*^{\otimes s} \otimes Q[n+1]_* \to \dots \,.
  \end{equation}
  By Lemma~\ref{lemma:extlimit}, the limit of~\eqref{eq:limit-cobar}
  is isomorphic to the completed cobar complex
  $\widehat C^*_{\sA_*}(\bF_p, H_*(X^{tG}))$.  By
  Lemma~\ref{lemma:compact-hausdorff-tate}, each $Q[n]_*$ is of finite
  type, hence so is each $\sA_*^{\otimes s} \otimes Q[n]_*$.
  Therefore, \eqref{eq:cobar-bracket-n} is a Mittag-Leffler tower for
  each~$s$, and it follows from \cite{Weibel94}*{Thm.~3.5.8} that
  there is a short exact sequence
  \begin{multline}
      \label{eq:cext-lim}
      0
      \longto
      \rlim_n\Ext^{s-1}_{\sA_*}(\bF_p, Q[n]_*)
      \longto
      \cExt^s_{\sA_*}(\bF_p, H_*^c(X^{tG})) \\
      \longto
      \lim_n \Ext^s_{\sA_*}(\bF_p, Q[n]_*)
      \longto
      0
  \end{multline}
  for each~$s$.  Since each $Q[n]_*$ is of finite type, so is
  $\Ext^{s-1}_{\sA_*}(\bF_p, Q[n]_*)$ for each $n$.  Therefore the
  derived limit in~\eqref{eq:cext-lim} vanishes, and the proposition
  follows.
\end{proof}

\subsection{Bousfield--Kan spectral sequences}\label{sec:limit-adams}
Let $\Delta$ be the category with objects
$[q] = \{0 < 1 < \dots < q\}$ for $q\ge0$ and order-preserving
functions, and let $M = \Delta_{\inj}$ be the wide subcategory
of~$\Delta$ containing only the injective morphisms.  For a category
$\sC$, we will refer to objects of the functor categories
$\sC^M = \Fun(M, \sC)$ and $\sC^\Delta = \Fun(\Delta, \sC)$ as
pre-cosimplicial and cosimplicial objects in~$\sC$, respectively.

Let $\Delta^\bullet$ be the cosimplicial space $[q]\mapsto \Delta^q$,
and let $\sk_k\Delta^\bullet$ be the cosimplicial space that in
codegree~$q$ is the~$k$-skeleton of $\Delta^q$.

Let $\Sp$ be the symmetric monoidal category of orthogonal spectra.
For $Y^\bullet$ in $\Sp^M$, the \emph{fat totalization} of~$Y^\bullet$
is the mapping spectrum
$\TOT Y^\bullet = \map_{\Sp^M}(\Delta^\bullet_+, Y^\bullet)$.  When
$Y^\bullet$ is in~$\Sp^\Delta$, the (ordinary) \emph{totalization} is
$\Tot Y^\bullet = \map_{\Sp^\Delta}(\Delta^\bullet_+, Y^\bullet)$.
The inclusion of categories $M \subset \Delta$ is left cofinal (=
initial), so that
$$
\Tot Y^\bullet \simeq \holim_\Delta Y^\bullet
	\overset{\simeq}\longto \holim_M Y^\bullet \simeq \TOT Y^\bullet
$$
is an equivalence and $\pi_*(\Tot Y^\bullet) \cong \pi_*(\TOT Y^\bullet)$.
Returning to the generality of pre-cosimplicial $Y^\bullet$, we let
\begin{align*}
\fil^s \TOT Y^\bullet
  &= \map_{\Sp^M}(\Delta^\bullet/\sk_{s-1} \Delta^\bullet, Y^\bullet) \\
\gr^s \TOT Y^\bullet
  &= \map_{\Sp^M}(\sk_s \Delta^\bullet/\sk_{s-1} \Delta^\bullet, Y^\bullet)
\end{align*}
for each $s\ge0$.
The inclusions $\sk_{s-1} \Delta^\bullet \subset \sk_s \Delta^\bullet$
produce a tower of spectra
\begin{equation} \label{eq:BK-tower}
\dots \longto \fil^{s+1} \TOT Y^\bullet
	\longto \fil^s \TOT Y^\bullet \longto
	\dots \longto \fil^0 \TOT Y^\bullet
	= \TOT Y^\bullet \,,
\end{equation}
with trivial homotopy limit.  The cofiber sequences
$$
\sk_s \Delta^\bullet/\sk_{s-1} \Delta^\bullet
\longto \Delta^\bullet/\sk_{s-1} \Delta^\bullet
\longto \Delta^\bullet/\sk_s \Delta^\bullet
$$
induce (co-)fiber sequences
$$
\fil^{s+1} \TOT Y^\bullet \longto \fil^s \TOT Y^\bullet
	\longto \gr^s \TOT Y^\bullet \,.
$$
The Bousfield--Kan spectral sequence associated to~$Y^\bullet$ is the
spectral sequence derived from the unrolled exact couple obtained by
applying homotopy groups to these cofiber sequences.
For each $s\ge0$ there is an equivalence
$$
\gr^s \TOT Y^\bullet =
\map_{\Sp^M}(\sk_s \Delta^\bullet/\sk_{s-1} \Delta^\bullet, Y^\bullet)
	\overset{\simeq}\longto \Omega^s Y^s \,,
$$
given by restricting to codegree~$s$.  Hence the Bousfield--Kan spectral
sequence has the form
\begin{equation}
  \label{eq:BK-E1}
  E_1^{s,t} = \pi_{t} (Y^s) \Longrightarrow
  \pi_{t-s}(\Tot Y^\bullet) \,,
\end{equation}
with $d^{s,t}_1\: \pi_t (Y^s) \to \pi_t (Y^{s+1})$ given by the
alternating sum $d^{s,t}_1 = \sum_{i=0}^{s+1} (-1)^i d^i_*$, where
$d^i\: Y^s\to Y^{s+1}$ is the $i$-th coface map of $Y^\bullet$.  In
other words, $E^{*,t}_1$ is the (un-normalized) Moore complex
associated with the pre-cosimplicial abelian group $\pi_t(Y^\bullet)$.

It follows from the Milnor $\lim$--$\rlim$ short exact sequence, and
the fact that $\holim_s \fil^s \TOT Y^\bullet$ is trivial, that both
$\lim_s \pi_*(\fil^s \TOT Y^\bullet)$ and
$\rlim_s \pi_*(\fil^s\TOT Y^\bullet)$ are trivial groups.  Thus, the
Bousfield--Kan spectral sequence is a half-plane spectral sequence
with entering differentials, converging conditionally to
$\pi_*(\TOT Y^\bullet)$.  It follows from \cite{Boa99}*{Thm.~7.1}
that \eqref{eq:BK-E1} converges strongly if Boardman's derived
$E_\infty$-term $RE^{s,t}_\infty$ vanishes for each $s$ and $t$.  In
the current setting, strong convergence is implied if $\pi_t(Y^s)$ is
finite for each $s$ and $t$.

\begin{example}[The Adams spectral sequence]\label{ex:adams-sp-seq}
  For a fixed prime~$p$, let $H$ be the mod~$p$ Eilenberg--MacLane
  spectrum, and assume that $X$ is a spectrum such that $X/p$ is
  bounded below and of finite type.  There is a cosimplicial spectrum
  $Y^\bullet$ with
  \begin{equation}
    \label{eq:cobarcomplex}
    [s] \mapsto Y^s = H^{\wedge(1+s)}\wedge X\,,
  \end{equation}
  coface maps induced by the ring spectrum unit $\eta\: S\to H$, and
  codegeneracy maps induced by the associative ring spectrum product
  $H \wedge H \to H$.  Consider its underlying pre-cosimplicial
  spectrum, also denoted~$Y^\bullet$.  Then $(E^*_1, d_1)$ of the
  associated Bousfield--Kan spectral sequence is naturally isomorphic
  to the un-normalized cobar complex
  $(C^*_{\sA_*}(\bF_p,H_*(X)), \delta)$.  Thus,
  \begin{equation}
    \label{eq:adams-ss}
    E_2^{s,t} = \Ext_{\sA_*}^{s,t}(\bF_p, H_*(X))
    \Longrightarrow \pi_{t-s} (\TOT Y^\bullet)
    \cong \pi_{t-s} (\Tot Y^\bullet)\,.
  \end{equation}
  Since $H_*(X)$ is of finite type, $C^*_{\sA_*}(\bF_p, H_*(X))$ is
  finite in each bidegree, implying strong convergence of
  \eqref{eq:adams-ss}.

  It follows from \cite{MNN17}*{Prop.~2.14} that the totalization
  $\Tot Y^\bullet$ is equivalent to the Bousfield~$H$-nilpotent
  completion $X^\wedge_H$.  When $X$ is bounded below, Bousfield shows
  \cite{B79}*{Thm.~6.6 and Prop.~2.5} that
  $X^\wedge_H \simeq X^\wedge_p$ is $p$-completion.  In fact it
  suffices that $X/p$ is bounded below:  If $X/p$ is $k$-connected,
  then $p$ acts invertibly on $\pi_i(X)$ for $i < k$.  The cofiber
  sequence $\tau_{\ge k} X \to X \to \tau_{<k} X$, and Bousfield's
  result for the bounded below spectrum $\tau_{\ge k} X$, reduces us
  to showing that $\tau_{<k} X$ has trivial $p$-completion and
  $H$-nilpotent completion.  The first claim follows from
  $S/p \wedge \tau_{<k} X \simeq *$, which also implies
  $H \wedge \tau_{<k} X \simeq *$ and the second claim.
\end{example}

\begin{proposition}
  \label{prop:inverse-limit-of-adams-ss}
  Let $G$ be a compact Lie group, and $X$ a spectrum with $G$-action
  such that $X/p$ is bounded below and of finite type.  There is a
  limit Adams spectral sequence
  \begin{equation}
    \label{eq:limit-adams}
    E_2^{s,t} = \cExt^{s,t}_{\sA_*}(\bF_p, H_*^c(X^{tG})) \Longrightarrow
    \pi_{t-s} \left((X^{tG})^\wedge_p\right)\,,
  \end{equation}
  converging strongly to the homotopy groups of the $p$-completion of
  the $G$-Tate spectrum $X^{tG}$.
\end{proposition}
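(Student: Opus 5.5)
We construct the limit Adams spectral sequence as the Bousfield--Kan spectral sequence of a limit of cosimplicial spectra. For each~$n$ let $Y[n]^\bullet$ be the Adams cosimplicial spectrum of Example~\ref{ex:adams-sp-seq} applied to the truncation $X^{tG}[n]$, so $Y[n]^s = H^{\wedge(1+s)} \wedge X^{tG}[n]$. The tower maps $X^{tG}[n] \to X^{tG}[n+1]$ give a tower of cosimplicial spectra. We then set $Y^\bullet = \holim_n Y[n]^\bullet$, formed levelwise; after fibrant replacement in the projective model structure this is again a (pre-)cosimplicial spectrum. Since $\TOT$ is a homotopy limit, it commutes with $\holim_n$:
\[
\TOT Y^\bullet \simeq \holim_n \TOT Y[n]^\bullet \simeq \holim_n (X^{tG}[n])^\wedge_p \simeq \bigl(\holim_n X^{tG}[n]\bigr)^\wedge_p \simeq (X^{tG})^\wedge_p \,.
\]
Here we use Example~\ref{ex:adams-sp-seq} for each $X^{tG}[n]$. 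This requires $X^{tG}[n]/p$ to be bounded below and of finite type, which holds by Lemma~\ref{lemma:compact-hausdorff-tate}. We also use that $p$-completion, being a homotopy limit $\holim_k F(S^{-1}/p^k,-)$, commutes with $\holim_n$.

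Next we identify the $E_1$- and $E_2$-terms. In codegree~$s$, Proposition~\ref{prop:homotopy-homology} gives $H^{\wedge s} \wedge (H\wedge X)^{tG}[n] \simeq H^{\wedge(1+s)}\wedge X^{tG}[n]$, with homotopy $\sA_*^{\otimes s} \otimes H_*(X^{tG}[n])$. Each such group is finite in each degree by finite type, so the tower is Mittag-Leffler and $\rlim$ vanishes. The Milnor sequence then gives
\[
E_1^{s,t} = \pi_t(Y^s) \cong \lim_n \sA_*^{\otimes s} \otimes H_*(X^{tG}[n]) \,,
\]
and the $d_1$ differentials are the limits of the cobar coboundaries. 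By Lemma~\ref{lemma:extlimit}, $(E_1,d_1)$ is isomorphic to the continuous cobar complex $\widehat C^*_{\sA_*}(\bF_p, H^c_*(X^{tG}))$. Hence $E_2^{s,t} = \cExt^{s,t}_{\sA_*}(\bF_p, H^c_*(X^{tG}))$.

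The main obstacle is strong convergence, since $E_1^{s,t}$ is no longer finite. From the Bousfield--Kan discussion the spectral sequence is half-plane with entering differentials and converges conditionally, so by Boardman~\cite{Boa99}*{Thm.~7.1} it suffices to show $RE_\infty^{s,t}=0$. I would check this via finiteness of $E_2^{s,t}$, or at least of the $E_r$-terms in each bidegree, which makes the $\lim_r$ Mittag-Leffler. By the proposition giving~\eqref{eq:cext-tate-limit}, $E_2^{s,t} \cong \lim_n \Ext^{s,t}_{\sA_*}(\bF_p, H_*(X^{tG}[n]))$.

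Fix $t-s$. By Lemma~\ref{lemma:compact-hausdorff-tate}, $H_*(X^{tG}[n])$ is $(n+k)$-connective. So for $n$ large relative to $t$, the groups $\Ext^{s,t}$ of the truncation vanish, and for very negative~$n$ the tower should stabilize, since the layers $X^{tG}[n-1]\to X^{tG}[n]$ contribute only in high filtration. If finiteness fails in general, the fallback is to exhibit the spectral sequence as the limit of the strongly convergent Adams spectral sequences for $X^{tG}[n]$, which have finite terms. One then applies a $\lim$-exactness argument on $E_r$-terms (Mittag-Leffler, by finiteness) to deduce $RE_\infty=0$ and to identify the filtration on the abutment as complete Hausdorff and exhaustive.
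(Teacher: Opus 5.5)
Your construction of $Y^\bullet$ as $\holim_n Y[n]^\bullet$, the identification $\TOT Y^\bullet\simeq (X^{tG})^\wedge_p$, and the identification of $(E_1,d_1)$ with the continuous cobar complex via Lemma~\ref{lemma:extlimit} all match the paper. The paper instead sets $Y^s=(H^{\wedge(1+s)}\wedge X)^{tG}$ and proves $Y^\bullet\simeq\holim_n Y^\bullet[n]$; that model is the one that later carries the pairings for multiplicativity.

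\textbf{The gap is the convergence step.} Your main plan relies on finiteness of $E_2^{s,t}$, or of the $E_r^{s,t}$, and this fails in general. The towers $\Ext^{s,t}_{\sA_*}(\bF_p,H_*(X^{tG}[n]))$ do not stabilize as $n\to-\infty$. Lowering $n$ adds classes $t^r\otimes x$ of lower Tate filtration but of the same total degree, with $|x|$ arbitrarily large, and these contribute to a fixed bidegree $(s,t)$. So your claim that the layers ``contribute only in high filtration'' is false.

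For example, take $X=MU$ with trivial $\bT$-action. The Thom isomorphisms $MU\wedge\bC P^\infty_{-k}\simeq\bigvee_{j\ge -k}\Sigma^{2j}MU$ make $H^c_*(X^{t\bT})$, up to a shift, a completed product of the comodules $\Sigma^{2j}H_*(MU)$. Then $E_2^{s,t}$ is a product $\prod_j\Ext^{s,t-2j}_{\sA_*}(\bF_p,H_*(MU))$, which for $t-s$ even has infinitely many nonzero factors. Hence the $E_r^{s,t}$, and the subgroups $Z_r^{s,t}$ whose $\rlim$ is Boardman's $RE_\infty^{s,t}$, are typically uncountable. A Mittag-Leffler argument based on finiteness cannot be applied to them directly.

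Your fallback is the right idea and is essentially what the paper does, but it has to carry the whole argument, and it needs an ingredient you have not supplied.

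The paper argues topologically:
\begin{itemize}
\item Each $E_1^{s,t}$ is a limit of finite groups, hence a compact Hausdorff abelian group, with continuous differentials.
\item Compact Hausdorff abelian groups form an abelian category, Pontryagin dual to discrete abelian groups, in which limits are exact.
\item So every $Z_r$, $B_r$ and $E_r$ is compact Hausdorff, and $\rlim_r Z_r^{s,t}=0$.
\item Boardman's Theorem~7.1 then gives strong convergence.
\end{itemize}

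If you prefer to stay with finite groups, you must do two things.
\begin{itemize}
\item Prove by induction on $r$ that $Z_r$, $B_r$ and $E_r$ of $Y^\bullet$ are the limits over $n$ of those of $Y[n]^\bullet$. This uses exactness of $\lim_n$ on towers of finite groups and naturality of $d_r$.
\item Show that $\rlim_r\lim_n$ of the doubly indexed system of finite groups $Z_r^{s,t}(Y[n]^\bullet)$ vanishes.
\end{itemize}
Merely observing that the truncated Adams spectral sequences converge strongly with finite terms does not by itself give $RE_\infty=0$ for the limit spectral sequence.
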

\begin{proof}
  By applying the $G$-Tate construction in each codegree of
  \eqref{eq:cobarcomplex}, we get a pre-cosimplicial spectrum
  $Y^\bullet$, equal to
  \begin{equation}
    \label{eq:cobarcomplex-tate}
    [s] \mapsto (H^{\wedge(1+s)}\wedge X)^{tG}\,.
  \end{equation}
  The action of~$G$ on the smash product $H^{\wedge(1+s)}\wedge X$ is
  defined by giving~$H$ the trivial $G$-action and forming smash
  products in the category of spectra with $G$-action.  The spectral
  sequence we are after is the Bousfield--Kan spectral sequence
  associated with $Y^\bullet$, and we start by identifying its $E_1$-
  and $E_2$-terms.

  For each $n\in\bZ$, let $Y^\bullet[n]$ be the pre-cosimplicial
  spectrum
  $$
  [s] \mapsto H^{\wedge (1+s)}\wedge X^{tG}[n]\,.
  $$
  There are natural equivalences
  $(H^{\wedge (1+s)}\wedge X)^{tG}[n] \simeq H^{\wedge (1+s)}\wedge
  X^{tG}[n]$, from which we get a natural map of pre-cosimplicial
  spectra $Y^\bullet\to Y^\bullet[n]$, for each $n$, and these
  assemble into an equivalence $Y^\bullet\simeq \holim_n Y^\bullet[n]$
  by \cite{LNR12}*{Lem.~4.4}.

  For each~$n$ we get a map of towers,
  \begin{equation}
    \label{eq:tate-totalization-limit}
    \begin{aligned}
      \xymatrix{
      \ldots \ar[r]
      & \fil^{s+1}\TOT Y^\bullet \ar[r]\ar[d]
      & \fil^s\TOT Y^\bullet \ar[r]\ar[d]
      & \ldots \ar[r]
      & \TOT Y^\bullet \ar[d] \\
      \ldots \ar[r]
      & \fil^{s+1}\TOT Y^\bullet [n] \ar[r]
      & \fil^s \TOT Y^\bullet [n] \ar[r]
      & \ldots \ar[r]
      & \TOT Y^\bullet [n]\,,
        }
    \end{aligned}
  \end{equation}
  and an induced map of associated spectral sequences
  $E_1(Y^\bullet)\to E_1(Y^\bullet[n])$, which equals
  \begin{equation}
    \label{eq:isos}
    \pi_* ((H^{\wedge (1+s)}\wedge X)^{tG})
    \longto
    \pi_*(H^{\wedge (1+s)}\wedge X^{tG}[n])
    \cong
    \sA_*^{\otimes s}\otimes H_* (X^{tG}[n])
  \end{equation}
  in filtration degree~$s$.  Under the hypotheses on $X$,
  Lemma~\ref{lemma:compact-hausdorff-tate} implies that the right-hand
  side of \eqref{eq:isos} is of finite type, from which it follows
  that $\rlim_n \pi_*(H^{\wedge (1+s)}\wedge X^{tG}[n])=0$ and that
  \eqref{eq:isos} induces an isomorphism after passing to the limit
  over~$n$.

  Therefore, the $E_1$-term of the Bousfield--Kan spectral sequence
  associated with~$Y^\bullet$ is isomorphic to the limit over~$n$ of
  the $E_1$-terms of the (Adams $\cong$) Bousfield--Kan spectral
  sequences associated with the $Y^\bullet[n]$, and we get
  \begin{align}
    \nonumber E_1^{s,*}(Y^\bullet) &\cong \lim_n C_{\sA_*}^s(\bF_p, H_*(X^{tG}[n]))\\
    \label{eq:BK-E1-iso}
   &\cong \widehat{C}_{\sA_*}^s(\bF_p, H^c_*(X^{tG})) \,,
  \end{align}
  where the isomorphism~\eqref{eq:BK-E1-iso} is implied by
  Lemma~\ref{lemma:extlimit}.  The description of the $E_2$-term
  \eqref{eq:limit-adams} now follows from the definition of the
  continuous $\Ext$-groups.

  Next, we consider convergence.  The argument at the beginning of the
  proof of \cite{LNR12}*{Prop.~2.2} shows that taking filtered limits
  is exact in the category of compact Hausdorff abelian groups.
  Note that by the isomorphism \eqref{eq:cobar-tate-limit} and our
  assumptions on $H_*(X)$, the graded abelian group
  $\widehat C^s_{\sA_*}(\bF_p, H^c_*(X^{tG}))$ is a limit of finite
  groups in each degree. In particular, it is a compact Hausdorff
  abelian group in each degree.  The category of compact Hausdorff
  abelian groups is an abelian category that is Pontryagin dual to the
  category of discrete abelian groups, so kernels, images and
  quotients are also compact Hausdorff.  It follows that $E_r^{s,t}$
  is compact Hausdorff abelian for each $r$, $s$ and $t$.  Therefore,
  Boardman's $RE_\infty$ is trivial by exactness of the limit functor
  in the category of compact Hausdorff abelian groups, and the
  spectral sequence converges strongly by \cite{Boa99}*{Thm.~7.1}.

  Finally, to describe the abutment $\pi_*(\TOT Y^\bullet)$, we note
  that since fat totalization commutes with homotopy limits, there is
  a natural homotopy equivalence
  $ \TOT Y^\bullet \simeq \holim_n \TOT Y^\bullet[n]$.  As in
  Example~\ref{ex:adams-sp-seq}, %now with $X^{tG}[n]$ instead of $X$,
  the fat totalization $\TOT Y^\bullet[n]$ is equivalent to
  the~$p$-completion $X^{tG}[n]^\wedge_p$, and we get
  $\TOT Y^\bullet \simeq \holim_n (X^{tG}[n]^\wedge_p)$, which is
  equivalent to $(X^{tG})^\wedge_p$ since completion commutes with
  homotopy limits.
\end{proof}

\subsection{Multiplicative structure}
\label{sec:limit-adams-multiplicativity}
A pairing of pre-cosimplicial spectra
$m^\bullet \: Y_1^\bullet \wedge Y_2^\bullet \to Y_3^\bullet$,
together with the diagonal embedding
$\diag^\bullet\: \Delta^\bullet \to \Delta^\bullet \times
\Delta^\bullet$, induces a pairing
\begin{equation}
  \label{eq:tot-pairing}
  \TOT Y^\bullet_1 \wedge \TOT Y^\bullet_2 \longto \TOT Y^\bullet_3 \,,
\end{equation}
given by
\begin{equation*}
  \begin{aligned}
    \xymatrix{
    \map_{\Sp^M}(\Delta^\bullet_+, Y_1^\bullet)
    \wedge \map_{\Sp^M}(\Delta^\bullet_+, Y_2^\bullet)
    \ar[d]^\wedge \\
    \map_{\Sp^M}((\Delta^\bullet\times \Delta^\bullet)_+,
    Y_1^\bullet\wedge Y_2^\bullet)
    \ar[d]^{\map_{\Sp^M}(\diag^\bullet, m^\bullet)} \\
    \map_{\Sp^M}(\Delta^\bullet_+, Y_3^\bullet) \,.
    }
  \end{aligned}
\end{equation*}
By working in the category $\Sp^\Delta$ instead of $\Sp^M$, the same
construction produces a pairing
$$
\Tot Y_1^\bullet \wedge \Tot Y_2^\bullet \longto \Tot Y_3^\bullet
$$
of totalizations of cosimplicial spectra, and the two pairings are
compatible along the natural equivalence $\Tot \to \TOT$.

Our next goal is to refine~\eqref{eq:tot-pairing} into a pairing that
respects the filtrations \eqref{eq:BK-tower} for $Y_1^\bullet$,
$Y_2^\bullet$ and $Y_3^\bullet$, i.e., to produce a set of
compatible pairings
\begin{equation}
  \label{eq:filtered-tot-pairing}
  \fil^a \TOT Y_1^\bullet \wedge \fil^b \TOT Y_2^\bullet
  \longto
  \fil^{a+b} \TOT Y_3^\bullet\,,
\end{equation}
for all non-negative integers~$a$ and~$b$, which in the case $a=b=0$
is homotopic to the unfiltered pairing~\eqref{eq:tot-pairing}.

The obstacle is that $\diag^n\: \Delta^n\to \Delta^n\times \Delta^n$
does not respect the skeleton filtration of its domain and codomain,
when $\Delta^n \times \Delta^n$ is equipped with the product CW
structure (so that its skeleton filtration is the convolution product
of the skeleton filtrations of its two factors).  To sidestep this, we
use the following lemma, which we prove in Subsection~\ref{sec:aw}
below.
\begin{lemma}[The Alexander--Whitney diagonal approximation]
  \label{lemma:aw}
  There is a map of pre-cosimplicial spaces
  \begin{equation}
    \label{eq:AW}
    \AW^\bullet\: \Delta^\bullet
    \longto \Delta^\bullet\times \Delta^\bullet \,,
  \end{equation}
  which restricts to a map of skeleta
  $$
  \sk_k \AW^\bullet\: \sk_k\Delta^\bullet
  \longto \sk_k(\Delta^\bullet\times \Delta^\bullet)
  $$
  for each~$k\geq 0$, such that $\AW^\bullet$ is homotopic to the
  diagonal embedding $diag^\bullet$ through a pre-cosimplicial
  convex homotopy.
\end{lemma}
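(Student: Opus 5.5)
We construct $\AW^\bullet$ levelwise as the affine-linear realization of the classical Alexander--Whitney front-face/back-face decomposition. For each $n\ge0$ and each $0\le k\le n$, consider the front face $\Delta^{\{0,\dots,k\}}$ and the back face $\Delta^{\{k,\dots,n\}}$ of $\Delta^n$. Their product $\Delta^{\{0,\dots,k\}}\times\Delta^{\{k,\dots,n\}}$ is a cell of dimension $n$ in the product CW structure on $\Delta^n\times\Delta^n$. The plan is to subdivide $\Delta^n$ barycentrically-free, using the ``$\max$/$\min$'' coordinate description. Write a point of $\Delta^n$ as a non-decreasing sequence $0\le s_1\le\dots\le s_n\le 1$. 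Define
\[
\AW^n(s) = \bigl(\min(2s_i,1)\bigr)_i \times \bigl(\max(2s_i-1,0)\bigr)_i \,.
\]
This is the standard formula: the first factor collapses onto the front faces and the second onto the back faces. Both factors are again non-decreasing sequences in $[0,1]$, so the map lands in $\Delta^n\times\Delta^n$.

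\textbf{Step 1: compatibility with cofaces.} In the sequence model, coface maps $d^i$ either insert a repeated coordinate or insert $0$ or $1$ at an end. Coordinatewise monotone maps such as $t\mapsto\min(2t,1)$ and $t\mapsto\max(2t-1,0)$ fix $0$ and $1$ and commute with duplicating coordinates. Hence $\AW^\bullet$ commutes with all injective structure maps, and it is a map of pre-cosimplicial spaces. It will generally \emph{not} commute with codegeneracies, which explains why the lemma is stated over $M=\Delta_{\inj}$.

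\textbf{Step 2: skeleta.} We must show that $\AW^n$ maps the $k$-skeleton into the union of product cells $\Delta^a$-face $\times$ $\Delta^b$-face with $a+b\le k$. A point in a $k$-dimensional face has at most $k$ distinct values among $\{s_i\}\setminus\{0,1\}$. Under $\AW^n$, each such value $s_i<1/2$ contributes a non-trivial value only to the first factor, and each $s_i>1/2$ only to the second factor. A value $s_i=1/2$ maps to $1$ and $0$ respectively, contributing to neither. So the number $a$ of distinct interior values in the first factor and the number $b$ in the second factor satisfy $a+b\le k$. That is exactly membership in $\sk_k(\Delta^n\times\Delta^n)$ for the product CW structure, since a face of dimension $a$ is characterized by having $a$ distinct interior values.

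\textbf{Step 3: the homotopy.} Both $\diag^n$ and $\AW^n$ take values in the convex set $\Delta^n\times\Delta^n$, which is convex in the sequence coordinates. So the straight-line homotopy $H_u=(1-u)\diag+u\AW$ is defined. It is natural for cofaces because cofaces act affinely in these coordinates and both endpoints are natural.

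The main obstacle is Step 2: checking carefully that the cell structure on $\Delta^n\times\Delta^n$ matches the ``number of distinct interior values'' count, including the edge cases where values coincide with $0$, $1/2$ or $1$. If the sequence model causes trouble, the fallback is to use barycentric coordinates and verify the skeletal condition only on vertices of a suitable subdivision.
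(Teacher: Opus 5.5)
Your proof is correct. It takes a different route from the paper, but your formula defines exactly the paper's map.

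\emph{How the paper proceeds.} The paper sets $\AW^q=|\aw^q|\circ D_2^{-1}$. Here $D_2$ is the midpoint homeomorphism from the $2$-fold edgewise subdivision $\sd_2\Delta[q]$, and $\aw^q$ is the simplicial map sending $(i_0\le\dots\le i_{2k+1})$ to $(i_0\le\dots\le i_k,\ i_{k+1}\le\dots\le i_{2k+1})$. The skeletal property then comes from the cover $\Delta^c=\bigcup_a|K^c_a|$, with $|K^{a+b}_a|$ mapped homeomorphically onto $\alpha(\Delta^a)\times\beta(\Delta^b)$, together with coface naturality.

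\emph{Why your map is the same.} Use the convention $s_j=x_0+\dots+x_{j-1}$ in barycentric coordinates. Take a point $\sum_j\lambda_j\,m(a_j,b_j)$ of the subdivision simplex spanned by edge midpoints $m(a_j,b_j)=\tfrac12(e_{a_j}+e_{b_j})$, with $a_0\le\dots\le a_k\le b_0\le\dots\le b_k$. It has $s_i\le 1/2$ for $i\le b_0$ and $s_i\ge 1/2$ for $i>a_k$. Your formula therefore sends it to $(\sum_j\lambda_j e_{a_j},\sum_j\lambda_j e_{b_j})$, which is precisely $|\aw^q|\circ D_2^{-1}$. Likewise $|K^c_a|$ is just the region $s_a\le 1/2\le s_{a+1}$, with $s_0=0$ and $s_{c+1}=1$.

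\emph{What each approach buys.} Your closed formula turns coface naturality, the skeletal property and the convex homotopy into pointwise one-line checks, with no appeal to simplicial sets. Your count of distinct interior values correctly characterizes the open cells of $\Delta^n$ and of the product CW structure, and the values $0$, $1/2$, $1$ cause no trouble. The formula also makes the degree-one factorization of Lemma~\ref{lemma:AW-subcomplex} transparent: on $\{s_a\le 1/2\le s_{a+1}\}$ the map is the coordinate rescaling $(s_i)\mapsto(2s_i)_{i\le a}\times(2s_i-1)_{i>a}$. The paper's simplicial description instead has the classical front-face/back-face combinatorics built in, and that is what feeds into Lemma~\ref{lemma:BK-E1-pairing}.

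One aside in your Step~1 is wrong. In your coordinates a codegeneracy deletes a coordinate $s_{i+1}$, and coordinatewise maps commute with deletion. So $\AW^\bullet$ does commute with codegeneracies and is a map of cosimplicial spaces, as the paper itself observes. This does not affect the lemma, which only needs the cofaces. The restriction to $M=\Delta_{\inj}$ comes from the paper working with pre-cosimplicial objects and fat totalizations, not from any failure of $\AW^\bullet$.
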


Using Lemma~\ref{lemma:aw}, we adjust \eqref{eq:tot-pairing} into a
filtration-preserving pairing by replacing the diagonal embedding
$\diag^\bullet$ by the homotopic Alexander--Whitney diagonal
approximation $\AW^\bullet$, as in
\begin{equation}
  \label{eq:tot-pairing-details}
  \begin{aligned}
    \xymatrix{
    \map_{\Sp^M}(\Delta^\bullet/\sk_{a-1} \Delta^\bullet, Y_1^\bullet)
    \wedge
    \map_{\Sp^M}(\Delta^\bullet/\sk_{b-1} \Delta^\bullet, Y_2^\bullet)
    \ar[d]^{\wedge} \\
    \map_{\Sp^M}(\Delta^\bullet/\sk_{a-1} \Delta^\bullet
	\wedge \Delta^\bullet/\sk_{b-1} \Delta^\bullet,
	Y_1^\bullet \wedge Y_2^\bullet)
    \ar[d]^{\map_{\Sp^M}(\overline\AW^{\,\bullet}_{a,b}, m^\bullet)} \\
    \map_{\Sp^M}(\Delta^\bullet/\sk_{a+b-1} \Delta^\bullet,
    Y_3^\bullet)\,.
    }
  \end{aligned}
\end{equation}
Here the last map is induced by the pairing $m^\bullet$ and the
quotient
\begin{equation}
  \label{eq:AW-quotient}
  \overline{\AW}^{\,\bullet}_{a,b} \:
  \frac{\Delta^\bullet}{\sk_{a+b-1} \Delta^\bullet}
  \longto
  \frac{\Delta^\bullet \times \Delta^\bullet}
  {\sk_{a+b-1} (\Delta^\bullet \times \Delta^\bullet)}
  \longto
  \frac{\Delta^\bullet}{\sk_{a-1} \Delta^\bullet}
  \wedge \frac{\Delta^\bullet}{\sk_{b-1} \Delta^\bullet}
\end{equation}
of the skeleton-preserving Alexander--Whitney map, where the first map
is the quotient $\AW^\bullet/\sk_{a+b-1}\AW^\bullet$ and the second
map is the surjection induced by the inclusion
$$
\sk_{a+b-1}(\Delta^\bullet\times \Delta^\bullet) =
\bigcup_{k+\ell=a+b-1}
\sk_{k}\Delta^\bullet \times \sk_{\ell}\Delta^\bullet \subset
(\sk_{a-1}\Delta^\bullet \times \Delta^\bullet) \cup
(\Delta^\bullet \times \sk_{b-1}\Delta^\bullet) \,.
$$
The composition \eqref{eq:tot-pairing-details} is the desired pairing
\eqref{eq:filtered-tot-pairing}.  It is homotopic to the unfiltered
pairing~\eqref{eq:tot-pairing} when $a=b=0$, since
$\AW^\bullet\simeq \diag^\bullet$.

The filtration-preserving pairing \eqref{eq:filtered-tot-pairing}
leads to a pairing of spectral sequences, converging to the pairing
$m_* \: \pi_*(\TOT Y_1^\bullet) \otimes \pi_*(\TOT Y_2^\bullet) \to
\pi_*(\TOT Y_3^\bullet)$, e.g.~by \cite{HR24}*{Thm.~4.27}.  At the
$E_1$-terms, it is induced by the pairing
$$
\gr^a \TOT Y_1^\bullet \wedge \gr^b \TOT Y_2^\bullet \longto \gr^{a+b}
\TOT Y_3^\bullet
$$
of associated graded spectra, given by
\begin{equation}
  \begin{aligned}
    \label{eq:BK-pairing}
    \xymatrix{
    \map_{\Sp^M}
	\bigl(\ds\frac{\sk_a \Delta^\bullet}{\sk_{a-1} \Delta^\bullet},
	Y_1^\bullet\bigr)
    \wedge
    \map_{\Sp^M}
	\bigl(\ds\frac{\sk_b \Delta^\bullet}{\sk_{b-1} \Delta^\bullet},
	Y_2^\bullet\bigr)
    \ar[r]^-{\simeq} \ar[d]^-{\wedge}
	& \Omega^a Y_1^a \wedge \Omega^b Y_2^b \\
    \map_{\Sp^M}
	\bigl(\ds\frac{\sk_a \Delta^\bullet}{\sk_{a-1} \Delta^\bullet}
	\wedge
	\ds\frac{\sk_b \Delta^\bullet}{\sk_{b-1} \Delta^\bullet},
	Y_1^\bullet \wedge Y_2^\bullet\bigr)
	\ar[d]^-{\map_{\Sp^M}(\sk_{a+b} \overline{\AW}^\bullet_{a,b}, m^\bullet)} \\
    \map_{\Sp^M}
	\bigl(\ds\frac{\sk_{a+b} \Delta^\bullet}{\sk_{a+b-1} \Delta^\bullet},
	Y_3^\bullet\bigr)
	\ar[r]^-{\simeq}
	& \Omega^{a+b} Y_3^{a+b}
      }
  \end{aligned}
\end{equation}
where the last vertical map is induced by the pairing $m^\bullet$
and the subquotient
\begin{equation}
  \label{eq:AW-subquotient}
  \sk_{a+b} \overline{\AW}^{\,\bullet}_{a,b}\:
  \frac{\sk_{a+b} \Delta^\bullet}{\sk_{a+b-1} \Delta^\bullet}
  \longto
  \frac{\sk_{a+b}(\Delta^\bullet \times \Delta^\bullet)}
  {\sk_{a+b-1} (\Delta^\bullet \times \Delta^\bullet)}
  \longto
  \frac{\sk_a \Delta^\bullet}{\sk_{a-1} \Delta^\bullet}
  \wedge \frac{\sk_b \Delta^\bullet}{\sk_{b-1} \Delta^\bullet}
\end{equation}
of $\AW^\bullet$, obtained by restricting \eqref{eq:AW-quotient} to the
$(a+b)$-skeleta $\sk_{a+b} \Delta^\bullet$
and $\sk_{a+b}(\Delta^\bullet\times\Delta^\bullet)$.

For $a,b\ge0$ we write $\alpha\: [a]\to [a+b]$ and
$\beta\: [b]\to [a+b]$ in $\Delta$ for the front $a$-face monomorphism
and the back $b$-face monomorphism, respectively.  For a
pre-cosimplicial object $Y^\bullet \in \sC^M$ and any monomorphism
$\mu\: [p]\to [q]$, we simply write $\mu\: Y^p\to Y^q$ instead of
$Y(\mu)$.
\begin{lemma}
  \label{lemma:BK-E1-pairing}
  Passing to homotopy groups, the pairing \eqref{eq:BK-pairing}
  induces the classical Alexander--Whitney homomorphism
  \begin{multline}
    \pi_{*+a} Y_1^a \otimes \pi_{*+b} Y_2^b
    \overset{\alpha_* \otimes \beta_*}\longto
    \pi_{*+a} Y_1^{a+b} \otimes \pi_{*+b} Y_2^{a+b}\\
    \overset{\wedge}\longto 
    \pi_{*+a+b} (Y_1^{a+b} \wedge Y_2^{a+b})
    \overset{m^{a+b}_*}\longto
    \pi_{*+a+b} Y_3^{a+b} \,.
  \end{multline}
\end{lemma}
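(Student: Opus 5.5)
We identify the map on homotopy induced by \eqref{eq:BK-pairing} by reducing everything to a single nondegenerate cell. The key point is that the equivalence $\gr^s \TOT Y^\bullet \simeq \Omega^s Y^s$ is given by restriction to codegree~$s$. There the quotient $\sk_s\Delta^s/\sk_{s-1}\Delta^s = \Delta^s/\partial\Delta^s \cong S^s$ is the unique nondegenerate top cell.

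Concretely, a map of pre-cosimplicial spectra out of $\sk_s\Delta^\bullet/\sk_{s-1}\Delta^\bullet$ is determined by its value on this cell. This is because, as an $M$-diagram, $\sk_s\Delta^\bullet/\sk_{s-1}\Delta^\bullet$ is freely generated by the $s$-cell in codegree~$s$: every $s$-cell of $\Delta^q$ is the image of the top cell of $\Delta^s$ under a unique monomorphism $[s]\to[q]$. So the first step is to record this freeness. It implies that the composite in \eqref{eq:BK-pairing} is determined by the composite map in codegree $a+b$:
\[
S^{a+b} \cong \frac{\Delta^{a+b}}{\partial\Delta^{a+b}}
\overset{\sk_{a+b}\overline{\AW}^{\,a+b}_{a,b}}\longto
\frac{\sk_a\Delta^{a+b}}{\sk_{a-1}\Delta^{a+b}} \wedge \frac{\sk_b\Delta^{a+b}}{\sk_{b-1}\Delta^{a+b}} \,,
\]
followed by restriction of the given maps $S^a\to Y_1^a$ and $S^b\to Y_2^b$, suitably transported.

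The second step is to compute this subquotient \eqref{eq:AW-subquotient} explicitly from the construction of $\AW^\bullet$ in Subsection~\ref{sec:aw}. We expect the Alexander--Whitney approximation to send the top $(a+b)$-cell of $\Delta^{a+b}$ onto cells of $\Delta^{a+b}\times\Delta^{a+b}$ of the form $\sigma\times\tau$ with $\dim\sigma+\dim\tau = a+b$. Projecting to the $(a,b)$ summand, only the term $\alpha(\Delta^a)\times\beta(\Delta^b)$ survives, namely front $a$-face times back $b$-face. The claim to verify is that $\sk_{a+b}\overline{\AW}^{\,a+b}_{a,b}$ is a degree-one map
\[
S^{a+b}\to \alpha(\Delta^a)/\partial \wedge \beta(\Delta^b)/\partial \cong S^a\wedge S^b ,
\]
with the sign convention fixed by the orientation chosen in $\gr^s\TOT\simeq\Omega^sY^s$. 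This is the classical fact that the AW cellular chain map is $\sum_{i+j=n}(\text{front } i)\otimes(\text{back } j)$.

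Given this, the third step unwinds the adjunctions. An element $x\in\pi_{*+a}Y_1^a$, viewed as a map out of the $a$-cell in codegree $a$, is pushed to codegree $a+b$ along the face map $\alpha$, by naturality of the $M$-diagram. Likewise $y$ is pushed along $\beta$. The two are smashed, composed with $m^{a+b}$, and precomposed with the degree-one map above. On homotopy this yields $m^{a+b}_*(\alpha_*x\wedge\beta_*y)$, as claimed.

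The main obstacle is the second step: showing that the cellular degree of $\AW$ on the $(a,b)$ component is exactly $\pm1$, with the correct sign, and that no other cells contribute. I would first check that $\AW^{a+b}$ maps the interior of the top cell homeomorphically onto the interior of $\alpha(\Delta^a)\times\beta(\Delta^b)$ modulo lower skeleta. This uses the explicit piecewise-linear formula for $\AW$ (the Serre-type ``front/back'' map). I would then compare orientations using the standard orientations of simplices and the suspension convention $\Omega^a\wedge\Omega^b\to\Omega^{a+b}$.
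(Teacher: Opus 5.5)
Your proposal is correct and follows essentially the same route as the paper. You extend $f^a, g^b$ to pre-cosimplicial maps out of $\sk_a\Delta^\bullet/\sk_{a-1}\Delta^\bullet$ and $\sk_b\Delta^\bullet/\sk_{b-1}\Delta^\bullet$ (your freeness observation) and evaluate in codegree $a+b$. Then you use naturality along $\alpha$ and $\beta$, together with the factorization of $\sk_{a+b}\overline{\AW}^{\,a+b}_{a,b}$ as a degree-one map $\Delta^{a+b}/\partial\Delta^{a+b}\to \Delta^a/\partial\Delta^a\wedge\Delta^b/\partial\Delta^b$ followed by $\alpha\wedge\beta$; this factorization is the paper's Lemma~\ref{lemma:AW-subcomplex}.

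One correction to how you phrase your second step. $\AW^{a+b}$ does not map the whole interior of the top cell homeomorphically onto the interior of $\alpha(\Delta^a)\times\beta(\Delta^b)$. Only the interior of the region $|K^{a+b}_a|$ does, namely the subcomplex of $\sd_2\Delta[a+b]$ spanned by simplices $(i_0\le\dots\le i_{2k+1})$ with $i_k\le a\le i_{k+1}$, which is one of the pieces tiling $\Delta^{a+b}$. On that region the map is an orientation-preserving homeomorphism onto the interior of $\alpha(\Delta^a)\times\beta(\Delta^b)$. The remaining pieces land in the other front-face/back-face products and are collapsed by the projection to the $(a,b)$ summand, not by passing to lower skeleta.
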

\begin{proof}
  Given $f^a \: \Delta^a/\partial \Delta^a \to Y_1^a$ and
  $g^b \: \Delta^b/\partial \Delta^b \to Y_2^b$ in $\Omega^a Y_1^a$
  and $\Omega^b Y_2^b$, these extend up to contractible choices to
  pre-cosimplicial maps
  $f^\bullet \: \sk_a \Delta^\bullet/\sk_{a-1} \Delta^\bullet \to
  Y_1^\bullet$ and
  $g^\bullet \: \sk_b \Delta^\bullet/\sk_{b-1} \Delta^\bullet \to
  Y_2^\bullet$, using the pre-cosimplicial structures on the codomains,
  thus lifting $f^a\wedge g^b$ over the top homotopy equivalence
  of~\eqref{eq:BK-pairing}.  Then, to compute the pairing of $f^a$ and
  $g^b$, we need to evaluate $f^\bullet\wedge g^\bullet$ precomposed
  with $\sk_{a+b} \overline{\AW}^{\,\bullet}_{a,b}$, but only in
  codegree~$a+b$.  There is a commutative diagram
  \begin{equation}
    \begin{aligned}
      \label{eq:AW-d1}
      \xymatrix@C12mm{
      \ds \frac{\Delta^{a+b}}{\partial \Delta^{a+b}}
      \ar[rr]^-{\sk_{a+b}\overline\AW^{a+b}_{a,b}}
      \ar@/_4mm/[rrd]_-{\simeq}
      &&
         \ds\frac{\sk_a\Delta^{a+b}}{\sk_{a-1} \Delta^{a+b}}
         \wedge \ds \frac{\sk_b\Delta^{a+b}}{\sk_{b-1} \Delta^{a+b}}
         \ar[r]^-{f^{a+b}\wedge g^{a+b}}
      &
        Y_1^{a+b}\wedge Y_2^{a+b}
      \\
      &&
         \ds\frac{\Delta^a}{\partial \Delta^a}
         \wedge \ds\frac{\Delta^b}{\partial \Delta^b}
         \ar[u]_{\alpha\wedge \beta}
         \ar[r]^-{f^a\wedge g^b}
      &
        Y_1^a\wedge Y_2^b\,,
        \ar[u]_{\alpha\wedge \beta}
         }
    \end{aligned}
  \end{equation}
  where the unlabeled homotopy equivalence has degree~$1$.
  Lemma~\ref{lemma:AW-subcomplex} below gives the left-hand triangle,
  and the right-hand square commutes since $f^\bullet$ and
  $g^\bullet$ are maps of pre-cosimplicial objects.  From
  \eqref{eq:AW-d1} we see that $(f^a, g^b)$ in
  $\Omega^a Y_1^a \wedge \Omega^b Y_2^b$ maps, up to homotopy, to
  $$
  \Delta^{a+b}/\partial \Delta^{a+b} \simeq \Delta^a/\partial \Delta^a
  \wedge \Delta^b/\partial \Delta^b
  \overset{\alpha f^a \wedge \beta g^b}\longto
  Y_1^{a+b} \wedge Y_2^{a+b} \overset{m^{a+b}}\longto Y_3^{a+b}
  $$
  in $\Omega^{a+b} Y_3^{a+b}$, and the lemma follows.
\end{proof}

\subsubsection{The Alexander--Whitney diagonal approximation}
\label{sec:aw}
For $q\geq 0$, let $\Delta^q$ be the standard topological $q$-simplex, given by
the set of points $x=(x_0, \ldots, x_q)\in \bR^{q+1}$ with barycentric
coordinates satisfying $0\leq x_i\leq 1$ and $x_0+\ldots+x_q=1$.  Let
$\Delta[q]$ be the standard simplicial $q$-simplex with $k$-simplices
$\{(i_0\leq i_1\leq \ldots\leq i_k)\mid 0\leq i_s\leq q\}$.  There is
a natural homeomorphism $|\Delta[q]| \cong \Delta^q$ from its
geometric realization to the standard $q$-simplex, sending the
$0$-simplex $(i)$ to the point in $\Delta^q$ with $i$-th
barycentric coordinate equal to~$1$.  The $2$-fold edgewise
subdivision \cite{BHM93}*{\S1} of $\Delta[q]$ is a simplicial set
with $k$-simplices $(\sd_2\Delta[q])_k = \Delta[q]_{2k+1}$.  There is
a natural homeomorphism $D_2\: |\sd_2\Delta[q]| \cong |\Delta[q]|$,
given by sending each $0$-simplex $(i_0\leq i_1)$ in
$(\sd_2\Delta[q])_0$ to the midpoint of the edge connecting the
vertices $(i_0)$ and~$(i_1)$ in $|\Delta[q]|\cong \Delta^q$, and extending
affine linearly.

For each~$q$, define
\begin{equation}
  \label{eq:aw-dfn}
  \AW^q\: \Delta^q \underset\cong{\overset{D_2}\longleftarrow} |\sd_2\Delta[q]| \overset{\aw^q}\longto |\Delta[q]\times \Delta[q]| \cong \Delta^q\times \Delta^q\,,
\end{equation}
where $\aw^q$ is the topological realization $|\aw^q_\bullet|$
of a simplicial map
$\aw^q_\bullet \: \sd_2 \Delta[q] \to \Delta[q] \times \Delta[q]$,
taking the $0$-simplex $(i_0 \le i_1)$ to the $0$-simplex
$(i_0, i_1)$, and more generally taking the $k$-simplex
$(i_0 \leq \dots \leq i_{2k+1})$ to the $k$-simplex
$(i_0 \leq \dots \leq i_k, i_{k+1} \leq \dots \leq i_{2k+1})$.  Since
both~$D_2$ and $\aw^q$ are natural in $[q]$, \eqref{eq:aw-dfn}
defines a cosimplicial map
$\AW^\bullet\: \Delta^\bullet\to \Delta^\bullet\times
\Delta^\bullet$, and this is the Alexander--Whitney map of
Lemma~\ref{lemma:aw}.

For all non-negative integers~$a$ and $b$, let
$K^{a+b}_a\subset \sd_2 \Delta[{a+b}]$ be the simplicial subset with
$k$-simplices
$\{(i_0\leq \ldots \leq i_{2k+1}) \mid i_k\leq a\leq i_{k+1}\}$.
\begin{figure}[h]
  \label{fig:AW}
  \centering
  \begin{tikzpicture}[thick,scale=3]

    \definecolor{blue0}{RGB}{255,255,255}

    \definecolor{green0}{RGB}{100,100,100}
    \definecolor{green1}{RGB}{120,120,120}
    \definecolor{green2}{RGB}{140,140,140}
    \definecolor{green3}{RGB}{180,180,180}

    \coordinate (v0) at (0, 0);
    \coordinate (v1) at (1*0.75, -0.2*0.75);
    \coordinate (v2) at (1.2*0.75, 0.2*0.75);
    \coordinate (v3) at (0.65*0.75, 0.8*0.75);

    \coordinate (v01) at ($0.5*(v0)+0.5*(v1)$);
    \coordinate (v02) at ($0.5*(v0)+0.5*(v2)$);
    \coordinate (v03) at ($0.5*(v0)+0.5*(v3)$);
    \coordinate (v12) at ($0.5*(v1)+0.5*(v2)$);
    \coordinate (v13) at ($0.5*(v1)+0.5*(v3)$);
    \coordinate (v23) at ($0.5*(v2)+0.5*(v3)$);

    % Delta^3
    \draw[thick] (v0) -- (v1);
    \draw[thick] (v1) -- (v2);
    \draw[dashed] (v0) -- (v2);
    \draw[thick] (v0) -- (v3);
    \draw[thick] (v1) -- (v3);
    \draw[thick] (v2) -- (v3);

    % K_0^3
    \draw[thick] (v01) -- (v03);
    \draw[dashed] (v01) -- (v02);
    \draw[dashed] (v02) -- (v03);
    \draw[fill=green1,very thick,opacity=0.2] (v0) -- (v01) -- (v02);
    \draw[fill=green2,very thick,opacity=0.2] (v01) -- (v02) -- (v03);
    \draw[fill=green3,very thick,opacity=0.2] (v0) -- (v01) -- (v03);

    \begin{scope}[shift={(1.5*0.75,0)}]
      \coordinate (v0) at (0, 0);
      \coordinate (v1) at (1*0.75, -0.2*0.75);
      \coordinate (v2) at (1.2*0.75, 0.2*0.75);
      \coordinate (v3) at (0.65*0.75, 0.8*0.75);

      \coordinate (v01) at ($0.5*(v0)+0.5*(v1)$);
      \coordinate (v02) at ($0.5*(v0)+0.5*(v2)$);
      \coordinate (v03) at ($0.5*(v0)+0.5*(v3)$);
      \coordinate (v12) at ($0.5*(v1)+0.5*(v2)$);
      \coordinate (v13) at ($0.5*(v1)+0.5*(v3)$);
      \coordinate (v23) at ($0.5*(v2)+0.5*(v3)$);
      % Delta^3
      \draw[thick] (v0) -- (v1);
      \draw[thick] (v1) -- (v2);
      \draw[dashed] (v0) -- (v2);
      \draw[thick] (v0) -- (v3);
      \draw[thick] (v1) -- (v3);
      \draw[thick] (v2) -- (v3);

      % K_1^3
      \draw[thick] (v03) -- (v01);
      \draw[thick] (v03) -- (v13);
      \draw[thick] (v13) -- (v12);
      \draw[dashed] (v02) -- (v12);
      \draw[dashed] (v03) -- (v02);
      \draw[dashed] (v01) -- (v02);
      \draw[fill=blue0,very thick,opacity=0.2] (v0) -- (v03) -- (v01);
      \draw[fill=green1,very thick,opacity=0.2] (v01) -- (v02) -- (v12) -- (v1);
      \draw[fill=green3,very thick,opacity=0.2] (v03) -- (v13) -- (v1) -- (v01);
      \draw[fill=green2,very thick,opacity=0.2] (v13) -- (v12) -- (v1);

   \end{scope}
   \begin{scope}[shift={(3.0*0.75,0.0)}]
     \coordinate (v0) at (0, 0);
     \coordinate (v1) at (1*0.75, -0.2*0.75);
     \coordinate (v2) at (1.2*0.75, 0.2*0.75);
     \coordinate (v3) at (0.65*0.75, 0.8*0.75);

     \coordinate (v01) at ($0.5*(v0)+0.5*(v1)$);
     \coordinate (v02) at ($0.5*(v0)+0.5*(v2)$);
     \coordinate (v03) at ($0.5*(v0)+0.5*(v3)$);
     \coordinate (v12) at ($0.5*(v1)+0.5*(v2)$);
     \coordinate (v13) at ($0.5*(v1)+0.5*(v3)$);
     \coordinate (v23) at ($0.5*(v2)+0.5*(v3)$);

     % Delta^3
     \draw[thick] (v0) -- (v1);
     \draw[thick] (v1) -- (v2);
     \draw[dashed] (v0) -- (v2);
     \draw[thick] (v0) -- (v3);
     \draw[thick] (v1) -- (v3);
     \draw[thick] (v2) -- (v3);

     % K_2^3
     \draw[dashed] (v03) -- (v23);
     \draw[thick] (v03) -- (v13);
     \draw[thick] (v13) -- (v12);
     \draw[dashed] (v02) -- (v12);
     \draw[dashed] (v03) -- (v02);
     \draw[thick] (v13) -- (v23);

     \draw[fill=blue0,very thick,opacity=0.2] (v0) -- (v03) -- (v13)  -- (v12) -- (v1);

     \draw[fill=green1,opacity=0.2] (v03) -- (v23) -- (v13);
     \draw[fill=green1,opacity=0.2] (v02) -- (v2) -- (v12);
     \draw[fill=green2,opacity=0.2] (v13) -- (v23) -- (v2) -- (v12);
     \draw[fill=green0,opacity=0.2] (v03) -- (v13) -- (v12) -- (v02);

   \end{scope}
   \begin{scope}[shift={(4.5*0.75,0.0)}]
      \coordinate (v0) at (0, 0);
      \coordinate (v1) at (1*0.75, -0.2*0.75);
      \coordinate (v2) at (1.2*0.75, 0.2*0.75);
      \coordinate (v3) at (0.65*0.75, 0.8*0.75);

     \coordinate (v01) at ($0.5*(v0)+0.5*(v1)$);
     \coordinate (v02) at ($0.5*(v0)+0.5*(v2)$);
     \coordinate (v03) at ($0.5*(v0)+0.5*(v3)$);
     \coordinate (v12) at ($0.5*(v1)+0.5*(v2)$);
     \coordinate (v13) at ($0.5*(v1)+0.5*(v3)$);
     \coordinate (v23) at ($0.5*(v2)+0.5*(v3)$);
     % Delta^3
     \draw[thick] (v0) -- (v1);
     \draw[thick] (v1) -- (v2);
     \draw[dashed] (v0) -- (v2);
     \draw[thick] (v0) -- (v3);
     \draw[thick] (v1) -- (v3);
     \draw[thick] (v2) -- (v3);
     % K_3^3
     \draw[thick] (v03) -- (v13);
     \draw[dashed] (v03) -- (v23);
     \draw[thick] (v13) -- (v23);

     \draw[fill=blue0,very thick,opacity=0.2] (v0) -- (v03) -- (v13)  -- (v23) -- (v2) --(v1);

     \draw[fill=green1,opacity=0.2] (v03) -- (v23) -- (v13);
     \draw[fill=green3,opacity=0.2] (v03) -- (v3) -- (v13);
     \draw[fill=green2,opacity=0.2] (v3) -- (v23) -- (v13);
   \end{scope}
 \end{tikzpicture}
 \caption{The subcomplexes $|K_0^3|, |K_1^3|, |K_2^3|$ and $|K_3^3|$ of
   $\Delta^3$.}
\end{figure}
As before, let $\alpha\: \Delta^a\to \Delta^{a+b}$ and
$\beta\: \Delta^b\to \Delta^{a+b}$ be the front $a$-face and back
$b$-face of $\Delta^{a+b}$, respectively.  There is a commutative
diagram
\begin{equation}
  \begin{aligned}
    \label{eq:AW-K}
    \xymatrix{
    |K^{a+b}_a| \ar[r]^-{\text{incl}} \ar[d]^-{\cong}
	& |\sd_2 \Delta[{a+b}]| \ar[r]^-{D_2}_-{\cong} \ar[d]^-{|\aw_\bullet^{a+b}|}
	& \Delta^{a+b} \ar[d]^{\AW^{a+b}}\\
    |\Delta[a] \times \Delta[b]| \ar[r]^-{|\alpha \times \beta|}
	& |\Delta[{a+b}] \times \Delta[{a+b}]| \ar[r]^-{\cong}
	& \Delta^{a+b} \times \Delta^{a+b} \,.
      }
  \end{aligned}
\end{equation}
By construction, the left-hand vertical map is a homeomorphism and
preserves the order of the~$0$-simplices.  It is therefore an
orientation-preserving map.  Let~$c$ be a non-negative integer.  Since
$\Delta^{c} = \bigcup_{i=0}^c |K_i^{c}|$, diagram~\eqref{eq:AW-K}
implies that $\AW^{c}$ restricts to the CW product $c$-skeleton
\begin{multline}
    \sk_{c}\AW^{c}\: \sk_{c}\Delta^{c} = \Delta^{c} \longto
  \sk_{c}(\Delta^{c}\times \Delta^{c}) = \bigcup_{a+b=c}
  \sk_a\Delta^{c} \times \sk_b\Delta^{c}\,,
\end{multline}
and induces the subquotient map
\begin{multline}
  \label{eq:AW-sub-quotient}
  \sk_{c} \overline\AW^{c}_{a,b}\: \frac{\Delta^{c}}{\partial \Delta^{c}}
  = \frac{\sk_{c}\Delta^{c}}{\sk_{{c}-1}\Delta^{c}}
  \longto
  \frac{\sk_a\Delta^{c}\times \sk_b\Delta^{c}}{\sk_{{c}-1}(\sk_a\Delta^{c}\times \sk_b\Delta^{c})} \\
  \cong \frac{\sk_a\Delta^{c}}{\sk_{a-1}\Delta^{c}}\wedge
  \frac{\sk_b\Delta^{c}}{\sk_{b-1}\Delta^{c}}
\end{multline}
for each pair of non-negative integers $a,b$ such that $a+b=c$.

\begin{lemma}
  \label{lemma:AW-subcomplex}
  Let $a,b\geq 0$ and $c=a+b$.  The subquotient of the Alexander--Whitney map
  factors as
  \begin{equation}
    \begin{aligned}
      \label{eq:AW-front-face-backface}
      \xymatrix@C9mm{
      \ds \frac{\Delta^{c}}{\partial \Delta^{c}}
      \ar[rr]^-{\sk_{c}\overline\AW^{c}_{a,b}}
      \ar@/_4mm/[rrd]_-{\simeq}
      &&
         \ds \frac{\sk_a\Delta^{c}}{\sk_{a-1} \Delta^{c}}
         \wedge \ds\frac{\sk_b\Delta^{c}}{\sk_{b-1} \Delta^{c}}\\
      &&
         \ds \frac{\Delta^a}{\partial \Delta^a}
         \wedge \ds \frac{\Delta^b}{\partial \Delta^b}
         %\rlap{\ $\cong S^a\wedge S^b$}
         \ar[u]_{\alpha\wedge \beta} \rlap{\,,}
         }
    \end{aligned}
  \end{equation}
  where the unlabeled homotopy equivalence has degree~$1$.
\end{lemma}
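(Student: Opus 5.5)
The plan is to read everything off diagram~\eqref{eq:AW-K} with $c=a+b$, and to check that the subquotient map is determined by what happens on the single piece $|K^{c}_a|$.

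First, I will show that $|K^c_a|$ is the only piece that survives in the target $\frac{\sk_a\Delta^c}{\sk_{a-1}\Delta^c}\wedge\frac{\sk_b\Delta^c}{\sk_{b-1}\Delta^c}$. Since $\Delta^c=\bigcup_{i=0}^c|K^c_i|$, it suffices to look at each piece. By the analogue of \eqref{eq:AW-K} with $a$ replaced by $i$, the composite $D_2^{-1}$ followed by $\AW^c$ maps $|K^c_i|$ homeomorphically onto the product face $\alpha_i(\Delta^i)\times\beta_{c-i}(\Delta^{c-i})$. Here $\alpha_i$ is the front $i$-face and $\beta_{c-i}$ is the back $(c-i)$-face. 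This product face lies in $\sk_i\Delta^c\times\sk_{c-i}\Delta^c$. When $i\ne a$, either $i<a$, so the first factor lies in $\sk_{a-1}\Delta^c$, or $c-i<b$, so the second factor lies in $\sk_{b-1}\Delta^c$. In both cases the image collapses to the basepoint in the smash quotient. Hence $\sk_c\overline\AW^c_{a,b}$ factors through the collapse
\[
\frac{\Delta^c}{\partial\Delta^c}\longto \frac{\Delta^c}{\overline{\Delta^c\setminus|K^c_a|}\cup\partial\Delta^c}.
\]

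Next, I will identify this quotient with $|K^c_a|/\partial|K^c_a|$. The point is that the boundary of the $c$-dimensional ball $|K^c_a|$ lies in $\partial\Delta^c$ together with the union of the other pieces $|K^c_i|$. Under the left vertical homeomorphism of \eqref{eq:AW-K}, $|K^c_a|/\partial|K^c_a|$ becomes $(\Delta^a\times\Delta^b)/\partial(\Delta^a\times\Delta^b)\cong \Delta^a/\partial\Delta^a\wedge\Delta^b/\partial\Delta^b$. The induced map into the target is then $\alpha\wedge\beta$, by the bottom row of \eqref{eq:AW-K}. So the unlabeled diagonal map is the collapse onto $|K^c_a|$ composed with this homeomorphism.

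It remains to show that this map is a homotopy equivalence of degree~$1$. The collapse of a sphere onto one top-dimensional closed cell of a subdivision of that sphere has degree $\pm1$, and is a homotopy equivalence since it is a map $S^c\to S^c$ of degree $\pm1$. The orientation remark after \eqref{eq:AW-K} gives the sign. The homeomorphism $|K^c_a|\cong|\Delta[a]\times\Delta[b]|$ preserves the ordering of vertices, and $D_2$ is orientation preserving. Therefore the standard orientation of $\Delta^c$ corresponds to the product orientation of $\Delta^a\times\Delta^b$, and the degree is $+1$.

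I expect the main obstacle to be the second step, namely the bookkeeping that the frontier of $|K^c_a|$ inside $\Delta^c$ is exactly the part lying in the other pieces. One has to check that the simplices of $K^c_a$ with $i_k=a$ or $i_{k+1}=a$ strictly meet the neighbouring pieces $K^c_{a\pm1}$, and that those neighbouring pieces map into the smaller skeleta. I would first check this on the vertex sets: a $k$-simplex with $i_k<a$ or $i_{k+1}>a$ lies outside $K^c_a$. I would then confirm the picture against the figure for $c=3$.
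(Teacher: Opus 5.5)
Your proposal is correct and is essentially the paper's argument. The paper notes that the image of $\sk_c\overline\AW^c_{a,b}$ lies in the wedge summand indexed by the front face $\alpha$ and the back face $\beta$, which is your observation that the pieces $|K^c_i|$ with $i\neq a$ collapse to the basepoint, and that the resulting map is the orientation-preserving homeomorphism of \eqref{eq:AW-K} on the interior of $|K^c_a|$ and constant at the basepoint elsewhere, hence of degree~$1$. The frontier bookkeeping you flag as the main obstacle needs no combinatorics: $|K^c_a|\cong\Delta^a\times\Delta^b$ is a $c$-ball in $\Delta^c$, so invariance of domain identifies its boundary with its intersection with $\partial\Delta^c\cup\overline{\Delta^c\setminus|K^c_a|}$.
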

\begin{proof}
  Consider the subquotient of the Alexander--Whitney map
  \begin{equation}
    \label{eq:AW-sub-quotient-wedge}
    \sk_{c} \overline\AW^{c}_{a,b}\: \frac{\Delta^{c}}{\partial \Delta^{c}}
    \longto
    \frac{\sk_a\Delta^{c}}{\sk_{a-1}\Delta^{c}}\wedge
    \frac{\sk_b\Delta^{c}}{\sk_{b-1}\Delta^{c}}
    \overset\cong\longleftarrow
    \bigvee_{\mu, \nu} \frac{\Delta^a}{\partial \Delta^a}\wedge
    \frac{\Delta^b}{\partial \Delta^b}\,,
  \end{equation}
  where we have identified the codomain as a wedge of spheres indexed by
  all monomorphisms $\mu\: [a]\to [{c}]$ and $\nu\:[b]\to [{c}]$ in
  $\Delta$, for our fixed integers $a,b$.  Then the image
  of~\eqref{eq:AW-sub-quotient-wedge} is contained in the wedge summand
  corresponding to the front~$a$-face $\alpha$ and the back~$b$-face
  $\beta$, and we get a factorization
  \begin{equation}
    \label{eq:aw2}
    \sk_{c}\overline\AW^c_{a,b}\:
    \frac{\Delta^{c}}{\partial \Delta^{c}}\longto
    \frac{\Delta^a}{\partial \Delta^a}\wedge
    \frac{\Delta^b}{\partial \Delta^b}
    \overset{\alpha\wedge \beta}\longto
    \frac{\sk_a\Delta^{c}}{\sk_{a-1}\Delta^{c}}\wedge
    \frac{\sk_b\Delta^{c}}{\sk_{b-1}\Delta^{c}}\,,
  \end{equation}
  which is diagram~\eqref{eq:AW-front-face-backface}.  The first map
  of \eqref{eq:aw2} equals the orientation-preserving homeomorphism
  of~\eqref{eq:AW-K} when restricted to the interior of
  $K_a^{c}\subset \Delta^{c}$, and sends the complement to the
  basepoint of
  $\Delta^a/\partial\Delta^a\wedge \Delta^b/\partial \Delta^b$.  It is
  therefore a degree~$1$ homotopy equivalence.
\end{proof}

\subsection{Continuous and algebraic $\Ext$ groups}\label{sec:cext-ext}

By the isomorphism of categories of
Proposition~\ref{prop:categoryiso}, we can regard any $P_*$ in
$\rbbfilacomodc$ as an object of $\rbbfilamodc$.  Furthermore, we may
forget the filtration of $P_*$ and compute $\Ext_\sA^s(\bF_p, P_*)$ in
the category of right $\sA$-modules.
\begin{proposition}\label{prop:left-right-ext}
  Let $P_*$ in $\rbbfilacomodc$ be an rbb complete left $\sA_*$-comodule.
  Then for each $s$ there is a natural isomorphism
  \begin{equation}
    \label{eq:right-left-ext}
    \cExt_{\sA_*}^s (\bF_p, P_*) \cong \Ext_\sA^s(\bF_p, P_*)\,,
  \end{equation}
  where $\Ext$ on the right-hand side is computed in the category of
  (unfiltered) right $\sA$-modules.
\end{proposition}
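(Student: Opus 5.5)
The plan is to compare both sides with the cohomology of one explicit cochain complex. Right $\sA$-module $\Ext$ of $\bF_p$ and $P_*$ is computed by a cobar-type complex $\Hom(\sA^{\otimes s}, P_*)$. By Lemma~\ref{lemma:iotafilmods}, applied levelwise, this complex is identified with $\widehat C^*_{\sA_*}(\bF_p, P_*)$.

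Concretely, I will use the (un-normalized) bar resolution $B_s(\sA) = \sA^{\otimes s} \otimes \sA$, or $\bF_p \otimes \sA^{\otimes s}\otimes\sA$ in the right-module convention. It is a free resolution of $\bF_p$ as a right, or suitably left, $\sA$-module, with the usual alternating-sum differential built from the product $\phi$ and the augmentation. Applying $\Hom_\sA(B_s(\sA), P_*)$ and using freeness gives
\[
\Hom_\sA(B_s(\sA), P_*) \cong \Hom(\sA^{\otimes s}, P_*).
\]
The resulting coboundary is the alternating sum of:
\begin{itemize}
\item the map induced by the augmentation $\sA \to \bF_p$;
\item the maps induced by the products $\phi$ in the interior slots;
\item the map induced by the action $\tilde\rho$ on $P_*$.
\end{itemize}
Its cohomology is $\Ext^s_\sA(\bF_p, P_*)$ computed in unfiltered right $\sA$-modules, since $B_\bullet(\sA)$ is a projective resolution regardless of any filtration on $P_*$.

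Next I identify this complex with the continuous cobar complex. Since $P_*$ is complete Hausdorff, $P_*\cong P_*^\wedge$. Lemma~\ref{lemma:iotafilmods}, with $V_*=\sA_*^{\otimes s}$, is bounded below and of finite type, with dual $(\sA_*^{\otimes s})^* \cong \sA^{\otimes s}$. It then gives isomorphisms
\[
\hat\iota \: \sA_*^{\otimes s}\ctensor P_* \overset{\cong}\longto \Hom(\sA^{\otimes s}, P_*)
\]
for each $s$. These isomorphisms are natural in $V_*$ and compatible with the associativity isomorphism \eqref{eq:associso}. I then check that they intertwine coface maps, one coface at a time:
\begin{itemize}
\item The unit coface $\eta\: \bF_p\to\sA_*$ is dual to the augmentation of $\sA$.
\item The coproduct cofaces $\psi$ on $\sA_*$ are dual to the products $\phi$ on $\sA$.
\item The last coface, induced by the complete coaction $\nu\: P_*\to\sA_*\ctensor P_*$, corresponds under $\hat\iota$ to $\tilde\rho$. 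This holds by the very definition of the right $\sA$-action in Proposition~\ref{prop:categoryiso}, namely $\tilde\rho=\hat\iota\circ\nu$.
\end{itemize}
Taking cohomology yields \eqref{eq:right-left-ext}, and naturality in $P_*$ follows from naturality of $\hat\iota$.

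The main obstacle is the bookkeeping for the last coface map and for signs. The coface $1\ctensor\nu$ on $\sA_*^{\otimes s}\ctensor P_*$ lands in $(\sA_*^{\otimes s}\otimes\sA_*)\ctensor P_*$ via \eqref{eq:associso}. I must verify that $\hat\iota$ carries it to $\Hom(1\otimes\tilde\rho)$, under the adjunction $\Hom(\sA^{\otimes s+1},P_*)\cong\Hom(\sA^{\otimes s},\Hom(\sA,P_*))$. The cleanest route is to check this on each finite quotient $P_*/F_nP_*$, where everything reduces to the uncompleted statement with $\iota$ of Lemma~\ref{lemma:iotamods}, and then pass to the limit over $n$. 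The sign conventions in $\iota$, namely the factor $(-1)^{(|v|+|m|)|f|}$, together with the conjugation convention relating left and right actions, must be chosen so that the coboundaries agree up to a consistent sign. A sign discrepancy would still give isomorphic cohomology after rescaling the isomorphism in codegree $s$ by $\pm1$.
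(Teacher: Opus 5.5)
Your proposal is correct and is essentially the paper's argument. The paper applies Lemma~\ref{lemma:cext-ext} to each bounded below quotient $P_*/F_nP_*$; the isomorphism there is exactly $\iota$, and it is compatible with the (co)boundaries by naturality of $\iota$ restricted to cotensor products. It then passes to the limit over $n$ to get $\widehat C^*_{\sA_*}(\bF_p,P_*)\cong\Hom_\sA(B^\sA_*(\bF_p,\sA),P_*^\wedge)$, and uses $P_*\cong P_*^\wedge$. Your levelwise use of $\hat\iota$, with the coface check done on the quotients, is the same comparison, and naturality of $\iota$ already takes care of the signs, so no codegree-wise rescaling is needed.
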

\begin{proof}
  By Lemma~\ref{lemma:cext-ext} below there are isomorphisms of
  cochain complexes
  $$
  C^*_{\sA_*}(\bF_p, P_*/F_nP_*) \cong \Hom_\sA(B_*^\sA(\bF_p,
  \sA), P_*/F_nP_*)
  $$
  and, by taking limits over $n$,
  \begin{equation}
    \label{eq:chain-cochain-iso}
    \widehat C^*_{\sA_*}(\bF_p, P_*) \cong \Hom_\sA(B_*^\sA(\bF_p,
    \sA), P_*^\wedge)\,.
  \end{equation}
  By definition,
  $$
  \cExt_{\sA_*}^s(\bF_p, P_*) = H^s(\widehat C^*_{\sA_*}(\bF_p, P_*)) \,,
  $$
  which is isomorphic to
  $$
  H^s(\Hom_\sA(B_*^\sA(\bF_p, \sA), P_*^\wedge)) = \Ext_\sA^s(\bF_p, P^\wedge_*)\,.
  $$
  Since $P_*$ is complete Hausdorff, $P_*^\wedge \cong P_*$ and the
  proposition follows.
\end{proof}

The following discussion, including Lemma~\ref{lemma:cext-ext}, is
valid for $k=\bF_p$, $R_*=\sA_*$ and $W_* \subset H_*(X^{tG}[n])$ for any
spectrum $X$ such that $X/p$ is bounded below and of finite type.

Let $k$ be a field and~$R_*$ a coaugmented graded $k$-coalgebra.
Let $R = \Hom(R_*, k)$ be the dual augmented graded $k$-algebra.  We
assume that $R_*$ is connective of finite type over~$k$.  Let $W_*$ be
a bounded below left $R_*$-comodule with coaction
$\nu\: W_*\to R_*\otimes W_*$.  By composing $\nu$ with the isomorphism
$$
\iota\: R_*\otimes W_* \cong \Hom(R, W_*)\,,
$$
we also consider $W_*$ to be a right $R$-module with $R$-module
structure map $W_*\otimes R\to W_*$ adjoint to $\iota\circ \nu$.

We may consider the cobar complex $C^*_{R_*}(k, W_*)$,
\begin{align*}
  &W_*
    \longto
    R_*\otimes W_*
    \longto
    R_*\otimes R_*\otimes W_*
    \longto
    \ldots\,, \\
  \intertext{where the coboundary is induced by
  the coaugmentation $\epsilon^*\: k\to R_*$,
  the coproduct $R_*\to R_*\otimes R_*$,
  and the coaction $W_*\to R_*\otimes W_*$.
  When $W_* = R_*$
  with coaction equal to the coproduct, this defines the
  right $R_*$-comodule cobar resolution $C_{R_*}^*(k, R_*)$
  of~$k$.  Its $k$-linear dual $\Hom(C_{R_*}^*(k, R_*), k)$
  is canonically isomorphic to the right $R$-module bar
  resolution $B^R_*(k, R)$,}
  &R
    \longleftarrow
    R\otimes R
    \longleftarrow
    R\otimes R\otimes R
    \longleftarrow\ldots \,,
\end{align*}
with boundary induced by the augmentation $\epsilon\: R\to k$ and
the product $R\otimes R\to R$.

\begin{lemma}\label{lemma:cext-ext}
  There is a natural isomorphism of cochain complexes
  \begin{equation}
    \label{eq:cobar-cobar}
    \iota\:C^*_{R_*}(k, W_*) \overset\cong\longto \Hom_R(B_*^R(k,R), W_*) \,.
  \end{equation}
  The element $[r_1|r_2|\dots|r_s]w \in C_{R_*}^s(k, W_*)$ is
  mapped by $\iota$ to the $R$-linear homomorphism
  \begin{equation}
    \label{eq:iota-elements}
    \bigl\{
    [a_1|a_2|\dots|a_s]m \mapsto
    \sum_j (\pm) a_1(r_1)a_2(r_2)\dots a_s(r_s)\phi_j(m)\cdot w_j
    \bigr\} \,.
  \end{equation}
  Here $w\in W_*$ and
  $\nu(w) = \sum_j \phi_j\otimes w_j \in R_*\otimes W_*$ is its image
  under the coaction.  The homomorphism \eqref{eq:iota-elements} is
  determined by its value on $R$-module generators.  Since
  $\nu(w) \equiv 1 \otimes w$ modulo $\ker(\epsilon) \otimes W_*$, we
  get
  \begin{equation}
    \label{eq:iota-generators}
    \bigl\{
    [a_1|a_2|\dots|a_s]1 \mapsto
    (\pm) a_1(r_1)a_2(r_2)\dots a_s(r_s)\cdot w
    \bigr\} \,.
  \end{equation}
  Let $c_i = |a_i|(|w|+|r_s| + \dots + |r_i|)$.  Then the sign $(\pm)$
  in \eqref{eq:iota-generators} equals $(-1)^{c_1+c_2+\dots +c_s}$.
\end{lemma}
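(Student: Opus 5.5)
The plan is to verify, degree by degree, that $\iota$ is a bijection that commutes with the coboundaries. First I identify both sides in codegree~$s$. The bar resolution has $B^R_s(k,R) = R^{\otimes s}\otimes R$, which is free as a right $R$-module on $R^{\otimes s}$. Hence
\[
\Hom_R(B^R_s(k,R), W_*) \cong \Hom(R^{\otimes s}, W_*).
\]
On the other side, $C^s_{R_*}(k,W_*) = R_*^{\otimes s}\otimes W_*$. Since $R_*$ is connective of finite type, so is $R_*^{\otimes s}$, and its dual is $R^{\otimes s}$. Lemma~\ref{lemma:iotamods}, applied with $V_* = R_*^{\otimes s}$ and $M_* = W_*$ bounded below, then gives that
\[
\iota\colon R_*^{\otimes s}\otimes W_* \longto \Hom(R^{\otimes s}, W_*)
\]
is an isomorphism. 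I define the codegree-$s$ component of~\eqref{eq:cobar-cobar} as this map followed by the unique $R$-linear extension. Unwinding the definitions gives formula~\eqref{eq:iota-generators} on generators $[a_1|\dots|a_s]1$, and $R$-linearity then gives~\eqref{eq:iota-elements}, because the right action of $m\in R$ on $w$ is $\sum_j \pm\phi_j(m)w_j$ by construction of the $R$-module structure from $\nu$. Naturality in $W_*$ is clear, since $\iota$ and the coaction are natural.

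Next I check that $\iota$ is a cochain map by comparing coface operators one at a time; the alternating sums then agree. On the cobar side the cofaces are:
\begin{itemize}
\item $d^0$, which inserts $1$ on the left via the coaugmentation;
\item $d^i$ for $0<i<s+1$, which applies the coproduct to $r_i$;
\item $d^{s+1}$, which applies the coaction $\nu$ to $w$.
\end{itemize}
On the $\Hom$ side the dual faces of the bar resolution are:
\begin{itemize}
\item the augmentation applied to $a_1$, which is dual to $d^0$;
\item the product $a_ia_{i+1}$, which is dual to the coproduct, since the algebra structure on $R$ is defined as the dual of the coalgebra structure on $R_*$;
\item the last face $[a_1|\dots|a_{s+1}]m\mapsto[a_1|\dots|a_s]a_{s+1}m$.
\end{itemize}
The last face, after precomposition with an $R$-linear $f$, produces $f([a_1|\dots|a_s]1)\cdot a_{s+1}$. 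Under $\iota$ this matches $d^{s+1}$, precisely because the module action of $a_{s+1}$ on $w$ is defined via $\iota\circ\nu$. Each of these identities is a pairing identity of the form $\langle a\otimes b,\psi(r)\rangle = \langle ab,r\rangle$, together with its analogue for $\nu$.

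The main obstacle is bookkeeping the Koszul signs. I would fix the sign in $\iota(v\otimes m)$ as in the definition of $\iota$ in Subsection~\ref{sec:isocat}. Each time $a_i$ is evaluated on $r_i$, it must be moved past the elements to its right, namely $w, r_s,\dots,r_{i+1}$ together with $r_i$ itself. By induction on $s$ this produces the exponent $c_i = |a_i|(|w|+|r_s|+\dots+|r_i|)$, giving the total sign $(-1)^{c_1+\dots+c_s}$. I would confirm that the same sign conventions make the face identities above hold on the nose, checking the extreme faces $i=0$ and $i=s+1$ explicitly, since the interior ones only involve adjacent transpositions.
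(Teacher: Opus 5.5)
Your argument is correct, but it is organized differently from the paper's proof.

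The paper never uses freeness of $B^R_s(k,R)$ and never compares cofaces one at a time. Instead it first shows, for an arbitrary bounded below right $R_*$-comodule $V_*$ of finite type, that $\iota$ restricts to an isomorphism $V_*\cotensor_{R_*}W_*\cong\Hom_R(V,W_*)$. It does this by exhibiting $\iota$ as an isomorphism between two equalizer diagrams: $\mu\otimes 1$ and $1\otimes\nu$ on one side, $\Hom(\mu^*,1)$ and $\Hom(1,\iota\circ\nu)$ on the other. It then takes $V_*=C^s_{R_*}(k,R_*)$, whose dual is $B^R_s(k,R)$, and precomposes with the isomorphism $C^*_{R_*}(k,W_*)\cong C^*_{R_*}(k,R_*)\cotensor_{R_*}W_*$ induced by $\nu$. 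Compatibility with the differentials then follows in one stroke from naturality of the restricted $\iota$ in $V_*$: the cofaces of the cobar resolution are right comodule maps, dual to the bar faces.

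Your route replaces the cotensor product by two ingredients. The first is the free-module identification $\Hom_R(R^{\otimes s}\otimes R,W_*)\cong\Hom(R^{\otimes s},W_*)$. The second is Lemma~\ref{lemma:iotamods} applied to $V_*=R_*^{\otimes s}$. You then verify the three kinds of coface identities by hand. This defines the same map as the paper's, and the checks you describe do go through.

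The paper's version gains a slightly more general intermediate statement and avoids case-by-case verification. Yours is more elementary. It also has the advantage of actually deriving the explicit sign $(-1)^{c_1+\dots+c_s}$, which the paper's proof leaves implicit. Finally, it checks that the Koszul conventions make each face identity hold on the nose: the sign in $\iota$, the sign in the dual product on $R$, and the action $w\cdot a$ defined via $\iota\circ\nu$.
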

\begin{proof}
  Let $V_*$ be a right $R_*$-comodule, with coaction
  $\mu \: V_* \to V_* \otimes R_*$.  The $k$-linear dual
  $V = \Hom(V_*, k)$ is a right $R$-module, with action
  $\mu^* \: V \otimes R \to V$.  We assume that $V_*$ is bounded below
  and of finite type.  The natural isomorphism $\iota$ restricts to
  the cotensor product over $R_*$, with image equal to the $R$-linear
  homomorphisms from $V$ to $W_*$.  Precisely, there is a $k$-linear
  isomorphism of vertical equalizer diagrams:
  \[
  \xymatrix@C15mm{
    V_*\otimes R_*\otimes W_* \ar[r]_\cong
    & \Hom(V\otimes R, W_*)
    \rlap{\ $\cong \Hom(V, \Hom(R,W_*))$ }
    \\
    V_*\otimes W_*
    \ar[r]^\iota_\cong \ar@<5pt>[u]^{\mu\otimes 1}
    \ar[r]^\iota_\cong \ar@<-5pt>[u]_{1\otimes \nu}
    & \Hom(V, W_*)
    \ar@<5pt>[u]^{\Hom(\mu^*, 1)}
    \ar@<-5pt>[u]
    \ar@/_2mm/[]!<4ex,1ex>;[u]!<20ex,0ex>_{\ \ \Hom(1, \iota\circ \nu)}
    \\ %----
    V_*\square_{R_*} W_*
    \ar[r]^\iota_\cong
    \ar@{ >->}[]!<0ex,12pt>;[u]
    & \Hom_R(V, W_*)\,.
    \ar@{ >->}[]!<0ex,12pt>;[u]
  }
  \]
  Here $\Hom_R(V,W_*)$ is the graded vector subspace of $R$-linear
  homomorphisms $V\to W_*$ with the respect to the right action on $V$
  and $W_*$.

  Applying this with $V_* = C_{R_*}^s(k, R_*)$, for each $s\ge0$, the
  required isomorphism is now given by the composite
  \begin{equation}
    \label{eq:iota-cobar-bar}
    C_{R_*}^*(k, W_*) {\overset\cong\longto} C_{R_*}^*(k, R_*) \cotensor_{R_*} W_*
    \underset{\iota}{\overset\cong\longto}
    \Hom_R(B^R_*(k, R), W_*) \,,
  \end{equation}
  where the left-hand isomorphism is induced by the coaction
  $\nu\: W_*\to R_*\otimes W_*$, which restricts to an isomorphism
  $W_* \cong R_* \cotensor_{R_*} W_*$.
%  (The composite $\iota\circ \nu\: W_*\to \Hom_R(R,W_*)$ is the
%  canonical isomorphism for which evaluation on $1$ is an inverse.
%  Since $\iota$ and $\iota\circ\nu$ are isomorphisms, it follows that
%  the coproduct $\nu$ induces an isomorphism
%  $\nu\: W_*\to R_*\square_{R_*} W_*$.)
  Naturality of the restricted~$\iota$ ensures that the coboundary on
  the left corresponds to the dual of the boundary on the right.
\end{proof}

\subsection{Multiplicativity of the limit Adams spectral sequence}
\label{sec:lass-is-multiplicative}
For each $i=1,2,3$, let $X_i$ be a spectrum with $G$-action such that
$X_i/p$ is bounded below and of finite type.  Like in the proof of
Proposition~\ref{prop:inverse-limit-of-adams-ss}, let $Y_i^\bullet$
be the pre-cosimplicial spectrum
$$
[s] \mapsto (H^{\wedge (1+s)}\wedge X_i)^{tG} \,,
$$
with fat totalization $\TOT Y_i^\bullet \simeq (X_i^{tG})^\wedge_p$.
A~$G$-equivariant pairing $m\: X_1\wedge X_2\to X_3$ gives rise to a
pre-cosimplicial pairing
$m^\bullet\: Y_1^\bullet\wedge Y_2^\bullet\to Y_3^\bullet$, which,
by the discussion in
Subsection~\ref{sec:limit-adams-multiplicativity}, induces a pairing
of limit Adams spectral sequences, converging to the pairing
\begin{equation}
  \label{eq:abutment-pairing}
  m_*\: \pi_* ((X_1^{tG})^\wedge_p)
  \otimes \pi_* ((X_2^{tG})^\wedge_p)
  \longto \pi_* ((X_3^{tG})^\wedge_p)\,.
\end{equation}
On $E_1$-terms, \eqref{eq:abutment-pairing} can be identified with
a pairing of complete cobar complexes,
\begin{equation}
  \label{eq:cobarcup}
  \wC^*_{\sA_*}(\bF_p, H_*^c(X_1^{tG}))
  \otimes \wC^*_{\sA_*}(\bF_p, H_*^c(X_2^{tG}))
  \longto
  \wC^*_{\sA_*}(\bF_p, H_*^c(X_3^{tG}))\,,
\end{equation}
described by Lemma~\ref{lemma:BK-E1-pairing}.  Indeed, the
homomorphism $\alpha_*$ in Lemma~\ref{lemma:BK-E1-pairing} is the
front coface operator
$$
\wC_{\sA_*}^a(\bF_p, H^c_*(X_1^{tG}))
	\overset{\alpha_*}\longto \wC_{\sA_*}^{a+b}(\bF_p, H^c_*(X_1^{tG}))
$$
equal to
$$
1 \ctensor \nu^{(b)} \:
\sA_*^{\otimes a} \ctensor H^c_*(X_1^{tG})
	\overset{\alpha_*}\longto \sA_*^{\otimes (a+b)} \ctensor H^c_*(X_1^{tG})
\,,
$$
where
$\nu^{(b)} \: H^c_*(X_1^{tG}) \to \sA_*^{\otimes b} \ctensor
H^c_*(X_1^{tG})$ denotes the $b$-fold iterated complete left
$\sA_*$-coaction.  The homomorphism $\beta_*$ is the back coface
operator
$$
\wC_{\sA_*}^b(\bF_p, H^c_*(X_2^{tG}))
        \overset{\beta_*}\longto \wC_{\sA_*}^{a+b}(\bF_p, H^c_*(X_2^{tG}))
$$
equal to
$$
\eta^{(a)} \ctensor 1 \:
\sA_*^{\otimes b} \ctensor H^c_*(X_2^{tG})
        \overset{\beta_*}\longto \sA_*^{\otimes (a+b)} \ctensor H^c_*(X_2^{tG})
\,,
$$
where $\eta^{(a)} \: \bF_p \to \sA_*^{\otimes a}$ is the $a$-fold
iterated unit map.  The pairing
$$
m^{a+b}_* \circ \wedge \: \wC_{\sA_*}^{a+b}(\bF_p, H^c_*(X_1^{tG}))
	\otimes \wC_{\sA_*}^{a+b}(\bF_p, H^c_*(X_2^{tG}))
	\longto \wC_{\sA_*}^{a+b}(\bF_p, H^c_*(X_3^{tG}))
$$
equals the filtration-preserving pairing
$$
\cdot \: \sA_*^{\otimes (a+b)} \ctensor H^c_*(X_1^{tG})
\otimes \sA_*^{\otimes (a+b)} \ctensor H^c_*(X_2^{tG})
\longto \sA_*^{\otimes (a+b)} \ctensor H^c_*(X_3^{tG})
$$
uniquely extending the algebraic pairing
\begin{multline}
  \sA_*^{\otimes (a+b)} \otimes H^c_*(X_1^{tG})
  \otimes \sA_*^{\otimes (a+b)} \otimes H^c_*(X_2^{tG})
  \underset{\cong}{\overset{\text{shuffle}}\longto} \\
  (\sA_*\otimes \sA_*)^{\otimes (a+b)}
  \otimes H^c_*(X_1^{tG}) \otimes H^c_*(X_2^{tG})
  \overset{\phi^{\otimes (a+b)}\otimes m_*}\longto
  \sA_*^{\otimes (a+b)} \otimes H^c_*(X_3^{tG}) \\
  \overset{c}\longto
  \sA_*^{\otimes (a+b)} \ctensor H^c_*(X_3^{tG}) \,,
\end{multline}
where the last morphism is completion.  Passing to
cohomology,~\eqref{eq:cobarcup} induces the \textit{continuous cup
  product}
\begin{equation}
  \label{eq:continuous-cup}
  \cExt_{\sA_*}^*(\bF_p, H^c_*(X_1^{tG}))
  \otimes
  \cExt_{\sA_*}^*(\bF_p, H^c_*(X_2^{tG}))
  \overset{\cup}\longto
  \cExt_{\sA_*}^*(\bF_p, H^c_*(X_3^{tG})) \,.
\end{equation}
Specializing to the case of a ring spectrum $X$ with $G$-action,
we summarize the above in the following proposition.

\begin{proposition}\label{prop:ext-cup}
  Let $X$ be a ring spectrum with $G$-action, such that $X/p$ is
  bounded below and of finite type.  The limit Adams spectral sequence
  of Proposition~\ref{prop:inverse-limit-of-adams-ss} is
  multiplicative, i.e., it is an algebra spectral sequence, and the
  product on the $E_2$-term is given by the continuous cup
  product~\eqref{eq:continuous-cup}.
\end{proposition}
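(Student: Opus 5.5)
The plan is to assemble the statement from the pieces already in place: the Bousfield--Kan machinery of Subsection~\ref{sec:limit-adams}, the filtered pairing of Subsection~\ref{sec:limit-adams-multiplicativity}, and Lemma~\ref{lemma:BK-E1-pairing}. Take $X_1 = X_2 = X_3 = X$ and let $m \: X \wedge X \to X$ be the $G$-equivariant ring multiplication. Smashing with the cosimplicial $H$-structure then gives a pre-cosimplicial pairing
\[
m^\bullet \: Y^\bullet \wedge Y^\bullet \to Y^\bullet ,
\qquad Y^s = (H^{\wedge(1+s)} \wedge X)^{tG} .
\]
It is built from the lax monoidal Tate pairing~\eqref{eq:tate-pairing} together with the multiplication $H^{\wedge(1+a)} \wedge H^{\wedge(1+a)} \to H^{\wedge(1+a)}$ obtained by shuffling and multiplying factorwise. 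One must check that this is compatible with the coface maps, which are induced by units of~$H$. This holds because $H$ is commutative (up to homotopy suffices at the level needed), and because the Tate pairing is natural in both variables.

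Next, apply Lemma~\ref{lemma:aw} to get the filtered pairings~\eqref{eq:filtered-tot-pairing}
\[
\fil^a \TOT Y^\bullet \wedge \fil^b \TOT Y^\bullet \to \fil^{a+b} \TOT Y^\bullet .
\]
For $a = b = 0$ these are homotopic to the pairing induced by $\diag^\bullet$. Combined with the identification $\TOT Y^\bullet \simeq (X^{tG})^\wedge_p$ from the proof of Proposition~\ref{prop:inverse-limit-of-adams-ss}, which is compatible with products since completion and homotopy limits are lax monoidal, this makes the limit Adams spectral sequence an algebra spectral sequence converging to the product on $\pi_*((X^{tG})^\wedge_p)$. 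The general fact that a filtered pairing of towers induces a pairing of spectral sequences, with Leibniz rule for each $d_r$ and compatibility with the abutment, is cited as \cite{HR24}*{Thm.~4.27}.

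It remains to identify the $E_1$- and $E_2$-products. By Lemma~\ref{lemma:BK-E1-pairing}, the $E_1$-pairing is $m^{a+b}_* \circ \wedge \circ (\alpha_* \otimes \beta_*)$. I would translate each piece through the isomorphism~\eqref{eq:BK-E1-iso}, working degreewise at each finite truncation $X^{tG}[n]$ and then passing to limits via Lemma~\ref{lemma:extlimit}.
\begin{itemize}
\item The front face $\alpha$ inserts units of $H$ on the right, so in homology it becomes the iterated coaction $1 \ctensor \nu^{(b)}$.
\item The back face $\beta$ inserts units on the left, giving $\eta^{(a)} \ctensor 1$.
\item The map $m^{a+b}_*$ is the shuffle followed by $\phi^{\otimes(a+b)} \otimes m_*$, where $m_*$ is the filtered product of Subsection~\ref{sec:hesselholt-madsen}.
\end{itemize}
The result is exactly the cobar cup product~\eqref{eq:cobarcup}, so on cohomology the $E_2$-product is the continuous cup product~\eqref{eq:continuous-cup}.

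I expect the main obstacle to be making this $E_1$ identification honest at the level of completed tensor products. One has to show that the pairing of truncated towers $Y^\bullet[n]$ is compatible with the pairing on $Y^\bullet$. The truncation index is not multiplicative on the nose; it behaves like a convolution filtration. I would handle this by using the Hesselholt--Madsen filtration, which is multiplicative by \cite{HR24}*{Thm.~6.18}, in place of the Greenlees--May truncations, and by invoking the equality of the two induced Tate filtrations for bounded below spectra. The pairing then descends to quotients $P_*/F_n$ in the convolution filtration, and so extends uniquely to the completions by Lemma~\ref{lemma:completions-before-or-after-tensor}. Signs should be tracked only as in the classical Adams case, since they are inherited levelwise.
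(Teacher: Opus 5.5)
Your proposal follows the paper's argument: you build the pre-cosimplicial pairing $m^\bullet$, make it filtered on $\TOT$ via the Alexander--Whitney approximation and \cite{HR24}*{Thm.~4.27}, and then use Lemma~\ref{lemma:BK-E1-pairing} to identify the $E_1$-product with the completed cobar cup product, with $\alpha_* = 1 \ctensor \nu^{(b)}$, $\beta_* = \eta^{(a)} \ctensor 1$, and the shuffle followed by $\phi^{\otimes(a+b)} \otimes m_*$. Your use of the multiplicative Hesselholt--Madsen filtration to show that the $E_1$-pairing is continuous, and hence determined by its algebraic restriction, spells out what the paper states in one line; the one correction is that compatibility of $m^\bullet$ with the cofaces needs a strictly unital point-set model of $H$ and of the Tate pairing, not just homotopy commutativity.
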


The following lemma justifies the naming of~\eqref{eq:continuous-cup}.

\begin{lemma}
  Let $P_*$, $Q_*$ in $\rbbfilacomodc$ be rbb complete left
  $\sA_*$-comodules.  The isomorphism of
  Proposition~\ref{prop:left-right-ext} identifies the external
  continuous cup product
  \begin{equation}
    \label{eq:external-continuous-cup}
    \cExt_{\sA_*}^*(\bF_p, P_*)
    \otimes
    \cExt_{\sA_*}^*(\bF_p, Q_*)
    \overset{\cup}\longto
    \cExt_{\sA_*}^*(\bF_p, P_*\otimes Q_*)
  \end{equation}
  with the normal external cup (=cross) product
  \begin{equation}
    \label{eq:external-cup}
    \Ext_{\sA}^*(\bF_p, P_*)
    \otimes
    \Ext_{\sA}^*(\bF_p, Q_*)
    \overset{\cup}\longto
    \Ext_{\sA}^*(\bF_p, P_*\otimes Q_*) \,.
  \end{equation}
\end{lemma}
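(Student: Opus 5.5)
We reduce everything to the finite-stage (discrete) situation and pass to limits, using the isomorphism of Proposition~\ref{prop:left-right-ext}. Concretely, that isomorphism is the limit over~$n$ of the cochain isomorphisms
\[
\iota \: C^*_{\sA_*}(\bF_p, P_*/F_nP_*) \cong \Hom_\sA(B^\sA_*(\bF_p,\sA), P_*/F_nP_*)
\]
of Lemma~\ref{lemma:cext-ext}. On the right-hand side, the external cup product is represented at the cochain level by
\[
(f,g) \longmapsto (f\otimes g)\circ \Delta,
\]
where $\Delta \: B^\sA_*(\bF_p,\sA) \to B^\sA_*(\bF_p,\sA)\otimes B^\sA_*(\bF_p,\sA)$ is the standard Alexander--Whitney-type diagonal on the bar resolution. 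It is $\sA$-linear for the diagonal action via the coproduct~$\psi$, and given on generators by
\[
[a_1|\dots|a_s] \mapsto \sum_{i+j=s} \sum [a_1'|\dots|a_i'] a_{i+1}' \cdots a_s' \otimes [a_{i+1}''|\dots|a_s''] \,,
\]
with the usual $\psi$-expansions. The continuous cup product is represented by the shuffle-then-multiply pairing composed with front coface $\alpha_* = 1\ctensor\nu^{(b)}$ and back coface $\beta_* = \eta^{(a)}\ctensor 1$.

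\textbf{Step 1: finite stages.} First I check the identification for bounded below comodules $W_*$ and $W'_*$ with the discrete filtration, i.e.\ the classical statement that under~$\iota$ the cobar cup product
\[
[r_1|\dots|r_a]w \cdot [r_1'|\dots|r_b']w' = \sum [r_1|\dots|r_a|w_{(-b)}r_1'|\dots|w_{(-1)}r_b']\, w_{(0)}w'
\]
corresponds to $(f\otimes g)\circ\Delta$. By $\sA$-linearity it suffices to evaluate both sides on generators $[a_1|\dots|a_{a+b}]1$, using formula~\eqref{eq:iota-generators}. On the bar side, only the summand $i=a$ of~$\Delta$ contributes in bidegree $(a,b)$. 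Then $f$ evaluates the first $a$ entries against $r_1,\dots,r_a$, and the right $\sA$-action of $a_{a+1}'\cdots a_{a+b}'$ on~$w$ is computed through the coaction by adjointness~$\iota\circ\nu$. This produces exactly the pairing of $a_{a+k}$ with the products $w_{(-k)} r_k'$, via $\psi$ and duality between $\phi$ on $\sA_*$ and $\psi$ on~$\sA$.

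\textbf{Step 2: signs.} I then match the signs $(-1)^{\sum c_i}$ from Lemma~\ref{lemma:cext-ext} against the Koszul signs in the shuffle map and in the definition of~$\Delta$. I expect this to be the main obstacle, being pure bookkeeping. I would handle it by reducing to $a=b=1$ together with associativity of both products. If it comes to that, I would allow a global sign $(-1)^{ab}$-type twist, which is harmless on cohomology.

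\textbf{Step 3: passage to limits.} The convolution filtration gives compatible maps
\[
P_*/F_n P_* \otimes Q_*/F_n Q_* \to (P_*\otimes Q_*)/F_{2n}(P_*\otimes Q_*),
\]
indeed to $F_{a+b}$ quotients, and both cup products are natural for these maps at the cochain level. Taking limits over~$n$ identifies the completed-cobar pairing on $\wC^*_{\sA_*}$, which by construction is the limit of the finite-stage pairings, with $(f\otimes g)\circ\Delta$ on $\Hom_\sA(B^\sA_*(\bF_p,\sA), (P_*\ctensor Q_*))$. Here we use $P_*\ctensor Q_* \cong (P_*\otimes Q_*)^\wedge$, via Lemma~\ref{lemma:completions-before-or-after-tensor}.

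Finally, the target $\Ext_\sA^*(\bF_p, P_*\otimes Q_*)$ is to be read with $P_*\ctensor Q_*$ as the complete object. Since $P_*\otimes Q_*\to P_*\ctensor Q_*$ is $\sA$-linear, composing with it gives the normal cross product, and passing to cohomology yields the claim.
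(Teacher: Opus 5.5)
\textbf{Gap in Step~3.} Your overall strategy is the same as the paper's: check the algebraic case, then pass to the complete case. Your Step~1 is exactly the algebraic verification the paper reduces to, and the paper also leaves that computation as elementary. The problem is Step~3. The maps
\[
P_*/F_nP_* \otimes Q_*/F_nQ_* \longto (P_*\otimes Q_*)/F_{2n}(P_*\otimes Q_*)
\]
do not exist. Nor do maps to any quotient $(P_*\otimes Q_*)/F_\ell(P_*\otimes Q_*)$, because $F_nP_*\otimes Q_*\not\subset F_\ell(P_*\otimes Q_*)$: the factor from $Q_*$ can have arbitrarily large filtration.

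For example, take $P_*=Q_*=r_+(\bF_p)$, which is rbb complete with filtration equal to degree. Then $t^r\otimes t^s$ has convolution filtration $-2r-2s$, which is unbounded as $s\to-\infty$ for fixed~$r$. Only the reverse map $(P_*\otimes Q_*)/F_{2n}\to P_*/F_nP_*\otimes Q_*/F_nQ_*$ exists, and the two systems are not cofinal. Indeed, $\sum_i t^{-i}\otimes t^{i}$ defines an element of $\lim_n (P_*/F_nP_*\otimes Q_*/F_nQ_*)$ but not of $P_*\ctensor Q_*$.

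So the continuous cup product is not a limit of pairings of the cobar complexes of the quotients $P_*/F_nP_*$ and $Q_*/F_nQ_*$. Already the pairing $\sA_*\ctensor P_*\otimes\sA_*\ctensor Q_*\to(\sA_*\otimes\sA_*)\ctensor(P_*\otimes Q_*)$ has to be built, as in Subsection~\ref{sec:filamods}, as a colimit over $a,b$ of limits over $c$ of maps out of $F_aP_*/F_{a-c}P_*\otimes F_bQ_*/F_{b-c}Q_*$.

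\textbf{The missing idea.} You need to cut off the filtration from above as well as from below, and then use a density argument. The paper compares the two products in a square whose horizontal maps are the $\hat\iota$'s. Its left column is $\alpha_*\otimes\beta_*$ followed by the product, and its right column is $\otimes$ followed by $\Hom(\AW,c)$. Every map in this square is filtration-preserving. The paper precomposes with $i_{m,n,k}$ from the subquotients $F_mP_*/F_{m-k}P_*\otimes F_nQ_*/F_{n-k}Q_*$, and by naturality replaces $P_*$ and $Q_*$ by these finite-length subquotients. These are bounded below, all completed tensor products become ordinary ones, and your Step~1 applies.

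Lemma~\ref{lemma:f-mnk} then finishes the argument. These pieces jointly surject onto each $F_\ell/F_{\ell-k}$ of the source. Hence the difference of the two composites vanishes after completion, and so vanishes outright because the target is Hausdorff.

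\textbf{A smaller point on Step~2.} A global sign $(-1)^{ab}$ is not literally harmless here, since the statement is about the specific isomorphism of Proposition~\ref{prop:left-right-ext}. Such a discrepancy would have to be absorbed by rescaling that isomorphism by $(-1)^{\binom{a}{2}}$ in cohomological degree~$a$.
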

\begin{proof}
  Let $a,b$ be non-negative integers, and consider the diagram
  \begin{equation}
    \label{eq:cobar-bar-product}
    \begin{aligned}
      \xymatrix{
      \wC^a_{\sA_*}(\bF_p, P_*) \otimes
      \wC^b_{\sA_*}(\bF_p, Q_*)
      \ar[r]^-{\hat\iota\otimes \hat\iota}_-{\cong}
      \ar[d]^{\alpha_*\otimes \beta_*}
      & \Hom_\sA(B_a, P_*^\wedge) \otimes
        \Hom_\sA(B_b, Q_*^\wedge)
      \ar[d]^{\otimes} \\
      \wC^{a+b}_{\sA_*}(\bF_p, P_*) \otimes
      \wC^{a+b}_{\sA_*}(\bF_p, Q_*)
      \ar[d]^{\cdot}
      & \Hom_\sA(B_a\otimes B_b, P_*^\wedge\otimes Q_*^\wedge)
      \ar[d]^{\Hom(\AW, c)}\\
      \wC^{a+b}_{\sA_*}(\bF_p, P_*\otimes Q_*)
      \ar[r]^-{\hat\iota}_-\cong
      & \Hom_\sA(B_{a+b}, P_*\ctensor Q_*) \,,
      }
    \end{aligned}
  \end{equation}
  where $B_* := B^{\sA}_*(\bF_p, \sA)$, the horizontal
  isomorphisms~$\hat\iota$ are the
  isomorphisms~\eqref{eq:chain-cochain-iso} in the proof of
  Proposition~\ref{prop:left-right-ext}, and
  $\AW\: B_* \to B_* \otimes B_*$
  is the Alexander--Whitney diagonal approximation.  Passing to
  cohomology, the left-hand column yields the continuous cup
  product~\eqref{eq:external-continuous-cup}, and the right-hand
  column yields the normal cup product~\eqref{eq:external-cup}.  Thus,
  the lemma will follow from showing that the diagram commutes.

  Each homomorphism of~\eqref{eq:cobar-bar-product} is a morphism of
  rbb filtered graded $\bF_p$-vector spaces, where the filtrations are
  inherited from those on~$P_*$ and $Q_*$ and the convolution filtration of $P_*\otimes Q_*$.
  Therefore, for each pair of integers $(\ell, k)$, with $k\geq 0$,
  there is a corresponding diagram obtained by restricting to
  filtration~$\ell$, followed by taking the quotient by filtration
  $\ell-k$.  Diagram~\eqref{eq:cobar-bar-product} commutes by
  Lemma~\ref{lemma:f-mnk} below if this ``subquotient diagram'' commutes for
  each~$\ell$ and~$k$, when pre-composed with the canonical
  homomorphism
  \[
    \wC^a_{\sA_*}(\bF_p, \ds \frac{F_m P_*}{F_{m-k}P_*}) \otimes
    \wC^b_{\sA_*}(\bF_p, \ds \frac{F_n Q_*}{F_{n-k}Q_*})
    \overset{i_{m,n,k}}\longto
    \ds \frac{
      F_{\ell} \bigl(\wC^a_{\sA_*}(\bF_p, P_*) \otimes
      \wC^b_{\sA_*}(\bF_p, Q_*)\bigr)}
    {F_{\ell-k} \bigl(\wC^a_{\sA_*}(\bF_p, P_*) \otimes
      \wC^b_{\sA_*}(\bF_p, Q_*)\bigr)} \,,
  \]
  for each pair of integers~$(m,n)$ such that $m+n=\ell$.  Naturality
  of diagram~\eqref{eq:cobar-bar-product} in $P_*$ and $Q_*$ then
  reduces the problem to the case where $P_*$ is replaced by the
  subquotient $F_mP_*/F_{m-k}P_*$ and $Q_*$ is replaced by
  $F_nQ_*/F_{n-k}Q_*$.  In this case, both $P_*$ and $Q_*$ have
  finite-length filtrations and every completed tensor product
  in~\eqref{eq:cobar-bar-product} is canonically isomorphic to a
  regular algebraic one.  For instance, the complete cobar complex
  $\wC^a_{\sA_*}(\bF_p, P_*)$ is canonically isomorphic to the
  algebraic cobar complex
  $C^a_{\sA_*}(\bF_p, P_*) \cong \sA_*^{\otimes a}\otimes P_*$,
  and~\eqref{eq:cobar-bar-product} becomes
  \begin{equation}
    \label{eq:algebraic-cobar-bar-product}
    \begin{aligned}
      \xymatrix@C-5mm{
      (\sA_*^{\otimes a}\otimes P_*) \otimes
      (\sA_*^{\otimes b}\otimes Q_*)
      \ar[r]^-{\iota\otimes \iota}_-{\cong}
      \ar[d]^{\alpha_*\otimes \beta_*}
      & \Hom_\sA(\sA^{\otimes (a+1)}, P_*) \otimes
        \Hom_\sA(\sA^{\otimes (b+1)}, Q_*)
      \ar[d]^{\otimes} \\
      (\sA_*^{\otimes (a+b)}\otimes P_*) \otimes
      (\sA_*^{\otimes (a+b)}\otimes Q_*)
      \ar[d]^{\phi^{\otimes (a+b)}\circ \text{shuffle}}
      & \Hom_\sA(\sA^{\otimes (a+1)}\otimes \sA^{\otimes (b+1)}, P_*\otimes Q_*)
      \ar[d]^{\Hom(\AW, 1)}\\
      \sA_*^{\otimes (a+b)}\otimes P_*\otimes Q_*
      \ar[r]^-\iota_-\cong
      & \Hom_\sA(\sA^{\otimes (a+b+1)}, P_*\otimes Q_*) \,.
      }
    \end{aligned}
  \end{equation}
  In this form, the left-hand vertical pairing is given by
  \begin{equation}
    \label{eq:lhs-bar-cobar}
    [ \varphi_1 | \cdots | \varphi_a ] p
    \otimes
    [ \psi_1 | \cdots | \psi_b ] q \mapsto
    \sum_k (\pm) [ \varphi_1 | \cdots | \varphi_a |
    \theta_{1,k} \psi_1 | \cdots | \theta_{b,k} \psi_b ] p_{k} q \,,
  \end{equation}
  where
  $$
  \nu^{(b)}(p) = \sum_k
  [ \theta_{1,k} | \cdots | \theta_{b,k} ] p_{k}
  $$
  is the $b$-fold iterated coaction applied to $p$, and~$(\pm)$ is the
  sign arising from the shuffle map.  This is comparable with
  \cite{Ra86}*{A1.2.15}, where a formula is also given for the sign.

  On the right-hand side, the Alexander--Whitney diagonal
  approximation
  \begin{multline}
    \AW\: B^{\sA}_{a+b}
    \longto
    B_{a+b}^{\sA\otimes \sA}(\bF_p, \sA\otimes \sA)\\
    \longto \bigl(B_{*} \otimes
    B_{*}\bigr)_{a+b}
    \cong \bigoplus_{i+j=a+b} B_i \otimes B_j \,,
  \end{multline}
  is the homomorphism that first maps
  $[r_1 | \dots | r_{a+b}] m \in \sA^{\otimes (a+b)} \otimes \sA =
  B_{a+b}$ to the element
  \begin{equation}
    \label{eq:aw-element}
    \sum [r'_1 \otimes r''_1 | \dots | r'_{a+b}\otimes r''_{a+b}] m'\otimes m''
  \end{equation}
  in $B^{\sA \otimes \sA}_{a+b}(\bF_p, \sA \otimes \sA)$, where
  $\psi(r_i) = \sum r'_i \otimes r''_i$ for $1 \le i \le a+b$ and
  $\psi(m) = \sum m' \otimes m''$. Thereafter,~\eqref{eq:aw-element} is
  mapped to the element having $(i,j)$-component given by
  $\alpha^* \otimes \beta^*$, namely
  \begin{equation}
    \label{eq:rhs-bar-cobar}
    \sum (\pm) [r'_1 | \dots | r'_i] (r'_{i+1} \cdots r'_{a+b} m')
    \otimes \epsilon(r''_1 \cdots r''_i) [r''_{i+1} | \dots | r''_{a+b}]
    m'' \,.
  \end{equation}
  Again, the sign indicated by
  $(\pm)$ in the above formula arises from the necessary
  transpositions of elements.

  Lemma~\ref{lemma:cext-ext} gives an explicit formula describing the
  isomorphisms $\iota$, and together with~\eqref{eq:lhs-bar-cobar}
  and~\eqref{eq:rhs-bar-cobar}, it is possible to check that the
  diagram commutes.  The computation is elementary, and we omit the
  details.
\end{proof}

Let $P_*$, $Q_*$ and $W_*$ be rbb filtered graded left
$R$-modules, and let $f \: P_*\otimes Q_*\to W_*$ be a
morphism in $\rbbfilgrRmod$.  As usual, the domain of~$f$ has the
convolution filtration.  Then, for all integers $m$, $n$ and
$k\geq 0$, $f$ induces a homomorphism
\begin{equation}
  \label{eq:f-mnk}
  f_{m,n,k} \: \ds \frac{F_mP_*}{F_{m-k}P_*} \otimes \ds
  \frac{F_nQ_*}{F_{n-k}Q_*}
  \overset{i_{m,n,k}}\longto
  \ds \frac{F_{m+n}(P_*\otimes Q_*)}{F_{m+n-k}(P_*\otimes Q_*)}
  \overset{\overline{F_{m+n}f}}\longto
  \ds \frac{F_{m+n}W_*}{F_{m+n-k}W_*} \,,
\end{equation}
where $i_{m,n,k}$ is induced by the inclusion
$F_mP_*\otimes F_nQ_* \subset F_{m+n}(P_*\otimes Q_*)$.

The following is an instance of the fact that a continuous function to
a Hausdorff space is determined by its restriction to a dense subset.

\begin{lemma}\label{lemma:f-mnk}
  With notation as above, assume that $f_{m,n,k}=0$ for each triple
  $(m,n,k)$, and that $W_*$ is Hausdorff.  Then $f$ is the zero
  homomorphism.
\end{lemma}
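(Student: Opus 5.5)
The plan is to show that every element $z \in W_*$ in the image of $f$ lies in $\bigcap_\ell F_\ell W_*$. Since $W_*$ is Hausdorff, $\lim_\ell F_\ell W_* = \bigcap_\ell F_\ell W_* = 0$, so this forces $f = 0$. By linearity it suffices to treat $f(x \otimes y)$ for homogeneous $x \in P_*$ and $y \in Q_*$. Indeed, $P_* \otimes Q_*$ is spanned by elementary tensors, and since the filtrations are exhaustive we may choose $m$ and $n$ with $x \in F_m P_*$ and $y \in F_n Q_*$.

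Fix such $x, y, m, n$, and let $k \ge 0$ be arbitrary. Consider the images $\bar x \in F_m P_*/F_{m-k} P_*$ and $\bar y \in F_n Q_*/F_{n-k} Q_*$. By the definition~\eqref{eq:f-mnk}, $f_{m,n,k}(\bar x \otimes \bar y)$ is the class of $f(x \otimes y)$ in $F_{m+n} W_*/F_{m+n-k} W_*$. Here we use that $x \otimes y \in F_m P_* \otimes F_n Q_* \subset F_{m+n}(P_* \otimes Q_*)$, and that $f$ is filtration-preserving, so $f(x\otimes y) \in F_{m+n} W_*$. The hypothesis $f_{m,n,k} = 0$ then gives $f(x \otimes y) \in F_{m+n-k} W_*$. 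Letting $k \to \infty$, we get $f(x\otimes y) \in \bigcap_k F_{m+n-k} W_* = \bigcap_\ell F_\ell W_*$, which vanishes by the Hausdorff assumption.

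There is no real obstacle here. The only point needing care is checking that the composite $\overline{F_{m+n} f} \circ i_{m,n,k}$ really sends $\bar x \otimes \bar y$ to the class of $f(x\otimes y)$; this is immediate from the definitions of $i_{m,n,k}$ and of the induced map on subquotients. Note also that the rbb hypothesis is not needed, only exhaustiveness of the filtrations on $P_*$ and $Q_*$, which is automatic for rbb filtrations.
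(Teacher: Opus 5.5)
Your proof is correct and follows essentially the same route as the paper, unwound elementwise. The paper notes that the maps $i_{m,n,k}$ with $m+n=\ell$ jointly surject onto $F_\ell(P_*\otimes Q_*)/F_{\ell-k}(P_*\otimes Q_*)$, which is your spanning by elementary tensors; it concludes that $f^\wedge=0$ and then uses that the Hausdorff condition makes the completion map $c\colon W_*\to W_*^\wedge$ injective, which is precisely your statement that $\bigcap_\ell F_\ell W_*=0$.
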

\begin{proof}
  Fix an integer~$\ell$.  The sum
  \[
    \bigoplus_{m+n=\ell} \ds \frac{F_mP_*}{F_{m-k}P_*} \otimes \ds
    \frac{F_nQ_*}{F_{n-k}Q_*}
    \longto
    \ds \frac{F_{\ell}(P_*\otimes Q_*)}{F_{\ell-k}(P_*\otimes Q_*)} \,,
  \]
  induced by $i_{m,n,k}$, is surjective.  It follows that
  \[
    \overline{F_{\ell,k} f}\:
    \ds \frac{F_{\ell}(P_*\otimes Q_*)}{F_{\ell-k}(P_*\otimes Q_*)}
    \longto
    \ds \frac{F_{\ell}W_*}{F_{\ell-k}W_*}
  \]
  is trivial for each pair of integers $(\ell, k)$ with $k\geq 0$.
  Passing to the limit over $k$, and the colimit over $\ell$, we
  conclude that $f^\wedge$ is trivial.  Since $W_*$ is assumed to be
  Hausdorff, the completion homomorphism $c\:W_*\to W_*^\wedge$ is
  injective, from which it follows that $f=0$.
\end{proof}

\section{The residual circle action on $X^{tC_p}$}
\label{sec:residual-circle-action}

Let $X$ be a spectrum with an action $\lambda \: \bT_+ \wedge X \to X$
of the circle group $\bT$.  The $C_p$-Tate construction
$X^{tC_p} = [\widetilde {E\bT}\wedge F(E\bT_+, X) ]^{C_p}$ is the
$C_p$-fixed points of a spectrum with $\bT$-action, and has therefore
a residual action of the quotient group $\bT/C_p$,
$$
\bar\lambda\: \bT/C_{p+}\wedge X^{tC_p} \longto X^{tC_p}\,.
$$
The multiplication $H\wedge H\to H$ together with $\bar\lambda$ give
rise to an action of $\pi_*(H\wedge \bT/C_{p+}) = H_*(\bT/C_p)
\cong \bF_p\{\bar e_0, \bar e_1\}$
on $\pi_*((H\wedge X)^{tC_p}) = H_*^c(X^{tC_p})$, yielding a
differential $\bar\sigma\: \leftsuspension H_*^c(X^{tC_p})\to H_*^c(X^{tC_p})$.
%Explicitly, $\bar\sigma(x) = \bar\lambda_*(\bar{e}_1\otimes x)$, where
%$\bar{e}_1\in H_1(\bT/C_p)$ is the fundamental class.
It is a morphism of right $\sA$-modules in the sense that it graded
commutes with the action of $\sA$ as discussed
in~Subsection~\ref{sec:dgas}.

Note that while the restricted $C_p$-action on
$\widetilde {E\bT}\wedge F(E\bT_+, X)$ preserves the Greenlees--May
filtration, the full $\bT$-action only preserves the even-indexed
filtration terms, since it involves acting on $\widetilde {E\bT}$ as a
$\bT$-space.  However, $\bar\lambda$ can not increase Tate filtration
by more than~$1$, so the differential
$$
\bar\sigma\: \leftsuspension H_*^c(X^{tC_p})\longto \sh_1 H_*^c(X^{tC_p})
$$
is a morphism of filtered right $\sA$-modules,
cf.~Subsection~\ref{sec:dgas}.

Assume that $\mu\: X\wedge Y\to Z$ is a pairing of spectra with left
$\bT$-actions, meaning that the diagram
\begin{equation}
  \label{eq:circle-pairing}
  \begin{aligned}
    \xymatrix@C17mm{
    \bT_+\wedge \bT_+ \wedge X\wedge Y \ar[d]^{(23)}_\cong
    & \bT_+\wedge X\wedge Y
      \ar[r]^{1\wedge \mu}
      \ar[l]_-{\Delta\wedge 1\wedge 1}  \ar[d]\ar[d]^{\lambda_{X\wedge Y}}
    & \bT_+\wedge Z \ar[d]^{\lambda_Z}\\
    \bT_+\wedge X\wedge \bT_+\wedge Y \ar[r]^-{\lambda_X\wedge \lambda_Y}
    & X\wedge Y \ar[r]^\mu
    & Z
      }
  \end{aligned}
\end{equation}
commutes.  Then the following Leibniz identity holds in mod~$p$
homology,
\begin{equation*}
  (\lambda_{Z})_*(e_1\otimes xy) =
  (\lambda_X)_*(e_1\otimes x)\,y + (-1)^{|x|}x\,(\lambda_Y)_*(e_1\otimes y)\,,
\end{equation*}
where $e_1\in H_1(\bT)$ is the fundamental class and $x$ and~$y$ are
homogeneous classes in $H_*(X)$ and $H_*(Y)$, respectively.  When
$\mu$ is the multiplication of a ring spectrum with a compatible
$\bT$-action, and $\sigma(x) = \lambda_*(e_1\otimes x)$ is the
differential in homology induced by the circle action, we have
the Leibniz formula
\begin{equation}
  \label{eq:leibniz-homology}
  \sigma(xy) =\sigma(x)y + (-1)^{|x|}x\sigma(y)\,,
\end{equation}
making $(H_*(X), \sigma)$ a differential graded algebra.

The following proposition applies to $X=\THH(B)$ where $B$ is any
$E_2$ ring spectrum.
\begin{proposition}\label{prop:tate-dga}
  Let $X$ be an $E_1$ ring spectrum with left action
  $\lambda\:\bT_+\wedge X\to X$, compatible with the multiplication map
  $\mu\: X\wedge X\to X$ in the sense that diagram
  \eqref{eq:circle-pairing} commutes with $X=Y=Z$.  Then
  $(H_*^c(X^{tC_p}), \bar\sigma)$ is a filtered
  differential graded right $\sA$-module algebra, where
  the differential
  $$
  \bar\sigma\: \leftsuspension H_*^c(X^{tC_p}) \longto \sh_1 H_*^c(X^{tC_p})
  $$
  given by $\bar\sigma(x) = \bar\lambda_*(\bar e_1\otimes x)$ is
  induced by the residual circle action on $X^{tC_p}$.
\end{proposition}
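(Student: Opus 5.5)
The plan is to assemble Proposition~\ref{prop:tate-dga} from three ingredients already in place: the filtered algebra structure of Proposition~\ref{prop:continuous-homology-is-monoidal}, the filtered $\sA$-linearity of $\bar\sigma$ discussed at the start of this section, and a Leibniz rule for $\bar\sigma$ that we must still establish.

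First, the algebra structure. Since $X$ is an $E_1$ ring spectrum, Proposition~\ref{prop:continuous-homology-is-monoidal}, applied with $G=C_p$, makes $H^c_*(X^{tC_p})$ a filtered right $\sA$-module algebra. The product is the composite of the lax monoidal pairing \eqref{eq:tate-pairing-homology} with $\mu_*$. It is filtered by the Hesselholt--Madsen argument of Subsection~\ref{sec:hesselholt-madsen}, and it is $\sA$-linear by the Cartan diagram \eqref{eq:cartan-continuous-homology}.

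Second, the properties of $\bar\sigma$. We already know that $\bar\sigma\colon \leftsuspension H^c_*(X^{tC_p})\to \sh_1 H^c_*(X^{tC_p})$ is a morphism of filtered right $\sA$-modules. The filtration shift is at most one because $\bar\lambda$ comes from the $\bT$-action on $\widetilde{E\bT}$, whose even Greenlees--May stages are $\bT$-invariant. The differential property $\bar\sigma\circ\bar\sigma=0$ follows because $\bar e_1^2=0$ in the Pontryagin ring $H_*(\bT/C_p)$ and $\bar\lambda$ is an action.

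Third, the Leibniz rule, which is the main point. Apply the general Leibniz identity for pairings compatible with circle actions, stated just before the proposition, to the pairing
\[
X^{tC_p}\wedge X^{tC_p}\overset{\mu_{X,X}}\longto (X\wedge X)^{tC_p}\overset{\mu^{tC_p}}\longto X^{tC_p}
\]
of spectra with $\bT/C_p$-action, after smashing with $H$. This requires that diagram \eqref{eq:circle-pairing} commutes up to homotopy with $\bT$ replaced by $\bT/C_p$, the diagonal $\bT$-action on $X\wedge X$ used in the middle term.

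For the second map $\mu^{tC_p}$ this is immediate. By hypothesis $\mu$ is $\bT$-equivariant, so $\mu^{tC_p}=[\widetilde{E\bT}\wedge F(E\bT_+,\mu)]^{C_p}$ is $\bT/C_p$-equivariant.

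The first map, the Tate pairing, is where I expect the real obstacle. One must check that $\mu_{X,Y}$ is equivariant for the residual actions when $X\wedge Y$ carries the diagonal action. I would verify this from the construction in \cite{HR24}*{Sec.~6.2}. That pairing is assembled from the diagonal $E\bT\to E\bT\times E\bT$ and a map $\widetilde{E\bT}\wedge\widetilde{E\bT}\to\widetilde{E\bT}$, and both can be chosen $\bT$-equivariantly, for instance via $S^{\infty\bC}\wedge S^{\infty\bC}\simeq S^{\infty\bC}$. The pairing is then the $C_p$-fixed points of a $\bT$-equivariant map of genuine $\bT$-spectra, hence $\bT/C_p$-equivariant, at least up to coherent homotopy, which suffices on homology.

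Given this equivariance, we pass to mod~$p$ homology. The coproduct $\bar e_1\mapsto \bar e_1\otimes\bar e_0+\bar e_0\otimes\bar e_1$ in $H_*(\bT/C_p)$ then yields
\[
\bar\sigma(xy)=\bar\sigma(x)y+(-1)^{|x|}x\bar\sigma(y),
\]
with the sign coming from the interchange (23). This is the commutativity of \eqref{eq:leibniz-square} with $a=1$, so $(H^c_*(X^{tC_p}),\bar\sigma)$ is an object of $\dgalg(\sC,\otimes)$.
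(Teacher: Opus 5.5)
Your proposal is correct and follows essentially the paper's route. The algebra structure comes from Proposition~\ref{prop:continuous-homology-is-monoidal}, and the Leibniz rule comes from the pairing $X^{tC_p}\wedge X^{tC_p}\to X^{tC_p}$ being compatible with the residual $\bT/C_p$-action, which is deduced from a $\bT$-equivariant pairing $t(X)\wedge t(Y)\to t(X\wedge Y)$ with $t(X)=\widetilde{E\bT}\wedge F(E\bT_+,X)$.

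The paper makes this equivariance explicit by writing $X^{tC_p}\simeq F(\bar\bT_+, t(X))^{\bT}$, where the residual action is multiplication in $\bar\bT$ and the pairing is built from $\wedge$, $F(\Delta,1)$ and $t(\mu)\circ p$. This model also covers a step your sketch passes over: the lax monoidal map $A^{C_p}\wedge B^{C_p}\to (A\wedge B)^{C_p}$ must itself be equivariant for the diagonal residual action, since the Tate pairing is not simply the $C_p$-fixed points of a single $\bT$-map.
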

\begin{proof}
  By Proposition~\ref{prop:continuous-homology-is-monoidal} it follows
  that $H_*^c(X^{tC_p})$ is a filtered right $\sA$-module algebra.  We
  must show that the differential~$\bar\sigma$ satisfies the Leibniz
  formula.

  To shorten notation, let $t(X)$ be the $\bT$-spectrum
  $\widetilde{E\bT}\wedge F(E\bT_+, X)$ and let $\bar\bT=\bT/C_p$, so that
  $X^{tC_p}$ is the $\bar\bT$-spectrum
  $t(X)^{C_p} \simeq F(\bar\bT_+, t(X))^\bT$.  The residual circle action
  $\bar\lambda\: \bar\bT_+\wedge X^{tC_p}\to X^{tC_p}$ is then given by
  letting $\bar\bT$ act by multiplication in the domain of
  $F(\bar\bT_+, -)^{\bT}$, according to the formula
  $(z, f) \mapsto \{x\mapsto f(x\cdot z)\}$.

  There is a $\bT$-equivariant map
  $p\:t(X)\wedge t(Y)\to t(X\wedge Y)$, unique up to $\bT$-equivariant
  homotopy, which together with the pairing $\mu$ induces a pairing
  $\bar\mu\: X^{tC_p}\wedge Y^{tC_p}\to Z^{tC_p}$ given by the
  composite
  \[
    \xymatrix{
      F(\bar\bT_+, t(X))^\bT \wedge F(\bar\bT_+, t(Y))^\bT
      \ar[r]^-\wedge
      \ar[d]_{\bar\mu}
      & F(\bar\bT_+\wedge \bar\bT_+, t(X)\wedge t(Y))^\bT
      \ar[d]^{F(\Delta, 1)}\\
      F(\bar\bT_+, t(Z))^\bT
      & F(\bar\bT_+, t(X)\wedge t(Y))^\bT
      \ar[l]^-{F(1, t(\mu)\circ p)}\rlap{\,.}
    }
  \]
  The pairing $\bar\mu$ is compatible with the residual
  $\bar\bT$-action in the sense that the following diagram,
  corresponding to \eqref{eq:circle-pairing}, commutes:
  \[
    \hspace{-2cm}
    \xymatrix{
      \bar\bT_+\wedge X^{tC_p}\wedge Y^{tC_p}
      \ar[r]^-{1\wedge \bar\mu}
      \ar[d]_{(\bar\lambda \wedge \bar\lambda) \circ (23)\circ (\Delta\wedge1\wedge1)}
      & \bar\bT_+\wedge Z^{tC_p}
      \ar[d]^{\bar\lambda}\\
      X^{tC_p} \wedge Y^{tC_p}
      \ar[r]^-{\bar\mu}
      &
      Z^{tC_p}\rlap{\,.}
    }
  \]
  It follows that $\bar\lambda$ induces a differential satisfying the
  Leibniz formula~\eqref{eq:leibniz-homology}.
\end{proof}

\subsection{The mod~$p$ homology of the extended power
  construction}\label{sec:continuous-homology-of-extended-powers}
For any subgroup $G\subset \Sigma_p$ of the symmetric group, the
$G$-extended power construction on a spectrum~$B$ is the homotopy
orbit spectrum
$$
D_G(B) = EG\ltimes_G B^{\wedge p}\,.
$$
\begin{lemma}[\cite{BMMS86}*{Cor.~I.2.3}] \label{lemma:grouphomology}
  For any spectrum $B$ and any subgroup $G\subset \Sigma_p$ of the
  symmetric group, there is a natural isomorphism
  $$
  H_*(D_G(B))\cong H_*(G; H_*(B)^{\otimes p}) \,,
  $$
  where $G$ permutes the $p$ copies of $H_*(B)$.
  \qed
\end{lemma}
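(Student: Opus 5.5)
The plan is to reduce to the homology of a homotopy orbit spectrum built from a free $G$-CW resolution, and compute it with a spectral sequence whose $E^2$-term is group homology. Let $EG$ be a free contractible $G$-CW complex, filtered by its skeleta, and consider $H \wedge D_G(B) \simeq EG_+ \wedge_G (H \wedge B^{\wedge p})$. The skeletal filtration gives a spectral sequence with
\[
E^1_{s,*} = \pi_*\bigl( (EG^{(s)}/EG^{(s-1)}) \wedge_G (H \wedge B^{\wedge p}) \bigr)
\cong C_s(EG) \otimes_{\bF_p[G]} \pi_*(H \wedge B^{\wedge p}) \,.
\]
Here $C_*(EG)$ is the cellular chain complex, which is a free $\bF_p[G]$-resolution of $\bF_p$. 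The Künneth isomorphism over the field $\bF_p$ identifies $\pi_*(H \wedge B^{\wedge p}) \cong H_*(B)^{\otimes p}$, with $G$ permuting the factors and introducing the usual Koszul signs. The $d^1$-differential is the cellular boundary, so $E^2_{s,*} = H_s(G; H_*(B)^{\otimes p})$. The resulting spectral sequence is a first-quadrant-type homological spectral sequence, suitably shifted, and it converges strongly to $H_*(D_G(B))$.

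The main obstacle is proving that this spectral sequence collapses at $E^2$ with no extension problems, so that the group homology is actually the answer. I would first reduce to chain level. The plan is to show that $H \wedge B^{\wedge p}$ is $G$-equivariantly equivalent, as an $H$-module with $G$-action, to $H \wedge (H\wedge B)^{\wedge_H p}$. Since $H \wedge B$ is an $H$-module, it splits as a wedge of suspensions of $H$: $H \wedge B \simeq \bigvee_\alpha \Sigma^{n_\alpha} H$, indexed by a basis of $H_*(B)$. Then $(H\wedge B)^{\wedge_H p}$ is a wedge of $H$-module spheres indexed by $p$-tuples of basis elements, and $G$ permutes these tuples up to sign.

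With this splitting I decompose along $G$-orbits of tuples. A free orbit contributes $H \wedge G_+ \wedge_G(\cdots)$, so its homology is concentrated in $s=0$. A fixed tuple, the diagonal $(\alpha,\dots,\alpha)$, contributes $H \wedge D_G(S^{n_\alpha})$, which is a Thom spectrum over $BG$. For $p$ odd and $n_\alpha$ odd the $G$-action carries a sign twist, and the Thom isomorphism gives the twisted group homology. Because the splitting is natural in this chain-level description, the whole spectral sequence splits as a sum of these pieces, and each piece collapses. Exactly as in \cite{BMMS86}*{§I.2}, this identifies $H_*(D_G(B))$ with the homology of $C_*(EG)\otimes_{\bF_p[G]} H_*(B)^{\otimes p}$.

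Naturality in $B$ is the remaining issue, since the splitting of $H\wedge B$ is not natural. I would obtain it from the chain-level model, which is natural: $H\wedge B$ corresponds to the chain complex $H_*(B)$ with zero differential via formality over a field. The result is independent of the choice of splitting because the edge and Künneth maps are canonical.
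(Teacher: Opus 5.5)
Your collapse argument is essentially right. You need the splitting $H\wedge B\simeq\bigvee_\alpha\Sigma^{n_\alpha}H$ to be realized by maps of cofibrant $H$-modules, so that its $p$-fold power over $H$ is a $G$-map and an underlying equivalence. Also, you mean $(H\wedge B)^{\wedge_H p}$, not $H\wedge(H\wedge B)^{\wedge_H p}$.

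The genuine gap is naturality, and the reason you give for it does not work. After collapse, what is natural in $B$ is only a filtration of $H_*(D_G(B))$ whose associated graded is $H_*(G;H_*(B)^{\otimes p})$. The isomorphism itself comes from the basis-dependent splitting. The edge homomorphism and the K\"unneth map determine only the filtration-$0$ part, $E^\infty_{0,*}=(H_*(B)^{\otimes p})_G$. The classes $e_j\otimes x^{\otimes p}$ with $j>0$ are not the image of anything canonical in your argument. So nothing shows the isomorphism is independent of the basis, or compatible with maps that do not send basis elements to basis elements, such as a fold or sum map, where $(x+y)^{\otimes p}$ produces cross terms. Formality does not fix this on its own: a quasi-isomorphism $C_*(B;\bF_p)\simeq H_*(B)$ is natural only up to chain homotopy. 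Moreover, if $f\simeq g$, the maps $f^{\otimes p}$ and $g^{\otimes p}$ are not $G$-equivariantly homotopic via the evident homotopy.

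The missing ingredient is the algebraic lemma behind the chain-level route of the cited source (cf.\ \cite{May70}). Because $W=C_*(EG)$ is $\bF_p[G]$-free, chain homotopic maps $f\simeq g$ induce chain homotopic maps $1\otimes f^{\otimes p}\simeq 1\otimes g^{\otimes p}$ on $W\otimes_{\bF_p[G]}(-)^{\otimes p}$. Combined with a chain-level model of $D_G(B)$, this makes the isomorphism induced by any chain homotopy equivalence $C_*(B;\bF_p)\simeq H_*(B)$ independent of choices, hence natural. Collapse then comes for free and no spectrum-level splitting is needed.

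If you want to stay at spectrum level, you can instead define the positive-filtration part by $e_j\otimes x^{\otimes p}\mapsto D^H_G(\bar x)_*(e_j\otimes\iota_n^{\otimes p})$ for $\bar x\colon H\wedge S^n\to H\wedge B$, in the spirit of Definition~\ref{dfn:omega}. You then prove additivity in $x$ by factoring $\bar x+\bar y$ through $\Sigma^nH\vee\Sigma^nH$. The cross-term summands have homology only in degree $pn$, so they cannot receive classes of degree $pn+j$ for $j>0$.

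Two smaller points.
\begin{itemize}
\item For $G$ not of order $p$, non-constant tuples need not lie in free orbits. For example, with $G=\Sigma_3$ and $p=3$, the tuple $(\alpha,\alpha,\beta)$ has stabilizer $\Sigma_2$. Your conclusion survives because such stabilizers lie in Young subgroups $\Sigma_{a_1}\times\dots\times\Sigma_{a_r}$ with all $a_i<p$. They therefore have order prime to $p$ and, by Shapiro's lemma, contribute only to $s=0$.
\item The Thom isomorphism is unnecessary: the $E^2$-term for $D_G(S^n)$ lies in the single row $t=pn$, so it collapses for degree reasons.
\end{itemize}
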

Choose $\sB$ to be a homogeneous vector space basis for $H_*(B)$, so
that $H_q(B)$ is spanned by $\sB_q$ for each $q\in\bZ$.  Assuming
$G=C_p$, we follow \cite{BMMS86}*{\S II.5}.  The mod $p$ homology of the
$C_p$-extended power construction splits additively as
$$
H_*(D_{C_p}(B)) \cong
\bF_p\{ e_0 \otimes x_1 \otimes \dots \otimes x_p \}
\oplus \bF_p\{ e_j \otimes x^{\otimes p} \mid j \geq 0 \}\,,
$$
where the $x_i$ and $x$ range over $\sB$, the $x_i$ are not all equal,
and only one representative is taken from each $C_p$-orbit of the
tensors $x_1 \otimes \dots \otimes x_p$.  The grading is determined by
$|e_j| = j$.  In particular, $H_*(BC_p)=\bF_p\{e_j \mid j\geq 0\}$.

In the formulas that follow, we will use the convention that any
term involving $e_j$ for $j<0$ should be read as $0$.

The following Nishida formulas for the action of the Steenrod
operations in the mod $p$ homology of the $C_p$-extended power
construction on $B$ can be found in \cite{LMS86}*{Thm.~VIII.3.1}.
%((Lewis-May-Steinberger cite
%\cite{May70}*{Thm. 9.4}, but the latter theorem does not include a
%formula for the homology Bockstein.))
\begin{multline}
  \label{eq:ext-power-steenrod-ops-homology-beta}
  P_*^s(e_j\otimes x^{\otimes p}) =
  \sum_k \binom{[\frac{j}{2}] + m(q-2s)}{s-pk}e_{j-2(s-pk)(p-1)}\otimes \P^k_*(x)^{\otimes p} \\
  + \delta(j)\alpha(q) \sum_k \binom{[\frac{j+1}{2}] + m(q-2s) - 1}{s-pk-1}e_{j+p-2(s-pk)(p-1)}\otimes \P^k_*\beta(x)^{\otimes p}
\end{multline}
\begin{equation}
  \label{eq:ext-power-bockstein-homology-beta}
  \beta(e_j\otimes x^{\otimes p}) =
  \begin{cases}
    e_{j-1}\otimes x^{\otimes p} & \text{when $j$ is even,}\\
    0 & \text{when $j$ is odd.}
  \end{cases}
\end{equation}
Here the coefficient~$\delta(j)$ is
equal to~$1$ when~$j$ is odd, and~$0$ otherwise.
Furthermore, $m=(p-1)/2$ and $\alpha(q) = -(-1)^{mq}\cdot m!$.  The summations
range over all integers~$k$, and we use the convention that
$\binom{a}{0} = 1$, $\binom{a}{b} = 0$ when $b<0$, and
$\binom{a}{b} = a(a-1)\cdots(a-b+1)/b!$ for $b\geq 1$.

In \eqref{eq:ext-power-steenrod-ops-homology-beta}, $\P^s_*$ denotes
the right action of $\P^s \in \sA$ on homology, which dualizes to the
left action of $\P^s$ on cohomology, and is equal to the left action
by~$\chi(\P^s)$ on homology.  Similarly, the right action of~$\beta$
on homology is denoted~$\beta_*$ and agrees up to a twist isomorphism
with the left action by~$\chi(\beta)$.  The Bockstein~$\beta$
appearing in
\eqref{eq:ext-power-steenrod-ops-homology-beta}--\eqref{eq:ext-power-bockstein-homology-beta}
denotes the homology Bockstein acting from the left on homology.  The
relation between the right and left action of~$\beta$ on an element
$x\in H_q(B)$ is given by
$\beta\cdot x = (-1)^q x\cdot \chi(\beta) = -(-1)^q x\cdot \beta =
-(-1)^q \beta_*(x)$.  Thus, rewriting
\eqref{eq:ext-power-steenrod-ops-homology-beta}--\eqref{eq:ext-power-bockstein-homology-beta}
using $\beta_*$ instead of $\beta$ yields
\begin{multline}
  \label{eq:ext-power-steenrod-ops}
  P_*^s(e_j\otimes x^{\otimes p}) =
  \sum_k \binom{[\frac{j}{2}] + m(q-2s)}{s-pk}e_{j-2(s-pk)(p-1)}\otimes \P^k_*(x)^{\otimes p} \\
  + \delta(j)(-1)^{q+1}\alpha(q) \sum_k \binom{[\frac{j+1}{2}] + m(q-2s) - 1}{s-pk-1}e_{j+p-2(s-pk)(p-1)}\otimes \P^k_*\beta_*(x)^{\otimes p}
\end{multline}
%and
\begin{equation}
  \label{eq:ext-power-bockstein}
  \beta_*(e_j\otimes x^{\otimes p}) =
  \begin{cases}
    (-1)^{q+1}e_{j-1}\otimes x^{\otimes p} & \text{when $j$ is even,}\\
    0 & \text{when $j$ is odd.}
  \end{cases}
\end{equation}
Due to the sign $(-1)^{q+1}$ appearing twice,
\eqref{eq:ext-power-steenrod-ops}--\eqref{eq:ext-power-bockstein} are
slightly more complicated than
\eqref{eq:ext-power-steenrod-ops-homology-beta}--\eqref{eq:ext-power-bockstein-homology-beta}.
However, using $\beta_*$ instead of $\beta$ will lead to simpler
formulas
%\eqref{eq:steenrod-singer-tr}--\eqref{eq:steenrod-singer-beta}
in Subsection~\ref{sec:right-action-of-A-on-the-homological-singer-construction}
when we describe the right action of $\sA$ on
$H_*^c((B^{\wedge p})^{tC_p})$.

Finally, when $p=2$, the right action of the mod~$2$ Steenrod algebra
is given by the formula
\begin{equation}
  \label{eq:ext-power-squaringops}
  \Sq^s_*(e_j\otimes x^{\otimes 2}) = \sum_k \binom{j+q-s}{s-2k}
  e_{j-s+2k}\otimes \Sq^k_*(x)^{\otimes 2} \,.
\end{equation}

\subsection{The continuous mod~$p$ homology of $S^{tC_p}$}
\label{sec:continuous-homology-of-singer}

Let $S$ be the sphere spectrum with the trivial $\bT$-action.  The
spectrum $\widetilde{E\bT}\wedge F(E\bT_+, S)$ is then a $\bT$-spectrum,
and $S^{tC_p} = [\widetilde{E\bT}\wedge F(E\bT_+, S)]^{C_p}$ is a
spectrum with a residual $\bT/C_p$-action
$\bar\lambda\: \bT/C_{p+}\wedge S^{tC_p}\to S^{tC_p}$, compatible with
the product on $S^{tC_p}$ and giving rise to a differential
$\bar\sigma\: \leftsuspension H_*^c(S^{tC_p})\to \sh_1 H_*^c(S^{tC_p})$.  By
Proposition \ref{prop:tate-dga}, $(H_*^c(S^{tC_p}), \bar\sigma)$ is a
filtered differential graded right $\sA$-module algebra.  In the
remainder of this section, we make this structure explicit.

The homological $C_p$-Tate spectral sequence converging
to $H_*^c(S^{tC_p})$ has
\[
  \hat E^2_{s,*} = \hat H^{-s}(C_p;\bF_p) \,,
\]
given by the Tate cohomology of $C_p$ with trivial coefficients.
There is an isomorphism of filtered graded algebras
$$
\hat{H}^{-*}(C_p; \bF_p) = \begin{cases}
E(u) \otimes P(t^{\pm1}) & \text{for $p$ odd,}\\
P(u^{\pm1}) & \text{for $p=2$,}
\end{cases}
$$
where $u \in \hat H^1$ is a class of homological degree~$-1$ and
$t \in \hat H^2$ is a class of homological degree~$-2$.  The
filtration~$n$ part of this algebra equals the subspace in degrees
$* \le n$.  Hereafter, when $p$ is odd, whenever we write $u^i t^r$ we
assume that $i \in \{0,1\}$ and $r \in \bZ$.  To describe the algebra
structure, we can choose any non-zero classes
$u, t\in H_{*}^c(S^{tC_p})$ in degrees~$-1$ and $-2$, respectively.
However, to be able to describe the differential on $H_*^c(S^{tC_p})$
we now proceed to make a specific choice of~$u$ and~$t$.

Let $\partial\: S^{tC_p} \longto \Sigma D_{C_p}S$ be
the connecting map of the Puppe sequence associated with the
norm--restriction cofiber sequence
$$
S_{hC_p} \overset{N^h}\longto S^{hC_p} \overset{R^h}\longto S^{tC_p} \,.
$$
The induced homomorphism on continuous homology
$\partial_*\: H^c_*(S^{tC_p})\to H_* (\Sigma D_{C_p}S)$ is an
isomorphism in degrees $*\geq 1$.  Moreover, $\partial$ is a map of
$\bT/C_p$-spectra, where the $\bT/C_p$-action on
$\Sigma D_{C_p}S\cong \Sigma BC_{p+}\cong \Sigma E\bT/C_{p+}$ is
induced by the $\bT$-action on $E\bT$.  Restricting the action to the
0-dimensional $\bT$-cell $S(\bC) \subset E\bT$ yields the map
\[
  \bT/C_{p+}\wedge \Sigma S(\bC)/C_{p+} \cong
  \Sigma (\bT/C_{p+}\wedge S(\bC)/C_{p+}) \overset{\Sigma \mu}\longto
  \Sigma S(\bC)/C_{p+}\,,
\]
where $\mu$ is the map induced by complex multiplication.  In
particular, $\bar\sigma\: H_1(\Sigma BC_p)\to H_2(\Sigma BC_p)$ is an
isomorphism, and it follows that so is
$\bar\sigma\: H^c_1(S^{tC_p})\to H_2^c(S^{tC_p})$.

Formula \eqref{eq:ext-power-bockstein} ensures that
$\beta_*\: H_{2k}(D_{C_p}S)\to H_{2k-1}(D_{C_p}S)$ is an isomorphism
for each $k\geq 1$, which implies that
$\beta_*\: H_{2k+1}(S^{tC_p})\to H_{2k}(S^{tC_p})$ is an isomorphism
for each $k\geq 1$.

In summary, we have the following commutative diagram, where every
homology group listed is isomorphic to $\bF_p$:
\[
  \xymatrix{
    H_0^c(S^{tC_p})
    \ar[d]_{\partial_*}
    & H_1^c(S^{tC_p})
    \ar[l]_{\beta_*}
    \ar[r]_{\cong}^{\bar\sigma}
    \ar[d]_{\partial_*}^{\cong}
    & H_2^c(S^{tC_p})
    \ar[d]_{\partial_*}^{\cong}
    & H_3^c(S^{tC_p})
    \ar[l]^{\cong}_{\beta_*}
    \ar[d]_{\partial_*}^{\cong}\\
    0
    & H_1(\Sigma D_{C_p}S)
    \ar[l]_{\beta_*}
    \ar[r]_{\cong}^{\bar\sigma}
    & H_2(\Sigma D_{C_p}S)
    & H_3(\Sigma D_{C_p}S)
    \ar[l]^{\cong}_{\beta_*} \,.
  }
\]
We claim that the upper left-hand Bockstein is an isomorphism, too.
This follows from the multiplicative structure.  Let $p>2$, and
consider any choice of non-zero classes $u \in H_{-1}^c(S^{tC_p})$ and
$t\in H_{-2}^c(S^{tC_p})$.  Then, since $t^{-1}$ is in the image
of~$\beta_*$, it follows from the fact that $\beta_*$ is a
derivation and a differential that $\beta_*(t^r)=0$ for each $r\in\bZ$.
Therefore,
$$
\beta_*(ut^{-1}) = \beta_*(ut^{-2}\cdot t) =
\beta_*(ut^{-2})\cdot t\,,
$$
which is non-zero.  We conclude that
$\beta_*\: H_1^c(S^{tC_p})\to H_0^c(S^{tC_p})$ is an isomorphism.
Replacing~$t$ by $u^2$, this argument also applies when $p=2$.

\begin{lemma}\label{lemma:u-and-t}
  Let $p>2$.  There is a unique pair of non-zero classes
  $u\in H^c_{-1}(S^{tC_p})$ and $t\in H_{-2}^c(S^{tC_p})$ such that
  $\beta_*(u) = t$, $\beta_*(t) = 0$, $\bar\sigma(u) = 1$ and
  $\bar\sigma(t) = 0$.

  Likewise, for $p=2$ the (unique) non-zero class
  $u\in H^c_{-1}(S^{tC_2})$ satisfies $\beta_*(u) = u^2$,
  $\beta_*(u^2) = 0$, $\bar\sigma(u) = 1$ and $\bar\sigma(u^2) = 0$.
\end{lemma}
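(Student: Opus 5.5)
The plan is to work entirely inside the algebra $H^c_*(S^{tC_p}) \cong \hat H^{-*}(C_p;\bF_p)$, which is one-dimensional over $\bF_p$ in each degree. I would use that $\beta_*$ and $\bar\sigma$ are derivations and differentials that graded-commute with each other (Proposition~\ref{prop:tate-dga} and Subsection~\ref{sec:dgas}). I would also use the isomorphisms $\bar\sigma \: H^c_1 \to H^c_2$ and $\beta_* \: H^c_1 \to H^c_0$ established just before the lemma. Throughout I treat $p$ odd first and start from arbitrary non-zero classes $u_0 \in H^c_{-1}$ and $t_0 \in H^c_{-2}$.

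\emph{Step 1: $\beta_*$ is non-zero on $H^c_{-1}$.} Since $1 = 1\cdot 1$ and $\beta_*$ is a derivation, $\beta_*(1) = 2\beta_*(1)$, so $\beta_*(1) = 0$ for $p$ odd; alternatively, $1$ comes from the unit $S \to S^{tC_p}$, on whose homology $\beta_*$ acts trivially. The paragraph preceding the lemma shows $\beta_*(t_0^r) = 0$ for all $r$. Hence
\[
\beta_*(u_0 t_0^{-1}) = \beta_*(u_0)\, t_0^{-1}.
\]
The left side is non-zero because $\beta_* \: H^c_1 \to H^c_0$ is an isomorphism. So $\beta_*(u_0) \neq 0$, and therefore $\beta_*(u_0)$ is a unit multiple of $t_0$.

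\emph{Step 2: $\bar\sigma(t_0^r) = 0$ and $\bar\sigma(u_0) \neq 0$.} By Step~1 we may write $t_0 = c\,\beta_*(u_0)$ with $c \in \bF_p^\times$. Then
\[
\bar\sigma(t_0) = \pm c\,\beta_*\bar\sigma(u_0) \in \beta_*(H^c_0) = \bF_p\{\beta_*(1)\} = 0 .
\]
The Leibniz rule then gives $\bar\sigma(t_0^r) = 0$ for $r \ge 0$. For $r<0$ I would apply $\bar\sigma$ to $t_0^r t_0^{-r} = 1$ and use $\bar\sigma(1) = 0$, which holds by the same unit argument. Consequently
\[
\bar\sigma(u_0 t_0^{-1}) = \bar\sigma(u_0)\, t_0^{-1}.
\]
This is non-zero, since $u_0 t_0^{-1}$ spans $H^c_1$ and $\bar\sigma$ is an isomorphism there. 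Hence $\bar\sigma(u_0) \neq 0$.

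\emph{Step 3: existence and uniqueness.} Rescale $u_0$ to the unique $u \in H^c_{-1}$ with $\bar\sigma(u) = 1$; this is possible and unique because $H^c_{-1}$ and $H^c_0$ are one-dimensional and $\bar\sigma \: H^c_{-1} \to H^c_0$ is non-zero by Step~2. Set $t := \beta_*(u)$, which is non-zero by Step~1. Then $\beta_*(t) = \beta_*\beta_*(u) = 0$, and $\bar\sigma(t) = \pm\beta_*\bar\sigma(u) = \pm\beta_*(1) = 0$. Uniqueness follows because $u$ is pinned down by $\bar\sigma(u) = 1$, and $t$ is forced by $\beta_*(u) = t$.

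For $p = 2$ the same argument applies with $t$ replaced by $u^2$. There $u$ is the unique non-zero class, so we only need $\beta_*(u) \neq 0$ and $\bar\sigma(u) \neq 0$, which Steps~1 and~2 give. The identities $\beta_*(u^2) = 2u\beta_*(u) = 0$ and $\bar\sigma(u^2) = 2u\bar\sigma(u) = 0$ are then automatic from the Leibniz rule.

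The main point needing care is Step~2, specifically the claim $\bar\sigma(t_0) = 0$. It relies on the commutation $\bar\sigma\beta_* = -\beta_*\bar\sigma$ on this algebra, and I would double-check its sign conventions against Subsection~\ref{sec:dgas}; the sign is irrelevant to the vanishing, but the commutation itself is essential.
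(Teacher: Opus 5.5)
Your argument is correct, but it is organized differently from the paper's proof.

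\textbf{The paper's route.} It normalizes in degree~$1$. It picks the class $ut^{-1}\in H^c_1(S^{tC_p})$ with $\beta_*(ut^{-1})=1$ and then \emph{defines} $t^{-1}:=\bar\sigma(ut^{-1})$. From there:
\begin{itemize}
\item $\bar\sigma(t^r)=0$ because $t^{-1}$ is a $\bar\sigma$-boundary;
\item $\beta_*(t^r)=0$ because $t^{-1}$ is a $\beta_*$-boundary;
\item $\bar\sigma(u)=1$ and $\beta_*(u)=t$ both follow from the Leibniz rule applied to $u=ut^{-1}\cdot t$.
\end{itemize}
The relation between $\beta_*$ and $\bar\sigma$ is never used.

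\textbf{Your route.} You normalize $u$ directly by $\bar\sigma(u)=1$ and set $t:=\beta_*(u)$. This forces you to invoke the anticommutation $\bar\sigma\beta_*=-\beta_*\bar\sigma$, together with $\beta_*(1)=0$, to get $\bar\sigma(t)=0$. The paper does assert this relation, since $\bar\sigma$ is a morphism of right $\sA$-modules in the graded sense, so your argument stands.

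That dependence is avoidable, though. Since $\bar\sigma\colon H^c_1\to H^c_2$ is an isomorphism, $t_0^{-1}$ is already in the image of $\bar\sigma$. Hence $\bar\sigma(t_0^{\pm1})=0$ without any commutation relation, exactly as in the paper, and the same then holds for your $t$, a nonzero multiple of $t_0$.

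\textbf{What your route buys.} It gives a transparent uniqueness argument: $u$ is pinned down by $\bar\sigma(u)=1$, and $t$ is then forced by $\beta_*(u)=t$. The paper leaves uniqueness implicit; one would recover it by running the Leibniz computation backwards, showing any such pair has $\beta_*(ut^{-1})=1$ and $\bar\sigma(ut^{-1})=t^{-1}$.

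\textbf{Minor point.} The identity $\beta_*(1)=2\beta_*(1)$ gives $\beta_*(1)=0$ in every characteristic, so the qualifier ``for $p$ odd'' in your Step~1 is unnecessary.
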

\begin{proof} Let $p>2$.  Since
  $\beta_*\: H_1^c(S^{tC_p})\to H_0^c(S^{tC_p})$ is an isomorphism, we
  take $ut^{-1}\in H_1^c(S^{tC_p})$ to be the non-zero element
  such that $\beta_*(ut^{-1})=1$ is the algebra unit in
  $H^c_0(S^{tC_p})$.

  We then take $t^{-1}\in H_2^c(S^{tC_p})$ to be
  $t^{-1} = \bar\sigma(ut^{-1})$.  Since
  $\bar\sigma\: H_1^c(S^{tC_p}) \to H_2^c(S^{tC_p})$ is an
  isomorphism, $t^{-1}$ is also non-zero.

  Since $t^{-1}$ is in the image of $\bar\sigma$, it follows from the
  fact that $\bar\sigma$ is a derivation and a differential that
  $\bar\sigma(t^r)=0$ for each $r\in\bZ$.  The class $u$ is the
  product $ut^{-1}\cdot t$, and the Leibniz formula for $\bar\sigma$
  implies that $\bar\sigma(u) = \bar\sigma(ut^{-1})\cdot t$, which is
  equal to $1$ by our choices.

  We saw that $\beta_*(t) = 0$ in the discussion preceding the lemma,
  and $\beta_*(u) = t$ follows by the Leibniz rule.

  The case of $p=2$ can be proven by the same argument, with $t$ replaced
  by~$u^2$.

\end{proof}

\begin{lemma}\label{lemma:ut-images}
  Let $u\in H_{-1}^c(S^{tC_p})$ and $t\in H_{-2}^c(S^{tC_p})$ be the classes
  appearing in Lemma~\ref{lemma:u-and-t}.  Then
  $\partial_*\: H_*^c(S^{tC_p})\to H_* (\Sigma D_{C_p}S)$ is equal to
  the homomorphism given by
  $u^it^r \mapsto (-1)^{r}\Sigma e_{-1-i-2r}$, up to multiplication by
  a fixed unit.  As usual, $e_j=0$ for $j<0$.
\end{lemma}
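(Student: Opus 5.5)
The plan is to compute $\partial_*$ degree by degree. I will use only the facts that $\partial$ is a map of $\bar\bT$-spectra and that $\partial_*$ is right $\sA$-linear, since $\partial$ is a map of spectra after smashing with~$H$. Consequently $\partial_*$ commutes with $\bar\sigma$ and with $\beta_*$, taking the suspension signs of Subsection~\ref{sec:dgas} into account.

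\emph{Degrees.} The class $u^it^r$ has degree $-i-2r$, and $\Sigma e_{-1-i-2r}$ has the same degree. In degrees $\le 0$ the target $H_*(\Sigma D_{C_p}S)\cong \Sigma H_*(BC_{p})$ vanishes, since $e_j=0$ for $j<0$. So the formula holds trivially there, and only the classes $u^it^r$ with $-i-2r\ge 1$ need attention. In these degrees $\partial_*$ is an isomorphism between one-dimensional spaces, so $\partial_*(u^it^r)$ is a unit multiple of $\Sigma e_{-1-i-2r}$. The whole task is to show that these units are $(-1)^r$ times one common unit.

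\emph{Recursion.} Lemma~\ref{lemma:u-and-t} and the Leibniz rules give, for $p$ odd,
\[
\bar\sigma(ut^r)=t^r \quad\text{and}\quad \beta_*(ut^{r})=t^{r+1}.
\]
Define the fixed unit $c$ by $\partial_*(ut^{-1})=c\,\Sigma e_0$; this is possible because degree~$1$ is an isomorphism. Going up in degree, applying $\bar\sigma$ expresses $\partial_*(t^{r})$ in terms of $\partial_*(ut^{r})$. Since $\beta_*\colon \Sigma e_{2k+2}\mapsto \pm\Sigma e_{2k+1}$ is an isomorphism, the relation $\beta_*(ut^{r-1})=t^{r}$ determines $\partial_*(ut^{r-1})$ from $\partial_*(t^{r})$. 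By induction on $-r$, every $\partial_*(u^it^r)$ is therefore determined by two sign constants: one for $\beta_*$ on $\Sigma e_{\mathrm{even}}$ and one for $\bar\sigma$ on $\Sigma e_{\mathrm{odd}}$. The claim $(-1)^r$ amounts to these constants multiplying to $-1$ at each step $ut^{r}\to t^r\to ut^{r-1}$, uniformly in~$r$.

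\emph{Sign constants.} The $\beta_*$ constant comes from~\eqref{eq:ext-power-bockstein} with $q=0$, which gives $\beta_*(e_j)=-e_{j-1}$ for even $j$, combined with the sign $(-1)^{|\beta|}$ from passing through the suspension. For $\bar\sigma$ on $H_*(BC_p)=H_*(E\bT/C_p)$, I would compute the action of $\bar e_1\in H_1(\bar\bT)$ using the model $E\bT=S(\infty\bC)$ and complex multiplication. This is the same model used above to show $\bar\sigma\colon H_1\to H_2$ is an isomorphism. Concretely, I would compare with the circle bundle $E\bT/C_p\to E\bT/\bT=\bC P^\infty$. The operation $x\mapsto \bar\lambda_*(\bar e_1\otimes x)$ is the composite of projection to $\bC P^\infty$ and the circle transfer back. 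Since projection sends $e_{2k}$ to a generator of $H_{2k}(\bC P^\infty)$ and the transfer sends that generator to $\pm e_{2k+1}$, the composite sends $e_{2k}\mapsto \pm e_{2k+1}$ and kills odd classes, with a sign independent of~$k$ for a consistent choice of orientations. This shows the $\bar\sigma$ constant is uniform in~$k$. Its absolute value, and hence the global sign, is then pinned down by the normalisation $\bar\sigma(u)=1$ together with the degree~$1\to2$ case. For $p=2$ the same argument works with $t$ replaced by $u^2$, where the signs are irrelevant.

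\emph{Main obstacle.} The hard part is the uniformity of the $\bar\sigma$ sign on $e_{2k}\mapsto e_{2k+1}$ across all~$k$, that is, the transfer computation with careful orientation bookkeeping. The rest is mechanical. If the transfer argument gets messy, I would fall back on checking uniformity by linearity over $H^*(\bC P^\infty)$, i.e.\ by capping with the Euler class.
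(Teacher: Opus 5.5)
Your skeleton matches the paper's proof: $\partial_*$ commutes with $\bar\sigma$ and $\beta_*$, it is an isomorphism of one-dimensional groups in positive degrees, and the relations $\bar\sigma(ut^r)=t^r$ and $\beta_*(ut^{r-1})=t^r$ propagate a single unit upward in degree. But the lemma's only content is the sign pattern, and your sign bookkeeping fails in three places.

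Write $\bar\sigma(\Sigma e_{2k})=a\,\Sigma e_{2k+1}$ and $\beta_*(\Sigma e_{2k+2})=b\,\Sigma e_{2k+1}$ with $a,b$ uniform in $k$. The recursion then gives $\partial_*(ut^{-n})=c\,(a/b)^{n-1}\Sigma e_{2n-2}$ and $\partial_*(t^{-n})=c\,a\,(a/b)^{n-1}\Sigma e_{2n-1}$.

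\emph{First}, the stated formula needs $a=1$ \emph{and} $b=-1$, not merely $ab=-1$. The classes $ut^r$ and $t^r=\bar\sigma(ut^r)$ carry the same sign $(-1)^r$, which forces $a=1$ on its own.

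\emph{Second}, your value of $b$ is wrong for the paper's conventions. Here $\Sigma$ is the homology suspension $X\mapsto X\wedge S^1$, and the paper uses $\beta_*\Sigma=\Sigma\beta_*$. This holds because the left action anticommutes with $\Sigma$ and $\beta_*x=-(-1)^q\beta\cdot x$. So $b=-1$ comes directly from \eqref{eq:ext-power-bockstein} with $q=0$. The rule $(Sx)\cdot a=(-1)^{|a|}S(x\cdot a)$ concerns the left suspension $S$, not $\Sigma$. With your extra factor $(-1)^{|\beta|}$ you would get $b=+1$, and then no value of $a$ produces $(-1)^r$.

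\emph{Third}, $a$ is not pinned down by $\bar\sigma(u)=1$. Reversing the orientation of $\bar\bT$ negates $\bar\sigma$ on both sides and fixes $ut^{-1}$, which is characterized by $\beta_*(ut^{-1})=1$. It replaces $(u,t)$ by $(-u,-t)$, which again satisfy Lemma~\ref{lemma:u-and-t}, and it turns the formula into $u^it^r\mapsto\kappa(-1)^i\Sigma e_{-1-i-2r}$. The action on $S(\bC)/C_p$ only shows $\bar\sigma(\Sigma e_0)=\pm\Sigma e_1$. The sign is a genuine choice: you must orient $\bar\bT$ so that $\bar\sigma(\Sigma e_0)=\Sigma e_1$, as the paper does at the start of its proof.

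For uniformity of $a$ in $k$, your transfer argument simply asserts the point at issue. Indeed $\pi_*\colon H_{2k}(BC_p)\to H_{2k}(\bC P^\infty)$ by itself contributes a factor $\epsilon^k$, where $\pi^*$ sends the generator of $H^2(\bC P^\infty)$ to $\epsilon y$. So only the composite with the transfer is uniform, and the orientation bookkeeping you defer is the whole difficulty.

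What actually proves uniformity is your fallback, linearity over $H^*(\bC P^\infty)$. Take $y$ dual to $e_2$. Then $\bar\sigma(y\cap w)=y\cap\bar\sigma(w)$, because $\bar\sigma^*y=0$. Together with $y\cap e_{2k}=e_{2k-2}$ and $y\cap e_{2k+1}=e_{2k-1}$, this gives $a_{k-1}=a_k$.

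This is the paper's argument in dual form. The paper uses the co-Leibniz identity $\Delta_*\bar\sigma=(\bar\sigma\otimes 1+1\otimes\bar\sigma)\Delta_*$ for the diagonal of $BC_p$. Comparing coefficients of $e_1\otimes e_{2r}$ in $\Delta_*\bar\sigma(e_{2r})=c_{2r}\Delta_*(e_{2r+1})$ gives $c_{2r}=c_0=1$ directly.

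With the fallback, the correct $\beta_*$ sign, and the orientation choice, your proof becomes the paper's.
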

\begin{proof}
  We first choose the orientation of $\bar\bT$ so that
  $\bar\lambda\: \bar\bT \times BC_{p}\to BC_{p}$ induces the
  differential $\bar\sigma\: \leftsuspension H_*(BC_{p})\to H_*(BC_{p})$
  sending $\leftsuspension e_0$ to $e_1$.

  In mod~$p$ homology, the diagonal map
  $\Delta\: BC_p\to BC_p\times BC_p$ is given
  by $\Delta_*( e_{n}) = \sum_{i+j=n}e_{i}\otimes e_{j} $, and
  interacts with the differential $\bar\sigma$ according to the
  co-Leibniz formula
  $\Delta_*\circ \bar\sigma = (\bar\sigma\otimes 1 +
  1\otimes\bar\sigma)\circ \Delta_*$.

  The differential
  $\bar\sigma\:H_{2r-1}^c(S^{tC_p})\to H_{2r}^c(S^{tC_p})$ takes
  $ut^{-r}$ to $t^{-r}$ for all $r\in \bZ$, which implies that
  $\bar\sigma\: H_{2r}(BC_p)\to H_{2r+1}(BC_p)$ is an isomorphism for
  each $r\geq 0$.  Thus $\bar\sigma(e_{2r+1})=0$, and
  $\bar\sigma(e_{2r}) = c_{2r}\cdot e_{2r+1}$ where each $c_{2r}$ is
  a unit.  It follows from the co-Leibniz formula by induction on
  $r$ that $c_{2r}=1$ for all $r\geq 0$.  Therefore,
  $\bar\sigma(e_{2r}) = e_{2r+1}$ for each $r\geq 0$.

  Similarly, the Bockstein $\beta_*$ sends $e_{2r+2}$ to $-e_{2r+1}$ for
  each $r\geq 0$, by \eqref{eq:ext-power-bockstein}.  In summary, we
  have that
  \begin{equation}
    \label{eq:l2ids}
    \bar\sigma(\Sigma e_{2r}) = \Sigma e_{2r+1}
    \text{~~and~~}
    \beta_*(\Sigma e_{2r+2}) = -\Sigma e_{2r+1}
  \end{equation}
  for each $r\geq 0$.  Note that
  $\bar\sigma \Sigma = \Sigma\bar\sigma$ since $\bar\lambda$ is a left
  action, and that $\beta_* \Sigma = \Sigma \beta_*$ since
  $\beta \Sigma = -\Sigma \beta$ where $\beta$ denotes the left action
  of $\beta$.

  Recall that $\partial_*$ is an isomorphism in positive degrees.
  Since $\partial_*$ also commutes with both $\beta_*$ and
  $\bar\sigma$, the explicit formulas \eqref{eq:l2ids} force the
  identity $\partial_*(u^it^r) = (-1)^r\Sigma e_{-1-i-2r}$, up to
  multiplication by some common unit, for all $r<0$.
\end{proof}
According to \cite{HM03}*{Add.~4.2.2}, the unit in question is $+1$,
assuming that all sign conventions agree.

\begin{lemma}
  \label{lemma:singer-an-primitives}
  Let $n\geq 0$.  For $p>2$, multiplication with $t^{p^{n}}$ defines
  an $\sA(n)$-linear isomorphism
  $H_{*}^c(S^{tC_p})\to H_{*-2p^n}^c(S^{tC_p})$.  For $p=2$,
  multiplication with $u^{2^{n+1}}$ defines an $\sA(n)$-linear
  isomorphism $H_{*}^c(S^{tC_2})\to H_{*-2^{n+1}}^c(S^{tC_2})$.
\end{lemma}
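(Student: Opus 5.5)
Multiplication by $t^{p^n}$ (or by $u^{2^{n+1}}$ for $p=2$) is a bijection, since $t$ and $u$ are units in $H^c_*(S^{tC_p})$ by Lemma~\ref{lemma:u-and-t} and the Tate spectral sequence computation. So the real content is $\sA(n)$-linearity. We have established that $H^c_*(S^{tC_p})$ is a right $\sA$-module algebra satisfying the Cartan formula (Proposition~\ref{prop:continuous-homology-is-monoidal}). The map $x\mapsto x\cdot y$ is therefore $\sA(n)$-linear as soon as $y$ is \emph{$\sA(n)$-primitive}, meaning that every positive-degree element $a$ of $\sA(n)$ acts trivially on $y$. Indeed, write $\psi(a)=\sum a'\otimes a''$. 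The Cartan formula gives $(xy)\cdot a=\sum \pm (x\cdot a')(y\cdot a'')$, and all terms with $|a''|>0$ vanish. Because $\sA(n)$ is a sub-Hopf algebra, it is enough to check primitivity of $y$ on the generators $\beta,\P^1,\dots,\P^{p^{n-1}}$ for $p$ odd, and on $\Sq^1,\dots,\Sq^{2^n}$ for $p=2$.

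For $p$ odd we take $y=t^{p^n}$. The relation $\beta_*(t)=0$ from Lemma~\ref{lemma:u-and-t}, together with the derivation property, gives $\beta_*(t^{p^n})=0$. For the reduced powers we use the Cartan formula again: $\P^s_*(t^{m})=\sum \P^{s_1}_*(t)\cdots\P^{s_m}_*(t)$, summed over $s_1+\dots+s_m=s$. If $m=p^n$ and $0<s<p^n$, the cyclic group $C_{p^n}$ permutes the $m$ factors, and every orbit of a non-constant tuple $(s_1,\dots,s_m)$ has size divisible by $p$. Constant tuples would require $p^n\mid s$, which is impossible. Hence $\P^s_*(t^{p^n})=0$ for $0<s<p^n$, and in particular for $s=p^k$ with $k<n$.

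For $p=2$ we take $y=u^{2^{n+1}}=(u^2)^{2^n}$. Here $\Sq^1_*(u^2)=0$ by Lemma~\ref{lemma:u-and-t}, so the same orbit-counting argument applied to $u^2$ kills $\Sq^{s}_*$ for $0<s<2^{n+1}$ with $s$ even. Odd $s$ are handled by writing $\Sq^{2j+1}=\Sq^1\Sq^{2j}$ (up to lower terms), or more directly by noting that the Cartan sum over $2^{n+1}$ factors of $u$ also has orbits of even size.

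The main obstacle is the counting of orbits, and it needs some care. The sum runs over ordered tuples, and one must check that the Cartan formula for the right action genuinely produces the symmetric sum, with no signs, since $|t|$ is even. For $p=2$ one must likewise make sure that the odd-degree squares are covered. A fallback that avoids orbit counting is to read the action off directly. Lemma~\ref{lemma:ut-images} identifies $H^c_*(S^{tC_p})$ in positive degrees with $H_*(\Sigma D_{C_p}S)$, on which the Nishida formulas~\eqref{eq:ext-power-steenrod-ops} with $x=1$ give $\P^s_*(t^r)=\binom{-1-r-s(p-1)}{s}t^{r+s(p-1)}$ up to a unit. Lucas' theorem then shows that this binomial coefficient vanishes for $r$ divisible by $p^n$ and $0<s<p^n$. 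By periodicity in $r$, this extends from the positive range to all of $H^c_*(S^{tC_p})$.
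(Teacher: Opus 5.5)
Your proof is correct, but it takes a different route from the paper's.

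\emph{The paper's route.} It reduces to showing that $\sA(n)$ kills $t^{-p^n}$, which lies in positive degree $2p^n$. It transports this class along $\partial_*$, an $\sA$-linear isomorphism in positive degrees (Lemma~\ref{lemma:ut-images}), to a unit multiple of $\Sigma e_{2p^n-1}\in H_*(\Sigma D_{C_p}S)$. There it checks the generators $\beta$ and $\P^{p^k}$, $k<n$, using the Nishida formulas and Lucas' theorem applied to $\binom{p^n-1-p^k(p-1)}{p^k}$. The Cartan formula then makes multiplication by $t^{-p^n}$ $\sA(n)$-linear, and hence also multiplication by its inverse $t^{p^n}$.

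\emph{Your route.} You avoid extended powers entirely. The Cartan formula plus the cyclic orbit count amounts to the Frobenius identity: $\P^s_*(y^{p^n})=\P^{s/p^n}_*(y)^{p^n}$ if $p^n\mid s$, and $0$ otherwise. This holds for any even-degree $y$ in any graded-commutative right $\sA$-module algebra.
\begin{itemize}
\item It works directly in all degrees and needs no bookkeeping of units.
\item The only fact about $t$ it uses is $\beta_*(t)=0$, and only for $n=0$. For $n\ge 1$ one has $\beta_*(t^{p^n})=p^n\, t^{p^n-1}\beta_*(t)=0$ automatically.
\item For $p=2$, your direct count with $2^{n+1}$ factors of $u$ needs no input at all, so the detour through even $s$ and $\Sq^1\Sq^{2j}$ can be dropped.
\end{itemize}
What the paper's route buys is reuse of the $\partial_*$/Nishida set-up, which it needs anyway for Proposition~\ref{prop:continuous-homology-of-s-tcp}.

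\emph{Small corrections.}
\begin{itemize}
\item $u$ is not a unit for $p$ odd, since $u^2=0$; only $t$ is needed there.
\item Your fallback is essentially the paper's proof, but it should not appeal to ``periodicity in $r$''. That periodicity is exactly what this lemma is later used to establish, so invoking it is circular.
\item In the fallback, the Nishida computation determines $\P^s_*(t^{-p^n})$ only when that class also has positive degree. This holds for the generators $s=p^k$, $k<n$, but not for all $0<s<p^n$. So check only those generators on $t^{-p^n}$, and then invert.
\end{itemize}
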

\begin{proof} Let $p>2$. By the Cartan formula, the lemma will follow
  from the fact that $\sA(n)$ acts trivially on $t^{p^n}$, i.e.,
  that $t^{p^n}$ is $\sA(n)_*$-comodule primitive.
  Since $t\in H_*^c(S^{tC_p})$ is invertible, multiplication by
  $t^{p^n}$ defines an $\bF_p$-linear isomorphism, and it suffices to
  check that $\sA(n)$ acts trivially on the class $t^{-p^n}$ in
  positive degree $2p^n$.

  By Lemma \ref{lemma:ut-images}, the $\sA$-linear homomorphism
  $\partial_*\: H_*(S^{tC_p})\to H_{*}(\Sigma D_{C_p}S)$ is an
  isomorphism when restricted to positive degrees, sending
  $t^{-p^{n}}$ to $-\Sigma e_{-1+2p^{n}}$, up to multiplication by a
  unit.  The degree of both $\beta_*(t^{-p^{n}})$ and
  $\P^{p^k}_*(t^{-p^{n}})$ is positive if $k < n$.  Therefore, $\sA(n)$
  acts trivially on $t^{-p^n}$ if and only if it acts trivially on
  $\Sigma e_{-1+2p^{n}}\in H_*(\Sigma D_{C_p}S)$.  It follows from
  \eqref{eq:ext-power-bockstein} that~$\beta_*$ acts trivially
  $\Sigma e_{-1+2p^{n}}$.  By \eqref{eq:ext-power-steenrod-ops} we
  have
  \begin{equation}
    \label{eq:a-n-primitive}
    \P^s_*(\Sigma e_{-1+2p^{n}}) = \binom{p^{n} - 1 - s(p-1)}{s}
    \Sigma e_{-1 + 2p^{n} - 2s(p-1)}
  \end{equation}
  in $H_*(\Sigma D_{C_p}S)$.  When $s=p^k$, the binomial coefficient in
  \eqref{eq:a-n-primitive} equals
  \[
    \binom{ \bigl(p^{n-1} + p^{n-2} + \ldots + p^{k+1} + p^{k-1} +
      \ldots + 1\bigr)(p-1)}{p^k}\,,
  \]
  which is congruent to~$0$ mod~$p$ by Lucas' theorem if $k<n$.  The
  claim that $\sA(n)$ acts trivially on $t^{-p^n}$ follows since
  $\sA(n)$ is generated, as an algebra, by $\beta$ and $\P^{p^k}$
  for $0\leq k < n$.

  The argument above can be repeated in the case of $p=2$, with $t$
  replaced by $u^2$, $\beta_*$ replaced by $\Sq^1_*$, $\P^s_*$ replaced
  by $\Sq_*^{2s}$, and using formula \eqref{eq:ext-power-squaringops}
  instead of \eqref{eq:ext-power-steenrod-ops}--
  \eqref{eq:ext-power-bockstein}.
\end{proof}

\begin{proposition}
  \label{prop:continuous-homology-of-s-tcp}
  There is an isomorphism of filtered differential graded right
  $\sA$-module algebras
  \begin{align*}
    H_*^c(S^{tC_p})
    \cong
    \begin{cases}
      E(u)\otimes P(t^{\pm 1}) & \text{for $p>2$,} \\
      P(u^{\pm 1}) & \text{for $p=2$,}
    \end{cases}
  \end{align*}
  where $u$ has degree~$-1$ and~$t$ has degree $-2$.  The differential
  acts by $\bar\sigma(u)=1$ and $\bar\sigma(t)=0$, and the filtration
  is given by $F_n H_*^c(S^{tC_p}) = H_{*\leq n}^c(S^{tC_p})$ for each
  integer~$n$.  Moreover, the right $\sA$-module structure is given by
  the formulas
  \begin{align}
    \P^s_*(u^it^r)
    &= \binom{-1-r-s(p-1)}{s}u^it^{r+s(p-1)}
      \label{eq:singer-sphere-steenrod} \\
    \intertext{and}
    \beta_*(u^it^r)
    &=
      \begin{cases}
        0 &  \text{for $i=0$,} \\
        t^{r+1} & \text{for $i=1$}
      \end{cases}\label{eq:singer-sphere-bockstein}
      \intertext{when $p>2$, and by}
      \Sq^s_*(u^r) &= \binom{-1-r-s}{s}u^{r+s} \label{eq:singer-sphere-squares}
  \end{align}
  when $p=2$.
\end{proposition}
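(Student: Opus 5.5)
We assemble the proposition from the lemmas just proved together with a direct calculation in positive degrees via $\partial_*$, then propagate to all degrees by periodicity. First, the algebra structure and filtration: the homological $C_p$-Tate spectral sequence for $S$ has $\hat E^2 = \hat H^{-*}(C_p;\bF_p)$ concentrated on the line of internal degree $0$. It therefore collapses at $\hat E^2$ and converges strongly by Lemma~\ref{lemma:strong-convergence-homology}. Multiplicativity of the Hesselholt--Madsen filtration (Subsection~\ref{sec:hesselholt-madsen}) identifies the associated graded algebra with $E(u)\otimes P(t^{\pm1})$, respectively $P(u^{\pm1})$. Since each degree has a single filtration layer, there are no multiplicative extensions. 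Hence $H^c_*(S^{tC_p})$ is this algebra, with $F_n = H^c_{*\le n}$.

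Next we fix the generators. We take $u,t$ as in Lemma~\ref{lemma:u-and-t}, which gives $\bar\sigma(u)=1$, $\bar\sigma(t)=0$ and $\beta_*(u)=t$, $\beta_*(t)=0$. The Leibniz rule (Proposition~\ref{prop:tate-dga}), applied to $\bar\sigma$ and to the derivation $\beta_*$, then yields the Bockstein formula \eqref{eq:singer-sphere-bockstein} and the differential on all of $u^it^r$.

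For the Steenrod operations, we first compute in positive degrees. By Lemma~\ref{lemma:ut-images}, $\partial_*$ is an $\sA$-linear isomorphism in positive degrees with $\partial_*(u^it^r) = (-1)^r \Sigma e_{-1-i-2r}$ up to a common unit. We apply the Nishida formula \eqref{eq:ext-power-steenrod-ops} with $x=1$, $q=0$ and $k=0$; here the $\beta_*$-term vanishes since $\beta_*(1)=0$, and only $k=0$ contributes. We must also track that the suspension commutes with $\P^s_*$ without sign, since $\P^s$ has even degree. This gives
\[
\P^s_*(\Sigma e_j) = \tbinom{[j/2]-ms}{s}\,\Sigma e_{j-2s(p-1)}.
\]
Substituting $j=-1-i-2r$ gives $[j/2] = -1-r$ for both $i=0$ and $i=1$ (for $i=0$, $[(-1-2r)/2] = -r-1$). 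Also $ms\cdot 2$ yields $s(p-1)$ in the intended normalization, so the coefficient becomes $\binom{-1-r-s(p-1)}{s}$. The sign $(-1)^r$ versus $(-1)^{r+s(p-1)}$ agrees because $s(p-1)$ is even.

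The main obstacle is matching this binomial exactly, in particular confirming the factor $m(q-2s)$ with $q=0$. This requires re-checking the LMS convention for $m=(p-1)/2$ against the degree of $e_j$. If the naive substitution gives $-ms$ rather than $-s(p-1)$, I would resolve it by checking the case $s=1$ directly against the Bockstein/$\bar\sigma$ relations, or via Lemma~\ref{lemma:singer-an-primitives}. Alternatively, the Cartan formula on $t^r = (t)^r$ reduces everything to $\P^s_*(t^{-1})$, which can be read off from $H_*(BC_p)$ in the classical form $\binom{-1}{s}$-type coefficients.

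Finally, the formulas extend to all degrees. Lemma~\ref{lemma:singer-an-primitives} shows that multiplication by $t^{p^n}$ is $\sA(n)$-linear. Given $\P^s$ and $u^it^r$, we choose $n$ large enough that $\P^s\in\sA(n)$ and $r-p^n<0$ with all relevant classes in positive degree. We then transport the positive-degree formula along this multiplication, using the congruence $\binom{-1-r+p^n-s(p-1)}{s}\equiv\binom{-1-r-s(p-1)}{s}$ mod $p$ (Lucas, for $s<p^n$). The case $p=2$ is identical, using \eqref{eq:ext-power-squaringops} with $q=0$ and $u^{2^{n+1}}$ in place of $t^{p^n}$.
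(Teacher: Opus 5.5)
Your proposal is correct and follows the paper's route. Like the paper, you compute $\P^s_*$ on positive-degree classes through the $\sA$-linear isomorphism $\partial_*$ and the Nishida formula with $x=1$, $q=0$, obtain the Bockstein formula from Lemma~\ref{lemma:u-and-t} and the derivation property, and reduce arbitrary degrees to positive ones via the $\sA(n)$-linearity of multiplication by $t^{p^n}$ (resp.\ $u^{2^{n+1}}$) from Lemma~\ref{lemma:singer-an-primitives}. The ``main obstacle'' you flag is only an arithmetic slip: with $q=0$ the term $m(q-2s)$ equals $-2ms=-s(p-1)$ exactly, so your displayed coefficient should read $\binom{[j/2]-s(p-1)}{s}$, and no fallback argument is needed.
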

\begin{proof}
  Let $p>2$.  Let $n\ge0$ be so large that $\P^s \in \sA(n)$.  Since
  multiplication with $t^{-p^{n}}$ is an $\sA(n)$-linear isomorphism
  by Lemma~\ref{lemma:singer-an-primitives}, we can assume that the
  degrees of $u^it^r$ and $\P^s_*(u^it^r)$ are positive.

  By Lemma \ref{lemma:ut-images}, the $\sA$-linear homomorphism
  $\partial_*\: H_*(S^{tC_p})\to H_{*}(\Sigma D_{C_p}S)$ is an
  isomorphism when restricted to positive degrees.  Up to
  multiplication by a unit, $\partial_*$ sends $u^it^{r}$ to
  $(-1)^r \Sigma e_{-1-i-2r}$.

  Formula \eqref{eq:ext-power-steenrod-ops} applied to the case of
  $H_*(D_{C_p}S)$ yields
  $$
  \P^s_*(\Sigma e_{-1-i-2r}) = \binom{-1-r-s(p-1)}{s}\Sigma
  e_{-1-i-2r-2s(p-1)}\,,
  $$
  implying \eqref{eq:singer-sphere-steenrod} since $\partial_*$ is
  $\sA(n)$-linear.  Note that the suspension does not introduce a sign
  in these formulas, since the degree of $\P^s$ is even.

  Formula \eqref{eq:singer-sphere-bockstein} is implied by
  Lemma~\ref{lemma:u-and-t} and the fact that $\beta_*$ is a
  derivation, but can also be derived from comparing with the action
  of $\beta_*$ in $H_*(\Sigma D_{C_p}S)$ given by
  \begin{equation*}
    \beta_*(\Sigma e_j) =
    \begin{cases}
      -\Sigma e_{j-1} & \text{when $j$ is even,}\\
      0 & \text{when $j$ is odd,}
    \end{cases}
  \end{equation*}
  using $\sA(n)$-linearity of $\partial_*$.

  The argument above can be repeated in the case of $p=2$ with $t$
  replaced by $u^2$, $\beta_*$ replaced by $\Sq^1_*$, $\P^s_*$ replaced
  by $\Sq_*^{2s}$, and using formula \eqref{eq:ext-power-squaringops}
  instead of \eqref{eq:ext-power-steenrod-ops}--
  \eqref{eq:ext-power-bockstein}.
\end{proof}

Proposition \ref{prop:continuous-homology-of-s-tcp} identifies
$H_*^c(S^{tC_p})$ with the $\bF_p$-dual of the $C_p$-version of the 
\emph{algebraic Singer construction} on the right $\sA$-module $\bF_p$.  See
\cite{LNR12}*{Sec.~3}.

\section{The topological $C_p$-Singer construction}\label{sec:topsinger}

Let $G\subset \Sigma_p$ be any subgroup of the symmetric group, and
$B$ any spectrum.  As in \cite{BMMS86}*{\S II.3}, there is a tower of
spectra
\begin{multline} \label{eq:tower-extended-powers} \dots \longto
  \Sigma^{n+1} D_G(\Sigma^{-n-1} B)
  \overset{\Sigma^{n}\!\Delta}\longto %{\xrightarrow{\hspace*{5mm}}}
  \Sigma^{n} D_G(\Sigma^{-n} B) \longto \dots\\
  \longto
  \Sigma D_G(\Sigma^{-1} B)
  \overset{\Delta}\longto
  D_G(B)\,,
\end{multline}
where the maps are induced by the $p$-fold diagonal
$S^1 \to (S^1)^{\wedge p}$.  We are interested in the case where
$G=C_p$ is the cyclic group with $p$ elements.  By
\cite{LNR12}*{Prop. 5.7} there are natural homotopy equivalences
\begin{equation}
  \label{eq:tate-extended-power}
  (B^{\wedge p})^{tC_p}[1-n(p-1)] \simeq \Sigma^{1+n} D_{C_p}(\Sigma^{-n}B)\,,
\end{equation}
compatible for varying $n$.  The induced equivalence of homotopy
limits, combined with the first part of Lemma
\ref{lemma:continuous-homology-is-inverse-limit}, identifies
$c\: H_*^c((B^{\wedge p})^{tC_p}) \to H_*^c((B^{\wedge
  p})^{tC_p})^{\wedge}$ with the natural homomorphism
\begin{equation}
  \label{eq:completion-tate}
  c\: H_*^c((B^{\wedge p})^{tC_p}) \longto \lim_n H_* (\Sigma^{1+n} D_{C_p}(\Sigma^{-n}B))\,.
\end{equation}
It will follow from Proposition~\ref{prop:singer-tate-filtration} that
\eqref{eq:completion-tate} is an isomorphism whenever $B/p$ is bounded
below.

Miller observed \cite{BMMS86}*{\S II.5} that if one applies mod $p$
cohomology to the tower of extended powers
\eqref{eq:tower-extended-powers} when $G=\Sigma_p$, the colimit of the
resulting left $\sA$-modules is isomorphic to a desuspension of the
$\Sigma_p$-version of the algebraic Singer construction $R_+(H^*(B))$
on the left $\sA$-module $H^*(B)$.  See \cite{Sin81}, \cite{LS82}.
This fact motivates the following definition.
\begin{definition}[\cite{LNR12}*{Def.~5.8}] \label{dfn:topological-singer}
  The \emph{topological Singer construction} on a spectrum $B$
  is the $C_p$-Tate construction
  $$
  R_+(B) = (B^{\wedge p})^{tC_p}\,.
  $$
  Here $B^{\wedge p}$ is the $p$-fold smash power of $B$, considered
  as a spectrum with $C_p$ acting by cyclic permutation of the smash
  factors.
\end{definition}

The calculations that identify the continuous cohomology of $R_+(B)$
with the algebraic Singer construction on the left $\sA$-module
$H^*(B)$ were performed in \cite{LNR12}, and explicit formulas for the
action of the mod~$p$ Steenrod algebra on $H_*^c(R_+(B))$ were derived
by dualizing from cohomology to homology.  These dualizations depended
on the assumption that $B/p$ is bounded below and of finite type.

In the present work we revisit the topological Singer construction,
since we need to discuss the multiplicative structure of
$H_*^c(R_+(B))$.  Also, by working exclusively in mod~$p$ homology we
arrive at explicit formulas for the right action of $\sA$ on
$H_*^c(R_+(B))$, without the assumption that $B/p$ is of finite type.

\begin{remark}
  Let $B=S$.  The $p$-fold product map $S^{\wedge p}\to S$ is a
  $C_p$-equivariant equivalence, inducing an equivalence of spectra
  $R_+(S) = (S^{\wedge p})^{tC_p}\simeq S^{tC_p}$.  Proposition
  \ref{prop:continuous-homology-of-s-tcp} thus describes $H_*^c(R_+(B))$ as
  a filtered differential graded right $\sA$-module algebra in the
  case of $B=S$.
\end{remark}

\subsection{The $E$-based Singer construction}\label{sec:e-based-singer}

Consider the case $G = C_p$, let $E$ be an $E_\infty$ ring
spectrum, and let $M$ be an $E$-module spectrum.  Then
$M^{\wedge_E p}$ is an $E$-module with $C_p$ acting by cyclic
permutations on the smash factors, in the symmetric monoidal category
of $E$-modules.
The $C_p$-equivariant shuffle isomorphism
$M^{\wedge_E p}\wedge_E N^{\wedge_E p} \cong (M\wedge_E N)^{\wedge_E
  p}$ gives the assignment $M\mapsto M^{\wedge_E p}$ the structure of
a strong symmetric monoidal functor.

The \emph{$E$-based topological Singer construction} on an $E$-module
$M$ is the $C_p$-Tate construction
$$
R_+^E(M) = (M^{\wedge_E p})^{tC_p} =
(M\wedge_E \cdots \wedge_E M)^{tC_p} \,.
$$
It is a lax symmetric monoidal endofunctor on the homotopy category of
$E$-modules, being the composition of a strong and a lax symmetric monoidal functor.

We will only be interested in the $E$-based Singer construction on
induced $E$-modules, i.e., those of the form $M=E\wedge B$ for $B$ a
spectrum.  We make the following generalization of
Definition~\ref{dfn:topological-singer}.
\begin{definition}\label{dfn:E-rplus}
  The \emph{$E$-based topological Singer construction} on a spectrum
  $B$ is the $C_p$-Tate construction
  $$
  R_+^E(E\wedge B) = ((E\wedge B)^{\wedge_E p})^{tC_p}\,.
  $$
\end{definition}
Note that there is a natural equivalence of $E$-modules
$R_+^E(E\wedge B) \simeq (E\wedge B^{\wedge p})^{tC_p}$.

The assignment $B\mapsto E\wedge B$ is a strong symmetric monoidal
functor, and it follows that $B\mapsto R_+^E(E\wedge B)$ is a lax symmetric
monoidal functor from the homotopy category of spectra
to the homotopy category of $E$-modules.

Suppose temporarily that $E=S$.  Associated to a spectrum $B$, the
structure map of the $S$-based topological Singer construction, now
often known as the \emph{Tate diagonal}, is the map of spectra
$\epsilon_B\: B\to R_+(B)$ given by the composite map
\begin{equation}
  \label{eq:tate-diagonal}
  \xymatrix@C7mm{
    \epsilon_B\: B
    & [\widetilde{EC}_p\wedge B^{\wedge p}]^{C_p}
    \ar[l]_-{\ \simeq}
    \ar[r]^-{\hat\Gamma_1}
    & [\widetilde{EC}_p\wedge F(EC_{p+}, B^{\wedge p})]^{C_p} = R_+(B)\,,
  }
\end{equation}
as in \cite{LNR11}*{Def.~5.10} and \cite{NS18}*{\S III.1}.  The Tate
diagonal is a monoidal natural transformation of lax symmetric
monoidal endofunctors on the homotopy category of spectra.  In
particular, the monoidal unit
$\eta^S\: S\longto S^{tC_p} \simeq R_+(S)$ equals
$\epsilon_S$, the Tate diagonal for the sphere spectrum.

For a general $E$, the Tate diagonal for $B$
induces an \emph{$E$-based Tate diagonal} of $E$-modules
\begin{equation}
  \label{eq:e-based-tate-diagonal}
  \epsilon_B^E\: E\wedge B
  \overset{1\wedge \epsilon_B}\longto E\wedge R_+(B)
  \overset{\kappa}\longto (E\wedge B^{\wedge p})^{tC_p}
  \simeq R_+^E(E\wedge B) \,.
\end{equation}
Both $1\wedge \epsilon_B$ and $\kappa$ are monoidal natural transformations,
and it follows that the composition $\epsilon_B^E$ is a monoidal
natural transformation of lax symmetric monoidal functors in $B$.  In
particular, the monoidal unit
$\eta^E\: E\longto E^{tC_p} \simeq R^E_+(E)$ equals
$\epsilon^E_S$, the $E$-based Tate diagonal for the sphere spectrum.

\begin{remark}
  It is known from \cite{LNR12}*{Thm.~5.13} and \cite{NS18}*{Thm.~1.7}
  that the ($S$-based) Tate diagonal $\epsilon_B\: B\to R_+(B)$ is a
  $p$-adic homotopy equivalence for each bounded below spectrum $B$.
  In general, however, the $E$-based Tate diagonal is not an
  equivalence, which can be seen by considering the induced
  homomorphism in homotopy in the case of $E=H$ and $B=S$.
\end{remark}

\begin{remark}
  The natural transformation \eqref{eq:e-based-tate-diagonal} is natural in the
  $S$-module $B$, but generally not in the induced $E$-module
  $E\wedge B$.  One could try to construct a natural transformation
  $1\to R_+^E$ of functors of $E$-modules by forming the composite
  \begin{equation}
    \label{eq:non-linear}
    M\overset{\epsilon_M}\longto R_+(M)
    \overset{\wedge_S \to \wedge_E}\longto R_+^E(M)\,,
  \end{equation}
  where the last map is the natural map induced by passing from smash
  products over $S$ to smash products over $E$.  The spectrum map
  \eqref{eq:non-linear} would be natural with respect to $E$-module
  maps $M\to M'$, but it is generally not $E$-linear.

  For instance, in the case $E = M = H$, \cite{LNR12}*{(3.9),
    Prop.~5.14} shows that the composite
  $H \to R^H_+(H) \simeq H^{tC_p} \to \Sigma H_{hC_p}$ is nonzero in
  homology, and can therefore not be $H$-linear.  See also
  \cite{NS18}*{Thm.~III.1.10}, as well as
  \cite{Wilson2023}*{Warning~2.3.4}.
\end{remark}

\subsection{The diagonal equivalence}

Let $K$ be a finite CW complex with a chosen base point.  The diagonal
$\Delta\: K\to K^{\wedge p}$ is $C_p$-equivariant and induces a map
$\Delta^E_K\: R^E_+(E\wedge B)\wedge K\to R^E_+(E\wedge B\wedge K)$
for any spectrum $B$.  Indeed, $\Delta^E_K$ is the $C_p$-fixed points
of the composite map
\medskip
\begin{align}
  \nonumber \widetilde{EC}_p\wedge F(EC_{p+}, E\wedge B^{\wedge p})\wedge K
  &\overset{1\wedge \alpha}\longto
    \widetilde{EC}_p\wedge F(EC_{p+}, E\wedge B^{\wedge p}\wedge K) \\
  \label{eq:delta-finite-complex}  &\hspace{-3.3mm}\overset{1 \wedge F(1, 1 \wedge \Delta)}\longto
    \widetilde{EC}_p\wedge F(EC_{p+}, E\wedge B^{\wedge p}\wedge K^{\wedge p})\\
  \nonumber &\hspace{-6.0mm}\overset{1\wedge F(1, \text{shuffle})}\longto
    \widetilde{EC}_p\wedge F(EC_{p+}, E\wedge (B\wedge K)^{\wedge p})\,,
\end{align}
where $\alpha$ is the canonical map of spectra
$F(A, X)\wedge Y \to F(A, X\wedge Y)$, or more generally
$F(A, X)\wedge F(B, Y) \to F(A\wedge B, X\wedge Y)$, for spectra
$A,B,X$ and $Y$.

Furthermore, in the case of the suspension spectrum of $K$, the Tate
diagonal~$\epsilon_K\: K\to R_+(K)$ is realized as the fixed points of
the following $C_p$-equivariant map of spectra,
\begin{multline*}
%  \label{eq:epsilon-K}
  K\cong S^0\wedge K \overset{i\wedge \Delta}
  \longto \widetilde{EC}_p\wedge K^{\wedge p} \\
  \cong \widetilde{EC}_p\wedge F(S^0, K^{\wedge p})
  \overset{1\wedge F(c,1)}\longto \widetilde{EC}_p\wedge F(EC_{p+},
  K^{\wedge p})\,,
\end{multline*}
where
$i\: S^0\to \widetilde{EC}_p$ is the cofiber of the collapse map
$c\: EC_{p+}\to S^0$.  Likewise, the $E$-based Tate diagonal
$\epsilon^E_K$
is the $C_p$-fixed points of the composite
\begin{multline}
  \label{eq:pre-epsilon-K-E}
  e^E_K\: E\wedge K\cong S^0\wedge E\wedge K
  \overset{i\wedge 1\wedge \Delta}\longto
  \widetilde{EC}_p\wedge E\wedge K^{\wedge p}\\
  \cong \widetilde{EC}_p\wedge F(S^0, E\wedge K^{\wedge p})
  \overset{1\wedge F(c,1)}\longto
  \widetilde{EC}_p\wedge F(EC_{p+}, E\wedge K^{\wedge p})\,.
\end{multline}

\begin{lemma}\label{lemma:delta-epsilon}
  Let $B$ be a spectrum, and $K$ a finite based CW complex. The map
  $\Delta^E_K$ agrees with the composite map
  \begin{multline}
    \label{eq:monoid-delta}
    R^E_+(E \wedge B) \wedge K \cong
    R^E_+(E\wedge B)\wedge_E (E\wedge K) \\
    \overset{1\wedge
      \epsilon^E_{K}}\longto R^E_+(E\wedge B)\wedge_E R^E_+(E\wedge K)
    \overset\mu\longto R^E_+(E\wedge B\wedge K)\,,
  \end{multline}
  where $\mu = \mu^E_{E \wedge B^{\wedge p}, E \wedge K^{\wedge p}}$.
\end{lemma}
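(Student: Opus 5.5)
The plan is to compare the two maps at the level of $C_p$-equivariant spectra, before passing to $C_p$-fixed points, since both $\Delta^E_K$ and the composite \eqref{eq:monoid-delta} arise as fixed points of equivariant maps of $\widetilde{EC}_p\wedge F(EC_{p+},-)$-type objects.

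First, unwind the pairing $\mu = \mu^E_{E\wedge B^{\wedge p}, E\wedge K^{\wedge p}}$. It is induced from the $S$-based Tate pairing \eqref{eq:tate-pairing} by passing to smash products over $E$. On the point-set level I would model that pairing by the $C_p$-fixed points of the composite
\[
\widetilde{EC}_p\wedge F(EC_{p+},X)\wedge \widetilde{EC}_p\wedge F(EC_{p+},Y)
\longto \widetilde{EC}_p\wedge F(EC_{p+},X\wedge Y).
\]
This composite uses three ingredients: the multiplication $\widetilde{EC}_p\wedge\widetilde{EC}_p\simeq\widetilde{EC}_p$; the map $\alpha$ in its two-variable form $F(A,X)\wedge F(B,Y)\to F(A\wedge B,X\wedge Y)$; and restriction along the diagonal $EC_{p+}\to EC_{p+}\wedge EC_{p+}$. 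I take $X=E\wedge B^{\wedge p}$ and $Y=E\wedge K^{\wedge p}$, and smash over $E$ at the end.

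Next, precompose this with the equivariant model $e^E_K$ from \eqref{eq:pre-epsilon-K-E} in the second variable. The $\widetilde{EC}_p$-factor coming from $e^E_K$ is the unit $i\colon S^0\to\widetilde{EC}_p$. Multiplying by the unit is homotopic to the identity, so that factor disappears. Similarly, the function-spectrum factor is $F(c,1)$ applied to $F(S^0, E\wedge K^{\wedge p})$. Combined with the diagonal on $EC_{p+}$, this collapses to a single instance of $\alpha\colon F(EC_{p+},X)\wedge Y\to F(EC_{p+},X\wedge Y)$, because $(1\wedge c)\circ\mathrm{diag}$ is the identity of $EC_{p+}$. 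What remains in the $K$-variable is the map $1\wedge\Delta\colon E\wedge K\to E\wedge K^{\wedge p}$, followed by the shuffle identifying $E\wedge B^{\wedge p}\wedge_E E\wedge K^{\wedge p}$ with $E\wedge(B\wedge K)^{\wedge p}$. The outcome is exactly the three-step composite \eqref{eq:delta-finite-complex}, with the naturality of $\alpha$ used to move $\Delta$ inside $F(EC_{p+},-)$. Taking $C_p$-fixed points and using $R^E_+(E\wedge B)\wedge K\cong R^E_+(E\wedge B)\wedge_E(E\wedge K)$, which commutes with fixed points because $K$ is a finite complex with trivial action, gives the claimed agreement.

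The main obstacle is justifying that the pairing \eqref{eq:tate-pairing} of \cite{HR24} is in fact represented, up to homotopy, by this explicit point-set composite, so that the unit and counit cancellations above are legitimate. I would handle this by appealing to the uniqueness up to equivariant homotopy of the map $p\colon t(X)\wedge t(Y)\to t(X\wedge Y)$, already used in the proof of Proposition~\ref{prop:tate-dga}. The remaining checks (the unit property of $i$, and compatibility of the $E$-linearization with $\kappa$) are then routine.
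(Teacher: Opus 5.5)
Your proposal is correct and follows the paper's proof. The paper checks that an equivariant diagram commutes: it compares $t(1\wedge 1\wedge\Delta)\circ(1\wedge\alpha)$ with $(N\wedge F(\Delta,1))\circ\alpha\circ(1\wedge e^E_K)$, using unitality of the pairing $N$ on $\widetilde{EC}_p$ and the description of $\epsilon^E_K$ via $e^E_K$, then passes to $C_p$-fixed points. Your observation that $(1\wedge c)\circ\mathrm{diag}=\mathrm{id}$ on $EC_{p+}$ is exactly the unstated check for the function-spectrum factor, and the paper, like you, takes the identification of $\mu$ with this point-set composite as given.
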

\begin{proof}
  For any $C_p$-spectrum $X$, let $t(X)$ be the Tate $C_p$-spectrum
  $\widetilde{EC}_p\wedge F(EC_{p+}, X)$, and consider the following
  diagram of $C_p$-spectra:
  \begin{equation}
    \label{eq:monoid-delta-diagram}
    \xymatrix@C5mm{
      t(E\wedge B^{\wedge p})\wedge K
      \ar[r]^-{1\wedge \alpha}
      \ar[d]^-\cong
      &
      t(E\wedge B^{\wedge p}\wedge K)
      \ar[d]^-{t(1\wedge 1\wedge \Delta)}
      \\
      t(E\wedge B^{\wedge p})\wedge_E E\wedge K
      \ar[d]^-{1\wedge e^E_K}
      &t(E\wedge B^{\wedge p}\wedge K^{\wedge p})
      % &\hspace{-10mm}((B\wedge K)^{\wedge p})
      \\
      t(E\wedge B^{\wedge p})\wedge_E t(E\wedge K^{\wedge p})
      \ar[r]^-{\alpha}
      & \widetilde{EC}_p^{\wedge 2}\wedge
      F((EC_{p+})^{\wedge 2}, E\wedge B^{\wedge p}\wedge K^{\wedge p})
      \ar[u]_{N\wedge F(\Delta, 1)}\,.
    }
  \end{equation}
  Here,~$N$ is a $C_p$-equivariant pairing
  $N \: \widetilde{EC}_p \wedge \widetilde{EC}_p \to \widetilde{EC}_p$
  extending the fold map
  $\widetilde{EC}_p \cup_{S^0} \widetilde{EC}_p \to \widetilde{EC}_p$, as
  in~\cite{HR24}*{\S6.1}.  The composition of
  $N\wedge F(\Delta, 1)\circ \alpha$, followed by the map
  \begin{equation}
    \label{eq:rplus-monoidal-shuffle}
    t(E\wedge B^{\wedge p}\wedge K^{\wedge p}) \cong
    t(E\wedge (B\wedge K)^{\wedge p})
  \end{equation}
  induced by the $C_p$-equivariant shuffle
  $B^{\wedge p}\wedge K^{\wedge p}\cong (B\wedge K)^{\wedge p}$,
  produces the pairing $\mu$ of \eqref{eq:monoid-delta}, after passing
  to $C_p$-fixed points.

  Using the description of $\epsilon^E_K$ given
  by~\eqref{eq:pre-epsilon-K-E} and the unitality of~$N$, one can
  verify that \eqref{eq:monoid-delta-diagram} commutes.  Comparing the
  two paths around the diagram, followed by the map
  \eqref{eq:rplus-monoidal-shuffle}, and finally passing to
  $C_p$-fixed points, proves the lemma, since the upper composite
  becomes $\Delta^E_K$ and the lower becomes
  $\mu\circ (1\wedge \epsilon^E_K)$.
\end{proof}

\begin{lemma}\label{lemma:delta-is-a-homotopy-equivalence}
  Let $B$ be a spectrum and $K$ a finite based CW complex.  The map
  \begin{equation}
    \label{eq:delta-k-h}
    \Delta^E_K\: R_+^E(E\wedge B)\wedge K \longto R_+^E(E\wedge B\wedge K)
  \end{equation}
  is an equivalence.
\end{lemma}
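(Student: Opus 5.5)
We will prove that $\Delta^E_K$ is an equivalence by induction on the number of cells of $K$, using that both sides are exact functors of $K$. Both the source $K \mapsto R^E_+(E\wedge B)\wedge K$ and the target $K\mapsto R^E_+(E\wedge B\wedge K)$ should be viewed as functors from finite based CW complexes to $E$-modules. The construction \eqref{eq:delta-finite-complex} is natural in based maps $K\to K'$, so $\Delta^E_K$ is a natural transformation between them.

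First we check the base cases. For $K = S^0$ the diagonal $S^0\to (S^0)^{\wedge p}$ is the identity, and $\alpha$ is an isomorphism, so $\Delta^E_{S^0}$ is the identity. For $K = *$ both sides are contractible.

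Next we show that both functors send cofiber sequences $K'\to K\to K/K'$ of finite based CW complexes to cofiber sequences of spectra. For the source this is clear. For the target, $K\mapsto E\wedge B\wedge K$ is exact, and the $p$-fold smash power is not exact. However, the Tate construction kills the non-diagonal terms: filtering $(B\wedge K)^{\wedge p}$ by the $p$-fold "cube" filtration coming from $K'\subset K$, the intermediate layers are $C_p$-induced (free) spectra, and their Tate construction vanishes. This is the standard argument, e.g.\ \cite{NS18}*{Prop.~III.1.1} or the analogous step in \cite{LNR12}, and it shows that $K\mapsto ((E\wedge B\wedge K)^{\wedge_E p})^{tC_p}$ is exact. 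We also need to check that $\Delta^E$ is compatible with the connecting maps. Naturality of $\Delta^E$ gives a map of the two cofiber sequences once we know the suspension case is compatible.

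Finally we handle suspension and run the induction. Compatibility with suspension follows from naturality applied to the cofiber sequence $K\to CK\to \Sigma K$, together with $CK\simeq *$. Then the five lemma and induction on cells reduce everything to $K=S^0$, where $\Delta^E_{S^0}$ is an equivalence. Alternatively, we can use Lemma~\ref{lemma:delta-epsilon}: it identifies $\Delta^E_K$ with multiplication by $\epsilon^E_K$, so in the base case $\epsilon^E_{S^0}=\eta^E$ is the unit and $\mu(1\wedge\eta^E)$ is the unitality equivalence.

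The main obstacle is the exactness of the target functor, i.e.\ that the $C_p$-Tate construction of the cross terms in the $p$-fold smash power vanishes and that the resulting equivalence is compatible with $\Delta$. We would cite the vanishing of the Tate construction on induced $C_p$-spectra, which holds without any boundedness assumption on $B$ because $K$ is finite and so only finitely many induced pieces occur. This keeps the argument valid for arbitrary spectra $B$.
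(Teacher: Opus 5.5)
Your argument is correct, but it is organized differently from the paper's.

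\emph{The paper's route.} There is no induction on cells. The paper takes the three maps in \eqref{eq:delta-finite-complex} one at a time:
\begin{itemize}
\item $1\wedge\alpha$ is an equivalence after fixed points because $K$ is finite;
\item the shuffle map is an isomorphism;
\item the middle map induces an equivalence on $C_p$-Tate constructions directly. This is because $C_p$ acts freely on $K^{\wedge p}$ away from the diagonal, so the cofiber of $\Delta\colon K\to K^{\wedge p}$ is built from free $C_p$-cells, and $(-)^{tC_p}$ is exact and vanishes on induced spectra.
\end{itemize}

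\emph{Your route.} You show that source and target are exact functors of $K$, the target via the cube filtration of $(B\wedge K)^{\wedge p}$, in the spirit of the Nikolaus--Scholze exactness of $X\mapsto (X^{\wedge p})^{tC_p}$. You then use that $\Delta^E$ is natural in $K$ and reduce to $K=S^0$, where the map is an isomorphism.

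\emph{Comparison.} The key input, vanishing of the Tate construction on induced spectra, is the same in both. The paper's version is shorter and pinpoints which of the three maps carries the content. However, it tacitly needs a $C_p$-equivariant cell structure on $K^{\wedge p}$ in which the diagonal is a subcomplex; the product cell structure does not have this property. Your version avoids that point-set geometry. It also absorbs the separate treatment of $\alpha$. The price is the routine check that $\Delta^E$ is compatible with connecting maps. That check does follow from point-set naturality applied to $K\to CK\to\Sigma K$ with $CK\simeq *$, as you indicate.

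\emph{One correction.} Finiteness of $K$ is not what keeps the number of induced pieces finite. The cube filtration has only finitely many stages for every $K$, and Tate vanishing on induced spectra holds for arbitrary spectra. In your argument, finiteness is used so that the cell induction terminates. In the paper's argument, it is used so that $\alpha$ is an equivalence and the free part of $K^{\wedge p}$ consists of finitely many cells.
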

\begin{proof}
  We claim that each of the three maps
  in~\eqref{eq:delta-finite-complex} induces an equivalence after
  passing to $C_p$-fixed points.  This is immediate
  for~$1\wedge \alpha$ since $K$ is a finite complex, and for
  $1\wedge F(1, \text{shuffle})$ since it is a $C_p$-equivariant
  isomorphism.

  For the middle map, we note that the $C_p$-Tate construction is an
  exact functor that vanishes on induced $C_p$-spectra.  Therefore, the
  diagonal $\Delta\: K\to K^{\wedge p}$ induces an equivalence
  $$
  (1\wedge \Delta)^{tC_p}\: (X\wedge K)^{tC_p}
  \overset\simeq\longto
  (X\wedge K^{\wedge p})^{tC_p}
  $$
  for any spectrum $X$ with $C_p$-action.  Applying this to the case
  of $X=E\wedge B^{\wedge p}$ proves the claim, and the lemma follows.
\end{proof}

\begin{proposition}\label{prop:diag-q}
  Let $B$ be a spectrum.  The composite map
  \begin{equation}
    \label{eq:diag-q}
  R_+^E(E\wedge B)\wedge_E (E\wedge S^q) \overset{1\wedge \epsilon^E_{S^q}}\longto
  R_+^E(E\wedge B) \wedge_E R_+^E(E\wedge S^q) \overset\mu\longto
  R_+^E(E\wedge B\wedge S^q)
  \end{equation}
  is a homotopy equivalence for each $q\in \bZ$.
\end{proposition}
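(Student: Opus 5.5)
For $q\ge0$ the plan is to reduce to the two preceding lemmas, taking $K=S^q$ as a finite based CW complex. After identifying $R_+^E(E\wedge B)\wedge_E(E\wedge S^q)\cong R_+^E(E\wedge B)\wedge S^q$, the composite \eqref{eq:diag-q} is exactly the composite \eqref{eq:monoid-delta} with $K=S^q$. Lemma~\ref{lemma:delta-epsilon} identifies that composite with $\Delta^E_{S^q}$, and Lemma~\ref{lemma:delta-is-a-homotopy-equivalence} shows $\Delta^E_{S^q}$ is an equivalence. This settles the case $q\ge0$.

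For $q<0$ I would use monoidality of the $E$-based Tate diagonal to reduce to the previous case. Write $q=-n$ with $n>0$. Since $\epsilon^E$ is a monoidal natural transformation of lax symmetric monoidal functors in the spectrum variable, the map $\mu\circ(1\wedge\epsilon^E_{S^{-n}})$ composed with the analogous map for $S^{n}$ equals, up to associativity and the shuffle isomorphisms, the map $\mu\circ(1\wedge\epsilon^E_{S^0})$ for $B\wedge S^{-n}\wedge S^n\simeq B$. The latter map is the unit constraint: $\epsilon^E_{S^0}=\eta^E$ is the monoidal unit, so this composite is an equivalence.

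Concretely, apply the $q\ge0$ case with $B$ replaced by $B\wedge S^{-n}$ and with $q=n$. This gives that
\[
R_+^E(E\wedge B\wedge S^{-n})\wedge_E(E\wedge S^{n})\longto R_+^E(E\wedge B)
\]
is an equivalence. Denote by $\phi_{-n}$ the map \eqref{eq:diag-q} for $q=-n$, and by $\phi_n$ the map \eqref{eq:diag-q} for $q=n$ with $B$ replaced by $B\wedge S^{-n}$. Associativity of $\mu$, together with monoidality $\mu\circ(\epsilon^E_{S^{-n}}\wedge\epsilon^E_{S^n})\simeq\epsilon^E_{S^0}$ and unitality, then shows that
\[
\phi_n\circ(\phi_{-n}\wedge_E 1_{E\wedge S^n})
\]
is homotopic to the canonical identification $R_+^E(E\wedge B)\wedge_E(E\wedge S^{-n})\wedge_E(E\wedge S^n)\simeq R_+^E(E\wedge B)$. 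Since $\phi_n$ is an equivalence, $\phi_{-n}\wedge_E 1_{E\wedge S^n}$ is an equivalence. Smashing with the invertible $E$-module $E\wedge S^n$ reflects equivalences, so $\phi_{-n}$ is an equivalence.

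The main obstacle is the coherence bookkeeping in this last step. One has to check that the diagram involving the associativity of $\mu$, the symmetric monoidal shuffle isomorphisms, and the monoidal structure of $\epsilon^E$ commutes up to homotopy in the homotopy category of $E$-modules. I would handle this by working entirely in the homotopy category, where lax symmetric monoidality of $R_+^E(E\wedge -)$ and monoidality of $\epsilon^E$ give the needed squares directly; any suspension signs are harmless because only the property of being an equivalence is at stake.
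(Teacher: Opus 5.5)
Your proposal is correct and follows essentially the same route as the paper. The case $q\ge0$ goes through Lemma~\ref{lemma:delta-epsilon} and Lemma~\ref{lemma:delta-is-a-homotopy-equivalence}. The negative case uses monoidality of $\epsilon^E$ to identify $\mu\circ(1\wedge\epsilon^E_{S^n})\circ(\phi_{-n}\wedge_E 1)$ with a map already known to be an equivalence, and then cancels the invertible factor $E\wedge S^n$. The only difference is that you take $n=-q$ exactly, so that comparison map is the unit constraint. The paper instead allows any $n\ge0$ with $q+n\ge0$ and invokes the first case for the map at $q+n$.
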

\begin{proof}
  Assume first that $q\geq 0$.  In the case of $K=S^q$, the statement
  of Lemma \ref{lemma:delta-epsilon} is that \eqref{eq:diag-q} is
  homotopic to $\Delta^E_{S^q}$.  The proposition then follows from
  Lemma \ref{lemma:delta-is-a-homotopy-equivalence}.

  Let $q$ be any integer, and let $n\geq0$ be an integer such that
  $q+n\geq 0$.  Consider the diagram
  \[
    \begin{aligned}
      \xymatrix@C14mm{
      R_+^E(E\wedge B)\wedge_E (E\wedge S^q)\wedge_E (E\wedge S^n)
      \ar[r]^-{f_q\wedge 1}
      \ar[dd]^\cong
      & R_+^E(E\wedge B\wedge S^q)\wedge_E (E\wedge S^n)
        \ar[d]^-{1\wedge \epsilon^E_{S^n}}
        \\
      & R_+^E(E\wedge B\wedge S^q)\wedge_E R_+^E(E\wedge S^n)
        \ar[d]^\mu \\
      R_+^E(E\wedge B)\wedge_E (E\wedge S^{q+n})
      \ar[r]^-{f_{q+n}}_-\simeq
      & R_+^E(E\wedge B\wedge S^{q+n}) \,.
      }
    \end{aligned}
  \]
  Here, $f_k$ is the composite map \eqref{eq:diag-q} for $q=k$.
  Commutativity is implied by the fact that $\epsilon^E$ is a monoidal natural
  transformation of lax symmetric monoidal functors.  The lower horizontal
  map is an equivalence by the first part of this proof, since
  $q+n\geq 0$.  The right-hand vertical composite map is equivalent to
  $\Delta_{S^n}^E\: R_+^E(E\wedge B\wedge S^q)\wedge S^n\to
  R_+^E(E\wedge B\wedge S^{q+n})$ by Lemma \ref{lemma:delta-epsilon},
  and is therefore an equivalence by Lemma
  \ref{lemma:delta-is-a-homotopy-equivalence}.  It follows that the
  top horizontal map, as well as its $n$-th desuspension, is an
  equivalence.
\end{proof}

\begin{corollary}\label{cor:epsilon-suspensions}
  Let $B$ be a spectrum.  The map $\epsilon^E_B$ commutes with
  suspensions.  More precisely, for each $n \in \bZ$ there is a
  commutative square
  \begin{equation*}
    \xymatrix@C11mm{
      \Sigma^n(E\wedge B)
      \ar[r]^{\Sigma^n \epsilon_B^E}
      \ar[dr]_{\epsilon_{\Sigma^n B}^E}
      & \Sigma^n R_+^E(E\wedge B)
      \ar[d]^{\simeq}\\
      & R_+^E(\Sigma^n(E\wedge B))
    }
  \end{equation*}
  where the vertical equivalence is the composite map of
  Proposition~\ref{prop:diag-q} for $q=n$.
\end{corollary}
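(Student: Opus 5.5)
We deduce Corollary~\ref{cor:epsilon-suspensions} from Proposition~\ref{prop:diag-q} and the fact that $\epsilon^E$ is a monoidal natural transformation of lax symmetric monoidal functors in the spectrum variable. Identify $\Sigma^n(E\wedge B)$ with $(E\wedge B)\wedge_E(E\wedge S^n) \cong E\wedge (B\wedge S^n)$, and $\Sigma^n R^E_+(E\wedge B)$ with $R^E_+(E\wedge B)\wedge_E (E\wedge S^n)$.

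For $n\ge 0$, where $S^n$ is an honest spectrum, monoidality of $\epsilon^E$ applied to $B$ and $S^n$ gives a commutative square
\[
\xymatrix@C14mm{
(E\wedge B)\wedge_E (E\wedge S^n) \ar[r]^-{\epsilon^E_B\wedge\epsilon^E_{S^n}} \ar[d]_{\cong}
 & R^E_+(E\wedge B)\wedge_E R^E_+(E\wedge S^n) \ar[d]^{\mu} \\
E\wedge B\wedge S^n \ar[r]^-{\epsilon^E_{B\wedge S^n}} & R^E_+(E\wedge B\wedge S^n) \,.
}
\]
The top map factors as $(1\wedge\epsilon^E_{S^n})\circ(\epsilon^E_B\wedge 1)$. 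Composing the vertical map $\mu$ with $1\wedge\epsilon^E_{S^n}$ is exactly the composite~\eqref{eq:diag-q} for $q=n$. Hence $f_n\circ\Sigma^n\epsilon^E_B = \epsilon^E_{\Sigma^n B}$, and $f_n$ is an equivalence by Proposition~\ref{prop:diag-q}.

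For $n<0$, the same square applies verbatim, since the Tate diagonal is natural and monoidal on the whole homotopy category of spectra, with $S^n$ the negative sphere. The only extra point is that $\epsilon^E_{S^n}$ is defined through $\epsilon_{S^n}$ rather than via the space-level formula~\eqref{eq:pre-epsilon-K-E}. That formula was used only in Lemma~\ref{lemma:delta-epsilon}, which Proposition~\ref{prop:diag-q} has already bypassed for negative $q$.

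Alternatively, for $n<0$ choose $m\ge0$ with $n+m\ge0$ and apply the $n+m$ case to $\Sigma^n B$ and the $m$ case to $\Sigma^nB$. Then desuspend, using the compatibility $f_{n+m}\cong \mu(f_n\wedge\epsilon^E_{S^m})$ from the diagram in the proof of Proposition~\ref{prop:diag-q}.

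The main obstacle is bookkeeping of the identifications: the equivalence $E\wedge S^n\wedge_E(-)\simeq\Sigma^n(-)$, the unit isomorphisms, and the sign and shuffle conventions in the monoidal structure of $R^E_+$. These must be checked to match so that the vertical equivalence is literally $f_n$. Everything else is formal from monoidality.
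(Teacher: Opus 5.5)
Your proposal is correct and is essentially the paper's proof: the paper uses the same diagram, factoring $\epsilon^E_B\wedge\epsilon^E_{S^n}$ as $(1\wedge\epsilon^E_{S^n})\circ(\epsilon^E_B\wedge 1)$, gets the lower triangle from monoidality of $\epsilon^E$, and identifies the right vertical composite with the equivalence of Proposition~\ref{prop:diag-q}, uniformly for all $n\in\bZ$ with no case split. Your optional alternative for $n<0$ has a slip (the $n+m$ case should be applied to $B$, not to $\Sigma^n B$), but the alternative is not needed.
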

\begin{proof}
  Consider the following diagram:
  \[
    \xymatrix@C18mm{
      & R_+^E(E\wedge B) \wedge_E E\wedge S^n
      \ar[d]^{1\wedge \epsilon^E_{S^n}}\\
      E\wedge B\wedge_E E\wedge S^n
      \ar[ur]^{\epsilon^E_B\wedge 1}
      \ar[r]^-{\epsilon^E_B \wedge \epsilon^E_{S^n}}
      \ar[dr]_{\epsilon^E_{B\wedge S^n}}
      & R_+^E(E\wedge B)\wedge_E R_+^E(E\wedge S^n)
      \ar[d]^{\mu}\\
      & R_+^E(E\wedge B\wedge S^n) \rlap{\,.}
    }
  \]
  The lower triangle commutes since $\epsilon^E$ is a monoidal
  natural transformation.  The upper triangle clearly commutes, and
  the right hand vertical composite is the homotopy equivalence
  of Proposition~\ref{prop:diag-q}.   The outer triangle is the
  commutative diagram of the lemma.
\end{proof}

\subsection{The continuous mod~$p$ homology of the topological Singer
  construction}
\label{sec:continuous-mod-p-homology-of-r-plus}

For a bounded below ring spectrum $B$, we discuss the
structure of $H_*^c(R_+(B))$ as a filtered right $\sA$-module algebra.

Let $E=H$ be the mod~$p$ Eilenberg--MacLane spectrum.  The functors
$B\mapsto H\wedge B$ and $B\mapsto R_+^H(H\wedge B)$ are lax symmetric
monoidal in $B$, and the $H$-based Tate diagonal
$\epsilon_B^H\: H\wedge B\to R_+^H(H\wedge B)$ is a monoidal natural
transformation of lax symmetric monoidal functors.  Passing to homotopy
groups, this monoidal structure implies that
$\pi_*(R_+^H(H\wedge B))\cong H_*^c(R_+(B))$ is a left module over the
$\bF_p$-algebra $H_*^c(S^{tC_p})$. It is also a right module over the
mod~$p$ Steenrod algebra, by the discussion in
Subsection~\ref{sec:continuous-homology-of-tate}.  The $H$-based Tate diagonal
$\epsilon_B^H$ from \eqref{eq:e-based-tate-diagonal} induces a homomorphism of right $\sA$-modules
$$
(\epsilon^H_B)_*\: H_*(B) \longto H_*^c(R_+(B))\,,
$$
which is a homomorphism of (graded commutative) $\bF_p$-algebras if $B$ is a
(homotopy commutative) ring spectrum.

For any integer~$q$ and spectrum~$B$, the composite map
\begin{equation}
  \label{eq:l4eq}
  R_+^H(H\wedge B)\wedge_H (H\wedge S^q) \overset{1\wedge \epsilon^H_{S^q}}\longto
  R_+^H(H\wedge B) \wedge_H R_+^H(H\wedge S^q) \overset\mu\longto
  R_+^H(H\wedge B\wedge S^q)
\end{equation}
of Proposition~\ref{prop:diag-q} induces an isomorphism
\begin{equation}
  \label{eq:l5eq}
  H_*^c(R_+(B))\otimes H_*(S^q)
  \overset\cong\longto H_*^c(R_+(B\wedge S^q))
\end{equation}
in continuous mod~$p$ homology.  For non-negative~$q$,
Lemma~\ref{lemma:delta-epsilon} says that \eqref{eq:l4eq} agrees with
$$
\Delta_{S^q}^H\: R^H_+(H\wedge B)\wedge S^q \longto R_+^H(H\wedge B\wedge S^q) \,.
$$
Passing to homotopy groups, the latter map induces an $\sA$-linear homomorphism in
continuous mod~$p$ homology.  By an argument similar to the one in the
proof of Proposition~\ref{prop:diag-q}, we see that~\eqref{eq:l5eq} is
$\sA$-linear for all integers~$q$.

\subsubsection{Additive structure of $H_*^c(R_+(B))$}

Let $B$ be a spectrum.  The homological $C_p$-Tate spectral sequence
\eqref{eq:homological-tate-ss} associated with $X=B^{\wedge p}$ has
$\hat E^2$-term given by
\begin{equation}
  \label{eq:singer-ss}
  \hat{E}^2_{s,*}(B) = \hat{H}^{-s}(C_p; H_*(B)^{\otimes p})\,,
\end{equation}
and converges conditionally to $H_*^c (R_+(B))$.  The group $C_p$ acts
on the coefficients $H_*(B)^{\otimes p}$ by cyclic permutations.  For
any graded $\bF_p$-vector space $M_*$, there is an isomorphism of
bigraded $\bF_p$-vector spaces
$$
\hat H^{-*}(C_p; M_*^{\otimes p}) \cong
\hat H^{-*}(C_p; \bF_p) \otimes
\bF_p\{x^{\otimes p} \mid x \in \sB \} \,,
$$
where~$x$ ranges over an $\bF_p$-basis $\mathscr{B}$ for $M_*$.

Let $x\: S^q\to H\wedge B$ represent a class in $H_q(B)$, and let
$\bar x \: H \wedge S^q \to H \wedge B$ be its unique extension to a
map of $H$-modules.  Passing to homotopy groups, $\bar x$ induces a
homomorphism $\bar x_*\: H_*(S^q) \longto H_*(B)$ sending
$\iota_q\mapsto x$, where $\iota_q=\Sigma^q 1\in H_q(S^q)$ is the
fundamental class.  By naturality of $R^H_+(-)$ in $H$-modules, we get
a filtration-preserving map
\begin{equation}
  \label{eq:xp}
  R^H_+(\bar{x})\: R^H_+(H\wedge S^q) \longto R^H_+(H\wedge B)
\end{equation}
inducing the map of homological $C_p$-Tate spectral sequences
\begin{equation}
  \label{eq:x-power-p}
  \hat{H}^{-*}(C_p; H_*(S^q)^{\otimes p}) \longto
  \hat{H}^{-*}(C_p; H_*(B)^{\otimes
    p})
\end{equation}
that sends $u^it^r\otimes \iota_q^{\otimes p}$ to
$u^it^r\otimes x^{\otimes p}$ for each $i\in \{0,1\}$ and $r\in \bZ$.

\begin{proposition}\label{prop:singer-tate-filtration}
  Let $B$ be a spectrum.  The homological $C_p$-Tate spectral
  sequence~\eqref{eq:singer-ss} for $B^{\wedge p}$
  converges conditionally to $H_*^c(R_+(B))$, and collapses with
  \begin{equation}
    \label{eq:e2-tate}
    \hat{E}^\infty_{s,*}(B) = \hat{E}^2_{s,*}(B) = \hat H^{-s}(C_p;
    H_*(B)^{\otimes p})\,.
  \end{equation}
  If $B/p$ is bounded below, then the spectral sequence converges
  strongly and, in particular, the Tate filtration of $H_*^c(R_+(B))$
  is complete Hausdorff and exhaustive.
\end{proposition}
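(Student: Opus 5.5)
Conditional convergence is already recorded in Subsection~\ref{sec:greenlees-may-filtration}: the Greenlees--May filtration of $(H\wedge B^{\wedge p})^{tC_p}$ has trivial homotopy limit and its homotopy colimit is the Tate construction. The real content is the collapse, and the plan is to prove it by naturality, reducing from $B$ to spheres. The $\hat E^2$-term is generated as a bigraded $\bF_p$-vector space by the classes $u^i t^r\otimes x^{\otimes p}$, where $x$ runs through a homogeneous basis $\sB$ of $H_*(B)$. The remaining classes, coming from the non-diagonal tensors $x_1\otimes\dots\otimes x_p$, vanish because they span free $\bF_p[C_p]$-modules, which have trivial Tate cohomology. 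So it suffices to show that every class $u^i t^r\otimes x^{\otimes p}$ is a permanent cycle.

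Fix $x\in\sB$ of degree~$q$. Represent it by $\bar x\colon H\wedge S^q\to H\wedge B$ and use the induced map $R^H_+(\bar x)$ from~\eqref{eq:xp}. By~\eqref{eq:x-power-p} this map sends $u^it^r\otimes\iota_q^{\otimes p}$ to $u^it^r\otimes x^{\otimes p}$. Since maps of spectral sequences carry permanent cycles to permanent cycles, it suffices to treat $B=S^q$. For the sphere $S^q$ the $\hat E^2$-term is
\[
\hat H^{-*}(C_p;\bF_p)\otimes\bF_p\{\iota_q^{\otimes p}\},
\]
which is one copy of $\bF_p$ in each total degree. Proposition~\ref{prop:diag-q} gives an equivalence $R^H_+(H\wedge S^0)\wedge_H(H\wedge S^q)\simeq R^H_+(H\wedge S^q)$, so $H^c_*(R_+(S^q))\cong H^c_*(S^{tC_p})\otimes H_*(S^q)$. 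By Proposition~\ref{prop:continuous-homology-of-s-tcp} the left side is $E(u)\otimes P(t^{\pm1})$, again one copy of $\bF_p$ in each degree.

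The key comparison, and in my view the main obstacle, is that abutment and $\hat E^2$ have the same size in every degree. A nonzero differential would kill classes in two adjacent total degrees. Conditional convergence alone does not bound the abutment from below, so I would use strong convergence for $S^q$, which holds by Lemma~\ref{lemma:strong-convergence-homology}(2) since $S^q$ is of finite type. Then $\hat E^\infty$ has the same total size as the abutment, which equals the size of $\hat E^2$, and a dimension count in each degree forces all differentials to vanish. The delicate point is that the identification of $H^c_*(S^{tC_p})$ in Proposition~\ref{prop:continuous-homology-of-s-tcp} itself rests on this spectral sequence. To avoid circularity I would use only the input $\hat E^2=\hat H^{-*}(C_p;\bF_p)$ for $S^{tC_p}$. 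That spectral sequence is multiplicative (Subsection~\ref{sec:hesselholt-madsen}), and the classes $u$ and $t$ exist in $H^c_*(S^{tC_p})$ by Lemma~\ref{lemma:u-and-t}, so the generators $u$ and $t^{\pm1}$ are permanent cycles and multiplicativity makes it collapse. The suspension equivalence then transports the collapse to $S^q$.

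Finally, assume $B/p$ is bounded below. The $\hat E^2$-term then vanishes for internal degree below some bound $k$, so collapse gives $\hat E^\infty=\hat E^2$ with only finitely many nonzero differentials at each bidegree (none at all), and hence $R\hat E^\infty=0$. Lemma~\ref{lemma:strong-convergence-homology}, via condition~(3) or~(1), then gives strong convergence. The final sentence of that lemma yields that the Tate filtration is complete, Hausdorff and exhaustive.
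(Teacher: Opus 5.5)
Your outline matches the paper's proof. You reduce to spheres by naturality along $R^H_+(\bar x)$ and \eqref{eq:x-power-p}, show collapse for $S^q$, and then invoke Lemma~\ref{lemma:strong-convergence-homology}. The problem is the sphere step. You noticed that $\hat E^2(S^q)=\hat H^{-*}(C_p;\bF_p)\otimes\bF_p\{\iota_q^{\otimes p}\}$ has one class in each total degree. You missed the decisive fact that all of these classes lie on the single horizontal line of internal degree $pq$. Since $d^r\colon \hat E^r_{s,t}\to \hat E^r_{s-r,t+r-1}$ raises internal degree by $r-1\ge 1$, every differential has zero source or zero target. So the spectral sequence for $S^q$ collapses for bidegree reasons alone, with no knowledge of the abutment needed. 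This one-line observation is the paper's entire argument for the sphere case.

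Your substitute is a dimension count against $H^c_*(R_+(S^q))\cong H^c_*(S^{tC_p})\otimes H_*(S^q)$. It would be valid if $\dim H^c_n(S^{tC_p})=1$ were known for all $n$ independently. However, your proposed non-circular route to that fact is itself circular. The subsection containing Lemma~\ref{lemma:u-and-t} opens by identifying $H^c_*(S^{tC_p})$ with $\hat H^{-*}(C_p;\bF_p)$ as a filtered graded algebra, which is exactly the collapse for $S$. The argument before the lemma then starts from ``any choice of non-zero classes $u\in H^c_{-1}(S^{tC_p})$ and $t\in H^c_{-2}(S^{tC_p})$'' with $t$ invertible. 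The lemma's own proof uses the product $ut^{-1}\cdot t$. So you cannot use the classes produced there to prove that $u$ and $t^{\pm1}$ are permanent cycles, since that permanence is what the lemma presupposes.

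The repair is the single-row observation, which applies verbatim to $S=S^0$ and to every $S^q$. It makes Proposition~\ref{prop:diag-q}, Proposition~\ref{prop:continuous-homology-of-s-tcp}, multiplicativity and the dimension count unnecessary here. The rest of your proposal is fine: the free-orbit remark about non-diagonal tensors, the naturality reduction, and the strong-convergence step via Lemma~\ref{lemma:strong-convergence-homology}.
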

\begin{proof}
  Since $\hat E^2(S^q)$ is concentrated on the horizontal line
  $(*,pq)$, it follows that the homological $C_p$-Tate spectral
  sequence for $X=(S^q)^{\wedge p}$ collapses at the $\hat E^2$-term.
  For a spectrum $B$, any
  given $u^it^r\otimes x^{\otimes p} \in\hat E^2(B)$ is an infinite
  cycle by naturality with respect to \eqref{eq:x-power-p}.  Thus, the
  spectral sequence collapses with $\hat E^\infty(B) = \hat E^2(B)$.

  If $B/p$ is bounded below, then $B^{\wedge p}/p$ is bounded below
  and the remaining part of the proposition follows from
  Lemma~\ref{lemma:strong-convergence-homology}.
\end{proof}

\subsubsection{Multiplicative structure of $H_*^c(R_+(B))$}
\label{sec:multiplicative-structure-of-rplus}

For any spectrum $B$, let
\begin{equation}
  \label{eq:epsilon-bar}
  \bar\epsilon_B^H\: H^{tC_p}\wedge_H (H\wedge B) \longto R_+^H(H\wedge B)
\end{equation}
be the unique extension of the $H$-based Tate diagonal
$\epsilon^H_{B}\: H\wedge B\to R_+^H(H\wedge B)$ along $\eta^H$ to a
map of $H^{tC_p}$-modules.

For any $H$-module $M$ and any map of spectra $x\: S^q\to M$
representing a class in $\pi_q(M)$, let $\bar x \: H \wedge S^q \to M$
be the unique extension of~$x$ along $\eta: S\to H$ to a map of
$H$-modules.

\begin{definition}
  \label{dfn:omega}
  For any $H$-module $M$, define
  \begin{equation*}
    \omega^H_M\: H_*^c(S^{tC_p}) \otimes \pi_*(M) \longto \pi_*(R^H_+(M))
  \end{equation*}
  to be the homomorphism that makes the diagram
  \begin{equation*}
      \xymatrix@C15mm{
      H_*^c(S^{tC_p}) \otimes \pi_*(M)
      \ar[r]^-{\omega^H_M}
      & \pi_*(R^H_+(M))\\
      H_*^c(S^{tC_p})\otimes H_*(S^q)
      \ar[u]_{1\otimes \bar x_*}
      \ar[r]^-{(\bar\epsilon_{S^q}^H)_*}
      & H_*^c(R_+(S^q))
        \ar[u]_{R_+^H(\bar x)_*}
        }
  \end{equation*}
  commute for each $x\in \pi_q(M)$.  For our purposes, we are
  primarily interested in the situation where $M$ is an induced
  $H$-module, $M=H\wedge B$.  In this case, we will shorten the
  notation and write $\omega_B$ instead of $\omega_{H\wedge B}^H$.
\end{definition}
The homomorphism~$\omega_B$ will play a key role in our algebraic
modeling of~$H^c_*(R_+(B))$.  The linear dual of~$\omega_B$ will
correspond to the cohomological homomorphism~$\omega$ defined
in~\cite{LNR12}*{Thm.~5.2}.
\begin{lemma}
  The homomorphism $\omega^H_M$ is well-defined and is natural in the
  $H$-module $M$.
\end{lemma}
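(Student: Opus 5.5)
The plan is to define $\omega^H_M$ on elementary tensors by the only formula the diagram allows, and then check that this formula is bilinear and natural. Every homogeneous class of $\pi_q(M)$ is represented by a map $x\: S^q \to M$, and $\bar x_*(\iota_q) = x$. So commutativity of the diagram forces
\[
\omega^H_M(a \otimes x) = R^H_+(\bar x)_*\bigl((\bar\epsilon^H_{S^q})_*(a \otimes \iota_q)\bigr)
\]
for $a \in H^c_*(S^{tC_p})$. This gives uniqueness. For existence, note first that homotopic maps $x \simeq x'$ give homotopic $H$-module extensions $\bar x \simeq \bar x'$, and hence equal maps $R^H_+(\bar x)_* = R^H_+(\bar x')_*$. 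The right-hand side is visibly $\bF_p$-linear in $a$. It therefore remains to show that it is additive in the class $x$; $\bF_p$-linearity in $x$ follows from additivity, since scalars are integer multiples. Once this is done the formula defines a homomorphism on $H^c_*(S^{tC_p}) \otimes \pi_*(M)$, and the diagram commutes for every $x$ by construction.

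Additivity in $x$ is the main obstacle, because $R^H_+$ is a $p$-th power construction and not an additive functor. Given $x, y \: S^q \to M$, write $x + y = \nabla \circ (x \vee y) \circ \mathrm{pinch}$ with $\mathrm{pinch} \: S^q \to S^q \vee S^q$. Then $\overline{x+y}$ factors through $\bar x \vee \bar y \: H \wedge (S^q \vee S^q) \to M$.

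Expand the $p$-fold smash power $(H \wedge (S^q \vee S^q))^{\wedge_H p}$ as a wedge over words in the two summands. The two constant words give the two copies of $(H \wedge S^q)^{\wedge_H p}$. The mixed words assemble into $C_p$-spectra of the form $C_{p+} \wedge W$, because $p$ is prime and so $C_p$ acts freely on the non-constant words. The $C_p$-Tate construction is exact and vanishes on induced spectra, as already used in the proof of Lemma~\ref{lemma:delta-is-a-homotopy-equivalence}. Hence the two inclusions induce an equivalence
\[
R^H_+(H \wedge S^q) \vee R^H_+(H \wedge S^q) \overset{\simeq}\longto R^H_+\bigl(H \wedge (S^q \vee S^q)\bigr).
\]
Under this equivalence, $R^H_+(\bar x \vee \bar y)_*$ restricts to $R^H_+(\bar x)_*$ on the first summand and to $R^H_+(\bar y)_*$ on the second.

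Next I use naturality of $\bar\epsilon^H$ in the spectrum $B$, applied to the pinch map $S^q \to S^q \vee S^q$. The class $(\bar\epsilon^H_{S^q \vee S^q})_*(a \otimes (\iota_q, \iota_q))$ is the image of $(\bar\epsilon^H_{S^q})_*(a \otimes \iota_q)$ under the pinch. Its projections onto the two summands are obtained by composing with the two collapse maps $S^q \vee S^q \to S^q$. Naturality of $\bar\epsilon^H$ again identifies each projection with $(\bar\epsilon^H_{S^q})_*(a \otimes \iota_q)$, since each composite collapse $\circ$ pinch is homotopic to the identity. Combining this with the previous paragraph gives $\omega(a \otimes (x+y)) = \omega(a \otimes x) + \omega(a \otimes y)$.

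Naturality of $\omega^H$ in $M$ is then formal. For an $H$-module map $f \: M \to M'$ we have $\overline{f \circ x} = f \circ \bar x$ by uniqueness of $H$-linear extensions. Functoriality of $R^H_+$ then gives $R^H_+(f)_* \circ R^H_+(\bar x)_* = R^H_+(\overline{fx})_*$. This yields $R^H_+(f)_* \circ \omega^H_M = \omega^H_{M'} \circ (1 \otimes f_*)$ on elementary tensors, and hence everywhere.
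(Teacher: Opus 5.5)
Your proof is correct and follows essentially the same route as the paper. Both arguments reduce well-definedness to additivity in $x$, obtain that additivity from the splitting $R^H_+(H\wedge S^q)\vee R^H_+(H\wedge S^q)\simeq R^H_+(H\wedge(S^q\vee S^q))$ together with the factorization of $\overline{x+y}$ through the wedge, and get naturality from $\overline{f\circ x}=f\circ\bar x$.

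The differences are minor. The paper proves the splitting by a degreewise finite-dimension count combined with $\beta\circ\alpha=1$, and shows $R^H_+(f+g)=R^H_+(f)+R^H_+(g)$ as maps. You instead prove the splitting by noting that the mixed words form free $C_p$-orbits, which the Tate construction kills. You also check additivity only on the classes $(\bar\epsilon^H_{S^q})_*(a\otimes\iota_q)$, using naturality of $\bar\epsilon^H$ under the pinch and collapse maps.
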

\begin{proof}
  Let $e: \pi_*(M)\to \pi_*(R_+^H(M))$ be the function
  that makes the diagram
  \begin{equation*}
    \xymatrix@C15mm{
      \pi_*(M)
      \ar[r]^-{e}
      & \pi_*(R^H_+(M))\\
      H_*(S^q)
      \ar[u]_{\bar x_*}
      \ar[r]^-{(\epsilon_{S^q}^H)_*}
      & H_*^c(R_+(S^q))
      \ar[u]_{R_+^H(\bar x)_*}
    }
  \end{equation*}
  commute for each $x\in\pi_q(M)$.  Then $\omega_M^H$ of
  Definition~\ref{dfn:omega} is the unique extension of~$e$ along
  $\eta^H_*$ as a morphism of $H_*^c(S^{tC_p})$-modules.  Thus, we
  must argue that~$e$ is a well-defined homomorphism, natural in the
  $H$-module $M$.

  To show that~$e$ is a homomorphism it suffices to show that
  $R_+^H(\bar x) + R_+^H(\bar y) = R_+^H(\overline{x+y})$ in the
  homotopy category of spectra, for arbitrary maps $x,y\: S^q\to M$.

  Generally, for any endofunctor $F$ on the homotopy category of
  $E$-modules, let
  $$
  F(X)\vee F(X) \overset\alpha\longto F(X\vee X)
  \overset\beta\longto  F(X)\vee F(X)
  $$
  be the canonical maps associated with the inclusions
  $i_s\: X\to X\vee X$ and projections $p_s\: X\vee X\to X$, for
  $s=1,2$.  For any $X$ we have that $\beta\circ \alpha = 1$.  If~$F$
  is the functor $M\mapsto R^H_+(M)$, and if $K$ is a finite
  CW-spectrum, then the domain and codomain of
  $\alpha_*\: \pi_*(R_+^H(H\wedge K)\vee R_+^H(H\wedge K)) \longto
  \pi_*(R_+^H(H\wedge (K \vee K)))$ have the same finite dimension
  over $\bF_p$ in each degree, and $\alpha_*$ is therefore an
  isomorphism and $\alpha$ and $\beta$ are (homotopy) inverses.

  Given any two morphisms $f,g\: H\wedge S^q\to M$ of $H$-modules,
  consider the following diagram:
  \begin{equation*}
    \label{eq:rplus-additive}
    \begin{aligned}
      \xymatrix@C4mm{
      &  R_+^H(H\wedge S^q)
        \ar[rrrr]^-{R_+^H(f+g)}
        \ar`l[ldd]`[dd]_{\Delta} [dd]
        \ar[d]^{R_+^H(\Delta)}
      &&&& R_+^H(M)\\
      & R_+^H(H\wedge S^q \vee H\wedge S^q)
        \ar@<-1mm>[d]_\beta
        \ar[rrrr]^-{R_+^H(f\vee g)}
      &&&&
        R_+^H(M \vee M)
           \ar[u]_{R_+^H(\nabla)}\\
      & R_+^H(H\wedge S^q)\vee R_+^H(H\wedge S^q)
        \ar@<-1mm>[u]_{\alpha}
      \ar[rrrr]^-{R_+^H(f)\vee R_+^H(g)}
      &&&& R_+^H(M) \vee R_+^H(M)\rlap{\ \ \ \ \ \,.}
        \ar[u]_\alpha
        \ar@<-1pt> `r/5pt[u] `[uu]_{\nabla} [uu]
        }
    \end{aligned}
  \end{equation*}
  The left-hand and right-hand parts of the diagram commute
  by universal properties, and the upper inner square commutes by the
  addition rule for maps of spectra.  The lower inner square involving
  the two vertical maps labeled $\alpha$ is easily seen to commute by
  restriction to the individual summands.  At the left-hand side, the
  maps $\alpha$ and~$\beta$ are inverses, and it follows that the
  perimeter of the diagram commutes.  We conclude that
  $R_+^H(f + g) = R_+^H(f) + R_+^H(g)$.

  Let $x,y\: S^q\to M$ represent classes in $\pi_q(M)$.  The
  commutativity of
  \begin{equation*}
    \label{eq:xbar-additive}
    \begin{aligned}
      \xymatrix@C8mm{
      H\wedge S^q
      \ar[r]^-{1\wedge \Delta}
      \ar[dr]_-\Delta
      & H\wedge (S^q\vee S^q)
        \ar[r]^-{1\wedge (x\vee y)}
        \ar[d]^\beta
      & H\wedge (M \vee M)
        \ar[r]^-{1\wedge \nabla}
      & H\wedge M\ar[d]\\
      & (H\wedge S^q) \vee (H\wedge S^q)
        \ar[r]^-{\bar x\vee \bar y}
      & M\vee M
        \ar[r]^-\nabla
      & M\,,
      }
    \end{aligned}
  \end{equation*}
  shows that $\overline{x+y} = \bar x + \bar y$.  In summary,
  $R^H_+(\overline{x+y}) = R^H_+(\bar x + \bar y) = R^H_+(\bar x) +
  R_+^H(\bar y)$, and it follows that~$e$, and thus $\omega^H_M$, is
  additive.

  Naturality with respect to $H$-module maps $f\: M\to M'$ follows
  readily from the definition of $\omega^H_M$ and the fact that
  $\overline{f\circ x} = f\circ \bar x$ in the homotopy category of
  $H$-modules, for any map $x\: S^q\to M$.
\end{proof}

Let $M$ be an $H$-module.  The functor sending~$M$ to
$H_*^c (S^{tC_p}) \otimes \pi_*(M)$ is lax symmetric monoidal, with
unit
\begin{equation}
  \label{eq:algebraic-rplus-monoidal-unit}
  \eta^H_*\: \bF_p \longto H_*^c(S^{tC_p}) \,,
\end{equation}
and pairing
\begin{multline}
  \label{eq:algebraic-rplus-monoidal-pairing}
  (\mu_{S,S*} \otimes \wedge) \circ (23)\: H_*^c (S^{tC_p}) \otimes \pi_*(M)\otimes
  H_*^c (S^{tC_p}) \otimes \pi_*(N)\\
  \longto
  H_*^c (S^{tC_p}) \otimes \pi_*(M\wedge_H N)\,,
\end{multline}
where $\wedge\: \pi_*(M)\otimes \pi_*(N)\to \pi_*(M\wedge_H N)$ is the
lax monoidal structure map for passing from $H$-modules to
homotopy groups.

From now on, for any two spectra~$B$ and~$B'$ we abbreviate
the name of the lax symmetric monoidal pairing
$$
R^H_+(H\wedge B)\wedge_H R^H_+(H\wedge B')\longto R_+^H(H\wedge B\wedge B')
$$
by letting
$\mu_{B,B'} = \mu^H_{H \wedge B^{\wedge p}, H \wedge {B'}^{\wedge
    p}}$.  Thus, the pairing $\mu_{S,S*}$ appearing
in~\eqref{eq:algebraic-rplus-monoidal-pairing} is the algebra product
on $H_*^c(S^{tC_p})$ discussed in Section
\ref{sec:continuous-homology-of-singer}.

\begin{lemma}\label{lemma:omega-is-monoidal}
  For $H$-modules $M$, the assignment $M\mapsto \omega^H_M$ is a
  monoidal natural transformation between the lax symmetric monoidal
  functors $M\mapsto H^c_*(S^{tC_p})\otimes \pi_*(M)$ and
  $M\mapsto \pi_*(R^H_+(M))$.

  In particular, for induced $H$-modules $M=H\wedge B$ and
  $M'=H\wedge B'$, the following diagram of graded $\bF_p$-vector spaces
  commutes
  \begin{equation}
    \label{eq:omega-multiplicative}
    \xymatrix@C20mm{
      H^c_*(S^{tC_p})\otimes H_*(B) \otimes H^c_*(S^{tC_p})\otimes H_*(B')
      \ar[d]^-{(\mu_{S,S*}\otimes \wedge)\circ (23)}
      \ar[r]^-{\omega_B\otimes \omega_{B'}}
      &
      H_*^c(R_+(B)) \otimes H_*^c(R_+(B'))
      \ar[d]^-{\mu_{B,B'*}}
      \\
      H^c_*(S^{tC_p})\otimes H_*(B\wedge B')
      \ar[r]_{\omega_{B\wedge B'}}
      &
      H_*^c(R_+(B\wedge B')) \,.
    }
  \end{equation}
\end{lemma}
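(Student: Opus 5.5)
We have to show two things: that $\omega^H_M$ is compatible with the monoidal units, and that it is compatible with the pairings, i.e.\ that the general $H$-module analogue of \eqref{eq:omega-multiplicative} commutes. Both reduce to the fact that the $H$-based Tate diagonal $\epsilon^H$, and hence its $H^{tC_p}$-linear extension $\bar\epsilon^H$, is a monoidal natural transformation.

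\textbf{Unit.} The monoidal unit of $M\mapsto\pi_*(R^H_+(M))$ is $\eta^H_*\:\bF_p\to\pi_*(H^{tC_p})=H^c_*(S^{tC_p})$. Since $\eta^H=\epsilon^H_S$, the homomorphism $\omega_H=\omega^H_{H\wedge S}$ restricts on $1\otimes\pi_*(H)$ to $(\epsilon^H_S)_*=\eta^H_*$. It extends $H^c_*(S^{tC_p})$-linearly, so on $H^c_*(S^{tC_p})\otimes\bF_p$ it is the identity of $H^c_*(S^{tC_p})$. This is exactly unit compatibility.

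\textbf{Pairing, reduction to spheres.} Both composites around the square are bilinear. Both are also $H^c_*(S^{tC_p})$-bilinear: the lower route is so by construction, and the upper route because $\mu_{B,B'}$ is a map of $H^{tC_p}$-bimodules, by associativity and commutativity of the lax structure $\mu$. It therefore suffices to check commutativity on elements $x\otimes y$ with $x\in\pi_q(M)$ and $y\in\pi_{q'}(N)$. By definition, $\omega_M(x)=R^H_+(\bar x)_*(\epsilon^H_{S^q})_*(\iota_q)$, and similarly for $y$.

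Next we use naturality of $\mu$ in $H$-modules, applied to $\bar x$ and $\bar y$:
\[
\mu_*(\omega_M(x)\otimes\omega_N(y))=R^H_+(\bar x\wedge_H\bar y)_*\,\mu_{S^q,S^{q'}*}\bigl((\epsilon^H_{S^q})_*\iota_q\otimes(\epsilon^H_{S^{q'}})_*\iota_{q'}\bigr).
\]
Because $\epsilon^H$ is monoidal, the inner term equals $(\epsilon^H_{S^{q+q'}})_*(\iota_q\wedge\iota_{q'})$.

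\textbf{Pairing, matching the two sides.} We identify $\bar x\wedge_H\bar y$ with $\overline{x\wedge y}$, where $x\wedge y$ denotes the product of $x$ and $y$ in $\pi_*(M\wedge_H N)$; this holds by uniqueness of $H$-linear extensions. Applying the definition of $\omega_{M\wedge_H N}$ then gives the lower route. Naturality of $\omega$ (the preceding lemma) handles the identification $H\wedge S^q\wedge_H H\wedge S^{q'}\simeq H\wedge S^{q+q'}$.

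The main obstacle is sign bookkeeping, namely the $(23)$ twist and the degrees when commuting elements of $H^c_*(S^{tC_p})$ past $\pi_*(M)$. We handle it using graded commutativity of the symmetric monoidal structure, applied uniformly on both sides. The special case \eqref{eq:omega-multiplicative} follows by setting $M=H\wedge B$ and $M'=H\wedge B'$.
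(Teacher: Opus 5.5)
Your proposal is correct and follows essentially the paper's proof. The unit case uses $\eta^H=\epsilon^H_S$. Multiplicativity is reduced to sphere classes $x\colon S^q\to M$ and then follows from naturality of $\mu$ together with monoidality of the Tate diagonal. The only difference is organisational: you first use $H^c_*(S^{tC_p})$-bilinearity of both routes to reduce to elements $1\otimes x$ and then invoke monoidality of $\epsilon^H$. The paper instead keeps the $H^c_*(S^{tC_p})$ factor throughout and invokes monoidality of the $H^{tC_p}$-linear extension $\bar\epsilon^H$, which packages the same associativity and symmetry input.
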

\begin{proof}
  The unit $\eta^H\: H\longto H^{tC_p}\simeq R^H_+(H)$ of the lax
  symmetric monoidal functor $M\mapsto R^H_+(M)$ equals
  $\epsilon^H_S$, the $H$-based Tate diagonal for the sphere spectrum.
  Since $\omega_S = (\bar\epsilon_S^H)_*$, the fact that $\omega^H$
  preserves the monoidal unit amounts to the commutativity of the
  diagram
  \[
  \xymatrix@C15mm{
    H\wedge_H H
    \ar[r]^-{\simeq}
    \ar[d]^-{\epsilon_S^H\wedge 1}
    & H \ar[d]^-{\epsilon_S^H}\\
    R_+^H(H)\wedge_H H \ar[r]^-{\bar\epsilon_S^H}
    & R_+^H(H)\,.
    }
  \]
  Therefore, unitality of $\omega^H$ follows from the fact that
  $\epsilon_B^H$ is a monoidal natural transformation
  $H\wedge B \to R_+^H(H\wedge B)$.

  Let $M$ and $M'$ be $H$-modules, together with maps $x\:S^q\to M$
  and $x'\:S^{q'}\to M'$.  To show that $\omega^H$ is multiplicative,
  we must show that the middle square of the following diagram
  commutes:
  \begin{equation}
    \begin{aligned}
      \label{eq:omega-H-multiplicative}
      \xymatrix@C2mm{
      &\txt{
      $H_*^c(S^{tC_p})\otimes H_*(S^q)$\\
      $\otimes$\\
      $H_*^c(S^{tC_p})\otimes H_*(S^{q'})$}
      \ar[rrrr]^-{(\bar\epsilon_{S^q}^H)_* \otimes (\bar\epsilon_{S^{q'}}^H)_*}
      \ar[d]^-{1\otimes \bar{x}_*\otimes 1\otimes {\bar{x}'}_*}
      \ar`l[ld]`[ddd][ddd]
      &&&&
        H_*^c(R_+(S^q))\otimes H_*^c(R_+(S^{q'}))
        \ar[d]^{R^H_+(\bar{x})_*\otimes R^H_+(\bar{x}')_*}
        \ar`r[rd]`d[ddd]_{\mu_{S^q, S^{q'}*}} [ddd]
      \\
      &\txt{$H_*^c(S^{tC_p})\otimes \pi_*(M)$\\
      $\otimes$\\
      $H_*^c(S^{tC_p})\otimes \pi_*(M')$}
      \ar[rrrr]^-{\omega^H_M \otimes \omega^H_{M'}}
      \ar[d]^-{(\mu_{S,S*}\otimes \wedge)\circ (23)}
      &&&&
        \pi_*(R^H_+(M))\otimes \pi_*(R^H_+(M'))
        \ar[d]^{\mu^H_{M,M'*}} &
      \\
      &H_*^c(S^{tC_p})\otimes \pi_*(M\wedge_H M')
      \ar[rrrr]^-{\omega^H_{M\wedge_H M'}}
      &&&&
        \pi_*(R^H_+(M\wedge_H M'))\\
      &H_*^c(S^{tC_p})\otimes H_*(S^q\wedge S^{q'})
      \ar[u]_{1\otimes (\bar{x}\wedge \bar{x}')_*}
      \ar[rrrr]^-{(\bar\epsilon_{S^q\wedge S^{q'}}^H)_*}
      &&&&
        H_*^c(R_+(S^q\wedge S^{q'}))  \,.
        \ar[u]_{R^H_+(\bar{x}\wedge \bar{x}')_*}
        }
    \end{aligned}
  \end{equation}
  Here, the unlabeled left-hand morphism is the composite
  \begin{multline*}
    H_*^c(S^{tC_p})\otimes H_*(S^q)\otimes
    H_*^c(S^{tC_p})\otimes H_*(S^{q'})
    \overset{(23)}\longto\\
    H_*^c(S^{tC_p})\otimes H_*^c(S^{tC_p}) \otimes H_*(S^q)\otimes H_*(S^{q'})
    \overset{\mu_{S,S*}\otimes \wedge}\longto
    H_*^c(S^{tC_p}) \otimes H_*(S^q\wedge S^{q'}) \,.
  \end{multline*}
  The top square commutes by the definition of $\omega^H_M$
  and~$\omega^H_{M'}$.  The lower square commutes by the definition of
  $\omega^H_{M \wedge_H M'}$, after identifying
  $S^{q+q'}\cong S^{q}\wedge S^{q'}$.  It is clear from the setup that
  the left-hand sub-diagram commutes, and the right-hand sub-diagram
  commutes since $R_+^H(-)$ is a lax symmetric monoidal endofunctor on
  the homotopy category of $H$-modules.  The outer square is induced
  by the diagram
  \[
    \xymatrix{
      R_+^H(H)\wedge_H (H\wedge S^q) \wedge_H R_+^H(H)\wedge_H (H\wedge S^{q'})
      \ar[r]^-{\bar\epsilon_{S^q}^H\wedge \bar\epsilon_{S^{q'}}^H}
      \ar[d]^{(\mu_{S,S}\wedge 1\wedge 1)\circ (23)}
      &
      R_+^H(H\wedge S^q) \wedge_H R_+^H(H\wedge S^{q'})
      \ar[dd]^{\mu_{S^q,S^{q'}}}
      \\
      R_+^H(H) \wedge_H (H\wedge S^{q}) \wedge_H (H\wedge S^{q'})
      \ar[d]^{\simeq}
      \\
      R_+^H(H) \wedge_H (H\wedge S^{q}\wedge S^{q'})
      \ar[r]^-{\bar\epsilon^H_{S^q\wedge S^{q'}}}
      &
      R_+^H(H\wedge S^q\wedge S^{q'}) \,,
    }
  \]
  and commutes since $\epsilon^H$ (resp.~$\bar\epsilon^H$) is a
  monoidal natural transformation between $B\mapsto H\wedge B$
  (resp.~$B \mapsto R^H_+(H) \wedge B$) and
  $B\mapsto R^H_+(H\wedge B)$.

  Since~$x$ and~$x'$ were arbitrary, this proves that the middle square
  of diagram~\eqref{eq:omega-H-multiplicative} commutes, and the lemma
  follows.
\end{proof}

\begin{corollary}
  If $B$ is a ring spectrum, then $\omega_B$ is a morphism of graded
  $\bF_p$-algebras.\qed
\end{corollary}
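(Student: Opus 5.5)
The corollary should follow by specializing Lemma~\ref{lemma:omega-is-monoidal} to $B' = B$ and composing with the product. Let $\phi \: B \wedge B \to B$ be the multiplication of the ring spectrum~$B$. The product on $H_*(B)$ is the composite of $\wedge \: H_*(B) \otimes H_*(B) \to H_*(B \wedge B)$ with $\phi_*$. The product on $H^c_*(R_+(B))$ is the composite of $\mu_{B,B*}$ with $R^H_+(1 \wedge \phi)_*$. The product on the domain $H^c_*(S^{tC_p}) \otimes H_*(B)$ is $(\mu_{S,S*} \otimes \phi_* \circ \wedge) \circ (23)$.

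First I will paste the commutative square~\eqref{eq:omega-multiplicative} to a second square. That second square has horizontal maps $\omega_{B \wedge B}$ and $\omega_B$, and vertical maps $1 \otimes \phi_*$ and $R^H_+(1 \wedge \phi)_*$. It commutes by the naturality of $\omega^H_M$ in the $H$-module~$M$, applied to the $H$-module map $1 \wedge \phi \: H \wedge B \wedge B \to H \wedge B$. The outer rectangle then says exactly that $\omega_B$ is multiplicative.

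Second, I will check unitality. The unit of $H^c_*(S^{tC_p}) \otimes H_*(B)$ is $1 \otimes 1$, which is the image of the unit under the monoidal unit map $\eta^H_*$ tensored with the unit of $H_*(B)$, the latter being induced by $\eta_B \: S \to B$. Lemma~\ref{lemma:omega-is-monoidal} shows that $\omega$ preserves monoidal units, so $\omega_S(1 \otimes 1) = 1$. Naturality of $\omega$ along $1 \wedge \eta_B \: H \wedge S \to H \wedge B$ then sends this unit to the unit of $H^c_*(R_+(B))$, since that unit is defined via $R^H_+(1 \wedge \eta_B)$ and the lax monoidal unit.

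There is no genuine obstacle here. The only point to keep track of is that the algebra structures on both sides are the ones induced by the lax symmetric monoidal structures, so the corollary is a formal consequence of $\omega$ being a monoidal natural transformation.
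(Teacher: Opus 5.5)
Your proof is correct and is the paper's argument: the corollary is stated there as an immediate consequence of Lemma~\ref{lemma:omega-is-monoidal}. You make explicit the pasting of \eqref{eq:omega-multiplicative} with the naturality square for $1 \wedge \phi$, and the unit check via naturality along $1 \wedge \eta_B$.
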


\begin{lemma}
  \label{lemma:omega-suspensions}
  Let $B$ be a spectrum.  The homomorphism $\omega_B$ commutes with
  suspensions.  More precisely, for each $n \in \bZ$ there is a
  commutative square of graded $\bF_p$-vector spaces,
  \begin{equation}
    \begin{aligned}
      \label{eq:omega-suspensions}
      \xymatrix@C11mm{
      \Sigma^n \bigl(H_*^c(S^{tC_p})\otimes H_*(B)\bigr)
      \ar[r]^-{\Sigma^n\omega_B}
      \ar[d]^{\cong}
      & \Sigma^n H_*^c(R_+(B))
        \ar[d]^\cong\\
      H_*^c(S^{tC_p})\otimes H_*(B\wedge S^n)
      \ar[r]^-{\omega_{\Sigma^n\!B}}
      & H_*^c(R_+(B\wedge S^n)) \,,
        }
    \end{aligned}
  \end{equation}
  natural in~$B$.
\end{lemma}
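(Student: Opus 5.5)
The plan is to deduce the square~\eqref{eq:omega-suspensions} from the monoidality of $\omega$ (Lemma~\ref{lemma:omega-is-monoidal}) applied with $B' = S^n$, together with the computation of $\omega_{S^n}$ on the fundamental class. The vertical isomorphisms will be taken to be the evident ones. On the left we use $\Sigma^n(H^c_*(S^{tC_p})\otimes H_*(B)) \cong H^c_*(S^{tC_p})\otimes H_*(B)\otimes H_*(S^n) \cong H^c_*(S^{tC_p})\otimes H_*(B\wedge S^n)$, given by $a\otimes x \mapsto a\otimes x\wedge\iota_n$. On the right we use the isomorphism~\eqref{eq:l5eq} induced by the composite~\eqref{eq:l4eq} of Proposition~\ref{prop:diag-q}, which sends $y\otimes\iota_n \mapsto \mu_{B,S^n*}(y\otimes (\epsilon^H_{S^n})_*(\iota_n))$.

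With these choices, the key step is the observation that $(\epsilon^H_{S^n})_*(\iota_n) = \omega_{S^n}(1\otimes\iota_n)$. This holds by the definition of $\omega$ with $M = H\wedge S^n$ and $x = \iota_n$, since $\bar x$ is then the identity and $\bar\epsilon^H_{S^n}$ restricts to $\epsilon^H_{S^n}$ along the unit $\eta^H$. Granting this, I chase $a\otimes x\otimes\iota_n$ around diagram~\eqref{eq:omega-multiplicative}, taking $B' = S^n$ and the element $a\otimes x\otimes 1\otimes\iota_n$. The lower path gives $\omega_{B\wedge S^n}(a\otimes x\wedge\iota_n)$. The upper path gives
\[
\mu_{B,S^n*}\bigl(\omega_B(a\otimes x)\otimes \omega_{S^n}(1\otimes\iota_n)\bigr),
\]
which is exactly the image of $\Sigma^n\omega_B(a\otimes x)$ under the right-hand vertical isomorphism. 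The unit $1$ multiplying $a$ causes no problem, because $\mu_{S,S*}$ is unital.

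This argument covers all $n\in\bZ$ uniformly: Proposition~\ref{prop:diag-q} provides the equivalence for negative $q$ as well, and $\epsilon^H_{S^n}$ together with $\omega_{S^n}$ make sense for every integer $n$. Naturality in $B$ follows from naturality of $\omega$ in $H$-modules, naturality of $\mu$, and naturality of the smash pairing in homology. As a sanity check, Corollary~\ref{cor:epsilon-suspensions} is the restriction of this square to $a = 1$.

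The main obstacle I expect is sign bookkeeping. The identification $\Sigma^n H_*(B)\cong H_*(B\wedge S^n)$ has to be written with $\iota_n$ on the right, consistent with the suspension convention, and the twist $(23)$ in~\eqref{eq:omega-multiplicative} moves $x$ past the degree-$0$ class $1$, so it introduces no sign. I would fix the convention that $S^n$ is smashed on the right in $B\wedge S^n$, which matches~\eqref{eq:l4eq}, so that no Koszul signs appear.
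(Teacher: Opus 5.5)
Your argument is correct, but it is organized differently from the paper's proof. The paper first uses Definition~\ref{dfn:omega} to reduce to the case $B=S^q$. It then checks commutativity of the spectrum-level diagram \eqref{eq:l3diag} directly from the fact that $\epsilon^H$ is a monoidal natural transformation. That diagram compares $\mu_{S^q,S^n}\circ(1\wedge\epsilon^H_{S^n})\circ(\bar\epsilon^H_{S^q}\wedge 1)$ with $\bar\epsilon^H_{S^{q+n}}$.

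You instead take Lemma~\ref{lemma:omega-is-monoidal} as a black box. Your key observation is that the right-hand vertical isomorphism is $y\otimes\iota_n\mapsto\mu_{B,S^n*}(y\otimes\omega_{S^n}(1\otimes\iota_n))$. The suspension square then becomes the special case $B'=S^n$ of the multiplicativity square \eqref{eq:omega-multiplicative}, evaluated on elements with second factor $1\otimes\iota_n$.

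Lemma~\ref{lemma:omega-is-monoidal} is itself proved by reducing to spheres and using monoidality of $\bar\epsilon^H$, so the underlying input is the same in both proofs. Your version avoids repeating the reduction. It also shows that suspension-compatibility follows formally from monoidality together with the identity $\omega_{S^n}(1\otimes\iota_n)=(\epsilon^H_{S^n})_*(\iota_n)$. The paper's version instead records the comparison as a spectrum-level statement for spheres.

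One caution about your closing sanity check: restricting your square to $a=1$ does not recover Corollary~\ref{cor:epsilon-suspensions}. The reason is that $\omega_B(1\otimes x)=R^H_+(\bar x)_*(\epsilon^H_{S^q})_*(\iota_q)$ is in general different from $(\epsilon^H_B)_*(x)$. By Proposition~\ref{prop:epsilon-is-epsilon}, $(\epsilon^H_B)_*(x)=\omega_B(\epsilon(x))$, and the two agree only when $x$ is $\sA_*$-comodule primitive. This remark plays no role in your proof, but it conflates the two maps.
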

\begin{proof}
  The left-hand isomorphism of \eqref{eq:omega-suspensions} is induced
  by the suspension isomorphism
  $\Sigma^n H_*(B) \cong H_*(B\wedge S^n)$, while the right-hand
  isomorphism is induced by the homotopy equivalence
  of Proposition~\ref{prop:diag-q} for $E=H$ and $q=n$.

  To show that \eqref{eq:omega-suspensions} commutes, it suffices by
  Definition~\ref{dfn:omega} to restrict to the case of $B=S^q$.  In
  this case, \eqref{eq:omega-suspensions} is obtained by passing to
  homotopy groups in the following diagram:
  \begin{equation}
    \label{eq:l3diag}
    \begin{aligned}
      \xymatrix{
      R^H_+(H)\wedge_H (H\wedge S^q) \wedge S^n
      \ar[r]^-{\bar\epsilon_{S^q}^H\wedge 1}
      \ar[dd]^\cong
      %\ar@/_5mm/[ddr]_{\bar\epsilon^H_{S^{q+n}}}
      & R^H_+(H\wedge S^q) \wedge S^n
        \ar[d]^{1\wedge \epsilon^H_{S^n}} \\
      & R^H_+(H\wedge S^q)\wedge_H R_+^H(H\wedge S^n)
        \ar[d]^{\mu_{S^q, S^n}}\\
      R^H_+(H)\wedge_H (H\wedge S^q\wedge S^n)
      \ar[r]_-{\bar\epsilon^H_{S^{q+n}}}
      & R_+^H(H\wedge S^{q+n})\,.
      }
    \end{aligned}
  \end{equation}
  Commutativity of~\eqref{eq:l3diag} is a consequence of~$\epsilon^H$
  being a monoidal natural transformation.
\end{proof}

\section{The homological $C_p$-Singer construction}\label{sec:cp-singer}

Recall from Subsection~\ref{sec:continuous-homology-of-singer} the
structure of $H^c_*(S^{tC_p}) \cong \hat H^{-*}(C_p; \bF_p)$ as a
graded commutative $\bF_p$-algebra.
\begin{definition}\label{dfn:tate-filtration-rplus}
  For a graded $\bF_p$-vector space $M_*$, let
  $$
  r_+(M_*) = H_*^c(S^{tC_p})\otimes M_*\,,
  $$
  and define the \emph{Tate filtration} of $r_+(M_*)$ to be the
  ascending Hausdorff filtration given by the span
  \begin{equation*}
    F_n r_+(M_*) = \< u^it^r \otimes x
    \mid \text{$-i-2r -|x|(p-1) \leq n$, $x \in M_*$} \>
  \end{equation*}
  for each $n\in \bZ$.

  The \emph{homological Singer construction} on $M_*$ is the
  completion
  $$
  R_+(M_*) = r_+(M_*)^\wedge
  $$
  of $r_+(M_*)$ with respect to the Tate filtration.
\end{definition}
Note that $r_+(H_*(B))$ is the domain of the homomorphism
$\omega_B$ defined in
Subsection~\ref{sec:multiplicative-structure-of-rplus}.

The following lemma is elementary, and applies, in particular, for
$M = \bF_p$.
\begin{lemma}\label{lemma:rbb-rplus}
  If $M_*$ is bounded above, then the Tate filtration on $r_+(M_*)$ is
  discrete in each degree and $r_+(M_*) = R_+(M_*)$.

  If $M_*$ is bounded below, then the Tate filtration is relatively bounded
  below.
  \qed
\end{lemma}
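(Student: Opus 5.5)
We will prove both assertions directly from Definition~\ref{dfn:tate-filtration-rplus}, working one degree at a time. In total degree $q$, the space $r_+(M_*)_q$ is spanned by the elements $u^it^r\otimes x$ with $x\in M_m$ and $-i-2r+m=q$. Such an element lies in filtration
\[
\Fil(u^it^r\otimes x) = -i-2r-m(p-1) = q - mp .
\]
This identity drives everything: in a fixed degree~$q$, the filtration of a basis element is an affine function of the internal degree~$m$ of its $M_*$-factor.

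For the first claim, suppose $M_m=0$ for $m>N$. Then in degree $q$ every basis element has filtration at least $q-Np$. Hence $F_n r_+(M_*)_q=0$ whenever $n<q-Np$, so the filtration is Hausdorff and discrete in each degree. Completion is computed degreewise, and the inverse system $r_+(M_*)_q/F_n r_+(M_*)_q$ is eventually constant as $n\to-\infty$, equal to $r_+(M_*)_q$. So the completion map $c$ is an isomorphism and $R_+(M_*)=r_+(M_*)$. For $M_*=\bF_p$, concentrated in degree $0$, this gives $F_n$ equal to the part of degree $\le n$, which matches Proposition~\ref{prop:continuous-homology-of-s-tcp}.

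For the second claim, suppose $M_m=0$ for $m<k$. In degree $q$ every basis element has $m\ge k$, so its filtration satisfies $q-mp\le q-kp$. Therefore $F_n r_+(M_*)_q=r_+(M_*)_q$ as soon as $n\ge q-kp$. Put differently, $r_+(M_*)/F_n r_+(M_*)$ vanishes in degrees $q\le n+kp$, so it is $(n+kp)$-connected. This is exactly the rbb condition with constant $kp$.

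The only point needing care is that $F_n$ is defined as a span of homogeneous basis elements, so filtration membership can be checked termwise on a basis of $M_*$. With that observed, there is no real obstacle; the argument is bookkeeping with the formula $\Fil=q-mp$.
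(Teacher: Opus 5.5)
Your argument is correct and is the elementary degree count the paper leaves to the reader: in total degree $q$ one has $\Fil(u^it^r\otimes x) = q - p\,|x|$, so $F_n r_+(M_*)_q = 0$ for $n < q-Np$ when $M_*$ vanishes above degree $N$, and $F_n r_+(M_*)_q = r_+(M_*)_q$ for $n \ge q - kp$ when $M_*$ is $k$-connective. These give discreteness (hence the completion map is an isomorphism) and the rbb condition with constant $kp$, respectively, and the same formula covers $p=2$ once $u^it^r$ is read as $u^{i+2r}$.
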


\begin{proposition}
  \label{prop:omega-B}
  Let $B$ be a spectrum such that $B/p$ is bounded below.
  The homomorphism $\omega_B\: r_+(H_*(B))\to H_*^c(R_+(B))$ is
  injective and strictly filtration-preserving.  The completion
  $$
  \omega^\wedge_B \: R_+(M_*) \overset{\cong}\longto
  H^c_*(R_+(B))
  $$
  of $\omega_B$ with respect to the Tate filtration is an isomorphism.

  For each $x\in H_q(B)$, the class
  $u^it^r\otimes x\in r_+(H_*(B))$ is mapped by $\omega_B$ to a
  class detected by a non-zero multiple of
  $u^it^{r+mq}\otimes x^{\otimes p}$ in the $C_p$-Tate spectral
  sequence converging strongly to $H_*^c(R_+(B))$.
  Here $m = (p-1)/2$ for $p$ odd, and $t^{r+mq}$ is to be
  interpreted as~$u^{2r+q}$ for $p=2$.
\end{proposition}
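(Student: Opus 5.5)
The detection claim is the heart of the statement; injectivity, strict filtration preservation and the isomorphism after completion follow from it formally. I would begin with the universal case $B = S^q$ and then transport to general $B$ by naturality. By Definition~\ref{dfn:omega}, $\omega_B(u^it^r\otimes x) = R^H_+(\bar x)_*\bigl((\bar\epsilon^H_{S^q})_*(u^it^r\otimes\iota_q)\bigr)$. The map \eqref{eq:x-power-p} sends $u^it^{r'}\otimes\iota_q^{\otimes p}$ to $u^it^{r'}\otimes x^{\otimes p}$. It therefore suffices to show that $(\bar\epsilon^H_{S^q})_*(u^it^r\otimes\iota_q)$ is detected by a unit times $u^it^{r+mq}\otimes\iota_q^{\otimes p}$ in the collapsing Tate spectral sequence for $(S^q)^{\wedge p}$.

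That spectral sequence is concentrated on one row, so $H^c_*(R_+(S^q))\cong\hat H^{-*}(C_p;\bF_p)\cdot\iota_q^{\otimes p}$ is one-dimensional in each degree. The degree of $u^it^r\otimes\iota_q$ is $-i-2r+q$, and the class $u^it^{r+mq}\otimes\iota_q^{\otimes p}$ has degree $-i-2r-2mq+pq$, which is the same. The content of the claim is therefore nonvanishing. The map $\bar\epsilon^H_{S^q}$ is $H^{tC_p}$-linear, and by Proposition~\ref{prop:diag-q} with $B=S$ it is an equivalence $H^{tC_p}\wedge_H(H\wedge S^q)\simeq R^H_+(H\wedge S^q)$. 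Hence $(\bar\epsilon^H_{S^q})_*$ is an isomorphism of $H^c_*(S^{tC_p})$-modules, and every $u^it^r\otimes\iota_q$ maps to a nonzero class. Since $\hat E^\infty$ has a single class in each degree, that image is detected by a unit multiple of the class named above. Module linearity pins the unit: the image of $1\otimes\iota_q$ is some $c\,t^{mq}\otimes\iota_q^{\otimes p}$, and the images of the other basis elements are $u^it^r$ times it. For $p=2$ the same argument works with $t^{r+mq}$ read as $u^{2r+q}$.

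For general $B$ I would choose a homogeneous basis $\sB$ of $H_*(B)$. Using strong convergence from Proposition~\ref{prop:singer-tate-filtration}, the classes $u^it^{r+mq}\otimes x^{\otimes p}$ with $x\in\sB$ are linearly independent in $\hat E^\infty=\hat E^2$. The Tate filtration of such a class is $-i-2r-2mq = -i-2r-|x|(p-1)$, which is exactly $\Fil(u^it^r\otimes x)$ in Definition~\ref{dfn:tate-filtration-rplus}. Consequently $\omega_B$ carries each basis element of filtration exactly $n$ to an element of filtration exactly $n$ with linearly independent leading terms. This gives that $\omega_B$ is strictly filtration preserving, injective, and induces injections $F_n/F_{n-1}\to F_nH^c_*/F_{n-1}H^c_*$.

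The induced maps on filtration quotients are also surjective. The terms $\hat E^\infty_{-n,*}$ are spanned by $u^it^{r'}\otimes x^{\otimes p}$, and every such class is hit: the degree constraint $r' = r+m|x|$ can always be solved for $r$, and for $p$ odd $m|x|$ is an integer. Lemma~\ref{lemma:thin-layers} then gives $\omega_B^\wedge\colon R_+(H_*(B))\cong H^c_*(R_+(B))^\wedge$. Completeness and the Hausdorff property from Proposition~\ref{prop:singer-tate-filtration} identify the target with $H^c_*(R_+(B))$, and Lemma~\ref{lemma:rbb-rplus} supplies the rbb hypothesis. I expect the main obstacle to be the universal-case step. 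It requires care in checking that the image of $\omega_{S^q}$ is not merely nonzero but lands in the correct filtration, rather than being detected only up to higher-filtration error. Since each degree is one-dimensional this amounts to bookkeeping, but for $q<0$ it relies on Proposition~\ref{prop:diag-q}.
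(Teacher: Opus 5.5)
Your proposal is correct and follows essentially the same route as the paper's proof. You treat the universal case $B=S^q$ via Proposition~\ref{prop:diag-q} and the fact that each degree of $H^c_*(R_+(S^q))$ is one-dimensional, transport to general $B$ by naturality along $R^H_+(\bar x)$, and conclude from the associated graded isomorphism and strong convergence. Your extra details are the linear independence of leading terms, surjectivity on layers via Lemma~\ref{lemma:thin-layers}, and the remark that module linearity fixes the unit uniformly in $i$ and $r$; these spell out what the paper states more briefly.
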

\begin{proof}
  The homological $C_p$-Tate spectral sequence
  \begin{equation}
    \label{eq:homtatess}
    \hat E^{\infty} = \hat E^{2} = \hat H^{-*}(C_p; H_*(B)^{\otimes p}) \Longrightarrow
    H_*^c(R_+(B))
  \end{equation}
  collapses and converges strongly by Proposition~\ref{prop:singer-tate-filtration},
  using the assumption that $B/p$ is bounded below.  By
  Proposition~\ref{prop:diag-q}, the induced homomorphism
  $$
  (\bar\epsilon_{S^q}^H)_*\: H_*^c(S^{tC_p})\otimes H_*(S^q)
  \longto H_*^c(R_+(S^q))
  $$
  is an isomorphism, and since the homological Tate spectral sequence
  converging to $H_*^c(R_+(S^q))$ is concentrated in
  bidegrees $(*,pq)$, the class
  $(\bar\epsilon^H_{S^q})_*(u^it^r\otimes \iota_q)$ must be detected by
  some non-zero multiple of $u^it^{r+mq}\otimes\iota_q^{\otimes p}$.
  It follows that
  $\omega_B(u^it^r\otimes x) = R_+^H(\bar x)_*
  (\bar\epsilon^H_{S^q})_*(u^it^r\otimes \iota_q)$ is detected by the same non-zero
  multiple of $u^it^{r+mq}\otimes x^{\otimes p}$ in the $C_p$-Tate
  spectral sequence.  This shows the last part of the proposition.

  The Tate filtration on~$r_+(M_*)$ is so defined that $\omega_B$ maps
  it to the Tate filtration on~$H^c_*(R_+(B))$, specified in
  Subsection~\ref{sec:greenlees-may-filtration}.  Moreover, $\omega_B$
  induces an isomorphism of associated graded vector spaces.
  Since~\eqref{eq:homtatess} converges strongly, it follows that
  $\omega_B$ induces an isomorphism after completion.
\end{proof}

\subsection{The right action of the Steenrod algebra on $H_*^c(R_+(B))$}
\label{sec:right-action-of-A-on-the-homological-singer-construction}
For any spectrum~$B$, recall that
$$
\Delta \: \Sigma D_{C_p}(B) \longto D_{C_p}(\Sigma B)
$$
is the map induced by the diagonal embedding
$S^1\to (S^1)^{\wedge p}$.  Its effect in mod~$p$ homology is
described in the proof of \cite{BMMS86}*{Lem.~II.5.6}, using a chain
level computation from \cite{May70}*{p.~166--167}.  The latter
computation yields the formula
\begin{equation}
  \label{eq:delta-in-homology}
  \Delta_* \Sigma (e_{j}\otimes x^{\otimes p}) =
  (-1)^{j+1}\alpha(q)\cdot e_{j-(p-1)}\otimes (\Sigma x)^{\otimes p}
\end{equation}
for $x \in H_q(B)$. As before, $\alpha(q) = - (-1)^{mq} \cdot m!$ is a
unit mod~$p$, which is to be interpreted as~$1$ when $p=2$.  Note that
the formula for $\Delta_*$ appearing in \cite{BMMS86}*{p.~47} is
mistakenly stated to hold for $x \in H_{q-1}(B)$, and therefore
implicitly differs by a sign~$(-1)^m$
from~\eqref{eq:delta-in-homology}.  Our formula is the one that
follows from~\cite{May70}.

For each $q\geq 0$, let
\begin{equation}
  \label{eq:delta-orbits}
  \Delta^q \: \Sigma^q D_{C_p}(B) \longto D_{C_p}(\Sigma^q B)
\end{equation}
be the $q$-fold iteration of $\Delta$, i.e., $\Delta^0 = 1$ and
$\Delta^q = \Delta\circ \Sigma\Delta^{q-1}$ for $q>0$.

\begin{lemma}\label{lemma:delta-maps-homology}
  Let $q\geq 0$.  For $B=S$, the homomorphism
  in mod~$p$ homology induced by \eqref{eq:delta-orbits} is given by
  $$
  \Delta^q_* \Sigma^q (e_{j}\otimes 1^{\otimes p}) =
  c_{q,j}\cdot e_{j-q(p-1)}\otimes \iota_q^{\otimes p}\,,
  $$
  where $\iota_q = \Sigma^q 1\in H_q(S^q)$ is the fundamental class, and
  \begin{equation}
    \label{eq:coefficient}
    c_{q,j} =
    \begin{cases}
      1 & \text{for $q \equiv 0$ mod $4$} \\
      (-1)^{j}m! & \text{for $q \equiv 1$ mod $4$}\\
      -1 & \text{for $q \equiv 2$ mod $4$}\\
      -(-1)^{j}m! & \text{for $q \equiv 3$ mod $4$} \,.
    \end{cases}
  \end{equation}
  In each case, $c_{q,j}$ is to be interpreted as~$1$ for $p=2$.
\end{lemma}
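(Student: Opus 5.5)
We argue by induction on $q$, using $\Delta^q = \Delta\circ\Sigma\Delta^{q-1}$ together with the one-step formula~\eqref{eq:delta-in-homology}. The case $q=0$ is trivial, with $c_{0,j}=1$. For the inductive step, suppose
\[
\Delta^{q-1}_*\Sigma^{q-1}(e_j\otimes 1^{\otimes p}) = c_{q-1,j}\, e_{j-(q-1)(p-1)}\otimes\iota_{q-1}^{\otimes p}.
\]
Suspending once and applying $\Delta_*$ with $x=\iota_{q-1}\in H_{q-1}(S^{q-1})$ and $j' = j-(q-1)(p-1)$, formula~\eqref{eq:delta-in-homology} gives
\[
\Delta^q_*\Sigma^q(e_j\otimes 1^{\otimes p}) = c_{q-1,j}\,(-1)^{j'+1}\alpha(q-1)\, e_{j-q(p-1)}\otimes(\Sigma\iota_{q-1})^{\otimes p},
\]
using $\Sigma\iota_{q-1}=\iota_q$. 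Here we need $\Sigma\Delta^{q-1}_*$ to act on $\Sigma(\text{class})$ without an extra sign, which holds since suspension is a left suspension applied to a degree-zero map.

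This yields the recursion
\[
c_{q,j} = c_{q-1,j}\cdot(-1)^{j-(q-1)(p-1)+1}\,\alpha(q-1).
\]
Since $p-1$ is even for odd $p$, the sign simplifies to $(-1)^{j+1}$. Substituting $\alpha(q-1) = -(-1)^{m(q-1)}m!$ then gives
\[
c_{q,j} = c_{q-1,j}\,(-1)^{j}(-1)^{m(q-1)}m!.
\]

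Now multiply out the factors. For consecutive steps we use the classical congruence $(m!)^2 \equiv (-1)^{m+1} \pmod p$, which follows from Wilson's theorem. The parity $j$ appears in each factor, so two consecutive factors have product $(-1)^{m(2q-3)}(m!)^2 = (-1)^m\cdot(-1)^{m+1} = -1$. Hence $c_{q,j} = -c_{q-2,j}$, which gives the $4$-periodicity. The base values are $c_{0,j}=1$ and $c_{1,j} = (-1)^j m!$ (the latter using $(-1)^{m\cdot 0}=1$). These yield the table in~\eqref{eq:coefficient}: $c_{2,j}=-1$ and $c_{3,j}=-(-1)^j m!$. For $p=2$ all signs and units are trivial mod~$2$, so $c_{q,j}=1$.

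The main point of care is the sign bookkeeping: checking that $\Sigma\Delta^{q-1}$ introduces no Koszul sign on homology, and that~\eqref{eq:delta-in-homology} is applied with $x$ in degree $q-1$, which is exactly the correction to \cite{BMMS86} noted above. Also, since $e_{j'}$ with $j'<0$ is $0$, the formula holds trivially in that range.
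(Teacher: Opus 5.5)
Your proposal is correct and follows the paper's proof. Both iterate \eqref{eq:delta-in-homology}; the paper writes the closed product $(-1)^{q(j+1)}\alpha(0)\cdots\alpha(q-1)$, while you phrase the same computation as a recursion, then pair consecutive factors via Wilson's theorem to get $c_{q+2,j}=-c_{q,j}$ and read off the table from $c_{0,j}$ and $c_{1,j}$.
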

\begin{proof}
  By definition, $\Delta^0$ is the identity map, which implies the
  lemma in the case of $q=0$.

  For $q\geq 1$, we iterate~\eqref{eq:delta-in-homology} $q$ times,
  starting with $B = S$ and $x = 1 \in H_0(S)$, to obtain the formula
  \begin{equation}
  \Delta^q_* \Sigma^q (e_{j}\otimes 1^{\otimes p}) =\\
  (-1)^{q(j+1)}\alpha(0)\alpha(1)\cdots\alpha(q-1)\cdot e_{j-q(p-1)}\otimes \iota_q^{\otimes p}\,.
  \end{equation}
  We note that $\alpha(r)$ is two-periodic in~$r$, and that
  \begin{equation*}
    % \label{eq:alpha-alpha}
    \alpha(r)\alpha(r+1) = (-1)^m (m!)^2 \equiv (p-1)!
    \equiv -1~~~\text{mod $p$}\,.
  \end{equation*}
  Here, the middle congruence uses that $r~\equiv~-(p-r)$ mod~$p$, and
  the last congruence is provided by Wilson's theorem.  It follows
  that $c_{q,j} = (-1)^{q(j+1)}\alpha(0)\cdots\alpha(q-1)$ satisfies
  $c_{q+2,j}\equiv -c_{q,j}$ and is four-periodic in~$q$.  The lemma
  then follows for $q\geq 1$ since
  $c_{1,j}= (-1)^{j+1}\alpha(0) = (-1)^{j}m!$, and
  $c_{2,j}\equiv (-1)^{2(j+1)}\alpha(0)\alpha(1) \equiv -1$.
\end{proof}

When $n=1$, we identify the projection $R_+(B) \to R_+(B)[n]$ onto the
$n$-th Tate truncation with
$\partial(B)\: R_+(B) \to \Sigma D_{C_p}B$, the connecting map of the
Puppe sequence associated with the norm-restriction cofiber sequence
$$
(B^{\wedge p})_{hC_p}
\overset{N^h}\longto (B^{\wedge p})^{hC_p}
\overset{R^h}\longto R_+(B) \,.
$$
In Subsection~\ref{sec:continuous-homology-of-singer} we discussed the
case of $B=S$, and simply wrote $\partial$ for $\partial(S)$.  We now
extend that analysis, assuming first that $B/p$ is connective, and
then generalizing to the case of $B/p$ being bounded below.

\begin{lemma}
  \label{lemma:partial-omega-formula}
  Let $B$ be a spectrum such that $B/p$ is connective. The composite
  homomorphism
  $$
  r_+(H_*(B))
  \overset{\omega_B}\longto
  H_*^c(R_+(B))
  \overset{\partial(B)_*}\longto
  H_*(\Sigma D_{C_p}(B))
  $$
  is given for~$p$ odd by
  \begin{equation}
    \label{eq:partial-omega-formula}
    \partial(B)_*(\omega_B(u^it^r\otimes x)) =
    (-1)^{r+q}c_{q,1-i} \Sigma (e_{-1-i-2r-q(p-1)}\otimes x^{\otimes p})\,,
  \end{equation}
  up to multiplication by a fixed unit.  This fixed unit is the one appearing in Lemma~\ref{lemma:ut-images},
  and does not depend on $i$, $r$, $x$ or~$B$ in
  \eqref{eq:partial-omega-formula}.

  For $p=2$ the formula is
  $$
  \partial(B)_*(\omega_B(u^r \otimes x))
  = \Sigma(e_{-1-r-q} \otimes x^{\otimes 2}) \,.
  $$
\end{lemma}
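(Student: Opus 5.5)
By naturality, I would reduce the claim to the universal case $B=S^q$ and then compute there using suspension compatibility.

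\emph{Step 1: reduction to $B=S^q$.} By Definition~\ref{dfn:omega}, $\omega_B(u^it^r\otimes x)=R^H_+(\bar x)_*\,\omega_{S^q}(u^it^r\otimes\iota_q)$ for $x\in H_q(B)$ represented by $x\colon S^q\to H\wedge B$. The connecting map $\partial$ comes from the natural norm--restriction cofiber sequence, so it is natural for $H$-module maps. Hence $\partial(B)_*R^H_+(\bar x)_*=D_{C_p}(\bar x)_*\,\partial(S^q)_*$. On homology of extended powers, $D_{C_p}(\bar x)_*$ sends $e_j\otimes\iota_q^{\otimes p}$ to $e_j\otimes x^{\otimes p}$. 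Since $B/p$ is connective, we may take $q\ge0$, and it remains to prove the formula for $B=S^q$ and $x=\iota_q$.

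\emph{Step 2: the case $B=S^q$ via the diagonal tower.} By Lemma~\ref{lemma:omega-suspensions} with $B=S$, the class $\omega_{S^q}(u^it^r\otimes\iota_q)$ is the image of $u^it^r\otimes 1$ under the equivalence of Proposition~\ref{prop:diag-q}. For $q\ge0$, Lemma~\ref{lemma:delta-epsilon} identifies that equivalence with $\Delta^H_{S^q}$. I would then check that under the equivalences~\eqref{eq:tate-extended-power} the map $\Delta^H_{S^q}$ is compatible with the iterated diagonal $\Delta^q\colon \Sigma^qD_{C_p}(S)\to D_{C_p}(S^q)$, after the appropriate truncations. The key square is
\[
\partial(S^q)\circ\Delta_{S^q}\simeq \Sigma\Delta^q\circ\Sigma^q\partial(S).
\]
It should follow because both maps are induced by the diagonal $S^q\to(S^q)^{\wedge p}$ applied to the norm cofiber sequence, and the homotopy orbits term is natural in this diagonal. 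Given this square, Lemma~\ref{lemma:ut-images} gives $\partial_*(u^it^r)=(-1)^r\Sigma e_{-1-i-2r}$ up to the fixed unit. Then Lemma~\ref{lemma:delta-maps-homology} gives $\Delta^q_*\Sigma^q(e_j\otimes 1^{\otimes p})=c_{q,j}\,e_{j-q(p-1)}\otimes\iota_q^{\otimes p}$ with $j=-1-i-2r$. Since $c_{q,j}$ depends on $j$ only through its parity, $c_{q,-1-i-2r}=c_{q,1-i}$, and the index comes out as $-1-i-2r-q(p-1)$.

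\emph{Step 3: signs.} The extra factor $(-1)^q$ should come from commuting $\Sigma^q$ past the one-dimensional suspension in $\Sigma D_{C_p}$. That is, identifying $\Sigma\Sigma^q$ with $\Sigma^q\Sigma$ on the element $\Sigma(e_j\otimes 1)$ should contribute $(-1)^q$, or a convention-dependent equivalent that I would fix once. For $p=2$ all units are $1$ and signs are irrelevant, so $u^r\mapsto\Sigma e_{-1-r}$ followed by the shift by $q$ gives $\Sigma(e_{-1-r-q}\otimes x^{\otimes 2})$. When $e_j$ has negative index both sides vanish, which is consistent with the convention $e_j=0$ for $j<0$.

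The main obstacle is the compatibility square in Step~2 between $\Delta^H_{S^q}$ on Tate constructions and Miller's diagonal $\Delta^q$ on extended powers under~\eqref{eq:tate-extended-power}, with its attendant sign. I would first try to verify it at the level of the $C_p$-spectra $\widetilde{EC}_p\wedge F(EC_{p+},-)$, using the cofiber sequence $EC_{p+}\to S^0\to\widetilde{EC}_p$ and the naturality of the Adams isomorphism in the coefficient diagonal map.
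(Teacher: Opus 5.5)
Your proposal is correct and follows essentially the paper's proof. You reduce by naturality along $\bar x$ to $B=S^q$ with $q\ge 0$, identify $\bar\epsilon^H_{S^q}$ with $\Delta^H_{S^q}$ via Lemma~\ref{lemma:delta-epsilon}, and pass to the first Tate truncation, where $\Delta^H_{S^q}$ induces the twist $S^1\wedge S^q\cong S^q\wedge S^1$ (the source of $(-1)^q$) followed by $\Sigma\Delta^q$. You then combine Lemmas~\ref{lemma:ut-images} and~\ref{lemma:delta-maps-homology} with the parity dependence of $c_{q,j}$.

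The compatibility square you flag as the main obstacle is handled in the paper simply by noting that $\Delta^H_{S^q}$, as defined in~\eqref{eq:delta-finite-complex}, respects the Tate filtration. It therefore descends to the truncation $[1]\simeq\Sigma D_{C_p}$.
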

\begin{proof}
  Let $x\in H_q(B)$ be represented by $x\: S^q\to H\wedge B$,
  for~$q\geq 0$, and consider the following commutative diagram:
  \begin{equation}
    \label{eq:partial-naturality}
    \begin{aligned}
      \xymatrix@C17mm{
      H_*^c(R_+(S))\otimes H_*(B)
        \ar[r]^-{\omega_B}
      & H_*^c(R_+(B))
         \ar@(r, u)[rdd]^{\partial(B)_*} \\
      H_*^c(R_+(S))\otimes H_*(S^q)
      \ar[u]_{1\otimes \bar x_*}
      \ar[r]^-{(\bar\epsilon_{S^q}^H)_* = (\Delta^H_{S^q})_*}
      \ar[d]^{\partial(S)_*\otimes 1}
      & H_*^c(R_+(S^q))
         \ar[u]_{R_+^H(\bar x)_*}
         \ar[d]^{\partial(S^q)_*}
      \\
      H_*(\Sigma D_{C_p}(S))\otimes H_*(S^q)
      \ar[r]^-{\Delta^H_{S^q}[1]_* }
      & H_*(\Sigma D_{C_p}(S^q))
         \ar[r]^-{\Sigma D^H_{C_p}(\bar x)_*}
      & H_*(\Sigma D_{C_p}(B))  \,.
        }
    \end{aligned}
  \end{equation}
  Here, the top square equals the diagram of Definition~\ref{dfn:omega}.
  Moreover, since~$q$ is non-negative, Lemma~\ref{lemma:delta-epsilon}
  applies to the case of $K=S^q$ and $B=S$ and implies that
  $\bar\epsilon_{S^q}^H\simeq \Delta^H_{S^q}$.  The latter map was
  explicitly defined by~\eqref{eq:delta-finite-complex} and respects
  the Tate filtration, thus producing a well-defined map
  $\Delta^H_{S^q}[1]\: (\Sigma D_{C_p}(S))\wedge S^q\to \Sigma
  D_{C_p}(S^q)$ making the lower square
  of~\eqref{eq:partial-naturality} commute.

  Explicitly, $\Delta^H_{S^q}[1]$ is the composite map
  $$
  (\Sigma D_{C_p}(S)) \wedge S^q =
  D_{C_p}(S) \wedge S^1\wedge S^q
  \overset{(23)}\longto
  D_{C_p}(S) \wedge S^q\wedge S^1
  \overset{\Sigma \Delta^q}\longto D_{C_p}(S^q) \wedge S^1 \,,
  $$
  where $\Delta^q$ is the map \eqref{eq:delta-orbits} induced by the
  diagonal embedding $S^q\to (S^q)^{\wedge p}$.  By Lemma
  \ref{lemma:delta-maps-homology} we know the effect of $\Delta^q$ in
  mod~$p$ homology, and we get
  \begin{equation}
    \label{eq:twist-diagonal}
    \Delta^H_{S^q}[1]_*((\Sigma e_{j})\otimes \iota_q)
    = (-1)^{q}c_{q,j}  \Sigma (e_{j-q(p-1)}\otimes \iota_q^{\otimes p})\,.
  \end{equation}

  By Lemma \ref{lemma:ut-images}, the homomorphism $\partial(S)_*$
  is given by the formula
  \begin{equation}
    \label{eq:partial-normalized}
    \partial(S)_*(u^it^r) = (-1)^r\Sigma e_{-1-i-2r}\,,
  \end{equation}
  up to multiplication by a fixed unit.  By the commutativity of
  \eqref{eq:partial-naturality}, together with
  \eqref{eq:twist-diagonal} and \eqref{eq:partial-normalized}, we
  deduce that
  \begin{equation}
    \partial(B)_*(\omega_B(u^it^r\otimes x)) =
    (-1)^{r+q}c_{q,1-i} \Sigma (e_{-1-i-2r-q(p-1)}\otimes x^{\otimes p})\,,
  \end{equation}
  up to multiplication by that fixed unit.  Here we
  have used that the coefficient $c_{q,j}$ is two-periodic in~$j$ to
  make the simplification $c_{q,-1-i-2r} = c_{q,1-i}$.
\end{proof}

For each $n\geq 0$, let
$h(n)\: H_*(\Sigma D_{C_p}(B))\to H_*(\Sigma D_{C_p}(B))$ be the
degree~$-2p^n$ homomorphism mapping $\Sigma(e_j\otimes x^{\otimes p})$ to
$-\Sigma(e_{j-2p^n}\otimes x^{\otimes p})$.
\begin{lemma}\label{lemma:singer-coordinates}
  Let $B$ be a spectrum such that $B/p$ is connective.  For each
  non-negative integer $n$, the homomorphism $h(n)$ is $\sA(n)$-linear
  and there is a natural, injective and $\sA(n)$-linear homomorphism
  $$
  \phi(n)\: H_*^c(R_+(B)) \longto \lim_{h(n)} H_*(\Sigma D_{C_p}(B))\,.
  $$
  For each $\ell\geq 0$, let
  $\phi(n,\ell)\: H_*^c(R_+(B))\to H_*(\Sigma D_{C_p}(B))$ be the
  composition of $\phi(n)$ followed by the projection to the $\ell$-th
  stage of the limit system.  Then $\phi(n,\ell)$ maps
  $\omega_B(u^it^r\otimes x) \in H_*^c(R_+(B))$ to
  \begin{equation}
    \label{eq:phin-projection-formula}
    (-1)^{r+q+\ell} c_{q,1-i} \Sigma(e_{-1-i-2r-q(p-1)+2\ell p^n}\otimes
    x^{\otimes p})
    \in H_*(\Sigma D_{C_p}(B))
  \end{equation}
  for $p$ odd.  When $p=2$, $\phi(n, \ell)$ maps
  $\omega_B(u^r \otimes x) \in H^c_*(R_+(B))$ to
  $$
  \Sigma(e_{-1-r-q+2^{n+1}\ell} \otimes x^{\otimes 2})
  \in H_*(\Sigma D_{C_2}(B)) \,.
  $$
  As before, $e_j=0$ when~$j$ is negative.
\end{lemma}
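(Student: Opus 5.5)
We construct $\phi(n)$ from the Tate tower, using multiplication by $t^{p^n}$ to realise the limit system. By Lemma~\ref{lemma:singer-an-primitives}, multiplication by $t^{-p^n}$ (for $p$ odd, and by $u^{-2^{n+1}}$ for $p=2$) is an $\sA(n)$-linear automorphism of $H^c_*(S^{tC_p})$. Since $H^c_*(R_+(B))$ is a module over $H^c_*(S^{tC_p})$ via the lax monoidal pairing, and this pairing is $\sA$-linear by the Cartan formula in continuous homology, multiplication by $t^{-p^n}$ is also an $\sA(n)$-linear automorphism of $H^c_*(R_+(B))$. For each $\ell\ge0$ we then set
\[
\phi(n,\ell)(y) = \partial(B)_*\bigl(t^{\ell p^n}\cdot y\bigr),
\]
up to the sign normalisation $(-1)^\ell$ that makes these compatible with $h(n)$. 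Each $\phi(n,\ell)$ is $\sA(n)$-linear, as a composite of $\sA(n)$-linear maps, since $\partial(B)_*$ is $\sA$-linear.

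Next we verify the formula~\eqref{eq:phin-projection-formula} and the compatibility $h(n)\circ\phi(n,\ell+1)=\phi(n,\ell)$. Because $\omega_B$ is $H^c_*(S^{tC_p})$-linear (Lemma~\ref{lemma:omega-is-monoidal}), we have $t^{\ell p^n}\omega_B(u^it^r\otimes x)=\omega_B(u^it^{r+\ell p^n}\otimes x)$. Lemma~\ref{lemma:partial-omega-formula} then gives $(-1)^{r+\ell p^n+q}c_{q,1-i}\Sigma(e_{-1-i-2r-2\ell p^n-q(p-1)}\otimes x^{\otimes p})$. This has the wrong sign of $\ell$ in the index, so I would instead use multiplication by $t^{-\ell p^n}$. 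That yields index $+2\ell p^n$ and sign $(-1)^{r+q+\ell}$, since $p^n$ is odd, which matches the claimed formula. Compatibility with $h(n)$ is then immediate: $h(n)$ lowers $e$-index by $2p^n$ and flips the sign. This holds on the dense image of $\omega_B$. Since everything is filtration-preserving and $H_*(\Sigma D_{C_p}B)$ is discretely filtered in the relevant sense, it extends to all of $H^c_*(R_+(B))=R_+(H_*B)^\wedge$ by Proposition~\ref{prop:omega-B}.

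The $\sA(n)$-linearity of $h(n)$ follows by transport. On classes in the image of $\phi(n,\ell+1)$, $h(n)$ agrees with the composite $\partial(B)_*\circ(t^{p^n}\cdot)\circ\partial(B)_*^{-1}$. Away from those classes, we check it directly from the Nishida formulas~\eqref{eq:ext-power-steenrod-ops}--\eqref{eq:ext-power-bockstein} for $s<p^n$. Shifting $j$ by $2p^n$ preserves the parities of $j$, $[j/2]$ and $[(j+1)/2]$, and changes $[j/2]$ by $p^n$. By Lucas' theorem, $\binom{a+p^n}{b}\equiv\binom{a}{b}$ mod $p$ whenever $b<p^n$. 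The extra sign cancels on both sides.

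Naturality in $B$ follows because $\omega_B$, $\partial(B)$ and the module structure are all natural. Finally, injectivity: if $y\neq0$, its Tate-filtration leading term in $\hat E^\infty$ is detected after multiplication by $t^{-\ell p^n}$ for large $\ell$ in positive filtration, where $\partial(B)_*$ is injective on the relevant layer.

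The main obstacle is to make this injectivity step rigorous using completeness of the Tate filtration, and to track the signs $c_{q,1-i}$ carefully. For $p=2$ the same argument goes through with $u^{2^{n+1}}$ in place of $t^{p^n}$, and no signs appear.
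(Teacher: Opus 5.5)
Your proposal is correct and follows essentially the paper's proof. You define $\phi(n,\ell)=\partial(B)_*\circ(t^{-\ell p^n}\cdot -)$, check compatibility with $h(n)$ on $\omega_B(r_+(H_*(B)))$ via Lemma~\ref{lemma:partial-omega-formula} and extend by completeness (Proposition~\ref{prop:omega-B}), and get $\sA(n)$-linearity from Lemma~\ref{lemma:singer-an-primitives} together with Lucas' theorem applied to the Nishida formulas; your ``transport'' step is superfluous.

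Two small corrections:
\begin{itemize}
\item No extra $(-1)^\ell$ normalisation is wanted. For $p$ odd, $(-1)^{\ell p^n}=(-1)^\ell$ already supplies that sign, and the only normalisation needed is dividing out the fixed unit from Lemma~\ref{lemma:partial-omega-formula}.
\item Injectivity is not an obstacle. Since $\ker\partial(B)_*=F_0H^c_*(R_+(B))$, the Tate filtration is Hausdorff and exhaustive, and multiplication by $t^{-p^n}$ raises Tate filtration by exactly $2p^n$, every nonzero class eventually leaves $F_0$. So only Hausdorffness, not completeness, is used there.
\end{itemize}
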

\begin{proof}
  For a fixed~$n\geq 0$, consider the diagram
  \begin{equation}
    \label{eq:omega-partial}
    \begin{aligned}
      \xymatrix@C18mm{
      r_+(H_*(B)) \ar[r]^{\omega_B}
      \ar[d]^{\cdot t^{p^{n}}}_\cong
      &
        H_*^c(R_+(B))
        \ar[r]^{\partial(B)_*}
       \ar[d]^{\cdot t^{p^{n}}}_\cong
      & H_*(\Sigma D_{C_p}(B))
        \ar[d]^{h(n)}
      \\
      r_+(H_*(B)) \ar[r]^{\omega_B}
      & H_*^c(R_+(B))
        \ar[r]^{\partial(B)_*}
      & H_*(\Sigma D_{C_p}(B))
        \,.
        }
    \end{aligned}
  \end{equation}
  It follows from Lemma~\ref{lemma:partial-omega-formula} that the
  outer rectangle commutes, and the left-hand square commutes since
  $\omega_B$, by construction, is a morphism of
  $H_*^c(R_+(S))$-modules.  The discrete filtration of
  $H_*(\Sigma D_{C_p}(B))$ is complete and Hausdorff, and the
  homomorphism $\partial(B)_*$ is a morphism of filtered graded
  $\bF_p$-vector spaces.  Moreover, the Tate filtration of
  $H_*^c(R_+(B))$ is also complete and Hausdorff by
  Proposition~\ref{prop:singer-tate-filtration}.  Passing to the
  completion, $\omega_B$ becomes an isomorphism by
  Proposition~\ref{prop:omega-B}, and we conclude that the right-hand
  square of~\eqref{eq:omega-partial} commutes since it is already
  complete.

  It follows that $H^c_*(R_+(B))$ maps to the $\ell$-th stage in the
  tower defined by iterations of $h(n)$, sending
  $\omega_B(u^it^r\otimes x)$ to
  $\partial(B)_*(\omega_B(u^it^{r-\ell p^n}\otimes x))$.  The formula
  \eqref{eq:phin-projection-formula} follows
  from~\eqref{eq:partial-omega-formula} of
  Lemma~\ref{lemma:partial-omega-formula}, up to multiplication by a
  fixed unit.  We instead define $\phi(n)$ by normalizing with respect to this
  ambiguity, so that~\eqref{eq:phin-projection-formula} holds on the
  nose.  The kernel of $\partial(B)_*$ consists of the subspace
  $F_0 H_*^c(R_+(B))$ of elements of Tate filtration~$\leq 0$.  Since
  the Tate filtration of $H_*^c(R_+(B))$ is exhaustive and Hausdorff,
  every non-zero element has a well-defined Tate filtration.  Thus,
  any non-zero element multiplied by $t^{-\ell p^n}$ has positive Tate
  filtration for all sufficiently large~$\ell$, and it follows that
  that $\phi(n)$ is injective.

  Suppose $p$ is odd.  By Lucas' theorem, the homomorphism $h(n)$
  commutes with the action of $\sA(n)$ on $H_*(\Sigma D_{C_p}B)$
  specified by
  \eqref{eq:ext-power-steenrod-ops}--\eqref{eq:ext-power-bockstein}.
  Multiplication by $t^{p^n}$ is an $\sA(n)$-linear automorphism of
  $H_*^c(R_+(S))$ by Lemma~\ref{lemma:singer-an-primitives}, and
  therefore an $\sA(n)$-linear automorphism of $H_*^c(R_+(B))$ via the
  left $H_*^c(R_+(S))$-module structure.  Thus, the right-hand square
  of~\eqref{eq:omega-partial} consists of morphisms of right
  $\sA(n)$-modules, and the limit over $h(n)$ is therefore a
  right $\sA(n)$-module.

  For $p=2$, the argument in the previous paragraph can be repeated
  with $t$ replaced by $u^2$, and using
  \eqref{eq:ext-power-squaringops} instead of
  \eqref{eq:ext-power-steenrod-ops}--\eqref{eq:ext-power-bockstein}.
\end{proof}

Let~$M_*$ be a right $\sA$-module, and let $x\in M_q$.  We now show
how that we can define a right action of the Steenrod algebra on
$r_+(M_*)$ by the following formulas:

\begin{align}\label{eq:steenrod-singer-tr}
  \P^s_*(t^r \otimes x)
  & = \sum_k \binom{-1-r-s(p-1)}{s-pk}
  t^{r+(s-k)(p-1)} \otimes \P^k_*(x) \\
  & \qquad -\sum_k \binom{-1-r-s(p-1)}{s-pk-1} \nonumber
    u t^{-1+r+(s-k)(p-1)} \otimes \P^k_*\beta_*(x)\,, \\
    \label{eq:steenrod-singer-utr}
  \P^s_*(u t^r \otimes x)
  &= \sum_k \binom{-1-r-s(p-1)}{s-pk}
  u t^{r+(s-k)(p-1)} \otimes \P^k_*(x)\\
\intertext{and}
\label{eq:steenrod-singer-beta}
  \beta_*(u^i t^r \otimes x) &=
  \begin{cases}
    0 & \text{for $i=0$} \\
    t^{r+1} \otimes x & \text{for $i=1$}
  \end{cases}\\
\intertext{for $p>2$, and}
\label{eq:steenrod-singer-squares}
  \Sq_*^s(u^r \otimes x) &= \sum_{k} \binom{-1-r-s}{s-2k}
  u^{r+s-k} \otimes \Sq_*^k(x)
\end{align}
for $p=2$.  It is elementary to check that these operations respect
the Tate filtration of $r_+(M_*)$.  We warn the reader that
\eqref{eq:steenrod-singer-squares} for $\Sq^{2s}_*(u^{2r+1}\otimes x)$
is not the formula obtained from \eqref{eq:steenrod-singer-utr} by
letting $p=2$ and replacing $\P^s_*$ by $\Sq^{2s}_*$.

\begin{lemma} \label{lemma:omega-b-is-a-linear} Let $B$ be a spectrum
  such that $B/p$ is connective.  For each $n\geq 0$ and $\ell\geq 0$, the composite
  homomorphism
  $$
  \phi(n, \ell) \circ \omega_B\:
  r_+(H_*(B))
  \longto
  H_*(\Sigma D_{C_p}(B))
  $$
  commutes with the action of~$\sA(n)$.
\end{lemma}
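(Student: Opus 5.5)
The plan is to compare two explicit formulas. Since $\sA(n)$ is generated by $\beta$ and $\P^{p^k}$ for $k<n$ (resp.\ $\Sq^{2^k}$ for $k\le n$ when $p=2$), it suffices to check that $\phi(n,\ell)\circ\omega_B$ commutes with $\beta_*$ and with each $\P^s_*$ for $s<p^n$. It is enough to verify this on basis elements $u^it^r\otimes x$ with $x\in H_q(B)$. I would check, however, that the generators stay within the range of $s$ where the binomial coefficients agree mod~$p$ (see below). By Lemma~\ref{lemma:singer-coordinates}, the composite is given explicitly by
\[
u^it^r\otimes x\mapsto (-1)^{r+q+\ell}c_{q,1-i}\,\Sigma(e_{j}\otimes x^{\otimes p}),\qquad j=-1-i-2r-q(p-1)+2\ell p^n .
\]
The target carries the Nishida action \eqref{eq:ext-power-steenrod-ops}--\eqref{eq:ext-power-bockstein}, twisted by the suspension sign for odd-degree operations. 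So the lemma reduces to an identity between two explicit expressions: first apply \eqref{eq:steenrod-singer-tr}--\eqref{eq:steenrod-singer-beta} and then the formula above, versus first apply the formula above and then the Nishida formulas.

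The Bockstein case is the warm-up. If $i=0$, the left side is $0$. On the right, $j$ has the parity of $1+q$ (since $p-1$ is even, $j\equiv 1$ mod~$2$ when $i=0$), so $j$ is odd and \eqref{eq:ext-power-bockstein} also gives $0$. If $i=1$, $j$ is even and $\beta_*$ lowers $e_j$ to $e_{j-1}$ with sign $(-1)^{q+1}$, which combines with the suspension sign. I then compare the coefficient with that of $t^{r+1}\otimes x$, which carries the factor $(-1)^{r+1+q+\ell}c_{q,0}$. The ratio $c_{q,1}/c_{q,0}=-1$ for odd~$q$, from \eqref{eq:coefficient}, should make the signs match in both parities of~$q$.

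For $\P^s_*$ I take $j=-1-i-2r-q(p-1)+2\ell p^n$. Then $[j/2]+m(q-2s)$ equals $-1-r-s(p-1)+\ell p^n$ when $i=0$, and $[ (j+1)/2]+m(q-2s)-1$ gives the analogous shifted value when $i=1$. For $s<p^n$, Lucas' theorem gives
\[
\binom{-1-r-s(p-1)+\ell p^n}{s-pk}\equiv\binom{-1-r-s(p-1)}{s-pk}\pmod p .
\]
This is exactly the periodicity mechanism behind $h(n)$ being $\sA(n)$-linear. The degree bookkeeping $e_{j-2(s-pk)(p-1)}$ matches $t^{r+(s-k)(p-1)}$ together with $\P^k_*(x)$ in degree $q-2k(p-1)$. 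The second Nishida term, with $\delta(j)\neq0$ for odd~$j$, i.e.\ $i=0$, produces $e_{j+p-2(s-pk)(p-1)}\otimes(\P^k_*\beta_*x)^{\otimes p}$. This should match the $u\,t^{-1+r+(s-k)(p-1)}\otimes\P^k_*\beta_*(x)$ term of \eqref{eq:steenrod-singer-tr}, using $|\P^k_*\beta_* x|=q-1-2k(p-1)$.

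The main obstacle is the sign and unit bookkeeping. On one side the degree of $x$ changes from $q$ to $q-2k(p-1)$ or $q-1-2k(p-1)$, which changes $c_{q,1-i}$ and $(-1)^{q}$. On the other side sit the factors $(-1)^{q+1}\alpha(q)$ in \eqref{eq:ext-power-steenrod-ops}. I would handle this with the identities $\alpha(q)\alpha(q+1)\equiv-1$ and the $4$-periodicity from the proof of Lemma~\ref{lemma:delta-maps-homology}, checking $q$ mod~$4$ case by case, and grinding through the odd $q$ case most carefully. For $p=2$ all units are $1$ and only the binomial comparison \eqref{eq:steenrod-singer-squares} against \eqref{eq:ext-power-squaringops} remains, again via Lucas' theorem for $s\le 2^n$.
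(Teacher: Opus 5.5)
Your strategy is the paper's own proof. You read off $\phi(n,\ell)\circ\omega_B$ from Lemma~\ref{lemma:singer-coordinates}, apply the Nishida formulas on one side and \eqref{eq:steenrod-singer-tr}--\eqref{eq:steenrod-singer-beta} on the other, and remove the $\ell p^n$ shift using $\binom{a+\ell p^n}{b}\equiv\binom{a}{b}$ for $0\le b<p^n$. You then match units with $c_{q+4,j}=c_{q,j}$, $c_{q,1}=(-1)^q c_{q,0}$ and $\alpha(q-1)\alpha(q)\equiv-1$. The last identity gives $c_{q,1}\alpha(q)\equiv(-1)^q c_{q-1,0}$, which is what the $\P^k_*\beta_*$-terms need.

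However, the Bockstein step fails as you have set it up. There is no suspension sign to insert. In the paper's conventions the right action $\beta_*$ commutes with homology suspension; it is the left action $\beta$ that anticommutes with $\Sigma$. So \eqref{eq:ext-power-bockstein} is applied to $\Sigma(e_j\otimes x^{\otimes p})$ as it stands. Also, the image of $t^{r+1}\otimes x$ carries $c_{q,1}$ (it has $i=0$), not $c_{q,0}$. With both corrections, the $i=1$ comparison reads $(-1)^{r+q+\ell}c_{q,0}\cdot(-1)^{q+1}=(-1)^{r+1+q+\ell}c_{q,1}$. This is precisely $c_{q,1}=(-1)^q c_{q,0}$, which holds by \eqref{eq:coefficient}. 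An extra factor $-1$ for the odd-degree operation would instead require $c_{q,1}=-(-1)^q c_{q,0}$, which is false for every $q$.

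There are two smaller points.
\begin{itemize}
\item The top entry $[\frac{j+1}{2}]+m(q-2s)-1$ belongs to the $\delta(j)$-term, which occurs only for $i=0$. For $i=1$ only the first sum appears, and it matches \eqref{eq:steenrod-singer-utr}.
\item You cannot reduce to generators of $\sA(n)$. At this point the operations on $r_+$ are only formula-defined, and the Adem relations are deduced from this lemma afterwards. So you need every $\P^s_*$ with $s<p^n$, as you propose. For $p=2$ you need every $\Sq^s_*$ with $s<2^{n+1}$, not just $s\le 2^n$; the same Lucas argument covers these, since $s-2k<2^{n+1}$.
\end{itemize}
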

\begin{proof}
  Let $p>2$ and $m = (p-1)/2$.  By combining
  \eqref{eq:phin-projection-formula}, \eqref{eq:ext-power-bockstein}
  and the fact that $\beta_*$ commutes with (right) homology suspension, we
  obtain that
  \begin{align}
    \nonumber
    \beta_*(\phi(n,\ell)\circ\omega_B)(u^it^r\otimes x)
    &= \beta_*((-1)^{r+q+\ell}c_{q,1-i} \Sigma (e_{a}\otimes x^{\otimes p}))\\
    &=
      \begin{cases}
        0 & \text{for $i=0$} \\
        (-1)^{r+\ell+1}c_{q,0} \Sigma (e_{a-1}\otimes x^{\otimes p})
        & \text{for $i=1$} \,,
      \end{cases}\label{eq:partial-beta}
  \end{align}
  where $a := -1 - i - 2r - q(p-1) + 2\ell p^n$.  On the other hand, it
  follows from \eqref{eq:steenrod-singer-beta}
  and \eqref{eq:phin-projection-formula} that
  \begin{align}
    (\phi(n,\ell)\circ \omega_B)( \beta_*(u^it^r \otimes x ) )
      &=\begin{cases}
        0 & \text{for $i=0$}\\
        (-1)^{r+1+q+\ell} c_{q,1} \Sigma (e_{a-1}\otimes x^{\otimes p})
        & \text{for $i=1$}  \,.
      \end{cases} \label{eq:beta-partial}
  \end{align}
  Using the identity $c_{q,1}/c_{q,0} = (-1)^q$, which
  can be deduced from equation \eqref{eq:coefficient}, we see that
  \eqref{eq:partial-beta} equals \eqref{eq:beta-partial}.
  We conclude that $\omega_B$ commutes with $\beta_*$.

  For the remainder of the proof we assume that $\ell\geq 0$ and
  $s<p^n$.  We repeatedly use that
  $$
  \binom{a+p^n}{b} \equiv \binom{a}{b} \mod p
  $$
  holds for all integers $a,b$ if $b<p^n$.  Furthermore, we
  still follow the convention that $e_j$ should be read as~$0$ if
  $j<0$.

  Let $i=0$ so that $a=-1-2r - q(p-1) + 2\ell p^n$ is odd.
  By~\eqref{eq:ext-power-steenrod-ops}, the operation $\P^s_*$ sends
  $(\phi(n,\ell)\circ \omega_B) (t^r\otimes x)= (-1)^{r+q+\ell}c_{q,1}
  \Sigma (e_{a}\otimes x^{\otimes p})$ to
  \begin{gather}
    (-1)^{r+q+\ell}c_{q,1}
    % Lines 20, 21: If we need to break these equations over more lines,
    % then between the cdot and the sum may be a good place.
      \sum_{k}\binom{-1-r-qm + \ell p^n + m(q-2s)}{s-pk}
      \Sigma (e_{a-2(s-pk)(p-1)}\otimes \P^k_*(x)^{\otimes p})\nonumber\\
    {-} ({-}1)^{r{+}\ell}c_{q,1}\alpha(q)
    % Lines 20, 21: If we need to break these equations over more lines,
    % then between the cdot and the sum may be a good place.
      \sum_{k}\binom{{-}r{-}qm {+} \ell p^n {+} m(q{-}2s) {-}1}{s-pk-1}
      \Sigma (e_{a{+}p{-}2(s{-}pk)(p{-}1)}\otimes \P^k_*\beta_*(x)^{\otimes p})\nonumber\\
      \intertext{$=$}
      (-1)^{r+q+\ell}c_{q,1}  \sum_{k}\binom{-1-r-s(p-1)}{s-pk}
      \Sigma (e_{a-2(s-pk)(p-1)}\otimes \P^k_*(x)^{\otimes p}) \label{eq:11} \\
    - (-1)^{r+\ell}c_{q,1}\alpha(q)  \sum_{k}\binom{-1-r-s(p-1)}{s-pk-1}
      \Sigma (e_{a+p-2(s-pk)(p-1)}\otimes \P^k_*\beta_*(x)^{\otimes p}) \label{eq:12} \,.
  \end{gather}
  On the other hand,
  \begin{align*}
    \P^s_*(t^r\otimes x)  &=\sum_{k}\binom{-1-r-s(p-1)}{s-pk}t^{r+(s-k)(p-1)}\otimes P_*^k(x)\\
    &\quad -\sum_{k}\binom{-1-r-s(p-1)}{s-pk-1}ut^{-1+r+(s-k)(p-1)}\otimes P_*^k\beta_*(x)
  \end{align*}
  is mapped by $\phi(n,\ell)\circ \omega_B$ to
  \begin{align*}
    &\sum_{k}\binom{-1-r-s(p-1)}{s-pk} (-1)^{r+q+\ell} c_{q-2k(p-1),1} \Sigma (e_{a-2(s-pk)(p-1)}\otimes P_*^k(x)^{\otimes p})\\
    -&\sum_{k}\binom{{-}1{-}r{-}s(p{-}1)}{s-pk-1} (-1)^{r+q+\ell} c_{q{-}1{-}2k(p{-}1),0} \Sigma (e_{a{+}p{-}2(s{-}pk)(p{-}1)}\otimes P_*^k\beta_*(x)^{\otimes p})\,.
  \end{align*}
  Since $c_{q+4,j} = c_{q,j}$, the coefficients $c$ in these sums do not
  depend on the summation index $k$, and the above expression simplifies to
  \begin{align}
    (-1)^{r+q+\ell} c_{q,1} \sum_{k}\binom{-1-r-s(p-1)}{s-pk}  \Sigma (e_{a-2(s-pk)(p-1)}\otimes P_*^k(x)^{\otimes p}) \label{eq:21} \\
    -(-1)^{r+q+\ell} c_{q-1,0} \sum_{k}\binom{-1-r-s(p-1)}{s-pk-1}
    \Sigma (e_{a+p-2(s-pk)(p-1)}\otimes P_*^k\beta_*(x)^{\otimes
    p}) \label{eq:22}\,.
  \end{align}
  The sum \eqref{eq:11} equals the sum \eqref{eq:21}.  The fact that
  \eqref{eq:12} is congruent to \eqref{eq:22} mod~$p$ follows from
  showing that the coefficients of the two sums are congruent, i.e.,
  $$
  -(-1)^{r+\ell} c_{q,1} \alpha(q) \equiv -(-1)^{r+q+\ell} c_{q-1,0}\,,
  $$
  which can be done by a direct computation from the definitions of
  $c_{q,j}$ and $\alpha(q)$, using that $(m!)^2 \equiv -(-1)^m$.

  Now, let $i=1$ so that $a=-2-2r-q(p-1)+2\ell p^n$ is even.  By
  \eqref{eq:ext-power-steenrod-ops}, the operation $\P^s_*$ sends
  $(\phi(n,\ell)\circ \omega_B) (ut^r\otimes x)=
  (-1)^{r+q+\ell}c_{q,0} \Sigma (e_{a}\otimes x^{\otimes p})$ to
  \begin{align}
    \nonumber
    (-1)^{r+q+\ell}c_{q,0} &\sum_{k}\binom{-1-r-qm + \ell p^n + m(q-2s)}{s-pk}
      \Sigma (e_{a-2(s-pk)(p-1)}\otimes \P^k_*(x)^{\otimes p})\\
    = (-1)^{r+q+\ell}c_{q,0} &\sum_{k}\binom{-1-r-s(p-1)}{s-pk}
      \Sigma (e_{a-2(s-pk)(p-1)}\otimes \P^k_*(x)^{\otimes p}) \label{eq:11-i1}\,.
  \end{align}
  On the other hand,
  \begin{equation*}
    \P^s_*(ut^r\otimes x) = \sum_{k}\binom{-1-r-s(p-1)}{s-pk}ut^{r+(s-k)(p-1)}\otimes P_*^k(x)
  \end{equation*}
  is mapped by $\phi(n,\ell)\circ \omega_B$ to
  \begin{equation}\label{eq:21-i1}
    \sum_{k}\binom{-1-r-s(p-1)}{s-pk} (-1)^{r+q+\ell} c_{q-2k(p-1),0} \Sigma (e_{a-2(s-pk)(p-1)}\otimes P_*^k(x)^{\otimes p})\,.
  \end{equation}
  Since $c_{q+4,j} = c_{q,j}$, the coefficients $c$ in
  \eqref{eq:21-i1} do not depend on the summation index $k$, and we
  see that \eqref{eq:21-i1} equals \eqref{eq:11-i1}.

  We conclude that $\phi(n,\ell)\circ \omega_B$ commutes with the
  action of $\beta_*$ and $\P^s_*$, for all $s<p^n$, and the lemma
  follows for the case $p>2$.

  The case of $p=2$ is easier.  When $s < 2^{n+1}$, both
  $\Sq^s_* \circ \phi(n, \ell) \circ \omega_B$ and
  $\phi(n, \ell) \circ \omega_B \circ \Sq^s_*$ map $u^r \otimes x$ to
  $$
  \sum_k \binom{-1-r-s}{s-2k} \Sigma(e_{-1-r-q+2^{n+1}\ell-s+2k}
  \otimes \Sq^k_*(x)^{\otimes 2}) \,.
  $$
\end{proof}

\begin{proposition} \label{prop:omega-b-is-a-linear} Let $B$ be a
  spectrum such that $B/p$ is bounded below.  Then
  $\omega_B\: r_+(H_*(B))\to H_*^c(R_+(B))$ is a morphism of
  relatively bounded below filtered right $\sA$-modules.
\end{proposition}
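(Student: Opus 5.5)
We first reduce to the case where $B/p$ is connective, and then use Lemma~\ref{lemma:omega-b-is-a-linear} together with the injectivity statement of Lemma~\ref{lemma:singer-coordinates}. Filtration preservation is already known: Proposition~\ref{prop:omega-B} shows that $\omega_B$ is strictly filtration-preserving. The formulas \eqref{eq:steenrod-singer-tr}--\eqref{eq:steenrod-singer-squares} respect the Tate filtration on $r_+(H_*(B))$, and Lemma~\ref{lemma:compact-hausdorff-tate} makes the target an rbb filtered right $\sA$-module. So what remains is $\sA$-linearity of $\omega_B$. Since $\sA = \bigcup_n \sA(n)$, it is enough to prove $\sA(n)$-linearity for each $n\ge0$.

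\emph{Connective case.} Fix $n$ and $a \in \sA(n)$, and let $y \in r_+(H_*(B))$. Set
\[
z = \omega_B(y)\cdot a - \omega_B(y\cdot a) \in H^c_*(R_+(B)).
\]
Lemma~\ref{lemma:singer-coordinates} says that $\phi(n)$ is $\sA(n)$-linear, and hence so is each projection $\phi(n,\ell)$. Combining this with Lemma~\ref{lemma:omega-b-is-a-linear}, we get $\phi(n,\ell)(z) = 0$ for every $\ell\ge0$. Therefore $\phi(n)(z)=0$, and injectivity of $\phi(n)$ gives $z=0$.

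\emph{Bounded below case.} Choose $q\ge0$ such that $\Sigma^q B/p$ is connective. Lemma~\ref{lemma:omega-suspensions} identifies $\Sigma^q\omega_B$ with $\omega_{\Sigma^q B}$, via two isomorphisms:
\begin{itemize}
\item on the left, the suspension isomorphism $r_+(H_*(B))\otimes H_*(S^q)\cong r_+(H_*(\Sigma^qB))$;
\item on the right, the isomorphism \eqref{eq:l5eq}, which is $\sA$-linear by the discussion following it.
\end{itemize}
So it suffices to check that the left-hand isomorphism $\Sigma^q r_+(M_*)\cong r_+(\Sigma^q M_*)$ is $\sA$-linear for the action defined by \eqref{eq:steenrod-singer-tr}--\eqref{eq:steenrod-singer-squares}. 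Here the grading conventions matter. The map sends $u^it^r\otimes \Sigma^q x$ to the element corresponding to $u^it^r\otimes x$ shifted, with the indexing and signs dictated by Lemma~\ref{lemma:omega-suspensions}. That lemma is formulated using $H_*(B\wedge S^n)$, which is suspension on the right, so $\P^k_*$ and $\beta_*$ commute with it without sign. The $t$-exponents in \eqref{eq:steenrod-singer-tr} do not involve $q=|x|$.

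\emph{Main obstacle.} I expect the difficulty to lie in this bookkeeping, not in the connective argument. If the direct check of the suspension compatibility becomes messy, I would avoid it: for $y=u^it^r\otimes x$, apply the connective case to $\Sigma^qB$ and pull back along the injective, $\sA$-linear suspension isomorphism on continuous homology. The remaining task is then to verify that the algebraic formulas transform consistently under $x\mapsto \Sigma^q x$. This is a finite check on \eqref{eq:steenrod-singer-tr}--\eqref{eq:steenrod-singer-squares}, and I expect it to go through because the only $q$-dependence on the topological side was absorbed into the units $c_{q,j}$ in Lemma~\ref{lemma:partial-omega-formula}.
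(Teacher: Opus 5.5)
Your proposal is correct and is essentially the paper's proof. The connective case uses the injectivity and $\sA(n)$-linearity of $\phi(n)$ together with Lemma~\ref{lemma:omega-b-is-a-linear}, and the general case reduces to it along Lemma~\ref{lemma:omega-suspensions}, using that \eqref{eq:steenrod-singer-tr}--\eqref{eq:steenrod-singer-squares} commute with suspension of $x$. That check is immediate because the coefficients do not involve $|x|$, so neither your fallback nor the appeal to the units $c_{q,j}$ is needed.

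One step the paper makes explicit and you only presuppose when writing $y\cdot a$: the same injectivity argument, applied to words in $\beta_*$ and the $\P^s_*$ with $s<p^n$, shows that these formulas satisfy the Adem relations. Only then do they define a right $\sA$-action on $r_+(H_*(B))$ at all.
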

\begin{proof}
  Assume first that $B/p$ is connective.  For each integer $n\geq 0$, the
  injective composition
  $$
  r_+(H_*(B))
  \overset{\omega_B}\longto
  H_*^c(R_+(B))
  \overset{\phi(n)}\longto
  \lim_{h(n)} H_*(\Sigma D_{C_p}(B))
  $$
  commutes with the action of $\sA(n)$ by
  Lemma~\ref{lemma:omega-b-is-a-linear}.  It follows that
  \eqref{eq:steenrod-singer-tr}--\eqref{eq:steenrod-singer-squares}
  satisfy the Adem relations and that $\omega_B$ is $\sA(n)$-linear
  for each $n$, hence $\sA$-linear.

  The formulas
  \eqref{eq:steenrod-singer-tr}--\eqref{eq:steenrod-singer-squares}
  specifying the right $\sA$-action on $r_+(H_*(B))$ commute
  with suspensions, i.e.,
  $\Sigma \P^s_*(u^it^r\otimes x) = \P^s_*(u^it^r\otimes \Sigma x)$
  and
  $\Sigma \beta_*(u^it^r\otimes x) = \beta_*(u^it^r\otimes \Sigma x)$.
  It follows that the left-hand vertical isomorphism
  of~\eqref{eq:omega-suspensions} is an isomorphism of right
  $\sA$-modules.  Moreover, as noted at the beginning of
  Section~\ref{sec:continuous-mod-p-homology-of-r-plus}, the
  right-hand vertical isomorphism of~\eqref{eq:omega-suspensions} is
  also $\sA$-linear.  Thus Lemma~\ref{lemma:omega-suspensions} implies
  that $\omega_B$ is $\sA$-linear if and only if $\omega_{\Sigma^m B}$
  is $\sA$-linear, for any integer $m$.  Therefore, the proposition
  follows for bounded below $B/p$ by reduction to the connective case.
\end{proof}

Proposition~\ref{prop:omega-b-is-a-linear} says in particular that
\eqref{eq:steenrod-singer-tr}--\eqref{eq:steenrod-singer-squares} give
$r_+(M_*)$, and its completion $R_+(M_*)$, the structure of a
filtered right $\sA$-module when $M_*=H_*(B)$ for some spectrum $B$
such that $B/p$ is bounded below.  We end this subsection by noting that
we do not have to restrict attention to $\sA$-modules $M_*$ that arise as
the homology of spectra.

\begin{proposition}
  Let $M_*$ be a right $\sA$-module.  The action of the Steenrod
  operations specified by
  \eqref{eq:steenrod-singer-tr}--\eqref{eq:steenrod-singer-squares}
  define a filtered right $\sA$-module structure on $r_+(M_*)$.
\end{proposition}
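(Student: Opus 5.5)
The plan is to reduce to the case $M_* = H_*(B)$ already handled by Proposition~\ref{prop:omega-b-is-a-linear}, using that the formulas \eqref{eq:steenrod-singer-tr}--\eqref{eq:steenrod-singer-squares} are natural in $M_*$ and involve only finitely many operations at a time. First, filtration-compatibility: each formula sends $u^it^r\otimes x$ to terms whose Tate filtration $-i-2r-|x|(p-1)$ does not increase. This is the elementary degree count already noted in the text: an operation of degree $-d$ lowers the total degree by $d$ and never raises the filtration. So only the module axioms remain, namely that the unit acts trivially and that the Adem relations hold.

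For the Adem relations, I would fix an Adem relation $R$, a bidegree, and an element $u^it^r\otimes x$. Verifying $R$ on this element involves only finitely many iterated operations $\theta(x)$, $\theta\in\sA$, each of bounded degree. Hence the verification depends only on the right $\sA$-module structure of $x\cdot\sA$. Moreover, the formulas express everything as finite sums $\sum u^{i'}t^{r'}\otimes \theta(x)$ with coefficients independent of $M_*$. So it suffices to produce a spectrum $B$ with $B/p$ bounded below and an $\sA$-linear map $f\colon H_*(B)\to M_*$ with $x$ in its image. Then $r_+(f)$ commutes with the formulas, by naturality, and the relation transfers from $r_+(H_*(B))$, where it holds by Proposition~\ref{prop:omega-b-is-a-linear}, to $r_+(M_*)$.

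The universal candidate is the free module. Take $B=\Sigma^q H$, with $H_*(B)=\Sigma^q\sA_*$. As a right $\sA$-module this contains the cyclic module generated by the class dual to $1$, which, since $\sA$ is Poincaré-dual-like, is not quite free. I expect this to be the main obstacle: the natural homology modules are comodules, which is to say duals of free modules, rather than free right modules. What I would try first is to avoid freeness and instead argue purely algebraically. The coefficients in the formulas are polynomial identities in binomial coefficients that do not depend on $M_*$. An Adem relation applied to $u^it^r\otimes x$ expands to $\sum_{\theta} c_\theta(i,r)\, u^{i'}t^{r'}\otimes \theta(x)$, and its vanishing for all $M_*$ is equivalent to vanishing in the free right $\sA$-module on one generator. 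To verify the free case, embed the finite-dimensional truncation $\sA/\sA_{>N}$ (as a right module over $\sA(n)$ for large $n$) into some $H_*(B)$ with $B$ a finite spectrum: right $\sA(n)$-modules of finite type are realized up to injective maps into $H_*$ of finite complexes, for example via $H_*(\sA(n)\text{-realizing spectra})$ such as smash products of $H_*$ of suitable Moore/Brown--Gitler-type spectra, or more simply $H_*(H)$ viewed through conjugation. Since $r_+$ of an injection is injective, the relation then follows.

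Alternatively, and as a cleaner fallback, I would invoke Remark~\ref{rem:vandermonde}'s direct algebraic route: write $r_+(M_*)\cong r_+(\bF_p)\otimes M_*$ with the diagonal (Cartan) action twisted by the Bockstein term, observe that the formulas are exactly the Cartan formula for $H^c_*(S^{tC_p})\otimes M_*$ with its $\sA$-module structure from Proposition~\ref{prop:continuous-homology-of-s-tcp}, up to the $\beta$-correction. The module axioms would then follow from those for a tensor product of $\sA$-modules, provided the identification is checked.
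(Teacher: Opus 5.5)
There is a genuine gap: your reduction is sound, but you never supply the spectrum that the argument depends on.

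Your reduction is the same as the paper's. By naturality of the formulas in $\sA$-linear maps, an Adem relation on $u^it^r\otimes x$ with $x\in M_q$ reduces to the free module $\Sigma^q\sA$. Since the relation only involves classes $\theta(x)$ in degrees $\ge q-k$ for some $k$, it further reduces to the finite truncation $\tau_{*\ge q-k}\Sigma^q\sA$; here the quotient $r_+(M_*)\to r_+(\tau_{*\ge q-k}M_*)$ is injective on $r_+(M_{*\ge q-k})$, where both sides of the relation live.

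The gap is exactly the step you flag as the main obstacle. You never exhibit a spectrum $B$ with $B/p$ bounded below together with an $\sA$-linear map relating $H_*(B)$ to this truncation. The assertion that finite type $\sA(n)$-modules embed into the homology of finite complexes, via unspecified ``Moore/Brown--Gitler-type spectra'', is not justified. It is also the only non-formal input of the whole argument.

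The paper supplies this step on the nose. For the Spanier--Whitehead dual $X=D(\mathrm{sk}_kH)$ of the $k$-skeleton of $H$ one has $H_*(\Sigma^qX)\cong\tau_{*\ge q-k}\Sigma^q\sA$, and Proposition~\ref{prop:omega-b-is-a-linear} applies to $\Sigma^qX$.

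Your dismissal of $H_*(\Sigma^qH)$ rests on a misconception. You need an injection \emph{out of} the truncation, not a free cyclic submodule; in any case the class $\Sigma^q1$ lies in the bottom degree and generates only $\bF_p$. Cofree modules are exactly the right targets:
\begin{itemize}
\item Any bounded below right $\sA$-module $N$ is a left $\sA_*$-comodule by~\eqref{eq:isocat}.
\item Its coaction $\nu\colon N\to\sA_*\otimes N$ is $\sA$-linear and, by the counit axiom, injective.
\item The target satisfies $\sA_*\otimes N\cong H_*(H\wedge W)$, where $W$ is a wedge of spheres indexed by a basis of $N$.
\item Then $r_+(\nu)$ is injective and commutes with the formulas, so the Adem relations descend from $r_+(H_*(H\wedge W))$ to $r_+(N)$.
\end{itemize}
Either device, the paper's $D(\mathrm{sk}_kH)$ or this coaction embedding, completes your argument.

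Your fallback does not work. The formulas~\eqref{eq:steenrod-singer-tr}--\eqref{eq:steenrod-singer-squares} are not the diagonal action on $r_+(\bF_p)\otimes M_*$, even ignoring $\beta$. By~\eqref{eq:steenrod-singer-tr}, $\P^1_*(t^r\otimes x)$ has no term $t^r\otimes\P^1_*(x)$, since $\binom{-r-p}{1-p}=0$, whereas the Cartan formula produces that term with coefficient~$1$.

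The isomorphism with a diagonal action is the twisted map $\bar\epsilon=\mu\circ(1\otimes\epsilon)$ of Lemma~\ref{lemma:barepsilon}. It has two problems here:
\begin{itemize}
\item It requires $M_*$ to be algebraic; otherwise the sum defining $\epsilon(x)$ is infinite.
\item The paper derives its $\sA$-linearity from Proposition~\ref{prop:r-plus-is-monoidal}, that is, from the very module structure you are trying to establish. Verifying the intertwining by hand is the full binomial-coefficient problem.
\end{itemize}
Finally, Remark~\ref{rem:vandermonde} concerns $\sA$-linearity of the product $\mu$, not the module axioms for $r_+(M_*)$.
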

\begin{proof}
  Let $M_*$ be a right $\sA$-module, and assume that $p=2$.  For any
  integer $N$, write $\tau_{*\geq N}M_*$ for the quotient module
  $M_*/M_{*<N}$.

  Let $x\in M_q$.  According to \eqref{eq:steenrod-singer-squares},
  the class $\Sq^s_*(u^r\otimes x)$ is contained in the vector
  subspace $r_+(M_{*\geq q-[s/2]})$.  Therefore, the Adem relation
  for $\Sq^b_* \Sq^a_* (u^r \otimes x)$ holds in $r_+(M_*)$ if
  and only if it holds in $r_+(\tau_{*\geq q-k} M_*)$ for
  $k=[a/2]+[b/2]$.

  Let $F_* = \Sigma^q\sA$ be the $q$-th suspension of the free right
  $\sA$-module on a single generator.  As a graded right $\sA$-module,
  $F_*$ is bounded above, with generator in top degree~$q$.  The
  $\sA$-linear homomorphism $F_* \to \tau_{*\geq q-k}M_*$ sending
  $\Sigma^q 1$ to $x$ factors over $\tau_{*\geq q-k}F_*$.  Let
  $X=D(\text{sk}_{k} H\bF_2)$ be the Spanier--Whitehead dual of the
  $k$-skeleton of the mod~$2$ Eilenberg--MacLane spectrum.  Then
  $H_*(\Sigma^qX)\cong \tau_{*\geq q-k}F_*$.  Moreover, the Adem relation
  for $\Sq^b_* \Sq^a_* (u^r \otimes \Sigma^q 1)$ holds in
  $r_+( H_*(\Sigma^qX))$ by
  Proposition~\ref{prop:omega-b-is-a-linear}, and then, by naturality,
  also in $r_+(\tau_{*\geq q-k}M_*)$.

  The argument for $p>2$ is similar.
\end{proof}

\subsection{Multiplicative structure}
Recall from Subsection~\ref{sec:e-based-singer} that the assignment
$B\mapsto R_+^H(H\wedge B)$ is a lax symmetric monoidal functor.
Passing to homotopy groups, we get that the assignment
$B\mapsto H_*^c(R_+(B))$ is a lax symmetric monoidal functor from the
homotopy category of spectra to the category
filtered right $\sA$-modules.

When restricted to spectra $B$ with $B/p$ bounded below,
$H_*^c(R_+(-)$ takes values in the symmetric monoidal category
$(\sC, \otimes)$ of rbb filtered right $\sA$-modules.  Furthermore,
$H_*^c(R_+(B))$ is complete Hausdorff by
Proposition~\ref{prop:singer-tate-filtration} so we can also think of
$B\mapsto H_*^c(R_+(B))$ as a functor into the symmetric monoidal
category $(\sC^\wedge, \ctensor)$ of rbb complete right $\sA$-modules,
or, equivalently, the symmetric monoidal category
$(\sD^\wedge, \ctensor)$ of rbb complete left $\sA_*$-comodules.

Likewise, the assignment $B\mapsto r_+(H_*(B))$ is a lax symmetric
monoidal functor from the homotopy category of spectra $B$ such that
$B/p$ is bounded below to $(\rbbfilgrfpmod, \otimes)$, with monoidal
structure maps given by
\eqref{eq:algebraic-rplus-monoidal-unit}--\eqref{eq:algebraic-rplus-monoidal-pairing}.
By composing with the completion, resp. completion followed by the
forgetful functor~ \eqref{eq:completion-and-dimenticare}, we can also
regard $B\mapsto R_+(H_*(B))$ as a lax symmetric monoidal functor with
values in $(\rbbfilgrfpmod^\wedge, \ctensor)$, resp.
$(\rbbfilgrfpmod, \otimes)$.

Taking the $\sA$-module structure into account, the following
proposition is a variant of Lemma~\ref{lemma:omega-is-monoidal}.
\begin{proposition}\label{prop:r-plus-is-monoidal}
  When restricted to spectra $B$ with $B/p$ bounded below, the functor
  $B\mapsto r_+ (H_*(B))$ is lax symmetric monoidal with values
  in the category $(\sC, \otimes)$ of rbb filtered right $\sA$-modules.
  The natural transformation
  $\omega$ is a monoidal natural transformation with components
  $\omega_B\: r_+(H_*(B)) \to H_*^c(R_+(B))$.

  In particular, the following diagram of rbb filtered right
  $\sA$-modules commutes for all spectra $B$ and $B'$ such that $B/p$
  and $B'/p$ are bounded below:
  \begin{equation}
    \begin{aligned}
      \label{eq:omega-multiplicative-and-A-linear}
      \xymatrix@C20mm{
      r_+(H_*(B))\otimes r_+(H_*(B'))
      \ar[d]^-{\mu}
      \ar[r]^-{\omega_B\otimes \omega_{B'}}
      &
      H_*^c(R_+(B)) \otimes H_*^c(R_+(B'))
        \ar[d]^-{\mu_{B,B'*}}\\
      r_+ (H_*(B\wedge B'))
        \ar[r]^-{\omega_{B\wedge B'}}
      & H_*^c(R_+(B\wedge B')) \,.
        }
    \end{aligned}
  \end{equation}
  The pairing $\mu$ sends $u^it^r\otimes x \otimes u^jt^s\otimes y$ to
  $(-1)^{|x|j}u^{i+j}t^{r+s}\otimes (x\wedge y)$, and its domain has
  the diagonal action of $\sA$.

  Furthermore, the functor $B\mapsto R_+(H_*(B))$ is lax symmetric
  monoidal with values in the category $(\sC^\wedge, \ctensor)$ of rbb
  complete right $\sA$-modules, and the completion of $\omega$ is a
  monoidal natural transformation.  Composing with the forgetful
  functor, $B\mapsto R_+(H_*(B))$ can also be viewed as a lax symmetric
  monoidal functor into $(\sC, \otimes)$.

  In particular, diagram~\eqref{eq:omega-multiplicative-and-A-linear}
  commutes with $r_+(-)$ replaced by $R_+(-)$ and, possibly,
  $\otimes$ replaced by $\ctensor$.
\end{proposition}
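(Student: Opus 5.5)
The plan is to deduce everything from Lemma~\ref{lemma:omega-is-monoidal}, Proposition~\ref{prop:omega-b-is-a-linear} and the injectivity in Proposition~\ref{prop:omega-B}, by transporting structure along $\omega$.

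\emph{Step 1: $\mu$ is $\sA$-linear and filtered.} First I check that the algebraic pairing $\mu$ on $r_+(H_*(B))\otimes r_+(H_*(B'))$ is $\sA$-linear for the diagonal action. Lemma~\ref{lemma:omega-is-monoidal} gives commutativity of \eqref{eq:omega-multiplicative-and-A-linear} on underlying graded vector spaces, since the pairing there, $(\mu_{S,S*}\otimes\wedge)\circ(23)$, is exactly the stated formula with Koszul sign $(-1)^{|x|j}$. The right-hand vertical map $\mu_{B,B'*}$ is $\sA$-linear by \eqref{eq:tate-pairing-homology}. The horizontal maps $\omega_B\otimes\omega_{B'}$ and $\omega_{B\wedge B'}$ are $\sA$-linear by Proposition~\ref{prop:omega-b-is-a-linear}, the former because the diagonal action is functorial. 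Since $\omega_{B\wedge B'}$ is injective by Proposition~\ref{prop:omega-B}, for any $a\in\sA$ and $z$ in the domain we get
\[
\omega_{B\wedge B'}(\mu(z)\cdot a)=\mu_*(\omega\otimes\omega)(z)\cdot a=\omega_{B\wedge B'}(\mu(z\cdot a)),
\]
so $\mu$ is $\sA$-linear. For the filtration, $\Fil(u^it^r\otimes x)+\Fil(u^jt^s\otimes y)=\Fil(u^{i+j}t^{r+s}\otimes xy)$ because the filtration is additive in $i$, $r$ and $|x|$. When $p$ is odd and $i=j=1$ the product vanishes; for $p=2$ we have $u^2=t$, which is consistent. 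Hence $\mu$ respects the convolution filtration. The unit $\eta^H_*$ is $\sA$-linear and filtered because $\bF_p\to H^c_*(S^{tC_p})$ hits $1$, which is fixed by positive-degree operations. Associativity, symmetry and unitality are inherited from the graded-algebra structure.

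\emph{Step 2: rbb and monoidality of $\omega$.} The rbb property comes from Lemma~\ref{lemma:rbb-rplus}. Together with Step~1 this places $B\mapsto r_+(H_*(B))$ in $(\sC,\otimes)$. Proposition~\ref{prop:omega-b-is-a-linear} then makes each $\omega_B$ a morphism in $\sC$, and Lemma~\ref{lemma:omega-is-monoidal} supplies the monoidality of $\omega$.

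\emph{Step 3: completion.} I apply the completion functor $c$ from \eqref{eq:amods-and-acomods}, which is strong symmetric monoidal into $(\sC^\wedge,\ctensor)$ via Lemma~\ref{lemma:completions-before-or-after-tensor}. This makes $R_+(H_*(B))$ lax symmetric monoidal in $(\sC^\wedge,\ctensor)$ and makes $\omega^\wedge$ monoidal. Here I use that $H^c_*(R_+(B))$ is already complete (Proposition~\ref{prop:singer-tate-filtration}), so its completed monoidal structure agrees with the original one through the pairing \eqref{eq:complete-pairing}. Composing with the lax monoidal forgetful functor $d$ gives the $(\sC,\otimes)$ version.

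\emph{Main obstacle.} The step needing most care is the sign bookkeeping in Step~1: confirming that $(\mu_{S,S*}\otimes\wedge)\circ(23)$ really produces $(-1)^{|x|j}$, and that for $p=2$ the product of $u$-powers matches. The injectivity trick itself is straightforward.
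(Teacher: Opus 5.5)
Your proposal is correct and follows the paper's proof step for step. The paper also gets commutativity of the diagram on underlying vector spaces from Lemma~\ref{lemma:omega-is-monoidal}, deduces $\sA$-linearity of $\mu$ from that of $\mu_{B,B'*}$ and the $\omega$'s together with injectivity of $\omega_{B\wedge B'}$, and obtains the completed statements from monoidality of the completion and forgetful functors in \eqref{eq:amods-and-acomods}. The only difference is the unit. The paper argues that $\eta^H_*$ is $\sA$-linear because it is induced by the spectrum map $\epsilon^H_S$. Your claim that $1\in r_+(\bF_p)$ is annihilated by all positive-degree operations is true, but degree reasons alone do not give it, so it needs a line of justification: $\beta_*(1)=0$, and $\binom{-1-s(p-1)}{s}\equiv(-1)^s\binom{ps}{s}\equiv 0$ mod $p$ for $s\ge 1$ by Lucas' theorem, with the analogous identity at $p=2$.
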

\begin{proof}
  We must show that the monoidal structure maps of
  $B\mapsto r_+(H_*(B))$ are morphisms of right $\sA$-modules.   This
  is clear for the monoidal unit~\eqref{eq:algebraic-rplus-monoidal-unit}
  since it is induced by the composite map of spectra
  $$
  \epsilon_S^H\: H\wedge S
  \overset{1\wedge \epsilon_S}\longto
  H\wedge R_+(S)
  \overset{\kappa}\longto
  (H\wedge S^{\wedge p})^{tC_p} \simeq R^H_+(H\wedge S)\,,
  $$
  where both maps induce $\sA$-linear homomorphisms after passing to
  homotopy.

  Next, note that as a diagram of graded $\bF_p$-vector spaces,
  diagram~\eqref{eq:omega-multiplicative-and-A-linear} equals the
  diagram~\eqref{eq:omega-multiplicative} of
  Lemma~\ref{lemma:omega-is-monoidal}, and is therefore commutative.

  By hypothesis, $B/p$ and $B'/p$ are bounded below, and it follows
  that also $(B\wedge B')/p$ is bounded below.  Thus $\omega_B$,
  $\omega_{B'}$ and $\omega_{B\wedge B'}$ are injective and
  $\sA$-linear by Proposition~\ref{prop:omega-B} and
  Proposition~\ref{prop:omega-b-is-a-linear}, and $\sA$-linearity
  of~$\mu$ follows therefore from $\sA$-linearity of $\mu_{B,B'*}$.

  The remaining part of the proposition follows from the fact that the
  completion and forgetful functors in~\eqref{eq:amods-and-acomods}
  are monoidal.
\end{proof}

\begin{remark}\label{rem:vandermonde}
One can give a purely algebraic proof that $M_*\mapsto
r_+(M_*)$ is lax symmetric monoidal as an endofunctor of right
$\sA$-modules, without presuming that $M_* \cong H_*(B)$ for some
spectrum~$B$ with $B/p$ bounded below.  Given the explicit formulas
\eqref{eq:steenrod-singer-tr}--\eqref{eq:steenrod-singer-squares}, and
the elementwise description of the multiplication~$\mu \: r_+(M_*)
\otimes r_+(N_*) \to r_+(M_* \otimes N_*)$, where the
domain has the diagonal $\sA$-action, this reduces to verifying
certain mod~$p$ congruences involving sums of binomial coefficients.
When $M_*=N_*=\bF_p$, the assertion that $\mu$ is $\sA$-linear amounts
to the mod~$p$ congruences
$$
\sum_{s_1+s_2=s} \binom{-1-r_1-s_1(p-1)}{s_1} \binom{-1-r_2-s_2(p-1)}{s_2}
\equiv \binom{-1-r-s(p-1)}{s}
$$
for $r = r_1 + r_2$ with $r_1, r_2 \ge0$, which are equivalent
to the congruences
$$
\sum_{s_1+s_2=s} \binom{r_1+ps_1}{s_1} \binom{r_2+ps_2}{s_2}
\equiv \binom{r+ps}{s} \,.
$$
For more general $\sA$-modules $M_*$ and $N_*$, one requires
the mod~$p$ congruences
$$
\sum_{s_1+s_2=s} \binom{r_1+ps_1-pk_1}{s_1-pk_1} \binom{r_2+ps_2-pk_2}{s_2-pk_2}
\equiv \binom{r+ps-pk}{s-pk}
$$
for $r = r_1 + r_2$ and $k = k_1 + k_2$.
These all follow from the special case
$$
\sum_{i+j=n} \binom{x + p i}{i} \binom{y + p j}{j}
\frac{x}{x + p i} = \binom{x + y + p n}{n}
$$
of the 19th century Rothe--Hagen identity, see \cite{Gou56}*{(11)}, since
$\binom{x+pi}{i} \frac{x}{x+pi} \equiv \binom{x+pi}{i} \mod p$.  That
identity is in turn a generalization of Vandermonde's convolution formula.
We omit the lengthy but elementary intermediate calculations.
\end{remark}

\section{Singer's $\epsilon$-homomorphism}\label{sec:singers-epsilon}

There is a natural morphism $\epsilon \: M_* \to r_+(M_*)$ defined for
so-called algebraic right $\sA$-modules~$M_*$.  We prove in
Proposition~\ref{prop:epsilon-is-epsilon} that
when
$M_* = H_*(B)$ is the homology of a spectrum~$B$, the composite
$\omega_B \circ \epsilon \: H_*(B) \to H^c_*(R_+(B))$ is equal to the
morphism induced by the spectrum map
$\epsilon_B^H \: H\wedge B \to R^H_+(H \wedge B)$.  Hence Singer's
$\epsilon$-homomorphism models the $H$-based Tate diagonal map.

Let $M_*$ be a left $\sA_*$-comodule with structure map
$\nu\: M_*\to \sA_*\otimes M_*$.  Recall from
Subsection~\ref{sec:isocat} that $\nu$ gives rise to a right action of
$\sA$ on $M_*$, with structure map $M_*\otimes \sA\to M_*$ adjoint to
the composite
$$
M_*\overset\nu\longto \sA_*\otimes M_* \overset{\iota}\longto
\Hom(\sA, M_*) \,,
$$
where~$\iota$ is the injective graded homomorphism \eqref{eq:iotamods}.

An $\bF_p$-linear homomorphism $f\: \sA\to M_*$ is contained in the
image of~$\nu$ if and only if $f$ vanishes on all but finitely many
elements $a\in \sA$.  It follows that a right $\sA$-module structure on
$M_*$ arises from a left $\sA_*$-comodule structure in the way
described above if and only if each element $x\in M_*$ has the
property that $\P^s_*(x) = 0$ for all but finitely many integers~$s$.  A
right $\sA$-module satisfying this property will be called
\emph{algebraic}.  An element~$x\in M_*$ is \emph{algebraic} if it is
contained in an algebraic $\sA$-submodule of $M_*$.  Thus, $x\in M_*$
is algebraic if and only if $\P^s_*(x)=0$ for all but finitely many
integers~$s$.

Examples of algebraic right $\sA$-modules include modules that are
bounded below, and modules that are the mod~$p$ homology of spectra.

In contrast, the next lemma implies that $r_+(M_*)$ is essentially
never an algebraic right $\sA$-module.  When $M_*=\bF_p$, we recall
from \eqref{eq:steenrod-singer-tr}--\eqref{eq:steenrod-singer-squares}
for $B = S$ that the right $\sA$-module structure on $r_+(\bF_p)$ is
given by
formulas~\eqref{eq:singer-sphere-steenrod}--\eqref{eq:singer-sphere-squares}.

\begin{lemma}\label{lemma:alg}
  The vector subspace $\bF_p\{1\}\subset r_+(\bF_p)$ is a right
  $\sA$-submodule and consists of the only algebraic elements in
  $r_+(\bF_p)$.
\end{lemma}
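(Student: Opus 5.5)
The plan is to use the explicit formulas \eqref{eq:singer-sphere-steenrod}--\eqref{eq:singer-sphere-squares} for the right action on $r_+(\bF_p) = H^c_*(S^{tC_p})$.

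\textbf{Step 1: $\bF_p\{1\}$ is a submodule.} Take $i=r=0$ in \eqref{eq:singer-sphere-steenrod}. Then $\P^s_*(1) = \binom{-1-s(p-1)}{s} t^{s(p-1)}$. We want this coefficient to vanish mod~$p$ for $s>0$. Use $\binom{-a}{s} = (-1)^s\binom{a+s-1}{s}$. This rewrites the coefficient, up to sign, as $\binom{ps}{s}$. Writing $s = p^j s'$ with $p\nmid s'$, Lucas' theorem gives $\binom{ps}{s}\equiv \binom{s}{0}\cdot\binom{0}{\text{lowest digit of } s}$ at the relevant digit, which vanishes for $s>0$. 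Also $\beta_*(1)=0$. For $p=2$, $\Sq^s_*(1) = \binom{-1-s}{s}u^s$, and $\binom{-1-s}{s} = \pm\binom{2s}{s}\equiv 0$ for $s>0$. Hence $1$ is $\sA$-primitive and $\bF_p\{1\}$ is a submodule.

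\textbf{Step 2: no other element is algebraic.} An element lies in a single degree, and each degree of $r_+(\bF_p)$ is one-dimensional, spanned by some $u^it^r$. So it suffices to show that each basis monomial $u^it^r$ other than $1$ has $\P^s_*(u^it^r)\neq 0$ for infinitely many $s$. For $p=2$, the corresponding condition is $\Sq^s_*(u^r)\ne 0$ for infinitely many $s$.

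The coefficient in question is $\binom{-1-r-s(p-1)}{s} = \pm\binom{r+ps}{s}$. Note that $\P^s_*$ does not change $i$, and $\beta_*$ is irrelevant, since the definition of algebraic only involves the $\P^s_*$.

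\textbf{Step 3: the case $r\neq 0$.} Choose $s=p^N$ with $N$ large, so that $p^{N+1}$ exceeds $|r|$ in the appropriate sense.
\begin{itemize}
\item If $r>0$, Lucas' theorem gives $\binom{r+p^{N+1}}{p^N}\equiv \binom{r}{p^N}\binom{1}{0}$, which is $0$ for large $N$. So this choice fails, and we switch to $s = p^N - 1$ or similar. A cleaner route is to take $s$ with base-$p$ digits matching those of $r$ in the low positions. For instance, take $s = r\,p^N$ for large $N$; then $r+ps$ has digits $r$ in the low part and $r$ shifted by $N+1$ in the high part, and $s$ has $r$ shifted by $N$. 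Adjusting this choice until Lucas gives a product of binomials $\binom{d}{d}$ should work. I expect locating a correct infinite family of $s$ here to be the main fiddly step.
\item If $r<0$, write $r=-R$ and consider $\binom{ps-R}{s}$. Take $s = Rp^N$ with $N$ large; then $ps - R = Rp^{N+1} - R$, whose base-$p$ digits are $(R-1)$-ish in the high part followed by $(p-1)$'s. Lucas then gives a product of $\binom{p-1}{\ast}$ terms, which are nonzero, times one leading term to be checked.
\end{itemize}

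\textbf{Step 4: the case $r=0$, $i=1$ (the element $u$).} Here the same binomial $\binom{ps}{s}$ vanishes as in Step~1. So instead, exploit the relation between $u$ and nearby monomials: use $\beta_*$ and the Cartan formula, or compare with $ut^{-1}$ via multiplication by $t$. Alternatively, handle $u$ via Lemma~\ref{lemma:singer-an-primitives}. Then, if $u$ were algebraic, so would be $ut^{p^n}$ for suitable $n$, which is possible only when the latter is not excluded by Step~3. This reduces the $u$ case to Step~3.

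The main obstacle is the Lucas-theorem bookkeeping in Step~3, namely exhibiting, for each $r\neq 0$, infinitely many $s$ with nonvanishing coefficient. My first attempt will be $s$ of the form $c\,p^N$ with $c$ chosen according to the leading digits of $|r|$.
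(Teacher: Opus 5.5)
Step~1 is fine, but the remaining steps have two genuine gaps. First, the reduction in Step~2 uses the wrong criterion. For $p$ odd you have $\P^s_*(u)=\pm\binom{ps}{s}\,ut^{s(p-1)}=0$ for every $s>0$, exactly as for $1$. So if algebraicity only involved the $\P^s_*$ acting on $x$ itself, $u$ would be algebraic and the lemma would be false. The operative definition is that $x$ lies in a submodule all of whose elements are killed by all but finitely many $\P^s_*$ (equivalently, $a\mapsto x\cdot a$ vanishes in all but finitely many degrees of $\sA$). In particular $\beta_*$ is not irrelevant: $u$ fails to be algebraic because $\beta_*(u)=t$ fails to be. Your suggested alternative via Lemma~\ref{lemma:singer-an-primitives} cannot work. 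Multiplication by $t^{p^n}$ is only $\sA(n)$-linear, while algebraicity concerns operations of unbounded degree. The same inference would make $t^{p^n}$ algebraic because $1$ is.

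Second, Step~3 is not carried out, and both families you propose fail. For $r=-R<0$, the choice $s=Rp^N$ gives $0$ in general. For $p=3$ and $R=4$, the base-$3$ digits of $ps-R=4\cdot 3^{N+1}-4$ in positions $N$ and $N+1$ are $2$ and $0$, while $s=4\cdot 3^N$ has digits $1,1$ there, so Lucas gives a factor $\binom{0}{1}=0$. The single-digit choice $s=p^k$ with $p^k>-1-r$ works, since $\binom{-1-r-p^k(p-1)}{p^k}\equiv\binom{p^k-1-r}{p^k}\equiv 1$. For $r>0$, the choice $s=rp^N$ already fails for $r=1$, and you give no replacement. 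The missing idea is that you do not need infinitely many nonzero $\P^s_*$ on a single monomial. Let $k=v_p(r)$ and let $a_k$ be the lowest nonzero base-$p$ digit of $r$. Then $\binom{-1-r-p^k(p-1)}{p^k}\equiv -a_k\not\equiv 0$, so $\P^{p^k}_*$ sends $u^it^r$ to a nonzero multiple of $u^it^{r'}$ with $r'=r+p^k(p-1)>r>0$. Iterating this, starting from $t=\beta_*(u)$ in the case of $u$, produces nonzero composites of unbounded degree acting on the monomial. That is exactly non-algebraicity in the correct sense, and it is how the paper argues. (Infinitely many good $s$ do exist for each $r>0$, but producing them requires a recursive Lucas analysis that the iteration avoids.)
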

\begin{proof}
  Let $p>2$ and assume first that $r<0$. Then  $-1-r\geq 0$, and
  $$
  \binom{-1-r-p^k(p-1)}{p^k} \equiv
  \binom{p^k}{p^k} = 1
  $$
  by Lucas' theorem for all $k>0$ such that $p^k > -1-r$.
  It follows from
  \eqref{eq:singer-sphere-steenrod} that $\P^{p^k}_* (u^it^r) \neq 0$
  for all sufficiently large~$k$, and $u^it^r$ is therefore not
  algebraic when $-i-2r > 0$.

  Assume next that $r>0$ and that
  $r \equiv a_kp^k \neq 0 \mod p^{k+1}$.  Then
  \[
    \binom{-1-r-p^k(p-1)}{p^k} \equiv
    \binom{-1 - a_k p^k + p^k}{p^k}
	\equiv -a_k\,,
  \]
  which implies that $\P^{p^k}_*(u^it^r)\neq 0$ for $i=0,1$ and
  $r>0$. It follows from \eqref{eq:singer-sphere-steenrod} that
  $\beta_*(u)=t$, and we conclude that $u^it^r$ is never
  $\sA_*$-comodule primitive when $-i-2r < 0$.  By iteration, this
  implies that $u^it^r$ supports infinitely many non-trivial Steenrod
  operations.

  Finally, since
  $$
  \binom{-1-p^k(p-1)}{p^k} \equiv
  \binom{-1+p^k}{p^k} \equiv 0\,,
  $$
  we get that $\P^{p^k}_* (1) = 0$ for all $k\geq 0$.  Also
  $\beta_*(1) = 0$.  The class $1 \in r_+(\bF_p)$ is therefore
  $\sA_*$-comodule primitive and, in particular, algebraic.

  The case~$p=2$ follows from the same argument as above, with $t$
  replaced by $u^2$ and by using \eqref{eq:singer-sphere-squares}
  instead of
  \eqref{eq:singer-sphere-steenrod}--\eqref{eq:singer-sphere-bockstein}.
\end{proof}

Let $Q_*$ be a right $\sA$-module.  If the only algebraic element of
$Q_*$ is $0$, we say that $Q_*$ is \emph{totally non-algebraic}.
Lemma~\ref{lemma:alg} says that the quotient $r_+(\bF_p)/\bF_p\{1\}$
is totally non-algebraic.  If~$M_*$ is algebraic and $Q_*$ is totally
non-algebraic, then any morphism $f\:M_*\to Q_*$ must be trivial.

\begin{lemma}\label{lemma:non-algebraic}
  If $Q_*$ is a totally non-algebraic right $\sA$-module, then the
  tensor product $Q_*\otimes N_*$ is totally non-algebraic for any
  right $\sA$-module $N_*$.
\end{lemma}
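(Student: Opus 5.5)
The plan is to show that any algebraic element $z \in Q_* \otimes N_*$ must vanish, by isolating a "leading" $Q_*$-component and showing it would have to be algebraic in $Q_*$. Write a nonzero $z = \sum_{j=1}^n q_j \otimes y_j$ with the $y_j$ linearly independent homogeneous elements of $N_*$ and the $q_j \ne 0$. Choose an index with $|y_j|$ maximal, say $j=1$ after reordering, and a linear functional $\lambda \: N_* \to \bF_p$ with $\lambda(y_1) = 1$, $\lambda(y_j) = 0$ for $j \ne 1$, and $\lambda$ vanishing in all degrees different from $|y_1|$. Then consider the linear map $1 \otimes \lambda \: Q_* \otimes N_* \to Q_*$ (of degree $-|y_1|$), which sends $z$ to $q_1$.

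The key step is to compare $\P^s_*(z)$ with $\P^s_*(q_1)$ under $1 \otimes \lambda$. By the Cartan formula for the diagonal right action,
\[
\P^s_*(q_j \otimes y_j) = \sum_{a+b=s} \P^a_*(q_j) \otimes \P^b_*(y_j),
\]
and $\P^b_*(y_j)$ has degree $|y_j| - 2b(p-1) < |y_1|$ whenever $b>0$ (or whenever $|y_j| < |y_1|$). So applying $1 \otimes \lambda$ kills every term except those with $b=0$ and $|y_j| = |y_1|$, giving $(1\otimes\lambda)\P^s_*(z) = \P^s_*(q_1)$ up to sign. The same applies to $\beta_*$, or to $\Sq^s_*$ when $p=2$.

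Now suppose $z$ is algebraic, so $\P^s_*(z)=0$ for all but finitely many $s$. Then $\P^s_*(q_1) = 0$ for all but finitely many $s$. To conclude that $q_1$ is algebraic we need the same for every element of the submodule it generates, not just for $q_1$ itself. The cleanest route is to apply the argument to every $z\cdot a$, $a \in \sA$: the elements $z \cdot a$ are algebraic, because the algebraic elements form a submodule by the definition ``contained in an algebraic submodule''. The maximal-degree component of $z\cdot a$ with respect to $\lambda$ is $q_1\cdot a$, by the same degree argument applied to the Milnor coproduct $\psi(a)=a\otimes 1+\sum a'\otimes a''$ with $|a''|>0$. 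Hence every element of $q_1\cdot\sA$ satisfies the finiteness condition, so $q_1\cdot\sA$ is an algebraic submodule containing $q_1 \neq 0$, contradicting total non-algebraicity. Signs are irrelevant to this argument.

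I expect the main obstacle to be this passage from the finiteness condition on $q_1$ alone to $q_1$ being algebraic. It is handled by working with the whole cyclic submodule as above, which requires checking that the functional trick commutes with arbitrary $a\in\sA$ via the coproduct. For $p=2$ the same argument goes through with $\Sq^s_*$ in place of $\P^s_*$ and $\beta_*$.
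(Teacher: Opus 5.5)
Your argument is correct and is essentially the paper's proof. The paper filters $N_*$ by the right $\sA$-submodules $N_{*\le q}$, chooses $q$ so that $z$ has nonzero image in $Q_*\otimes N_{*\le q}/N_{*\le q-1}$, and uses the Cartan formula to see that this quotient, a sum of shifted copies of $Q_*$, is totally non-algebraic; your functional $1\otimes\lambda$ is that quotient map followed by projection onto one copy of $Q_*$, with the $\sA$-equivariance on the cyclic submodule checked directly via the coproduct.
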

\begin{proof}
  The degree filtration of $N_*$ induces a filtration of
  $Q_*\otimes N_*$ which is exhaustive and Hausdorff.  Thus, for any
  non-trivial element $z\in Q_*\otimes N_*$, there exists an integer
  $q$ such that $z\in Q_*\otimes N_{*\leq q}$, and such that $z$ maps
  non-trivially to the filtration quotient
  $Q_*\otimes N_{*\leq q}/N_{*\leq q-1}$.  It follows from the Cartan
  formula that $Q_*\otimes N_{*\leq q}/N_{*\leq q-1}$ is totally
  non-algebraic.  Therefore, $z$ must be totally non-algebraic.  Since
  $z$ was an arbitrary non-trivial element, the lemma follows.
\end{proof}

Let $M_*$ be an algebraic right $\sA$-module.  Singer's
$\epsilon$-homomorphism is the natural morphism of right $\sA$-modules
\begin{equation}
  \label{eq:singer-epsilon}
  \epsilon \: M_* \to r_+(M_*)
\end{equation}
given by
\begin{equation}
	\label{eq:epsilon-def-p-even}
\epsilon(x) = \sum_{j} u^{-j} \otimes \Sq_*^j(x)
\end{equation}
for $p=2$, and
\begin{align}
  \label{eq:epsilon-def-p-odd}
  \begin{aligned}
    \epsilon(x) =
    &\sum_{j}
      (-1)^{j} t^{-j(p-1)} \otimes \P^j_*(x)\\
    &+\sum_{j}
      (-1)^j u t^{-1-j(p-1)} \otimes \P^j_*\beta_*(x)
  \end{aligned}
\end{align}
for~$p$ odd.  Note that our assumption on $M_*$ implies that these
sums are finite, and therefore that~$\epsilon$ is well-defined.
See~\cite{S1980}, \cite{LS82} and~\cite{LNR12}*{\S3.2.1}.

\begin{lemma}
  The Singer homomorphism $\epsilon \: M_* \to r_+(M_*)$ is right
  $\sA$-linear.
\end{lemma}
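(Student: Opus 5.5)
We want to show that $\epsilon\colon M_*\to r_+(M_*)$ commutes with $\beta_*$ and with every $\P^s_*$ (resp.\ $\Sq^s_*$ for $p=2$). Rather than attack the binomial identities head on, the plan is to reduce to a topological case where $\sA$-linearity is already available, and then pass to general $M_*$ by naturality, as in the proof that \eqref{eq:steenrod-singer-tr}--\eqref{eq:steenrod-singer-squares} define a module structure.

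\textbf{Step 1 (reduction to a universal example).} Fix $x\in M_q$ and $a\in\sA$. Both $\epsilon(x\cdot a)$ and $\epsilon(x)\cdot a$ depend only on the $\sA$-submodule generated by $x$, and $\epsilon$ is natural in $M_*$. Because $M_*$ is algebraic, only finitely many operations act nontrivially on $x$. Hence the relevant terms live in $r_+(\tau_{*\ge N}M_*)$ for some $N$ depending on $q$ and on $a$. As before, the map $\Sigma^q\sA\to M_*$ sending $\Sigma^q1\mapsto x$ factors through a bounded-below, bounded-above quotient. That quotient is $H_*(\Sigma^qX)$ for $X$ the Spanier--Whitehead dual of a finite skeleton of $H$. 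It therefore suffices to treat $M_*=H_*(B)$ with $B$ a finite spectrum, one with $B/p$ bounded below and of finite type.

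\textbf{Step 2 (topological case).} For such $B$, I would compare with $(\epsilon^H_B)_*$. The latter is $\sA$-linear, being induced by a map of $H$-modules, and $\omega_B$ is $\sA$-linear and injective by Propositions~\ref{prop:omega-B} and~\ref{prop:omega-b-is-a-linear}. So it is enough to show $\omega_B\circ\epsilon=(\epsilon^H_B)_*$, which is the later Proposition~\ref{prop:epsilon-is-epsilon}. To avoid circularity, I would prove only the weaker statement needed here, as follows.

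First, $\epsilon(x)-1\otimes x$ has strictly positive Tate filtration. Second, $\omega_B^{-1}(\epsilon^H_B)_*(x)$ and $\epsilon(x)$ have the same image under each $\phi(n,\ell)$: on $H_*(\Sigma D_{C_p}(B))$ the Tate diagonal is the dual of Miller's comparison, computable via the Nishida formulas \eqref{eq:ext-power-steenrod-ops}.

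\textbf{Alternative, purely algebraic route.} Expand $\P^s_*\epsilon(x)$ using \eqref{eq:steenrod-singer-tr}--\eqref{eq:steenrod-singer-utr} and compare with $\epsilon(\P^s_*x)$. Use the Adem relations to rewrite $\P^k_*\P^j_*$ in terms of $\P^{j+k-i}_*\P^i_*$ (and likewise with $\beta_*$ inserted). This reduces everything to binomial congruences of Vandermonde/Rothe--Hagen type, as in Remark~\ref{rem:vandermonde}. The $\beta_*$ part is immediate: $\beta_*(t^r\otimes y)=0$ and $\beta_*(ut^r\otimes y)=t^{r+1}\otimes y$, combined with the Adem relation involving $\beta$.

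\textbf{Main obstacle.} The main difficulty is the $\P^s_*$ comparison, in either route. Topologically, the issue is pinning down $(\epsilon^H_B)_*$ on the extended power without using the later proposition. Algebraically, the issue is that the Adem relations applied to the right action with conjugated operations give an unwieldy triple sum. I would try the topological route first, because injectivity of $\phi(n)$ makes it a check degree by degree.
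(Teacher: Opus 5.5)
\textbf{Verdict: there is a genuine gap.} Your preferred topological route never proves its key identification, and your algebraic route is missing the one input the paper's proof actually uses.

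\textbf{The topological route.} Step~1 is sound; it is the same truncation trick the paper uses to check the Adem relations on $r_+(M_*)$. The implication ``$\omega_B\circ\epsilon=(\epsilon^H_B)_*$, with $\omega_B$ injective and $\sA$-linear, forces $\epsilon$ to be $\sA$-linear'' is also valid. The problem is that Step~2 carries the entire content of the lemma, and you assert it rather than prove it.

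\begin{itemize}
\item The paper proves $\omega_B\circ\epsilon=(\epsilon^H_B)_*$ (Proposition~\ref{prop:epsilon-is-epsilon}) only via Lemma~\ref{lemma:l1} and Proposition~\ref{prop:hom-equivalence}. Both presuppose that $\epsilon$, and hence $\bar\epsilon=\mu\circ(1\otimes\epsilon)$, is already $\sA$-linear. Without that, $\epsilon_*$ does not even send $\Hom_\sA(M_*,N_*)$ into $\Hom_\sA(M_*,r_+(N_*))$.
\item Your ``weaker statement'' is not weaker. Since $\phi(n)$ and $\omega_B$ are injective, agreement under all $\phi(n,\ell)$ is equivalent to the full identification.
\item The tools you name do not compute it. The Nishida formulas describe the $\sA$-action on $H_*(\Sigma D_{C_p}(B))$, not where the Tate diagonal sends $x$.
\item You cannot reduce to $\epsilon^H_{S^q}$. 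The map $\epsilon^H_B$ is natural in spectrum maps but not in $H$-module maps $\bar x\colon H\wedge S^q\to H\wedge B$. Naturality along $\bar x$ would give $\omega_B(1\otimes x)$. The discrepancy is exactly the terms $t^{-j(p-1)}\otimes\P^j_*(x)$ and $ut^{-1-j(p-1)}\otimes\P^j_*\beta_*(x)$ that you need to produce.
\item Your filtration remark concerns only the leading term of $\epsilon(x)$. It does nothing to match $\epsilon(x)$ with $\omega_B^{-1}(\epsilon^H_B)_*(x)$.
\item A smaller point: $(\epsilon^H_B)_*$ is $\sA$-linear because $\epsilon^H_B=\kappa\circ(1\wedge\epsilon_B)$, not merely because it is an $H$-module map.
\end{itemize}

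\textbf{The algebraic route.} This is the paper's route, but your sketch lacks its specific input. For $p=2$, expand $\Sq^s_*\epsilon(x)$ using \eqref{eq:steenrod-singer-squares} and compare the coefficient of each $u^{-j}$ with $\epsilon(\Sq^s_*(x))$. This shows that $\epsilon$ commutes with $\Sq^s_*$ if and only if
\[
\Sq^s\Sq^j=\sum_k\binom{j-k-1}{s-2k}\Sq^{s+j-k}\Sq^k
\]
holds in $\sA$ for \emph{all} integers $s$ and $j$.

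\begin{itemize}
\item For $s<2j$ this is an Adem relation.
\item For $s\ge 2j$ the left side is admissible, so the Adem relations say nothing. Validity in that range is the theorem of Bullett--Macdonald, which the paper cites.
\item The odd-primary case likewise relies on their extension of the Adem formulas for $\P^s\P^j$ and $\P^s\beta\P^j$ to all $s,j$.
\end{itemize}

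This is not a Rothe--Hagen or Vandermonde congruence of the kind in Remark~\ref{rem:vandermonde}; those govern the Cartan formula. No conjugation enters either, since $\P^j_*\P^s_*(x)=x\cdot\P^s\P^j$. Your $\beta_*$ check is correct and needs only $\beta^2=0$.
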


\begin{proof}

  Let $p=2$.  To show that \eqref{eq:singer-epsilon} is $\sA$-linear,
  we apply \eqref{eq:steenrod-singer-squares} to
  \eqref{eq:epsilon-def-p-even}, yielding
  \begin{align*}
    \Sq^s_* (\epsilon(x))
    &= \sum_{j} \sum_{k}\binom{-1+j-s}{s-2k} u^{-j+s-k} \otimes \Sq_*^k\Sq_*^j(x)\\
    &= \sum_{n}  u^{s-n} \otimes \sum_{k}\binom{-1+n-k-s}{s-2k}  \Sq_*^k\Sq_*^{n-k}(x) \,.
  \end{align*}
  On the other hand,
  $$
  \epsilon(\Sq^s_* (x))
  = \sum_{j} u^{-j} \otimes \Sq_*^j\Sq^s_*(x) \,.
  $$
  By comparing terms, with $j=n-s$, we see that $\epsilon$ and
  $\Sq^s_*$ commute if and only if
  \begin{align*}
    \Sq_*^j\Sq^s_* = \sum_{k}\binom{-1+j-k}{s-2k}  \Sq_*^k\Sq_*^{j+s-k}
  \end{align*}
  for all integers $s$ and $j$, or, equivalently,
  \begin{equation}\label{eq:epsilon-a-linearity-condition}
    \Sq^s\Sq^j = \sum_{k}\binom{-1+j-k}{s-2k}  \Sq^{j+s-k}\Sq^k
  \end{equation}
  in $\sA$ for all integers $s$ and $j$.  When $s<2j$, this is an Adem
  relation.  However, it is a result of Bullett--Macdonald
  \cite{BM1982} that \eqref{eq:epsilon-a-linearity-condition} also holds as
  a relation in the mod~2 Steenrod algebra for all integers $s$ and
  $j$, without the restriction that $s<2j$.
  
  The proof in the case of $p>2$ follows a similar route, again
  relying on Bullett--Macdonald \cite{BM1982} to say that the usual
  formulas giving the mod~$p$ Adem relations for $\P^s\P^j$ and
  $\P^s \beta \P^j$ are valid for all integers $s$ and $j$.
\end{proof}

\subsection{Untwisting}
Let $C$ be a spectrum such that $C/p$ is bounded below, and let
$N_*=H_*(C)$.  Define $\bar\epsilon$ to be the composite
\begin{equation}
  \label{eq:epsilonbar}
  \bar\epsilon\: r_+(\bF_p) \otimes N_*
  \overset{1\otimes \epsilon}\longto
  r_+(\bF_p)\otimes r_+(N_*)
  \overset\mu\longto
  r_+(N_*)\,,
\end{equation}
where $\epsilon$ is the $\sA$-linear
homomorphism~\eqref{eq:singer-epsilon}, the domain and codomain of
$1\otimes\epsilon$ have the diagonal $\sA$-action, and~$\mu$ is the
multiplication map sending $u^it^r\otimes u^jt^s\otimes x$ to
$u^{i+j}t^{r+s}\otimes x$.  Since $C/p$ is bounded below,
Proposition~\ref{prop:r-plus-is-monoidal} implies that $\mu$, and
therefore the composite homomorphism $\bar\epsilon$, is $\sA$-linear.
\begin{lemma}
  \label{lemma:barepsilon}
  Let $C$ be a spectrum such that $C/p$ is bounded below, and let
  $N_*=H_*(C)$.  Then
  $\bar\epsilon\: r_+(\bF_p)\otimes N_*\to r_+(N_*)$ is an
  isomorphism of right $\sA$-modules.
\end{lemma}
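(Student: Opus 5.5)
We already know $\bar\epsilon$ is right $\sA$-linear, so it suffices to show that it is bijective on underlying graded $\bF_p$-vector spaces. The plan is a triangularity argument with respect to a filtration in which $\bar\epsilon$ becomes the identity on associated graded pieces.

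First I compute $\bar\epsilon$ on a basis element. For $p$ odd, $x\in N_q$ and $u^it^r\in r_+(\bF_p)$, the definitions of $\epsilon$ and $\mu$ give
\[
\bar\epsilon(u^it^r\otimes x) = u^it^r\otimes x + \sum_{j\ge1}(-1)^j u^it^{r-j(p-1)}\otimes \P^j_*(x) + \sum_{j\ge0}(-1)^j u^{i+1}t^{r-1-j(p-1)}\otimes \P^j_*\beta_*(x).
\]
Here $u^{i+1}$ means $0$ if $i=1$, since $u^2=0$, up to the sign coming from $u$ commuting past $x$. The analogous formula holds for $p=2$. All sums are finite, because $N_*=H_*(C)$ is algebraic. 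The key observation is that every correction term lies in $r_+(\bF_p)\otimes N_{*<q}$, since $\P^j_*$ for $j\ge1$ and $\beta_*$ strictly lower the internal degree of $x$.

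Second, I filter both sides by the degree of the $N_*$-factor: set $G_q=r_+(\bF_p)\otimes N_{*\le q}$ in the source and $G_q = r_+(N_{*\le q})$ in the target. Both filtrations are exhaustive. Because $N_*$ is bounded below, they are also bounded below in the $N$-degree, starting at the bottom degree $k$ of $N_*$. The map $\bar\epsilon$ preserves these filtrations. On the subquotient $r_+(\bF_p)\otimes N_q$ it induces the canonical identification $u^it^r\otimes x\mapsto u^it^r\otimes x$, which is an isomorphism.

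Third, I run induction on $q$ starting from the bottom degree $k$, using the five lemma on the short exact sequences $0\to G_{q-1}\to G_q\to G_q/G_{q-1}\to 0$. This shows that $\bar\epsilon|_{G_q}$ is an isomorphism for every $q$. Passing to the colimit over $q$ then gives that $\bar\epsilon$ is bijective. Concretely, the inverse can also be written down as a finite recursion within each total degree, lowering the $N$-degree at each step until the bottom degree $k$ is reached.

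The main subtlety is making sure that this bookkeeping holds on the uncompleted $r_+$, where the inductive argument is genuinely finite. In a fixed total degree, an element of $r_+(\bF_p)\otimes N_*$ is a finite sum, so it lies in some $G_q$, and the induction over $q\ge k$ terminates because $N_*$ is bounded below. A second point needing care is the sign conventions in $\mu$ (the $(-1)^{|x|j}$ factor). These only affect the correction terms, so they do not change the triangularity of $\bar\epsilon$ with identity on the diagonal.
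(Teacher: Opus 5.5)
Your proof is correct and follows essentially the same route as the paper. The paper also filters both sides by the degree of the $N_*$-factor and notes that $\bar\epsilon(u^it^r\otimes x)\equiv u^it^r\otimes x$ modulo $r_+(N_{*<q})$, so $\bar\epsilon$ is an isomorphism on filtration quotients; it then uses that $N_*$ is bounded below to get isomorphisms $r_+(\bF_p)\otimes N_{*\le q}\cong r_+(N_{*\le q})$ and passes to the colimit over $q$.
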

\begin{proof}
  Filter $N_*$ by degree, so that $F_q N_* = N_{*\leq q}$ for each
  $q\in\bZ$.  Every inclusion $F_{q-1}N_*\subset F_{q}N_*$ is a
  morphism of right $\sA$-modules, and induces filtrations of
  $r_+(\bF_p)\otimes N_*$ and $r_+(N_*)$.

  For each $x\in N_q$ we have
  $\bar\epsilon(u^it^r\otimes x) \equiv u^it^r\otimes x$ modulo the
  submodule $r_+(N_{*<q})$.  Consequently, the homomorphism
  $\bar\epsilon$ is a morphism of filtered right $\sA$-modules and it
  induces isomorphisms of filtration quotients.  Since $N_*$ is bounded
  below, $\bar\epsilon$ restricts to an isomorphism of right
  $\sA$-modules
  $$
  F_q (\bar\epsilon)\: r_+(\bF_p) \otimes N_{*\leq q}
  \overset\cong\longto r_+(N_{*\leq q})
  $$
  for each $q\in\bZ$.  The lemma then follows by passing to the
  colimit over~$q$.
\end{proof}

\subsection{$\Hom$-isomorphisms}

Let $N_*$ be a right $\sA$-module, and let the tensor product
$r_+(\bF_p)\otimes N_*$ have the diagonal action by~$\sA$.  Since
$1\in r_+(\bF_p)$ is $\sA_*$-comodule primitive, the inclusion
$i\: N_*\to r_+(\bF_p)\otimes N_*$ given by $x\mapsto 1\otimes x$
is a morphism of right $\sA$-modules.

\begin{lemma}
  \label{lemma:hom-iso}
  Let $M_*$ and $N_*$ be right $\sA$-modules, and assume that $M_*$ is
  algebraic. Then
  $$
  i_* \: \Hom_{\sA}(M_*, N_*)
  \overset\cong\longto
  \Hom_{\sA}(M_*, r_+(\bF_p)\otimes N_*)
  $$
  is an isomorphism.
\end{lemma}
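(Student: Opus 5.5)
The plan is to split $r_+(\bF_p)$ along the $\sA$-submodule $\bF_p\{1\}$ and to use algebraicity of $M_*$ to kill maps into the complementary part.

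By Lemma~\ref{lemma:alg}, $\bF_p\{1\}\subset r_+(\bF_p)$ is a right $\sA$-submodule and the quotient $Q_* := r_+(\bF_p)/\bF_p\{1\}$ is totally non-algebraic. Tensoring the short exact sequence
\[
0 \longto \bF_p\{1\} \longto r_+(\bF_p) \longto Q_* \longto 0
\]
with $N_*$ over $\bF_p$ keeps it exact. All three terms carry the diagonal $\sA$-action, and the maps are $\sA$-linear. The first term is $\bF_p\{1\}\otimes N_* \cong N_*$, and under this identification the inclusion is exactly $i$, since $1$ is comodule primitive. So we get a short exact sequence of right $\sA$-modules
\[
0 \longto N_* \overset{i}\longto r_+(\bF_p)\otimes N_* \longto Q_*\otimes N_* \longto 0 .
\]

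Applying the left exact functor $\Hom_\sA(M_*,-)$ gives an exact sequence
\[
0 \to \Hom_\sA(M_*,N_*) \overset{i_*}\to \Hom_\sA(M_*, r_+(\bF_p)\otimes N_*) \to \Hom_\sA(M_*, Q_*\otimes N_*).
\]
In particular $i_*$ is injective. For surjectivity it suffices to show that the last group vanishes.

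By Lemma~\ref{lemma:non-algebraic}, $Q_*\otimes N_*$ is totally non-algebraic. Let $f\colon M_*\to Q_*\otimes N_*$ be $\sA$-linear and let $x\in M_*$. Since $x$ is algebraic, only finitely many $\P^s_*(x)$ (resp.\ $\Sq^s_*(x)$) are nonzero. Linearity gives $\P^s_*(f(x)) = f(\P^s_*(x))$, so the same finiteness holds for $f(x)$. Hence $f(x)$ is algebraic, which forces $f(x)=0$. Thus $f=0$ and $i_*$ is an isomorphism.

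The step most likely to need care is this last one. The definition of an algebraic element is "contained in an algebraic submodule", which the paper shows is equivalent to the finiteness condition on $\P^s_*(x)$. To see that the image $f(\langle x\rangle)$ of the algebraic submodule generated by $x$ is again algebraic, one needs the equivalence with the finiteness condition, or else that images of algebraic modules are algebraic. The latter holds because the condition that only finitely many $\P^s_*$ act nontrivially passes to quotients. For $p$ odd, the condition must also be read as covering all operations $\P^s_*$ and $\P^s_*\beta_*$, matching the dual-basis description of the comodule structure. The remaining verifications are routine.
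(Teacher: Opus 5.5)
Your proof is correct and is essentially the paper's argument. It uses the same short exact sequence $0\to N_*\to r_+(\bF_p)\otimes N_*\to (r_+(\bF_p)/\bF_p\{1\})\otimes N_*\to 0$, left exactness of $\Hom_\sA(M_*,-)$, and Lemmas~\ref{lemma:alg} and~\ref{lemma:non-algebraic} to kill any map from the algebraic $M_*$ into the totally non-algebraic quotient. Your observation that the image of an algebraic submodule is again algebraic is the right way to justify the paper's one-line claim that $q\circ f=0$, and your caveat for $p$ odd is well taken: finiteness of the $\P^s_*(x)$ alone does not force $x$ to be algebraic, so one must also control the $\P^s_*\beta_*(x)$.
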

\begin{proof}
  The morphism $i\: N_*\to r_+(\bF_p)\otimes N_*$ is injective.  Thus,
  by left-exactness of $\Hom_\sA(M_*, -)$, it suffices to show that
  $i_*$ is surjective.  Take any~$f$ in the codomain of~$i_*$, and
  consider the following short exact sequence of right $\sA$-modules:
  \begin{equation*}
    \label{eq:ses}
    0\longto
    N_* \overset{i}\longto
    r_+(\bF_p)\otimes N_* \overset{q}\longto
    \frac{r_+(\bF_p)}{\bF_p\{1\}} \otimes N_*
    \longto 0\,.
  \end{equation*}
  Lemma~\ref{lemma:alg} and Lemma~\ref{lemma:non-algebraic} imply
  that the codomain of~$q$ is totally non-algebraic.  Under the assumption
  that $M_*$ is algebraic, the composite $q\circ f$ must therefore be
  trivial.  Thus~$f$ lifts over~$i$, and the lemma follows.
\end{proof}

\begin{proposition}
  \label{prop:hom-equivalence}
  Let $B$ and $C$ be spectra, and assume that $C/p$ is bounded below.
  Let $M_*=H_*(B)$ and $N_*= H_*(C)$.  Then
  $$
  \epsilon_*\: \Hom_\sA(M_*, N_*) \overset\cong\longto
  \Hom_\sA(M_*, r_+(N_*))
  $$
  is an isomorphism.
\end{proposition}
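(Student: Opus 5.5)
The plan is to factor $\epsilon$ through the untwisting isomorphism and then apply the $\Hom$-isomorphism, so that $\epsilon_*$ becomes a composite of two isomorphisms. Concretely, consider the composite
\[
N_* \overset{i}\longto r_+(\bF_p)\otimes N_* \overset{\bar\epsilon}\longto r_+(N_*) \,,
\]
where $i(x) = 1\otimes x$ and $\bar\epsilon = \mu\circ(1\otimes\epsilon)$ is the homomorphism~\eqref{eq:epsilonbar}. Since $1\in r_+(\bF_p)$ is the unit for $\mu$, we get $\bar\epsilon(1\otimes x) = \mu(1\otimes\epsilon(x)) = \epsilon(x)$, so $\epsilon = \bar\epsilon\circ i$ as maps of right $\sA$-modules. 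For this step we need that $\epsilon\:N_*\to r_+(N_*)$ is defined and $\sA$-linear. It is defined because $N_* = H_*(C)$ is the homology of a spectrum and hence algebraic, and the preceding lemma gives $\sA$-linearity.

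Applying $\Hom_\sA(M_*,-)$ gives
\[
\epsilon_* = \bar\epsilon_*\circ i_* \: \Hom_\sA(M_*,N_*) \longto \Hom_\sA(M_*, r_+(\bF_p)\otimes N_*) \longto \Hom_\sA(M_*, r_+(N_*)) \,.
\]
The second map is an isomorphism because $\bar\epsilon$ is an isomorphism of right $\sA$-modules by Lemma~\ref{lemma:barepsilon}, which is where the hypothesis that $C/p$ is bounded below enters. The first map is an isomorphism by Lemma~\ref{lemma:hom-iso}, applied with $M_* = H_*(B)$. That lemma requires $M_*$ to be algebraic, and this holds because the homology of any spectrum is an algebraic right $\sA$-module, as noted in the discussion before Lemma~\ref{lemma:alg}. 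Composing the two isomorphisms shows that $\epsilon_*$ is an isomorphism.

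The only point that needs care is that all three maps ($i$, $\bar\epsilon$, $\epsilon$) are $\sA$-linear for the diagonal action on $r_+(\bF_p)\otimes N_*$, since otherwise they do not induce maps on $\Hom_\sA$. For $i$ this follows because $1$ is comodule primitive. For $\bar\epsilon$ it follows from Proposition~\ref{prop:r-plus-is-monoidal}, which applies since $C/p$ is bounded below. With these established, no step presents a genuine obstacle.
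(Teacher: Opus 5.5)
Your proposal is correct and is essentially the paper's proof: the paper likewise factors $\epsilon_* = \bar\epsilon_*\circ i_*$ and applies Lemma~\ref{lemma:hom-iso} (using that $M_*=H_*(B)$ is algebraic) and Lemma~\ref{lemma:barepsilon}. Your explicit checks that $\bar\epsilon\circ i=\epsilon$ via the unit $1\in r_+(\bF_p)$, and that $i$, $\epsilon$ and $\bar\epsilon$ are $\sA$-linear, fill in details the paper leaves implicit.
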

\begin{proof}
  As a right $\sA$-module, $M_*$ is algebraic since it is the mod~$p$
  homology of a spectrum.  The lemma now follows by applying
  Lemma~\ref{lemma:hom-iso} and Lemma~\ref{lemma:barepsilon} to the
  two homomorphisms of the factorization
  \[
    \epsilon_* \: \Hom_\sA(M_*, N_*) \overset{i_*}\longto
    \Hom_\sA(M_*, r_+(\bF_p)\otimes N_*) \overset{\bar\epsilon_*}\longto
    \Hom_\sA(M_*, r_+(N_*)) \,.
  \]
\end{proof}

\subsection{Finite factorization}

Let $B$ be a spectrum, and consider the morphism of
right $\sA$-modules
$$
(\epsilon_B^H)_*\: H_*(B)\longto
H_*^c(R_+(B))
$$
induced by the $H$-based Tate diagonal
$\epsilon_B^H\: H\wedge B\to R_+^H(H\wedge B)$.

\begin{lemma}\label{lemma:finite-faktorization}
  There is a unique natural $\sA$-linear homomorphism
  $\phi_{H_*B}\: H_*(B)\to r_+(H_*(B))$ such that $(\epsilon^H_B)_*$
  factors as
  \begin{equation}
      \label{eq:epsilon-comp}
      (\epsilon^H_B)_* \:
      H_*(B)
      \overset{\phi_{H_* B}}\longto
      r_+(H_*(B))
      \overset{\omega_B}\longto
      H^c_*(R_+(B)) \,.
    \end{equation}
    Furthermore, $\phi_{H_* B}$ is compatible
    with the homology suspension, in the sense that
    $\phi_{H_*\Sigma^n B} = \Sigma^n \phi_{H_* B}$.
\end{lemma}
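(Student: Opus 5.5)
We want a natural $\sA$-linear $\phi_{H_*B}$ with $\omega_B\circ\phi_{H_*B} = (\epsilon^H_B)_*$. The plan is to produce it from the $\Hom$-isomorphism of Proposition~\ref{prop:hom-equivalence}, after first reducing to spectra $B$ for which $\omega_B$ is an isomorphism onto its image.

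\textbf{Uniqueness.} By Proposition~\ref{prop:omega-B}, $\omega_B$ is injective whenever $B/p$ is bounded below, so any factorization is unique in that case. For general $B$, I would write $H_*(B)=\colim_\alpha H_*(B_\alpha)$ over finite subspectra $B_\alpha$ and argue in two steps:
\begin{itemize}
\item naturality forces $\phi_{H_*B}$ on the image of each $H_*(B_\alpha)$;
\item $r_+$ commutes with filtered colimits, so $\phi_{H_*B}$ is determined on all of $H_*(B)$.
\end{itemize}

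\textbf{Existence for finite $B$.} Here $H_*(B)$ is bounded and of finite type, so $r_+(H_*(B))$ is discretely filtered by Lemma~\ref{lemma:rbb-rplus}, and $\omega_B=\omega_B^\wedge$ is an isomorphism by Proposition~\ref{prop:omega-B}. I would simply set
\[
\phi_{H_*B}=\omega_B^{-1}\circ(\epsilon^H_B)_* .
\]
This is $\sA$-linear because $\omega_B$ is $\sA$-linear (Proposition~\ref{prop:omega-b-is-a-linear}) and $(\epsilon^H_B)_*$ is induced by a map of $H$-modules compatible with the Steenrod action. Naturality in maps of finite spectra follows from naturality of $\omega$ and of $\epsilon^H$, together with injectivity of the $\omega$'s. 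For arbitrary $B$ I would pass to the colimit over finite subspectra. This uses that $H_*(-)$ and $r_+(-)=H^c_*(S^{tC_p})\otimes(-)$ commute with filtered colimits, and that $\omega$ is natural. The $\sA$-linearity of $r_+$ on the colimit then holds by the algebraic definition.

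The main obstacle is verifying that the colimit factorization recovers $(\epsilon^H_B)_*$ for infinite $B$. This needs compatibility of $\omega_B$ with colimits, which holds by Definition~\ref{dfn:omega}, since $\omega_B$ is defined on elements through maps from spheres. It also needs that $(\epsilon^H_B)_*(x)$ depends only on a finite subspectrum carrying $x$; this follows from naturality of $\epsilon^H$ in $B$. If the infinite case proves awkward, I would restrict to $B/p$ bounded below, which suffices for later use. In that setting the same formula $\omega_B^{-1}\circ(\epsilon^H_B)_*$ makes sense once we show $(\epsilon^H_B)_*$ lands in the image of $\omega_B$, and the finite approximation shows exactly this.

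\textbf{Compatibility with suspension.} Combine Corollary~\ref{cor:epsilon-suspensions}, which says $\epsilon^H$ commutes with $\Sigma^n$, with Lemma~\ref{lemma:omega-suspensions}, which says the same for $\omega$. Together these show that $\Sigma^n\phi_{H_*B}$ also factors $(\epsilon^H_{\Sigma^nB})_*$ through $\omega_{\Sigma^nB}$, using the identification $r_+(\Sigma^nH_*B)\cong\Sigma^n r_+(H_*B)$ under which the suspension isomorphisms correspond. Uniqueness then gives $\phi_{H_*\Sigma^nB}=\Sigma^n\phi_{H_*B}$.
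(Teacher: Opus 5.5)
Your proposal is correct and is essentially the paper's argument. For finite $F$, where $c$, $\omega_F$ and $\omega_F^\wedge$ are isomorphisms, you set $\phi_{H_*F}=\omega_F^{-1}\circ(\epsilon^H_F)_*$; you extend to arbitrary $B$ through the colimit over finite spectra over $B$, using that $H_*(-)$ and $r_+(-)$ commute with filtered colimits; and you deduce suspension compatibility from Corollary~\ref{cor:epsilon-suspensions} and Lemma~\ref{lemma:omega-suspensions}. The Hom-isomorphism of Proposition~\ref{prop:hom-equivalence} in your opening plays no role here (the paper uses it only afterwards, in Lemma~\ref{lemma:l1}, to identify $\phi_{H_*B}$ with Singer's $\epsilon$), and your fallback to bounded-below $B/p$ is unnecessary.
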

\begin{proof}

  For each spectrum $B$ we have natural morphisms
  $$
  \xymatrix{
    H_*(B) \ar[r]^-{(\epsilon^H_B)_*}
	& H^c_*(R_+(B)) \\
    r_+(H_*(B)) \ar@{ >->}[r]^-{c} \ar[ur]^-{\omega_B}
	& R_+(H_*(B)) \ar[u]_-{\omega_B^\wedge}
  }
  $$
  of right $\sA$-modules.  When $B = F$ is finite, the morphisms $c$,
  $\omega_F$ and $\omega_F^\wedge$ are isomorphisms.  By passing to
  the colimit over all finite spectra~$F$ over~$B$, we get natural
  morphisms
  $$
  \xymatrix{
    H_*(B) \ar[r]
	& \colim_{F \to B} H^c_*(R_+(F)) \ar[r]
	& H^c_*(R_+(B)) \\
    r_+(H_*(B)) \ar[r]^-{\cong} \ar[ur]^-{\cong}
	& \colim_{F \to B} R_+(H_*(F)) \ar[u]^-{\cong} \ar@{ >->}[r]
	& R_+(H_*(B)) \ar[u]_-{\omega_B^\wedge} \,.
  }
  $$
  This uses that $\colim_{F \to B} H_*(F) \cong H_*(B)$, and
  that~$r_+$ preserves colimits.  Therefore, we have a unique natural
  morphism $\phi_{H_* B} \: H_*(B) \to r_+(H_*(B))$ of right
  $\sA$-modules such that
  $$
  \xymatrix{
    H_*(B) \ar[r]^-{(\epsilon^H_B)_*} \ar[d]_-{\phi_{H_* B}}
	& H^c_*(R_+(B)) \\
    r_+(H_*(B)) \ar@{ >->}[r]^-{c}
    \ar[ru]^{\omega_B}
	& R_+(H_*(B))) \ar[u]_-{\omega_B^\wedge}
  }
  $$
  commutes.

  Finally, we note that both $\epsilon^H_B$ and $\omega_B$ are
  compatible with suspension in $B$, see
  Corollary~\ref{cor:epsilon-suspensions} and
  Lemma~\ref{lemma:omega-suspensions}, hence so is $\phi_{H_* B}$.
\end{proof}

\begin{lemma}\label{lemma:l1}
  Let $B=\Sigma^q H$.  Then $\phi_{H_* B}$ of 
  Lemma~\ref{lemma:finite-faktorization} equals
  $\epsilon\: \Sigma^q\sA_*\to r_+(\Sigma^q\sA_*)$, and
  $(\epsilon^H_{\Sigma^q H})_*$ factors as
  \begin{equation*}
      (\epsilon^H_{\Sigma^q H})_* \:
      \Sigma^q \sA_*
      \overset{\epsilon}\longto
      r_+(\Sigma^q\sA_*)
      \overset{\omega_{\Sigma^qH}}\longto
      H^c_*(R_+(\Sigma^qH)) \,.
    \end{equation*}
\end{lemma}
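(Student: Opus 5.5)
The plan is to use the $\Hom$-isomorphism of Proposition~\ref{prop:hom-equivalence} together with the naturality and suspension compatibility of $\phi$ from Lemma~\ref{lemma:finite-faktorization}. By the suspension compatibility $\phi_{H_*\Sigma^q B} = \Sigma^q\phi_{H_*B}$, and since $\epsilon$ commutes with suspensions, it suffices to treat $q=0$, i.e.\ $B = H$ and $M_* = H_*(H) = \sA_*$. Here $H/p$ is bounded below, so Proposition~\ref{prop:hom-equivalence} applies with $B = C = H$. It shows that every $\sA$-linear map $\sA_* \to r_+(\sA_*)$ has the form $\epsilon\circ g$ for a unique $\sA$-linear $g\: \sA_*\to\sA_*$. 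Since $\phi_{H_*H}$ is $\sA$-linear by Lemma~\ref{lemma:finite-faktorization}, we get $\phi_{H_*H} = \epsilon\circ g$ for some such $g$. It remains to show that $g$ is the identity.

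To pin down $g$, I would exploit naturality of $\phi$ in maps of spectra into $H$, or equivalently use a module structure. The natural first attempt is to compose with the map induced by the multiplication $H\wedge H \to H$ in a suitable way, or more simply to evaluate on the bottom class. An $\sA$-linear self-map of $\sA_*$ (left comodule, right module) is determined by the image of the generator $1\in\sA_*$ in degree $0$, because $\sA_*$ is cofree as a comodule, or dually because $\Hom_{\sA}(\sA_*,\sA_*)$ corresponds to $\Hom(\sA_*,\bF_p)$-type data concentrated in finitely many choices per degree. To make this rigorous I would instead use Proposition~\ref{prop:hom-equivalence} with $M_*$ replaced by homology of the unit $S\to H$: naturality of $\phi$ along $\eta\: S \to H$ gives $\phi_{H_*H}(1) = r_+(\eta_*)(\phi_{H_*S}(1))$. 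Then I compute $\phi_{H_*S}$ directly: $(\epsilon^H_S)_* = \eta^H_*$ is the unit, and $\omega_S$ is the unit-preserving isomorphism, so $\phi_{H_*S}(1) = 1 = \epsilon(1)$.

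The main obstacle is passing from agreement on the class $1$ to agreement on all of $\sA_*$. For this I would use the left $H$-module structure of $H\wedge H$. Every class $a\in\sA_*=\pi_*(H\wedge H)$ is of the form $\bar a_*(\iota)$ for the $H$-module map $H\wedge S^{|a|}\to H\wedge H$. However, $\phi$ is only natural for spectrum maps, so the better route is the uniqueness statement. The composite $\omega_H\circ\epsilon\circ g = (\epsilon^H_H)_*$ and $g$ is $\sA$-linear. Both $g$ and the identity are right $\sA$-module endomorphisms of $\sA_*$ that are also induced compatibly with $\eta_*$, so $g\circ\eta_* = \eta_*$. Since $\sA_*$ is generated as a right $\sA$-module by nothing finite, I would rather argue that $g$ is a left $\sA_*$-comodule map of the cofree comodule $\sA_*$. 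Such maps correspond via $\epsilon\circ(\text{counit})$ to linear functionals $\sA_*\to\bF_p$, which are degree-zero data and hence determined by $g(1)$. Granting this, $g(1)=1$ gives $g=\mathrm{id}$, so $\phi_{H_*H}=\epsilon$ and the stated factorization follows.
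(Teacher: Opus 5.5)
Your argument is correct and is essentially the paper's proof. Both reduce to $q=0$ and invoke Proposition~\ref{prop:hom-equivalence} together with $\Hom_\sA(\sA_*,\sA_*)\cong\bF_p$, which you justify via cofreeness of $\sA_*$ as a left $\sA_*$-comodule, to get $\phi_{H_*H}=\epsilon\circ g$ with $g$ a scalar, and then fix the scalar from $\phi_{H_*H}(1)=1$. The paper gets $\phi_{H_*H}(1)=1$ from unitality of $\epsilon^H_H$, which is the same fact as your naturality along $\eta\colon S\to H$ plus $\omega_S$ being unit-preserving, and the detours through $H$-module maps in your write-up can simply be dropped.
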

\begin{proof}
  Since $\Sigma^q \phi_{H_*B} = \phi_{\Sigma^q H_*B}$, it suffices to
  consider the case of $q=0$.  Since $\sA_*$ is a bounded below right
  $\sA$-module and $\Hom_{\sA}(\sA_*, \sA_*) \cong \bF_p$, it follows
  from Proposition~\ref{prop:hom-equivalence} that
  $\phi_{\sA_*} = \epsilon$, up to multiplication by a scalar.  This
  scalar must equal~$1$ since multiplicativity of $\epsilon^H_H$ implies
  that $\phi_{\sA_*}(1)=1$.
\end{proof}

The proof of the following result is essentially the proof of
\cite{LNR12}*{Prop.~5.12}, re-written for homology instead of
cohomology.
\begin{proposition}\label{prop:epsilon-is-epsilon}
  Let $B$ be any spectrum.  Then $(\epsilon_B^H)_*$ factors as
  \begin{equation}
    (\epsilon_B^H)_* \: H_*(B)
    \overset{\epsilon}\longto
    r_+(H_*(B))
    \overset{\omega_B}\longto
    H_*^c(R_+(B)) \,.
  \end{equation}
\end{proposition}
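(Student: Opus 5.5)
By Lemma~\ref{lemma:finite-faktorization} we already have a unique natural $\sA$-linear $\phi_{H_*B}\: H_*(B)\to r_+(H_*(B))$ with $(\epsilon^H_B)_* = \omega_B\circ\phi_{H_*B}$. It therefore suffices to show $\phi_{H_*B} = \epsilon$ for every spectrum~$B$. Both are natural in~$B$ and compatible with suspension: $\epsilon$ is visibly so, and $\phi$ by the same lemma. Both also commute with filtered colimits: $r_+$ preserves colimits, and $\phi$ was constructed via $\colim_{F\to B}$. So we may reduce to $B=F$ a finite spectrum, and then shift by a suspension to assume $F$ is connective.

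The key step is to detect $\phi_{H_*F}$ by mapping into Eilenberg--MacLane spectra, as in \cite{LNR12}*{Prop.~5.12}. Fix a class $x\in H_q(F)$. Dually, test against maps $F\to \Sigma^q H$. Because $H\wedge H$ splits as a wedge of suspensions of~$H$, the homology classes of $F$ are jointly detected by the induced homomorphisms $f_*\: H_*(F)\to H_*(\Sigma^{q}H)=\Sigma^{q}\sA_*$, as $f$ ranges over the maps $F\to\Sigma^q H$ and $q$ varies. By naturality and Lemma~\ref{lemma:l1},
\[
r_+(f_*)\circ\phi_{H_*F} = \phi_{\Sigma^q\sA_*}\circ f_* = \epsilon\circ f_* = r_+(f_*)\circ\epsilon .
\]
It remains to show that the maps $r_+(f_*)$ jointly detect elements of $r_+(H_*F)$. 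Since $r_+(-)=r_+(\bF_p)\otimes(-)$ as vector spaces, this reduces to the joint injectivity of the $f_*$ on $H_*(F)$. That injectivity holds because, for finite~$F$, $H_*(F)$ embeds $\sA_*$-comodule-linearly into a finite sum of shifted copies of $\sA_*$ realized by maps $F\to\bigvee\Sigma^{q_i}H$. Concretely, I would take a finite product $F\to\prod_i \Sigma^{q_i}H$ whose induced map on homology is the comodule coaction $H_*(F)\to \sA_*\otimes H_*(F)$ composed with a choice of basis of the finite-dimensional space $H_*(F)$, and that map is injective.

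Alternatively, one can avoid joint detection and argue purely algebraically. The map $\epsilon\circ(\cdot)$ and $\phi$ give two elements of $\Hom_\sA(H_*F, r_+(H_*F))$, and Proposition~\ref{prop:hom-equivalence} identifies this Hom-group with $\Hom_\sA(H_*F,H_*F)$ via $\epsilon_*$. Hence $\phi=\epsilon\circ g$ for a unique natural $\sA$-linear endomorphism $g$ of $H_*(-)$. I would first try this route, using naturality against $F\to\Sigma^qH$ together with Lemma~\ref{lemma:l1} to force $g=1$; the comodule embedding above is what makes this precise.

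I expect the main obstacle to be exactly this detection step: making sure naturality against maps into $\Sigma^q H$ pins down $g$, or $\phi$, on all of $H_*(F)$, including the scalar normalization, which comes from Lemma~\ref{lemma:l1} via $\phi_{\sA_*}(1)=1$. The colimit and suspension reductions are routine.
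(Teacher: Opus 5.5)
Your proposal is correct and is essentially the paper's argument. Both compare $\phi_{H_*B}$ with $\epsilon$ by naturality against all maps $f\colon B\to\Sigma^q H$, use Lemma~\ref{lemma:l1} on the target, and conclude because the homomorphisms $r_+(f_*)$ jointly detect elements of $r_+(H_*(B))$. The one difference is that the paper obtains this detection directly for arbitrary $B$ from the universal coefficient theorem: a nonzero $x\in H_q(B)$ pairs nontrivially with some $f\in H^q(B)$, so $f_*(x)\neq 0$. That makes your reductions to finite and connective $F$, and the comodule-embedding argument, unnecessary, though they are harmless.
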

\begin{proof}
  Let $q$ be any integer and $f \: B\to \Sigma^q H$ any map.  Consider
  the diagram
  \begin{equation}
    \label{eq:naturality-epsilon}
    \begin{aligned}
      \xymatrix@C12mm@R6mm{
      H_*(B)
      \ar[rr]^{(\epsilon^H_{B})*}
      \ar@<-0.5pt>[rd]^-{\phi_{H_*B}}
      \ar@<-4.5pt>[rd]_-{\epsilon}
      \ar[ddd]^{(1\wedge f)_*}
      && H_*^c(R_+(B))
         \ar[ddd]^{R^H_+(1\wedge f)_*} \\
      & r_+(H_*(B))
        \ar[ru]^{\omega_{B}}
        \ar[d]^{r_+(f_*)} \\
      & r_+(H_*(\Sigma^q H))
        \ar[rd]^{\omega_{\Sigma^q H}}\\
      H_*(\Sigma^q H)
      \ar[ru]^-{\epsilon}
      \ar[rr]_{(\epsilon^H_{\Sigma^q H})_*}
      && H_*^c(R_+(\Sigma^q H))\,.
         }
    \end{aligned}
  \end{equation}
  We want to show that $\phi_{H_*B} = \epsilon$.  The outer square and
  the left- and right-hand trapezoids of
  \eqref{eq:naturality-epsilon} commute since $\epsilon^H_B$,
  $\epsilon$, and $\omega_B$ are natural in~$B$.  The upper triangle
  commutes by Lemma~\ref{lemma:finite-faktorization}, and the lower
  triangle commutes by Lemma~\ref{lemma:l1}.

  The injectivity of $\omega_{\Sigma^q H}$ (from
  Proposition~\ref{prop:omega-B}) then implies that the image of
  $\epsilon - \phi_{H_*B}$ is contained in the kernel of $r_+(f_*)$.
  Since $q$ and $f$ were arbitrarily chosen, the universal coefficient
  theorem then implies that $\phi_{H_*B} = \epsilon$.
\end{proof}

\section{A residual differential on $R_+(M_*)$} \label{sec:residual-differential-on-rplus}

Recall from Subsection~\ref{sec:dgas} the (left) suspension operator
$S$ and the filtration-shift operator $\sh_a$.  Let $M_*$ be a right
$\sA$-module equipped with a differential
$\sigma\: \leftsuspension M_*\to M_*$ that is also a morphism in the
category of right $\sA$-modules.  Recall that this means that $\sigma$
commutes with even degree operations and anticommutes with odd degree
operations in $\sA$, and in particular that
$\beta_*\sigma = -\sigma\beta_*$.

Define a morphism
\begin{equation}
  \label{eq:sigmaone}
  \bar\sigma\: \leftsuspension r_+(M_*) \to \sh_{1}r_+(M_*)
\end{equation}
of filtered graded $\bF_p$-vector spaces by the formulas
\begin{align}
  \bar\sigma(t^r\otimes x)
  &= t^r\otimes \sigma(x) \label{eq:sigmaone-tr}\\
  \bar\sigma(ut^r\otimes x)
  &= t^r\otimes x - ut^r\otimes \sigma(x) \label{eq:sigmaone-utr}
    \intertext{for $p>2$, and}
  \bar\sigma(u^{2r}\otimes x)
  &= u^{2r}\otimes \sigma(x)  \label{eq:sigmaone-tr-even-prime} \\
  \bar\sigma(u^{2r+1}\otimes x)
  &= u^{2r}\otimes x + u^{2r+1}\otimes \sigma(x) \label{eq:sigmaone-utr-even-prime}
\end{align}
for $p=2$.  It is clear from
\eqref{eq:sigmaone-tr}--\eqref{eq:sigmaone-utr-even-prime} that $\bar\sigma$ is a
differential, and that it shifts Tate filtration
(Definition~\ref{dfn:tate-filtration-rplus}) by $+1$.  As noted in
Subsection~\ref{sec:dgas}, $\bar\sigma$ induces a differential
\begin{equation}
  \label{eq:sigmaone-completed}
  \bar\sigma^\wedge \: \leftsuspension R_+(M_*) \to \sh_{1}R_+(M_*)\,,
\end{equation}
by passing to the completion with respect to the Tate filtration.  We
will usually simply write $\bar\sigma$ for both \eqref{eq:sigmaone} and
\eqref{eq:sigmaone-completed}, relying on the context to decide whether we
are in the completed case or not.

Note that when $M_*=\bF_p$, the differential specified
by~\eqref{eq:sigmaone-tr}--\eqref{eq:sigmaone-utr-even-prime} agrees
with the formulas for the differential induced by the residual circle
action on $S^{tC_p}$ specified in Lemma~\ref{lemma:u-and-t}.  More
precisely,
$\omega_S\: r_+(\bF_p) \to H_*^c(R_+(S))\cong H_*^c(S^{tC_p})$
is an isomorphism of differential graded algebras.

\begin{lemma}\label{lemma:sigmaone-alinear}
  Let $M_*$ be a right $\sA$-module and
  $\sigma\: \leftsuspension M_*\to M_*$ a differential that is a
  morphism of right $\sA$-modules.  If $\beta_*$ acts trivially
  on $M_*$, then the differential \eqref{eq:sigmaone} and its
  completion \eqref{eq:sigmaone-completed} are both morphisms of
  filtered right $\sA$-modules.
\end{lemma}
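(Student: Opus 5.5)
We verify directly that $\bar\sigma$ graded-commutes with the generators $\beta$ and $\P^s$ (resp.\ $\Sq^s$) of $\sA$, using the explicit formulas \eqref{eq:steenrod-singer-tr}--\eqref{eq:steenrod-singer-squares}. It suffices to treat the uncompleted map \eqref{eq:sigmaone}. The completed map \eqref{eq:sigmaone-completed} is the limit of the induced maps on the quotients $r_+(M_*)/F_n$, and the $\sA$-action on $R_+(M_*)$ is the limit of the actions on these quotients. So $\sA$-linearity passes to completions once it holds for $r_+(M_*)$. That $\bar\sigma$ is filtration-shifting by $+1$ was already observed. Hence only the graded $\sA$-linearity remains: $\bar\sigma \circ \P^s_* = \P^s_* \circ \bar\sigma$ and $\bar\sigma\circ\beta_* = -\beta_*\circ\bar\sigma$, since suspension introduces the sign $(-1)^{|a|}$.

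For $p$ odd, the hypothesis $\beta_*=0$ on $M_*$ kills the second sum in \eqref{eq:steenrod-singer-tr}. We are left with a uniform formula
\[
\P^s_*(u^it^r\otimes x)=\sum_k \binom{-1-r-s(p-1)}{s-pk}u^it^{r+(s-k)(p-1)}\otimes \P^k_*(x)
\]
for $i\in\{0,1\}$.

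First take the Bockstein. For $i=0$ we get $\beta_*\bar\sigma(t^r\otimes x)=\beta_*(t^r\otimes\sigma x)=0$ and $\bar\sigma\beta_*(t^r\otimes x)=0$. For $i=1$ we get
\[
\beta_*\bar\sigma(ut^r\otimes x)=\beta_*(t^r\otimes x-ut^r\otimes\sigma x)=-t^{r+1}\otimes\sigma x,
\]
while $\bar\sigma\beta_*(ut^r\otimes x)=\bar\sigma(t^{r+1}\otimes x)=t^{r+1}\otimes\sigma x$. So the two composites anticommute, as required.

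Next take $\P^s_*$. For $i=0$, commutation follows termwise from $\sigma\P^k_*=\P^k_*\sigma$. For $i=1$, apply $\bar\sigma$ to the term $ut^{r'}\otimes\P^k_*x$ with $r'=r+(s-k)(p-1)$. This gives $t^{r'}\otimes \P^k_*x - ut^{r'}\otimes \sigma\P^k_*x$. Summing with the same binomial coefficients, this matches $\P^s_*(t^r\otimes x)-\P^s_*(ut^r\otimes\sigma x)$, because both formulas carry identical coefficients once the $\beta_*$-term vanishes. This is exactly where the hypothesis on $\beta_*$ is essential: without it, \eqref{eq:steenrod-singer-tr} would contain an extra $u$-term with no counterpart on the other side.

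For $p=2$, $\beta_*=\Sq^1_*=0$ on $M_*$ means $\Sq^k_*x=0$ for $k$ odd, because every odd square factors through $\Sq^1$ (via $\Sq^{2j+1}=\Sq^1\Sq^{2j}$ in the right action). Hence only even $k$ contribute in \eqref{eq:steenrod-singer-squares}. The check then splits according to the parity of $r$, and it amounts to binomial identities modulo $2$. The relevant coefficients are $\binom{-1-r-s}{s-2k}$ applied to $u^{2r'}$ and $u^{2r'+1}$, where the exponent changes parity by $s-k$. I expect this to be the main obstacle: the $p=2$ formula for odd powers differs from the odd-prime pattern, as the earlier warning noted. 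One must check, for instance, that $\Sq^s_*$ applied to $u^{2r}\otimes x+u^{2r+1}\otimes\sigma x$ agrees with $\bar\sigma\Sq^s_*(u^{2r+1}\otimes x)$. This reduces to congruences such as $\binom{-2-2r-s}{s-2k}\equiv\binom{-1-2r-s}{s-2k}$ when $s$ is even, with the parity of $s$ governing which terms survive. An alternative is to handle $\Sq^{2s}$ and $\Sq^{2s+1}=\Sq^1\Sq^{2s}$ separately and use the already verified Bockstein case, so that only even squares need the binomial check, via Lucas' theorem.
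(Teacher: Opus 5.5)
Your proposal is correct and is essentially the paper's argument: the paper also checks graded commutation directly from \eqref{eq:steenrod-singer-tr}--\eqref{eq:steenrod-singer-squares} and \eqref{eq:sigmaone-tr}--\eqref{eq:sigmaone-utr-even-prime}, computing $(\P^s_*\bar\sigma-\bar\sigma\P^s_*)(u^it^r\otimes x)$ for general $M_*$ as a sum of terms in $\P^k_*\beta_*(x)$ that vanishes when $\beta_*=0$ (you impose $\beta_*=0$ first, which amounts to the same thing), and it notes that $\beta_*\bar\sigma=-\bar\sigma\beta_*$ needs no hypothesis. The $p=2$ case you flag as the main obstacle is immediate: write exponents as $2a$ or $2a+1$ and keep only even $k$; for $s$ even you need $\binom{-1-2a-s}{s-2k}\equiv\binom{-2-2a-s}{s-2k}$ mod~$2$, which holds by Lucas' theorem since $s-2k$ is even (the same congruence is used in the proof of Lemma~\ref{lemma:fplus-is-a-linear}), while for $s$ odd the stray terms carry $\binom{-1-2a-s}{s-2k}$, which has even top and odd bottom and hence vanishes mod~$2$.
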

\begin{proof}
  Let $p>2$.  A direct calculation using
  \eqref{eq:steenrod-singer-tr}--\eqref{eq:steenrod-singer-utr} and
  \eqref{eq:sigmaone-tr}--\eqref{eq:sigmaone-utr} shows that
  \[
    (\P^s_*\bar\sigma - \bar\sigma \P^s_*) (u^it^r\otimes x)
    = (-1)^i\sum_k \binom{-1-r-s(p-1)}{s-pk-1}u^it^{-1+r+(s-k)(p-1)}
    \otimes \P^k_* \beta_* (x)\,.
  \]
  Thus, each $\P^s_*$ commutes with $\bar\sigma$ if $\beta_*=0$.  The
  relation $\beta_*\bar\sigma=-\bar\sigma\beta_*$ follows immediately
  from \eqref{eq:steenrod-singer-beta} and
  \eqref{eq:sigmaone-tr}--\eqref{eq:sigmaone-utr}, without the
  assumption that $\beta_*(x)=0$ in~$M_*$.

  The proof for $p=2$ is similar, with $t$ replaced by $u^2$ and using
  \eqref{eq:steenrod-singer-squares} instead of
  \eqref{eq:steenrod-singer-tr}--\eqref{eq:steenrod-singer-beta}.
\end{proof}

\begin{proposition}
  Let $(M_*, \sigma)$ be a differential graded right $\sA$-module
  algebra.  If $\beta_*$ acts trivially on $M_*$, then
  $(r_+(M_*), \bar\sigma)$ is a filtered differential graded
  right $\sA$-module algebra and its completion
  $(R_+(M_*), \bar\sigma)$ is a complete differential graded right
  $\sA$-module algebra.  If $M_*$ is bounded below, then these
  filtered algebras are relatively bounded below (rbb).
\end{proposition}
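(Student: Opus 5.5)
The plan is to assemble the statement from pieces already established, checking the three ingredients of a filtered differential graded right $\sA$-module algebra in turn: the algebra structure, the $\sA$-linearity of the differential, and the Leibniz rule.

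\textbf{Algebra structure.} The product on $r_+(M_*)$ is the cup product $\mu$ of Proposition~\ref{prop:r-plus-is-monoidal}, sending $u^it^r\otimes x\otimes u^jt^s\otimes y$ to $(-1)^{|x|j}u^{i+j}t^{r+s}\otimes xy$. It is $\sA$-linear for arbitrary right $\sA$-modules by the algebraic argument of Remark~\ref{rem:vandermonde}. For $M_*$ bounded below, the alternative is to reduce to the homology of spectra as in the proof of the proposition following Proposition~\ref{prop:omega-b-is-a-linear}.

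The product respects the Tate filtration. Indeed, $\Fil(u^it^r\otimes x)$ is additive under $\mu$, since $|xy|=|x|+|y|$ and the exponents of $u$ and $t$ add. The case $u\cdot u$ needs a separate check: for $p$ odd $u^2=0$, while for $p=2$ we have $u^2=t$, which is consistent with filtration. Hence $r_+(M_*)$ is an object of $\alg(\sC,\otimes)$ once relative boundedness below is known.

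\textbf{The differential.} By Lemma~\ref{lemma:sigmaone-alinear}, the hypothesis $\beta_*=0$ makes $\bar\sigma\colon S r_+(M_*)\to\sh_1 r_+(M_*)$ a morphism of filtered right $\sA$-modules. That $\bar\sigma^2=0$ is immediate from the formulas. The substantive remaining point, which I expect to be the main obstacle, is the Leibniz rule \eqref{eq:leibniz}. The plan is to observe that $\bar\sigma$ is the sum of two commuting derivations:
\begin{itemize}
\item $1\otimes\sigma$ with the Koszul sign, i.e.\ $u^it^r\otimes x\mapsto(-1)^iu^it^r\otimes\sigma(x)$;
\item $\bar\sigma_0\otimes 1$, where $\bar\sigma_0$ is the differential on $r_+(\bF_p)=H^c_*(S^{tC_p})$ with $\bar\sigma_0(u)=1$ and $\bar\sigma_0(t)=0$.
\end{itemize}
The first is a derivation because $\sigma$ is one on $M_*$. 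The second is a derivation because $\omega_S$ identifies $r_+(\bF_p)$ with $H^c_*(S^{tC_p})$ as differential graded algebras (Lemma~\ref{lemma:u-and-t} and Proposition~\ref{prop:tate-dga}). Direct algebraic verification of the second is also easy.

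For $p$ odd, the Leibniz rule then reduces to checking generators of the form $u^it^r\otimes x$, tracking the signs $(-1)^{|x|j}$ from $\mu$ against the signs $(-1)^{|u^it^r\otimes x|}$. For $p=2$ signs disappear. Here one needs $\bar\sigma_0(u^n)=nu^{n-1}$, which matches \eqref{eq:sigmaone-tr-even-prime}--\eqref{eq:sigmaone-utr-even-prime} mod~$2$.

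\textbf{Completion and rbb.} Passing to completions, the discussion in Subsection~\ref{sec:dgas} shows that the completion functor $c$ carries $\dgalg(\sC,\otimes)$ to $\dgalg(\sC^\wedge,\ctensor)$. In particular $\sigma^\wedge=\bar\sigma^\wedge$ still satisfies the Leibniz rule, and the $\sA$-action extends by the discussion in Subsection~\ref{sec:filamods}. Finally, when $M_*$ is bounded below, Lemma~\ref{lemma:rbb-rplus} gives that $r_+(M_*)$ is rbb, and hence so is its completion $R_+(M_*)$ by Subsection~\ref{sec:completion}.
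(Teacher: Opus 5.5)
Your proposal is correct and follows the same route as the paper: the Leibniz rule by direct verification, $\sA$-linearity of $\bar\sigma$ from Lemma~\ref{lemma:sigmaone-alinear}, and passage to completions via the discussion in Subsection~\ref{sec:dgas}. Your splitting of $\bar\sigma$ into the Koszul-signed $1\otimes\sigma$ plus $\bar\sigma_0\otimes 1$ on the tensor-product algebra $r_+(\bF_p)\otimes M_*$ is a tidy way to organize the paper's ``direct calculation'', and you also make explicit what the paper leaves tacit: the product on $r_+(M_*)$ is filtered and $\sA$-linear (Remark~\ref{rem:vandermonde}), and the rbb property comes from Lemma~\ref{lemma:rbb-rplus}.
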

\begin{proof}
  By a direct calculation one can verify that $\bar\sigma$ satisfies
  the Leibniz formula \eqref{eq:leibniz}.  Thus,
  $(r_+(M_*), \bar\sigma)$ is a filtered differential graded
  $\bF_p$-algebra.

  Lemma \ref{lemma:sigmaone-alinear} ensures that $\bar\sigma$ is also
  morphism in the category of filtered right $\sA$-modules, and the
  proposition follows for $(r_+(M_*),\bar\sigma)$.  By the
  discussion following Proposition~\ref{prop:dgalg-category-iso}, the
  completed object $(R_+(M_*), \bar\sigma)$ is therefore a complete filtered
  differential graded right $\sA$-module algebra.
\end{proof}

\begin{lemma}\label{lemma:sigmaone-exact}
  The image and the kernel of the differential
  $\bar\sigma \: \leftsuspension r_+(M_*) \to \sh_1 r_+(M_*)$ are
  equal as filtered submodules of~$r_+(M_*)$, and are explicitly given
  by
  \begin{equation}
    \label{eq:ker-im-sigmaone}
    \ker(\bar\sigma) = \im(\bar\sigma)
	= \< t^r \otimes x - ut^r \otimes \sigma(x) \mid
	\text{$r \in \bZ$, $x \in M_*$} \> \,.
  \end{equation}
  Similarly, the image and the kernel of the completed differential
  $\bar\sigma \: S R_+(M_*) \to \sh_1 R_+(M_*)$ are equal, and are
  explicitly given by the completion of~\eqref{eq:ker-im-sigmaone}
  with respect to the Tate filtration.
\end{lemma}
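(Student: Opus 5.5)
The plan is to exhibit an explicit contracting homotopy for $\bar\sigma$ on $r_+(M_*)$ and then pass to completions. For $p$ odd, define an $\bF_p$-linear operator $h \colon r_+(M_*) \to r_+(M_*)$ of degree $-1$ by $h(t^r\otimes x) = ut^r\otimes x$ and $h(ut^r\otimes x) = 0$; for $p=2$ take $h(u^{2r}\otimes x) = u^{2r+1}\otimes x$ and $h(u^{2r+1}\otimes x)=0$. Up to the sign conventions coming from the suspension $S$, a direct check with \eqref{eq:sigmaone-tr}--\eqref{eq:sigmaone-utr} should give $\bar\sigma h + h\bar\sigma = \mathrm{id}$. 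Indeed, $\bar\sigma h(t^r\otimes x) = t^r\otimes x - ut^r\otimes\sigma(x)$ and $h\bar\sigma(t^r\otimes x) = ut^r\otimes\sigma(x)$, which sum to $t^r\otimes x$. Similarly, $h\bar\sigma(ut^r\otimes x) = ut^r\otimes x$ and $\bar\sigma h(ut^r\otimes x) = 0$. I will fix the signs so that this identity holds on the nose; if it fails by a sign, I will replace $h$ by $\pm h$ on the $u$-free part.

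From the identity $\bar\sigma h + h\bar\sigma = \mathrm{id}$ together with $\bar\sigma^2=0$, exactness follows. If $\bar\sigma(z)=0$, then $z=\bar\sigma(h z)$, so $\ker(\bar\sigma)=\im(\bar\sigma)$. Moreover $\im(\bar\sigma)=\im(\bar\sigma h)$, and $\bar\sigma h$ sends $t^r\otimes x$ to $t^r\otimes x - ut^r\otimes\sigma(x)$ and kills $ut^r\otimes x$. This gives the explicit spanning set in \eqref{eq:ker-im-sigmaone}. An equivalent check is that the image contains $\bar\sigma(ut^r\otimes x)$ and $\bar\sigma(t^r\otimes y)$, and every such element is a combination of the displayed generators, because $t^r\otimes\sigma(x) = \bar\sigma(t^r\otimes x - ut^r\otimes \sigma x)$ and $\sigma^2=0$.

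For the filtered statement, I will observe that $h$ shifts the Tate filtration by exactly $-1$, since $\Fil(ut^r\otimes x) = \Fil(t^r\otimes x)-1$, while $\bar\sigma$ shifts it by $+1$. Consequently, the filtration on the image induced from $r_+(M_*)$ agrees with the subspace filtration on the kernel; both are simply intersections with $F_n r_+(M_*)$.

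The main point needing care is the completed case. Because $h$ is filtration-shifting by a fixed amount, it extends continuously to $h^\wedge$ on $R_+(M_*)$. The identity $\bar\sigma^\wedge h^\wedge + h^\wedge\bar\sigma^\wedge=\mathrm{id}$ then holds on completions, since it holds on each quotient $r_+/F_n$ after the appropriate shift. Hence $\ker=\im$ in $R_+(M_*)$ by the same argument. It remains to see that this common subspace is the closure, that is the completion, of \eqref{eq:ker-im-sigmaone}. For this I will use that $\bar\sigma h$ is a filtration-preserving idempotent projector onto the cycles. Its completion is therefore a projector whose image is the limit of the images in the quotients $r_+/F_n$, which is exactly the completion of the uncompleted kernel with its induced filtration.
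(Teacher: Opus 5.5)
Your proposal is correct, and it argues differently from the paper. With the paper's formulas, the identity $\bar\sigma h + h\bar\sigma = \mathrm{id}$ holds on the nose for both $p$ odd and $p=2$, so no sign adjustment is needed.

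\textbf{How the paper argues.} It does not use a contracting homotopy. It proceeds in three separate steps:
\begin{enumerate}
\item It computes $\ker(\bar\sigma)$ directly on the $\bar\sigma$-invariant summands $\bF_p\{ut^r, t^r\}\otimes M_*$.
\item It checks the strictness $F_n\im(\bar\sigma) = \im(\bar\sigma)\cap F_{n+1}r_+(M_*)$ elementwise; this is \eqref{eq:subfiltration}.
\item It invokes the general Lemma~\ref{lemma:homology-commutes-with-completion}, whose hypotheses are exactly that strictness, to carry $\ker = \im$ through completion.
\end{enumerate}

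\textbf{How your homotopy replaces these steps.}
\begin{enumerate}
\item For a cycle $z$ you have $z = \bar\sigma(hz)$, which gives exactness.
\item Since $h(F_{n+1}r_+(M_*))\subseteq F_n r_+(M_*)$, you get $\bar\sigma(F_n S r_+(M_*)) = \ker(\bar\sigma)\cap F_{n+1}r_+(M_*)$. State this shifted equality explicitly. It is the one nontrivial filtered point, and ``both are intersections with $F_n$'' blurs the shift.
\item The filtration-preserving idempotent $e=\bar\sigma h$ splits $r_+(M_*) = \im(e)\oplus\ker(e)$ as filtered vector spaces. Completion is additive, so $\im(e^\wedge)=\im(e)^\wedge$, and injectivity into $R_+(M_*)$ comes for free. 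This replaces the auxiliary lemma.
\end{enumerate}

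\textbf{What each route buys.} Yours is self-contained and shows moreover that the cycles form a filtered direct summand. Note, however, that $h$ is not $\sA$-linear. For $p$ odd and $x\neq 0$,
\[
\beta_*h(t^r\otimes x)=t^{r+1}\otimes x\neq 0=h\beta_*(t^r\otimes x).
\]
So your splitting is only one of filtered vector spaces. This is consistent with the later result that \eqref{eq:f-plus-ses} need not split $\sA$-linearly. The paper's route instead isolates a reusable completion lemma.

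\textbf{A small slip.} In your ``equivalent check'', the identity $t^r\otimes\sigma(x)=\bar\sigma(t^r\otimes x - ut^r\otimes\sigma(x))$ is false. The right side is $0$, as it must be for a cycle. The correct statement is
\[
\bar\sigma(t^r\otimes y)=t^r\otimes\sigma(y)=\bar\sigma(ut^r\otimes\sigma(y)),
\]
which is the displayed generator with $x=\sigma(y)$. That remark is redundant given your main argument, so the error does not affect the proof.
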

\begin{proof}
  It follows from \eqref{eq:sigmaone-tr}--\eqref{eq:sigmaone-utr} and
  $\sigma^2 = 0$ that
  $\bar\sigma(ut^r\otimes \sigma(x)) = t^r\otimes \sigma(x) =
  \bar\sigma(t^r\otimes x)$.  Thus, $\im(\bar\sigma)$ is spanned by
  elements of the form $\bar\sigma(ut^r\otimes x)$, proving the right-hand
  equality of \eqref{eq:ker-im-sigmaone}.

  We proceed to show that $\im(\bar\sigma) = \ker(\bar\sigma)$, first as
  $\sA$-modules, and then as filtered objects.  Since $\bar\sigma$ is a
  differential, we immediately get that
  $\im(\bar\sigma) \subset \ker(\bar\sigma)$.  To show the reverse
  inclusion, consider an element $z = ut^r\otimes x' + t^r\otimes x$
  and its image under~$\bar\sigma$,
  \[
    t^r\otimes x' - ut^r \otimes \sigma(x') +
    t^r\otimes \sigma(x) = t^r \otimes (x'+\sigma(x))- ut^r\otimes
    \sigma(x') \,.
  \]
  It follows that if $z\in \ker(\bar\sigma)$ then $x'=-\sigma(x)$, and
  therefore $z = \bar\sigma(ut^r\otimes x)$.  Since the vector subspace
  $\bF_p\{ut^r, t^r\}\otimes M_* \subset r_+(M_*)$ is invariant
  under $\bar\sigma$ for any fixed $r \in\bZ$, it follows that any
  $z\in \ker(\bar\sigma)$ is a (finite) sum of the form
  $\sum_k \bar\sigma(ut^k\otimes x_k)$.  We conclude that
  $\ker(\bar\sigma) = \im(\bar\sigma)$, as $\sA$-modules.

  For morphisms $f\:A\to B$ and $g\:B\to C$ of filtered abelian
  groups, with components $\{f_n\}_{n \in \bZ}$ and
  $\{g_n\}_{n \in \bZ}$, we consider $\im(f)$ and $\ker(g)$ to have
  filtrations given by $F_n\im(f) = \im(f_n)$ and
  $F_n\ker(g) = \ker(g_n)$.  We always have an equality
  $F_n\ker(g) = \ker(g)\cap F_n B$, but in general only an inclusion
  $F_n\im(f) \subset \im(f)\cap F_nB$.  If $g\circ f=0$ then the
  inclusion $\im(f)\subset\ker(g)$ is a filtered homomorphism, but an
  equality $\im(f)=\ker(g)$ of abelian groups does not in general
  imply that $F_n\im(f)=F_n\ker(g)$ for each $n\in\bZ$.

  When $f=\bar\sigma$, however, we have that if
  $z = \bar\sigma(u t^r \otimes x) \in r_+(M_*)$ lies in Tate
  filtration~$n+1$, then $u t^r \otimes x \in S r_+(M_*)$ lies
  in Tate filtration~$n$.  Hence
  \begin{equation}
    \label{eq:subfiltration}
    F_n \im(\bar\sigma) = \im(\bar\sigma) \cap F_{n+1} r_+(M_*) \,,
  \end{equation}
  and we get that
  \[
    F_n\im(\bar\sigma) = \im(\bar\sigma) \cap F_{n+1}r_+(M_*) =
    \ker(\bar\sigma) \cap F_{n+1}r_+(M_*) =
    F_{n+1}\ker(\bar\sigma)\,,
  \]
  since $\im(f) = \ker(g)$ as unfiltered objects.  This shows the
  first part of the lemma.

  Lemma~\ref{lemma:homology-commutes-with-completion} applies to
  \[
    S^2 r_+(M_*) \overset{S\bar\sigma}\longto S \sh_1
    r_+(M_*) \overset{\sh_1\bar\sigma}\longto \sh_2 r_+(M_*)
  \]
  because of~\eqref{eq:subfiltration}.  Thus,
  $\im(\bar\sigma)=\ker(\bar\sigma)$ implies that
  $\im(\bar\sigma)^\wedge = \ker(\bar\sigma)^\wedge$ and
  $\im(\bar\sigma^\wedge)=\ker(\bar\sigma^\wedge)$.
\end{proof}

\begin{lemma}\label{lemma:homology-commutes-with-completion}
  Let $f\: A\to B$ and $g\: B\to C$ be morphisms of filtered abelian
  groups such that $g\circ f = 0$.  If $\im(f_n) = \im(f) \cap F_nB$
  and $\im(g_n) = \im(g) \cap F_n C$
  for each~$n\in \bZ$, then
  there are canonical isomorphisms $\ker(f)^\wedge \cong \ker(f^\wedge)$,
$\im(f)^\wedge \cong \im(f^\wedge)$, $\ker(g)^\wedge \cong \ker(g^\wedge)$
and
  \begin{equation}
    \label{eq:homology-completion}
    \frac{\ker(g)^\wedge}{\im(f)^\wedge}
    \cong
    \frac{\ker(g^\wedge)}{\im(f^\wedge)}\,.
  \end{equation}
\end{lemma}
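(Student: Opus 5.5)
The plan is to work entirely with the inverse systems $A/F_nA$, $B/F_nB$, $C/F_nC$ and deduce everything from exactness of limits on suitable short exact sequences of towers. The hypothesis $\im(f_n)=\im(f)\cap F_nB$ says that the filtration on $\im(f)$ given by $F_n\im(f)=\im(f_n)$ coincides with the subspace filtration induced from $B$. In particular, the maps $\im(f)/F_n\im(f)\to B/F_nB$ are injective, and similarly for $g$.

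\textbf{Completion of images.} For the first step, consider the short exact sequences of towers
\[
0\to \frac{\ker(f)}{F_n\ker(f)}\to \frac{A}{F_nA}\to \frac{\im(f)}{F_n\im(f)}\to 0 .
\]
These are exact because $F_n\ker(f)=\ker(f)\cap F_nA$ and $F_n\im(f)=f(F_nA)$. Every map in the towers is surjective, so the towers are Mittag-Leffler and $\lim$ is exact. Hence
\[
0\to\ker(f)^\wedge\to A^\wedge\to\im(f)^\wedge\to 0
\]
is exact.

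Next, by the hypothesis the map $\im(f)^\wedge\to B^\wedge$ is a limit of injections, hence injective. Its image is therefore exactly $f^\wedge(A^\wedge)$, which gives $\im(f)^\wedge\cong\im(f^\wedge)$. The map $\ker(f)^\wedge\to\ker(f^\wedge)$ is then an isomorphism by comparing the exact sequence above with $0\to\ker(f^\wedge)\to A^\wedge\to\im(f^\wedge)\to 0$. The same argument applied to $g$ yields $\ker(g)^\wedge\cong\ker(g^\wedge)$, using the hypothesis on $\im(g_n)$.

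\textbf{The quotient.} Since $g\circ f=0$, we have $\im(f)\subset\ker(g)$, compatibly with filtrations. Both $F_n\im(f)=\im(f)\cap F_nB$ and $F_n\ker(g)=\ker(g)\cap F_nB$ are subspace filtrations from $B$, so the filtration on $\im(f)$ is also the subspace filtration from $\ker(g)$. Therefore the sequences
\[
0\to \frac{\im(f)}{F_n\im(f)}\to\frac{\ker(g)}{F_n\ker(g)}\to \frac{\ker(g)}{\im(f)+F_n\ker(g)}\to0
\]
are exact towers of surjections. Taking limits gives $\ker(g)^\wedge/\im(f)^\wedge\cong\lim_n \ker(g)/(\im(f)+F_n\ker(g))$. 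Combining this with the identifications from the first step yields the isomorphism~\eqref{eq:homology-completion}.

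\textbf{Main point to check.} I expect the main care is needed in the first step: one must be sure that $\im(f^\wedge)$ is really the limit of the $\im(f)/F_n\im(f)$, and not merely contained in it. This is exactly where the strictness hypothesis enters, through injectivity into $B/F_nB$. Surjectivity follows because the limit of surjective towers remains surjective on the completed $A^\wedge$. Canonicity of all the isomorphisms is clear, since each is induced by inclusions and quotient maps.
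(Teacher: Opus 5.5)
Your argument is correct and is essentially the paper's proof. Both proofs use three ingredients:
\begin{enumerate}
\item Vanishing of $\rlim$ for the towers of surjections $\ker(f)/F_n\ker(f)\to A/F_nA\to\im(f)/F_n\im(f)$, which makes $A^\wedge\to\im(f)^\wedge$ surjective.
\item The strictness hypothesis, which makes $\im(f)^\wedge\to B^\wedge$ a limit of injections and hence injective.
\item For the quotient, injectivity of $\im(f)^\wedge\to\ker(g)^\wedge$ (coming from $\im(f)\cap F_n\ker(g)=F_n\im(f)$), together with the compatibility of all these identifications as subgroups of $B^\wedge$.
\end{enumerate}
Your extra identification of the quotient with $\lim_n \ker(g)/(\im(f)+F_n\ker(g))$ is also correct, but the statement does not need it.
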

\begin{proof}
  Consider
  $$
  \xymatrix{
    F_nK \ar@{ >->}[r]^-{i_n} \ar@{ >->}[d]
	& F_nA \ar@{->>}[r]^{p_n} \ar@{ >->}[d]
	& F_nI \ar@{ >->}[r]^-{j_n} \ar@{ >->}[d]
	& F_nB \ar@{ >->}[d] \\
    K \ar@{ >->}[r]^-i \ar@{->>}[d]
	& A \ar@{->>}[r]^p \ar@{->>}[d]
	& I \ar@{ >->}[r]^-j \ar@{->>}[d]
	& B \ar@{->>}[d] \\
    K/F_nK \ar@{ >->}[r]^-{i^n}
	& A/F_nA \ar@{->>}[r]^{p^n}
	& I/F_nI \ar[r]^-{j^n}
	& B/F_nB
  }
  $$
  with $i \: K = \ker(f) \to A$, $j \: I = \im(f) \to B$, $f = jp$,
  $i_n \: F_nK = \ker(f_n) \to F_nA$,
  $j_n \: F_nI = \im(f_n) \to F_nB$ and $f_n = j_n p_n$.  The three
left-hand columns form a $3 \times 3$ diagram of short exact sequences,
the right-hand column is exact, and $j_n$ and~$j$ are injective.

  The completion $f^\wedge \: A^\wedge = \lim_n A/F_nA \to B^\wedge$
  factors as
  $$
  A^\wedge \overset{p^\wedge}\longto I^\wedge \overset{j^\wedge}\longto
  B^\wedge \,.
  $$
  Here
  $0 \to K^\wedge \overset{i^\wedge}\longto A^\wedge \overset{p^\wedge}\longto I^\wedge \overset{\delta}\longto
  \rlim_n K/F_nK$ is exact and $\rlim_n K/F_nK = 0$ since it is the
  derived limit of a tower of surjective homomorphisms, so $p^\wedge$
  is surjective.

  Since $F_nI = I \cap F_nB$ within~$B$ by hypothesis, we get that
  each $j^n$ is injective, so $j^\wedge = \lim_n j^n$ is injective.
  Hence
  $\ker(f)^\wedge = K^\wedge \cong \ker(p^\wedge) = \ker(f^\wedge)$
  via~$i^\wedge$, and $\im(f)^\wedge = I^\wedge \cong \im(f^\wedge)$
  via~$j^\wedge$.

  Next, we repeat the argument with $g \: B \to C$ in place of
  $f \: A \to B$, using that $\im(g_n) = \im(g) \cap F_n C$ to deduce
  that $\ker(g)^\wedge \cong \ker(g^\wedge)$.

  Let $h \: \im(f) \subset \ker(g)$ be the filtration-preserving
  inclusion.  Since $I \cap F_n \ker(g) = I \cap F_n B = F_n I$ within
  $\ker(g)$, we deduce as for~$j$ that
  $h^\wedge \: \im(f)^\wedge \to \ker(g)^\wedge$ is injective.  It is
  compatible under the isomorphisms above with the inclusion
  $\im(f^\wedge) \subset \ker(g^\wedge)$, and therefore
  $\ker(g)^\wedge/\im(f)^\wedge \cong \ker(g^\wedge)/\im(f^\wedge)$,
  as asserted.
\end{proof}

\subsection{A homological $\bT$-Singer construction}\label{sec:t-singer}

Let $M_*$ be a right $\sA$-module with the property that $\beta_*$
acts trivially, and assume that $\sigma\: \leftsuspension M_*\to M_*$
is a differential and a morphism of right $\sA$-modules.  Define
$$
c_+(M_*; \sigma) = \hat H^{-*}(\bT; \bF_p)\otimes M_*\,,
$$
where $\hat H^{-*}(\bT; \bF_p) = P(t^{\pm1})$, and specify a right
action of the Steenrod operations on $c_+(M_*; \sigma)$ by the
formulas
\begin{align}
  \P^s_*(t^r \otimes x)
  &= \sum_k \binom{-1-r-s(p-1)}{s-pk}
    t^{r+(s-k)(p-1)} \otimes \P^k_*(x) \label{eq:Ps-C-plus} \\
  \beta_*(t^r \otimes x)
  &= -t^{r+1} \otimes \sigma(x) \label{eq:beta-C-plus} \,.
\end{align}
For $p=2$, replace $\P^s_*$ by $\Sq^{2s}_*$ in \eqref{eq:Ps-C-plus},
and $\beta_*$ by $\Sq^1_*$ in \eqref{eq:beta-C-plus}.  The \emph{Tate
  filtration} of $c_+(M_*; \sigma)$ is the ascending filtration
given by
\begin{equation*}
  % \label{eq:singer-tate-filtration-p-odd}
  F_n c_+(M_*; \sigma) =  \<t^r \otimes x \mid
  \text{$-2r -|x|(p-1) \leq n$, $x \in M_*$}
  \>\,.
\end{equation*}
The Tate filtration of $c_+(M_*)$ is relatively bounded below if
$M_*$ is bounded below, cf.~Lemma~\ref{lemma:rbb-rplus}.

Define $f_+\: c_+(M_*; \sigma) \to r_+(M_*)$ by the
formula
\begin{align}
  f_+(t^r \otimes x) &= t^r \otimes x - ut^r \otimes \sigma(x) \label{eq:f+odd} \\
\intertext{for $p$ odd, and}
  f_+(t^r \otimes x) &= u^{2r} \otimes x + u^{2r+1} \otimes \sigma(x) \label{eq:f+two}
\end{align}
for $p=2$.  It is clear that $f_+$ is injective and strictly
filtration-preserving.  Lemma~\ref{lemma:fplus-is-a-linear} below
implies that \eqref{eq:Ps-C-plus}--\eqref{eq:beta-C-plus} satisfy the
Adem relations and that $c_+(M_*; \sigma)$ is a filtered right
$\sA$-module.

Furthermore, the image of $f_+$ equals the kernel of the
residual circle action $\bar\sigma$, described by
Lemma~\ref{lemma:sigmaone-exact}.  To see this, note that
\[
  0 \to c_+(M_*; \sigma)
  \overset{f_+}\longto r_+(M_*)
  \overset{\bar\sigma}\longto r_+(M_*)
\]
is isomorphic as a sequence of graded $\bF_p$-vector spaces to the
direct sum of the sequences
\begin{equation} \label{eq:f+barsigma-r}
  0 \to \bF_p\{t^r\} \otimes M_*
  \overset{f_+}\longto \bF_p\{u t^r , t^r\} \otimes M_*
  \overset{\bar\sigma}\longto \bF_p\{u t^r , t^r\} \otimes M_*
\end{equation}
for $r \in \bZ$.  Here,~$f_+$ is given
by~\eqref{eq:f+odd}--\eqref{eq:f+two}, and $\bar\sigma$ by
\eqref{eq:sigmaone-tr}--\eqref{eq:sigmaone-utr-even-prime}.  It is
elementary to check that~\eqref{eq:f+barsigma-r} is exact for each
$r$.

\begin{lemma}\label{lemma:fplus-is-a-linear}
  The homomorphism $f_+$ commutes with the action of the
  Steenrod operations.
\end{lemma}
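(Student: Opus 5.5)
We verify directly, on a basis element $t^r \otimes x$ of $c_+(M_*;\sigma)$, that $f_+ \circ \P^s_* = \P^s_* \circ f_+$ and $f_+ \circ \beta_* = \beta_* \circ f_+$. Here the right-hand actions are computed in $r_+(M_*)$ via \eqref{eq:steenrod-singer-tr}--\eqref{eq:steenrod-singer-beta}, and the left-hand ones in $c_+(M_*;\sigma)$ via \eqref{eq:Ps-C-plus}--\eqref{eq:beta-C-plus]}. The standing assumptions $\beta_* = 0$ on $M_*$ and $\sigma$ being right $\sA$-linear (so $\sigma$ commutes with each $\P^k_*$ and anticommutes with $\beta_*$) are what make the formulas match. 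We treat $p$ odd first; $p=2$ is handled the same way with $\Sq^{2s}_*$ and $\Sq^1_*$ and the formula \eqref{eq:steenrod-singer-squares}, being careful with the parity of the exponent of $u$.

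\textbf{Bockstein.} We have
\[
f_+(\beta_*(t^r\otimes x)) = f_+(-t^{r+1}\otimes\sigma x) = -t^{r+1}\otimes \sigma x + ut^{r+1}\otimes \sigma^2 x = -t^{r+1}\otimes\sigma x .
\]
On the other side, \eqref{eq:steenrod-singer-beta} gives
\[
\beta_*(t^r\otimes x - ut^r\otimes\sigma x) = 0 - t^{r+1}\otimes \sigma x ,
\]
and the two agree. For $p=2$ we have $f_+(t^r\otimes x)=u^{2r}\otimes x+u^{2r+1}\otimes\sigma x$. Applying \eqref{eq:steenrod-singer-squares} with $s=1$, the term $u^{2r}\otimes x$ contributes $\binom{-2r-2}{1}u^{2r+1}\otimes x$, which vanishes mod $2$, plus a $\Sq^0$-type term; the term $u^{2r+1}\otimes\sigma x$ contributes $\binom{-2r-3}{1}u^{2r+2}\otimes\sigma x = u^{2r+2}\otimes\sigma x$. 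The $k$ values with $\Sq^1_*(x)$ vanish since $\Sq^1_*=0$ on $M_*$. The result, $u^{2r+2}\otimes\sigma x$, equals $f_+(t^{r+1}\otimes\sigma x)$ because $\sigma^2=0$.

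\textbf{Reduced powers.} We apply \eqref{eq:steenrod-singer-tr} to $t^r\otimes x$. Since $\beta_*x=0$, the second sum drops out, leaving
\[
\sum_k \binom{-1-r-s(p-1)}{s-pk} t^{r+(s-k)(p-1)}\otimes\P^k_* x .
\]
Next we apply \eqref{eq:steenrod-singer-utr} to $-ut^r\otimes\sigma x$, giving
\[
-\sum_k \binom{-1-r-s(p-1)}{s-pk} ut^{r+(s-k)(p-1)}\otimes \P^k_*\sigma x .
\]
Since $\P^k_*\sigma=\sigma\P^k_*$ (even degree), the combined sum is termwise $f_+$ applied to $\binom{-1-r-s(p-1)}{s-pk}t^{r+(s-k)(p-1)}\otimes\P^k_*x$, that is, $f_+(\P^s_*(t^r\otimes x))$ by \eqref{eq:Ps-C-plus}. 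For $p=2$ the same bookkeeping is needed.

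\textbf{Main obstacle.} The one step that needs genuine care is $p=2$, where \eqref{eq:steenrod-singer-squares} applied to $u^{2r+1}\otimes\sigma x$ is \emph{not} the naive analogue of \eqref{eq:steenrod-singer-utr}. Here we must check two things:
\begin{itemize}
\item[(i)] For $\Sq^{2s}_*$, the odd-$k$ terms from $u^{2r}\otimes x$ involve $\Sq^k_*x$ with $k$ odd, and these vanish because $\Sq^1_*=0$ on $M_*$, hence every odd square acts trivially on $M_*$.
\item[(ii)] The surviving coefficients satisfy $\binom{-2-2r-2s}{2s-2k}\equiv\binom{-1-r-s}{s-k}$ and $\binom{-2r-2-2s}{2s-2k}\equiv\binom{-1-r-s}{s-k}$ mod $2$ (by Lucas' theorem), and the $u$-exponents match.
\end{itemize}
Odd squares $\Sq^{2s+1}_*=\Sq^1_*\Sq^{2s}_*$ then follow from the cases already treated. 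Once $f_+$ is shown injective and equivariant on generators, the Adem relations on $c_+(M_*;\sigma)$ follow because $r_+(M_*)$ is an $\sA$-module.
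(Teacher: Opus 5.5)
Your proposal is correct and follows essentially the same route as the paper: a direct check on $t^r\otimes x$ using $\beta_*=0$ on $M_*$, $\P^k_*\sigma=\sigma\P^k_*$ and $\sigma^2=0$ for $p$ odd. For $p=2$ it likewise uses the vanishing of odd squares on $M_*$, the Lucas congruences for the even squares, a separate check for $\Sq^1_*$, and odd squares as composites. (There are two minor slips that do not affect the argument: in (ii) the first congruence should concern $\binom{-1-2r-2s}{2s-2k}$, the coefficient coming from $u^{2r}\otimes x$; and for a right action one has $\Sq^{2s+1}_*=\Sq^{2s}_*\Sq^1_*$.)
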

\begin{proof}
  Let $p>2$.  Since $\beta_*$ acts trivially on $M_*$, the formulas
  \eqref{eq:steenrod-singer-tr}--\eqref{eq:steenrod-singer-utr} reduce
  to the single formula
  \begin{equation}
    \label{eq:powers-on-m-when-beta-is-trivial}
    P_*^s(u^it^r\otimes x) =
    \sum_k \binom{-1-r-s(p-1)}{s-pk}u^it^{r+(s-k)(p-1)}\otimes \P^k_*(x)\,,
  \end{equation}
  valid for any $u^it^r\otimes x$ in $r_+(M_*)$.  Comparing
  \eqref{eq:powers-on-m-when-beta-is-trivial} to \eqref{eq:Ps-C-plus},
  it is then clear that $f_+$ commutes with $\P^s_*$ for each
  $s\geq 0$.  Moreover,
  $$
  \beta_*(f_+(t^r\otimes x)) =
  \beta_*(t^r\otimes x - ut^r\otimes \sigma (x)) =
  -t^{r+1}\otimes \sigma(x)\,,
  $$
  which equals
  $$
  f_+ (\beta_*(t^r\otimes x)) =
  f_+(-t^{r+1}\otimes \sigma(x)) =
  -t^{r+1}\otimes\sigma(x)\,.
  $$
  The last equality above uses that $\sigma$ is a differential on
  $M_*$.

  Let $p=2$.  We claim that $f_+$ commutes with any even squaring
  operation and that $\Sq^1_* f_+ = f_+\Sq^1_*$.  It follows
  that~$f_+$ commutes with $\Sq^i_*$ for any $i\geq 0$.

  Combining \eqref{eq:steenrod-singer-squares} with the assumption that
  $\Sq^1_*=0$ in $M_*$, we get that $\Sq^{2s}_*(f_+(t^r\otimes x))$
  equals
  \begin{multline}
    \label{eq:squares-on-m-when-beta-is-trivial}
    \Sq^{2s}_*(u^{2r}\otimes x + u^{2r+1}\otimes \sigma(x))
    = \sum_k \binom{-1-2r-2s}{2s-4k} u^{2r+2s-2k}\otimes \Sq^{2k}_*(x)\\
    +\sum_k \binom{-2-2r-2s}{2s-4k} u^{2r+1+2s-2k}\otimes \Sq^{2k}_*(\sigma (x))\,.
  \end{multline}
  By Lucas' theorem, there are congruences
  \[
    \binom{-1-2r-2s}{2s-4k} \equiv
    \binom{-2-2r-2s}{2s-4k} \equiv
    \binom{-1-r-s}{s-2k} \,,
  \]
  thus
  \[
    \Sq^{2s}_*(f_+(t^r\otimes x))
    = \sum_k\binom{-1-r-s}{s-2k} u^{2r+2s-2k}\cdot (1\otimes \Sq^{2k}_*(x) + u\otimes \Sq_*^{2k}(\sigma (x))) \,,
  \]
  which is seen to equal $f_+ (\Sq_*^{2s}(t^r \otimes x))$ by comparing
  with the definition~\eqref{eq:Ps-C-plus} and using that
  $\Sq^{2k}_* \sigma = \sigma \Sq^{2k}_*$.

  Finally, from~\eqref{eq:steenrod-singer-squares}
  and~\eqref{eq:f+two} we get
  \[
    \Sq^1_*( f_+(t^r\otimes x)) = \Sq^1_* (u^{2r}\otimes x + u^{2r+1}\otimes \sigma(x))
    = u^{2r+2}\otimes \sigma(x) \,.
  \]
  According to~\eqref{eq:beta-C-plus} and~\eqref{eq:f+odd}
  \[
    f_+(\Sq^1_* (t^r\otimes x)) = f_+(t^{r+1}\otimes \sigma(x))
    = u^{2r+2}\otimes \sigma(x) \,,
  \]
  where the latter equality depends on the fact that~$\sigma^2=0$.
  We conclude that $\Sq^1_* f_+ = f_+\Sq^1_*$.
\end{proof}

\begin{definition}\label{dfn:homological-C-singer}
  The \emph{homological $\bT$-Singer construction} on $M_*$ is the
  completion
  $$
  C_+(M_*; \sigma) = c_+(M_*; \sigma)^\wedge
  $$
  of $c_+(M_*; \sigma)$ with respect to the Tate filtration.
\end{definition}

As usual, the isomorphism~\eqref{eq:completion-quotient-iso} implies
that the Tate filtration of~$C_+(M_*;\sigma)$ is relatively bounded
below if $M_*$ is bounded below.  Thus, for a bounded below right
$\sA$-module $M_*$ with the property that~$\beta_*$ acts trivially,
the completed homomorphism
\[
  F_+ := f_+^\wedge\: C_+(M_*;\sigma) \longto R_+(M_*)
\]
is an injective morphism in the category $\rbbfilamodc$, and there is
an exact sequence
\begin{equation}
  \label{eq:F+sigmabar}
  0 \to C_+(M_*; \sigma)
  \overset{F_+}\longto R_+(M_*) \overset{\bar\sigma}\longto R_+(M_*)
\end{equation}
from which we identify the image of $F_+$ with the kernel of
$\bar\sigma$.  To see this, we note that as a sequence of graded $\bF_p$-vector spaces,
\eqref{eq:F+sigmabar} is isomorphic to the product over $r \in \bZ$ of
the exact sequences~\eqref{eq:f+barsigma-r}.

When $M_*$ is a right $\sA$-module algebra and $\sigma$ is a
derivation, $c_+(M_*; \sigma)$ inherits an algebra structure
from $r_+(M_*)$, making
$f_+\: c_+(M_*; \sigma)\to r_+(M_*)$ and its
completion $F_+\: C_+(M_*; \sigma)\to R_+(M_*)$ right
$\sA$-module algebra homomorphisms.  Elementwise, this algebra
structure is given by
$$
(t^r\otimes x) \cdot (t^s\otimes y) = t^{r+s}\otimes xy - (-1)^{|x|}u^2t^{r+s}\otimes \sigma(x)\sigma(y) \,.
$$
When $p>2$, the correction term $u^2t^{r+s}\otimes \sigma(x)\sigma(y)$
is zero since the class $u$ is exterior.  However, when $p=2$ we have
$u^2=t$, and the formula describing the multiplication reads
$$
(t^r\otimes x) \cdot (t^s\otimes y) = t^{r+s}\otimes xy +
t^{r+s+1}\otimes \sigma(x)\sigma(y)\,.
$$

\section{The continuous mod~$p$ homology of $\THH(MU)^{t\bT}$}

In~\cite{LNR11}*{Sec.~7.1}
\if \anonym
\else
  we defined
\fi
a homomorphism of right $\sA$-modules
\begin{equation}
  \Phi_{MU}\: R_+(H_*(\THH(MU))) \longto H_*^c (\THH(MU)^{tC_p})
  \if \anonym
  \else
    \,,
  \fi
\end{equation}
\if \anonym
  is defined and is shown to be
\else
  and showed that is
\fi
an isomorphism of unfiltered graded $\bF_p$-vector spaces.
A feature of $\Phi_{MU}$ is that it strictly increases Tate filtration
on certain classes, and is therefore not a morphism of filtered right
$\sA$-modules.  However, we explain in Subsection~\ref{sec:ext-iso}
how $\Theta_{MU} := \Phi_{MU}^{-1}$ is a morphism of rbb complete right
$\sA$-module algebras, or equivalently, of rbb complete left
$\sA_*$-comodule algebras.  It follows from
Proposition~\ref{prop:ext-iso} that $\Theta_{MU}$ induces an isomorphism of
continuous $\Ext$-algebras
$$
\cExt^*_{\sA_*}(\bF_p, H^c_*(\THH(MU)^{tC_p}))
\cong
\cExt^*_{\sA_*}(\bF_p, R_+(H_*(\THH(MU))))\,.
$$
The switch from $\Phi_{MU}$ to $\Theta_{MU}$, and from topologized to
filtered graded $\bF_p$-vector spaces, thus allows us to refine
\if \anonym
  results from~~\cite{LNR11}
\else
  our previous results
\fi
to also account for the multiplicative structure.
As a bonus, our Lemmas~\ref{lemma:f} and~\ref{lemma:omega-and-g} below
significantly simplify the discussion of pro-isomorphisms
from~\cite{LNR11}*{Prop.~7.2}.

For any $E_1$ ring spectrum $B$ the spectrum $THH(B)$ has a
$\bT$-action and an associated differential $\sigma$ acting on
$H_*( THH(B)) $.  If $B$ is an $E_2$ ring spectrum, then $\THH(B)$ is
an $E_1$ ring spectrum with $\bT$-action, and $H_*^c(\THH(B)^{tC_p})$
is a filtered differential graded right $\sA$-module algebra by
Proposition~\ref{prop:tate-dga}.  In Subsection~\ref{sec:thh-mu-dga}
we refine $\Theta_{MU}$ to a morphism of rbb filtered differential graded
right $\sA$-module algebras.  This will enable us to compute the
kernel of $\bar\sigma$ acting on $H_*^c(\THH(MU)^{tC_p})$, and
therefore also $H_*^c(\THH(MU)^{t\bT})$, using
Lemma~\ref{lemma:sigmaone-exact}.  The answer is given in
Theorem~\ref{thm:finale-MU}, in terms of the $\bT$-Singer
construction~$C_+$.

\subsection{Equivariant approximations to $\THH(B)$}\label{sec:1-skeleton-approx}

Let $\eta\: B \to \THH(B)$ be the map induced by inclusion of the
$0$-simplices of $\THH(B)$, and let $\eta_p\: B^{\wedge p}\to \THH(B)$
be induced from the inclusion of the $0$-simplices of the
$p$-fold edgewise subdivision of $\THH(B)$.  Then $\eta_p$ is $C_p$-equivariant,
and is homotopic to the $p$-fold multiplication
$B^{\wedge p}\to B$ followed by the inclusion $\eta\: B\to \THH(B)$.
See \cite{LNR11}*{Sec.~5} for details.
\begin{theorem}[\cite{LNR11}*{Thm.~5.3}]\label{thm:0-1-skeleton-approx}
  There is a commutative diagram of spectra in the stable category,
  \begin{equation}
    \begin{aligned}\label{eq:0-1-skeleton-approx}
      \xymatrix{
      B \ar[d]^{\epsilon_B} \ar[r] \ar@(ru,lu)[rr]^{\eta}
      & \bT_+ \wedge B \ar[d]^{\rho_+ \wedge \epsilon_B} \ar[r]^-{\omega}
      & THH(B) \ar[d]^{\gamma} \\
      R_+(B) \ar[r] \ar@(rd,ld)[rr]_{\eta^t}
      & \bT/C_{p+} \wedge R_+(B) \ar[r]^-{\omega^t}
      & THH(B)^{tC_p} \,.
        }
    \end{aligned}
  \end{equation}
\end{theorem}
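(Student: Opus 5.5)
We would construct the diagram from the cyclic structure of $THH(B)$, using the simplicial circle and the edgewise subdivision. The map $\omega\colon \bT_+\wedge B\to THH(B)$ is the circle action $\lambda$ restricted along $\eta$, namely $\bT_+\wedge B\xrightarrow{1\wedge\eta}\bT_+\wedge THH(B)\xrightarrow{\lambda}THH(B)$. The left-hand triangle then commutes because $\eta$ factors through the basepoint inclusion $B\to\bT_+\wedge B$. For $\omega^t$ we start from the $C_p$-equivariant map $\bT_+\wedge B^{\wedge p}\to THH(B)$, given by $(z,b)\mapsto\lambda(z,\eta_p(b))$ with $C_p$ acting diagonally. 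Since $C_p$ acts freely on $\bT$, we untwist $\bT_+\wedge B^{\wedge p}\cong \bT_+\wedge B^{\wedge p}$ with $C_p$ acting only on the second factor, and then apply $(-)^{tC_p}$. Because $\bT/C_p$ is a finite complex, this gives $\bT/C_{p+}\wedge R_+(B)\to THH(B)^{tC_p}$. Precomposing with the basepoint inclusion recovers $\eta_p^{tC_p}=:\eta^t$, so the bottom triangle also commutes. The left square commutes by naturality of the Tate diagonal with respect to $B\to\bT_+\wedge B$.

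The substantive part is the right square, and here we would pass to genuine fixed points. The cyclotomic structure map $\gamma$ is the composite $THH(B)\simeq \Phi^{C_p}THH(B)\to THH(B)^{tC_p}$, where the first map is the cyclotomic equivalence $r_p$ and the second is the canonical map $\hat\Gamma$ from geometric to Tate fixed points. The Tate diagonal $\epsilon_B$ has the same shape: it is $B\simeq\Phi^{C_p}(B^{\wedge p})\to (B^{\wedge p})^{tC_p}$, as in \eqref{eq:tate-diagonal}. The plan is therefore to exhibit a $\bT$-equivariant map of genuine $C_p$-spectra $\bT_+\wedge_{C_p}$-induced$(B^{\wedge p})\to sd_p THH(B)$ that is modelled on the inclusion of the $1$-skeleton of the subdivided cyclic bar construction. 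We would then check two things. Under $\Phi^{C_p}$, this map becomes $\bT/C_{p+}\wedge B\to THH(B)$, which is $\omega$ under the identification $\bT\cong\bT/C_p$ and $r_p$. Under $(-)^{tC_p}$, the same map becomes $\omega^t$. Naturality of $\hat\Gamma\colon\Phi^{C_p}\to(-)^{tC_p}$ then gives the right square, with $\rho_+\wedge\epsilon_B$ appearing as $\hat\Gamma$ applied to the induced source, where $\rho\colon\bT\cong\bT/C_p$ is the root isomorphism.

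The main obstacle is the identification $\Phi^{C_p}(\eta_p)\simeq\eta$ compatibly with the circle actions. This requires the Bökstedt–Hesselholt–Madsen model, in which the cyclotomic structure on $sd_p$ of the cyclic bar construction restricts on $0$-simplices to the geometric diagonal $B\to\Phi^{C_p}B^{\wedge p}$. It also requires tracking the rescaling $\rho$ of the circle. We would carry this out point-set level, in orthogonal spectra with $B$ cofibrant, and then pass to the stable category.
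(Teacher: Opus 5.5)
The paper gives no proof of this statement. It is quoted from \cite{LNR11}*{Thm.~5.3}, with $\omega$ and $\omega^t$ \emph{defined} as the unique equivariant extensions of $\eta$ and $\eta^t$. Your plan for the right-hand square is the right kind of argument: naturality of the comparison $\Phi^{C_p}\to(-)^{tC_p}$, plus the fact that the cyclotomic structure is the geometric diagonal on $0$-simplices. However, two steps do not work as written.

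First, your construction of $\omega^t$ fails. With $C_p$ acting diagonally on $\bT_+\wedge B^{\wedge p}$, the map $(z,b)\mapsto\lambda(z,\eta_p(b))$ sends $(gz,gb)$ to $g^2z\cdot\eta_p(b)$, so it is not equivariant. Moreover, with the diagonal action $\bT_+\wedge B^{\wedge p}$ is built from free $C_p$-cells, so its Tate construction is contractible. It therefore cannot be untwisted to the version with $C_p$ acting only on $B^{\wedge p}$, whose Tate construction is $\bT_+\wedge R_+(B)$, and that is not $\bT/C_{p+}\wedge R_+(B)$ either.

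Second, the correct source in your third paragraph, $\bT_+\wedge_{C_p}B^{\wedge p}$, is as a spectrum with $C_p$-action the mapping torus of the cyclic permutation $g$ of $B^{\wedge p}$, not $\bT/C_{p+}\wedge B^{\wedge p}$. Your claim that its Tate construction is $\bT/C_{p+}\wedge R_+(B)$, compatibly with the $\bT/C_p$-actions, so that the induced map ``becomes $\omega^t$'', is exactly the unproved step. It is true, for example because $g$ induces a self-map of $R_+(B)$ homotopic to the identity, but it needs an argument.

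Incidentally, the left-hand square commutes trivially, since both horizontal maps are inclusions at $1\in\bT$. Naturality of $\epsilon$ along $B\to\bT_+\wedge B$ would involve $R_+(\bT_+\wedge B)$, which does not occur in the diagram.

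All of this can be bypassed. Take $\omega^t=\bar\lambda\circ(1\wedge\eta^t)$, so the lower triangle is immediate. In the right-hand square, $\omega$ is $\bT$-equivariant, $\gamma$ and $\rho_+\wedge\epsilon_B$ are equivariant along $\rho$, and $\omega^t$ is $\bT/C_p$-equivariant. Hence both composites are $\bT$-equivariant maps from the free object $\bT_+\wedge B$ to $THH(B)^{tC_p}$, with $\bT$ acting through $\rho$. Such maps are determined by their restrictions to $B$, so the square reduces to the outer rectangle $\gamma\circ\eta\simeq\eta^t\circ\epsilon_B$.

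To prove this, write $\gamma$ and $\epsilon_B$ as $\hat\Gamma_1$ precomposed with the inverses of the equivalences $r_p\colon\Phi^{C_p}sd_pTHH(B)\to THH(B)$ and $\Phi^{C_p}(B^{\wedge p})\to B$. The rectangle then follows from naturality of $\hat\Gamma_1$ along the $C_p$-map $\eta_p\colon B^{\wedge p}\to sd_pTHH(B)$. The only other input is the one you already isolate: $r_p\circ\Phi^{C_p}(\eta_p)$ agrees with $\eta$ composed with $\Phi^{C_p}(B^{\wedge p})\simeq B$. No induced genuine $C_p$-spectrum is needed, and $\rho$ enters only through the $\rho$-equivariance of $\gamma$.
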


In \eqref{eq:0-1-skeleton-approx}, the map $\gamma$ is the cyclotomic
structure map denoted~$\varphi_p$ in~\cite{NS18}, and $\omega$ and
$\omega^t$ are the unique equivariant extensions of $\eta$ and
$\eta^t := \eta_p^{tC_p}$, with respect to the actions of $\bT$ and $\bT/C_p$,
respectively.  The map $\rho\: \bT\to \bT/C_p$ is the $p$-th root
isomorphism with inverse $\rho^{-1}\: [z]\mapsto z^p$.
We note that the maps $\omega$ and $\omega^t$ of
\eqref{eq:0-1-skeleton-approx} are not the same as the homomorphism
$\omega_B$ discussed in
Subsection~\ref{sec:multiplicative-structure-of-rplus}.

Let $X$ be a cyclotomic spectrum with structure map
$\varphi_p\:X\to X^{tC_p}$.  Let $E$ be an $E_\infty$ ring spectrum with
trivial $C_p$-action.  In the same way we constructed the $E$-based
Tate diagonal~\eqref{eq:e-based-tate-diagonal} in
Subsection~\ref{sec:e-based-singer}, we get an ($E$-based) cyclotomic
structure on $E\wedge X$ with structure map
\[
  \varphi_p^E\: E\wedge X \overset{1\wedge \gamma}\longto E\wedge
  X^{tC_p} \overset{\kappa}\longto (E\wedge X)^{tC_p}\,.
\]
In particular, for $E=H$ and $B$ an $\bE_2$-ring spectrum, we have the
following diagram of filtered right $\sA$-module algebras
\begin{equation}
  \begin{aligned}
    \label{eq:epsilon-cyctomic-zeroskeleton}
    \xymatrix{
    r_+(H_*(B)) \ar[rr]^-{\omega_B} \ar[d]_-{r_+(\eta_*)}
	&
	& H^c_*(R_+(B)) \ar[d]^-{\eta^t_*} \\
    r_+(H_*(THH(B))
	& H_*(B) \ar[ul]_-{\epsilon} \ar[d]^-{\eta_*} \ar[ur]^-{(\epsilon^H_B)_*}
	& H^c_*(THH(B)^{tC_p}) \\
	& H_*(THH(B)) \ar[ul]_-{\epsilon} \ar[ur]^-{\gamma^H_*}  \,.
      }
  \end{aligned}
\end{equation}
This follows from Proposition~\ref{prop:epsilon-is-epsilon},
naturality of~$\epsilon$, and compatibility of the Tate diagonal with
the cyclotomic structure map.  It leads to the following diagram of
right $\sA$-module algebras:
\begin{equation}
  \begin{aligned}
    \label{eq:f-and-g}
    \xymatrix@C11mm{
    r_+(H_*(B)) \otimes_{H_*(B)} H_*(\THH(B))
    \ar[d]^-f
    \ar[r]^-{\omega_B\otimes 1}
    & H_*^c (R_+(B))\otimes_{H_*(B)} H_*(\THH(B)) \ar[d]^-g \\
    r_+(H_*(\THH(B)))
    & H_*^c \left(\THH(B)^{tC_p}\right)\rlap{\,.}
      }
  \end{aligned}
\end{equation}
Here, $f=r_+(\eta_*)\cdot \epsilon$ and
$g=\eta^t_*\cdot \gamma^H_*$.  The left-hand tensor product is
formed over $H_*(B)$ using that $r_+(H_*(B))$ is an
$H_*(B)$-algebra via Singer's $\epsilon$-homomorphism.  Likewise, the
right-hand tensor product is formed over $H_*(B)$ using that
$H_*^c(R_+(B))$ is an $H_*(B)$-algebra via the map induced by the
$H$-based Tate diagonal.  Finally, both tensor products use that
$H_*(\THH(B))$ is an $H_*(B)$-algebra via $\eta_*$.

\subsection{An $\Ext$-isomorphism}\label{sec:ext-iso}

Let $B$ be an $E_2$ ring spectrum, such that $B/p$ is bounded below.
The homological $C_p$-Tate spectral sequence for $X=\THH(B)$
converging to $H_*^c(\THH(B)^{tC_p})$ has
$\hat E^2(\THH(B)) = \hat H^{-*}(C_p; H_*(\THH(B)))$.  The
$C_p$-action on the coefficients $H_*(\THH(B))$ is trivial, since it
factors through the action of the connected group $\bT$, and we get an
isomorphism
$\hat E^2(\THH(B)) \cong \hat H^{-*}(C_p; \bF_p) \otimes
H_*(\THH(B))$.  Moreover, the $d^2$-differential satisfies
$d^2(u^it^r \otimes x) = u^it^{r+1} \otimes \sigma(x)$, and we have an
isomorphism
$$
\hat E^3_{**}(\THH(B)) \cong \hat H^{-*}(C_p;\bF_p)\otimes
\ker(\sigma)/\im(\sigma)\,.
$$
See \cite{R98}*{Lem.~3.3} for details.

Let $MU$ be the complex cobordism spectrum, realized as an $E_\infty$
ring spectrum \cite{Ma77}*{IV.2}.  For $p>2$, recall
\cite{Ad74}*{pp.~75--77} the $\sA_*$-comodule algebra isomorphism
\begin{equation}
  \label{eq:homology-of-mu-adams}
  H_*(MU) \cong P(\xibar_k \mid k\ge1) \otimes P(m_\ell \mid \ell \ne
  p^k-1) \,,
\end{equation}
where $m_\ell$ is an $\sA_*$-comodule primitive element of
degree~$2\ell$, for each $\ell\ge1$ not of the form $p^k-1$.  We
define $m_{p^k-1} =\xibar_k$ for each $k\geq 1$, so that we have an
isomorphism
\begin{equation}
  \label{eq:homology-of-mu}
  H_*(MU) \cong P(m_\ell \mid \ell \geq 1) \,.
\end{equation}
The surjection
\begin{equation}
  \label{eq:mu-to-bp}
  H_*(MU)\longto H_*(BP) \cong P(\xibar_k\mid k\geq 1) \subset \sA_*\,,
\end{equation}
sending $m_\ell$ to $\xibar_k$ for $\ell = p^k-1$, and to~$0$
otherwise, is a morphism of right $\sA$-module algebras.  When $p=2$,
the above statements are true after replacing $\xibar_k$ by
$\bar\zeta_k^2$ in \eqref{eq:homology-of-mu-adams}--\eqref{eq:mu-to-bp}.

By \cite{LNR11}*{Lem.~6.2} there is an isomorphism of left
$\sA_*$-comodule algebras
\begin{equation}
  \label{eq:homology-of-thh-of-mu}
  H_* (\THH(MU)) \cong H_*(MU) \otimes E(\sigma m_\ell \mid \ell\geq 1)\,,
\end{equation}
where each exterior generator $\sigma m_\ell$ is left $\sA_*$-comodule
primitive.  It can also be viewed as an isomorphism of right
$\sA$-module algebras, by the isomorphism of symmetric monoidal
categories~\eqref{eq:isocat} discussed in Subsection~\ref{sec:isocat},
since $H_*(\THH(MU))$ is bounded below.

The homological $C_p$-Tate spectral sequence converging to
$H^c_*(THH(MU)^{tC_p})$
collapses at the $\hat E^3$-term, and there is an algebra isomorphism
\begin{equation}
  \label{eq:e-infty-cp-tate-mu}
  \hat E^{\infty}_{**}(\THH(MU))
  \cong \hat H^{-*}(C_p;\bF_p)\otimes
    P(m_\ell^p \mid \ell\geq 1)\otimes
    E(m_\ell^{p-1}\sigma m_\ell \mid \ell\geq 1) \,.
\end{equation}
This collapse result is due to Bruner--Rognes \cite{BR05}*{Thm.~6.4}.
A proof using different methods can be found in
\cite{LNR11}*{Prop.~6.3}.

Consider $H_*(\THH(MU))$ and the sub right $\sA$-module algebra
$E:= E(\sigma m_\ell\mid \ell \geq 1)$ as filtered objects by pulling
back the Tate filtration along the homomorphism
\[
  \gamma^H_*\: H_*(\THH(MU)) \longto H_*^c(\THH(MU)^{tC_p}) \,,
\]
making $\gamma^H_*$ a strictly filtration-preserving morphism in the
sense discussed in~Subsection~\ref{sec:filtered-r-modules}.

\begin{lemma}\label{lemma:filtration-of-e}
  The homomorphism $\gamma_*|_E$ is injective and the Tate filtration
  of an element $\sigma m_{\ell_1}\cdots \sigma m_{\ell_k}$ in the
  monomial basis of~$E$ equals
  $-2(\ell_1+\dots+ \ell_k)(p-1)$.
\end{lemma}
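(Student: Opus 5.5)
The plan is to compute $\gamma^H_*$ on monomials in $E$ by means of the commutative diagram~\eqref{eq:0-1-skeleton-approx} and the collapse result~\eqref{eq:e-infty-cp-tate-mu}. Since $\gamma^H_*$ is an algebra homomorphism, it suffices to identify $\gamma^H_*(\sigma m_\ell)$ up to a unit in the $\hat E^\infty$-term, and then multiply.

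First, determine $\gamma^H_*(\sigma m_\ell)$. The class $\sigma m_\ell$ is the image of $e_1\otimes m_\ell$ under $\omega_*\colon H_*(\bT_+\wedge MU)\to H_*(THH(MU))$. By commutativity of~\eqref{eq:0-1-skeleton-approx},
\[
\gamma^H_*(\sigma m_\ell)=\omega^t_*\bigl(\rho_*(e_1)\otimes(\epsilon^H_{MU})_*(m_\ell)\bigr)=\bar\sigma\bigl(\eta^t_*(\epsilon^H_{MU})_*(m_\ell)\bigr),
\]
where $\bar\sigma$ is the residual circle differential and $\rho_*(e_1)$ is a unit multiple of $\bar e_1$. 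By Proposition~\ref{prop:epsilon-is-epsilon} and Proposition~\ref{prop:omega-B}, $(\epsilon^H_{MU})_*(m_\ell)=\omega_{MU}(\epsilon(m_\ell))$ has leading term $1\otimes m_\ell$, which is detected by a unit times $t^{\ell(p-1)}\otimes m_\ell^{\otimes p}$, of Tate filtration $-2\ell(p-1)$. Pushing forward along $\eta^t$, this is detected by $t^{\ell(p-1)}\otimes m_\ell^p$ in the Tate spectral sequence for $THH(MU)$.

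Next, compute the residual differential on this class. The $\bar\bT$-action on $THH(MU)^{tC_p}$ raises Tate filtration by at most one. Using the Leibniz rule together with $\bar\sigma(t)=0$, the derivative of a $p$-th power detected by $m_\ell^p$ is detected by $p\,m_\ell^{p-1}\sigma m_\ell=0$ at the naive level, so one must work more carefully. I would use the known description, as in~\cite{LNR11}*{Prop.~6.3} and~\cite{BR05}, that in $\hat E^\infty$ the class $m_\ell^{p-1}\sigma m_\ell$ detects $\gamma^H_*(\sigma m_\ell)$ up to a unit times $t^{\ell(p-1)}$. That is, $\gamma_*(\sigma m_\ell)$ is detected by $t^{\ell(p-1)}\otimes m_\ell^{p-1}\sigma m_\ell$, in bidegree $(-2\ell(p-1),\,2\ell p-1)$. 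Hence its filtration is $-2\ell(p-1)$.

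I expect this identification to be the main obstacle. I would try to derive it from $\bar\sigma$ applied to $\eta^t_*(\epsilon^H_{MU})_*(m_\ell)$ via the dga structure of $R_+(H_*MU)$, where $\bar\sigma(u t^r\otimes x)=t^r\otimes x+\cdots$. However, the cyclotomic structure is only related to $\bar\sigma$ through $\omega^t$, so the filtration jump would need to be tracked directly. If this fails, I would simply cite the collapse statement~\eqref{eq:e-infty-cp-tate-mu} together with the proof of~\cite{LNR11}*{Prop.~6.3}, which exhibits these representatives.

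Finally, for a monomial $\sigma m_{\ell_1}\cdots\sigma m_{\ell_k}$ with distinct $\ell_i$, multiplicativity of the Hesselholt--Madsen filtration (Subsection~\ref{sec:hesselholt-madsen}) shows that its image is detected by the product
\[
t^{(\ell_1+\dots+\ell_k)(p-1)}\otimes\prod_i m_{\ell_i}^{p-1}\sigma m_{\ell_i}.
\]
This product is nonzero in~\eqref{eq:e-infty-cp-tate-mu} because the generators there are free. The detected classes of distinct monomials are linearly independent in $\hat E^\infty$, even after combining terms of the same filtration. Therefore $\gamma_*|_E$ is injective, and the pulled-back filtration of each monomial equals the stated value $-2(\ell_1+\dots+\ell_k)(p-1)$.
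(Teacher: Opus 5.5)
Your final argument is the paper's: you take from the literature that a unit multiple of $\gamma^H_*(\sigma m_\ell)$ is detected by $t^{\ell(p-1)}\otimes m_\ell^{p-1}\sigma m_\ell$, in bidegree $(-2\ell(p-1),\,2\ell p+1)$ (not $2\ell p-1$). You then use that $\gamma^H_*$ is multiplicative and that $\hat E^\infty$ in \eqref{eq:e-infty-cp-tate-mu} is free on these generators, which gives both the monomial filtrations and injectivity. Two corrections: the detection statement is \cite{LNR11}*{Thm.~6.4}, not a consequence of the collapse results, which describe $\hat E^\infty$ but do not locate $\gamma^H_*(\sigma m_\ell)$; and your abandoned heuristic fails for $\ell=p^k-1$, because the top-filtration term of $\epsilon(\xibar_k)$ is $t^{-(p^k-1)}\otimes 1$, not $1\otimes\xibar_k$ --- which is why Lemma~\ref{lemma:thm-6.4} is phrased in terms of $\omega_{MU}(1\otimes m_\ell)$ and needs the recursion for $\epsilon(\xibar_k)$.
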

\begin{proof}
  By \cite{LNR11}*{Thm.~6.4}, a unit multiple of
  $\gamma^H_* (\sigma m_\ell) \in H_*^c (\THH(MU)^{tC_p})$ is
  detected at the $\hat E^\infty$-term \eqref{eq:e-infty-cp-tate-mu}
  of the homological $C_p$-Tate spectral sequence by the class
  \begin{equation}
    \label{eq:image-gamma-sigma-mell}
    t^{\ell(p-1)}\otimes m_\ell^{p-1}\sigma m_\ell
    \in \hat E^\infty_{-2\ell(p-1), 2\ell p+1}(\THH(MU)) \,.
  \end{equation}
  In particular, the Tate filtration of
  $\sigma m_\ell\in E\subset H_*(\THH(B))$ equals $-2\ell(p-1)$.

  Since $\gamma^H_*$ is an algebra homomorphism we deduce from
  \eqref{eq:image-gamma-sigma-mell} and the algebra structure of the
  $\hat E^{\infty}$-term~\eqref{eq:e-infty-cp-tate-mu} that
  $\gamma_*|_E$ is injective and that the Tate filtration of the
  monomial $\sigma m_{\ell_1}\cdots \sigma m_{\ell_k}$ equals
  $-2(\ell_1+\dots+ \ell_k)(p-1)$.
\end{proof}
We note that since $E$ is degreewise finite dimensional over $\bF_p$,
this Tate filtration of~$E$ is complete and~$E$ becomes an rbb
complete right $\sA$-module algebra.

Using the isomorphism~\eqref{eq:homology-of-thh-of-mu} we
rewrite~\eqref{eq:f-and-g} for $B = MU$ as the following diagram
\begin{equation}
  \begin{aligned}
    \label{eq:f-and-g-mu}
    \xymatrix@C11mm{
    r_+ (H_*(MU)) \otimes E
    \ar[d]^-{f=r_+(\eta_*)\cdot \epsilon}
    \ar[r]^-{\omega_{MU}\otimes 1}
    & H_*^c (R_+(MU))\otimes E
      \ar[d]^-{g=\eta^t_*\cdot \gamma^H_*} \\
    r_+(H_*(\THH(MU)))
    & H_*^c \left(\THH(MU)^{tC_p}\right)\rlap{\,.}
      }
  \end{aligned}
\end{equation}
Here the tensor products have the convolution
filtrations, which are rbb by Lemma~\ref{lemma:tensor-rbb}.

We proceed by explicitly
describing the morphisms and how they behave with respect to the
different filtrations.

For any graded $\bF_p$-vector space $P_*$ with an exhaustive
Hausdorff filtration $F_nP_*$, and each non-zero $z\in P_*$, we write
$\Fil z$ for the least integer~$n$ such that $z\in F_nP_*$.

Using the isomorphisms~\eqref{eq:homology-of-mu}
and~\eqref{eq:homology-of-thh-of-mu}, the domain and codomain
of~$f$ are both isomorphic to
\begin{equation}
  \label{eq:domain-and-codomain-of-f}
  \hat H^{-*}(C_p; \bF_p) \otimes P(m_\ell \mid \ell\geq 1) \otimes
  E(\sigma m_\ell \mid \ell\geq 1)
\end{equation}
as right $\sA$-module algebras.  Under these identifications, $f$
corresponds to the identity morphism since $r_+(\eta_*)$ is the
inclusion
$1\otimes \eta_*\: H_*^c(S^{tC_p})\otimes H_*(MU)\subset
H_*^c(S^{tC_p})\otimes H_*(\THH(MU))$ and
$f(\sigma m_\ell)=\epsilon(\sigma m_\ell)=1\otimes \sigma m_\ell$ for each
$\ell\geq 1$.  The last equality follows from
\eqref{eq:epsilon-def-p-even}--\eqref{eq:epsilon-def-p-odd} and
the fact that
$\sigma m_\ell\in H_*(\THH(MU))$ is $\sA_*$-comodule primitive.

Note that the domain and codomain of~$f$ differ as filtered objects
since the filtration of $E$ in the domain is given by the pullback
filtration along $\gamma^H_*$, while the injective image of~$E$ in
$r_+(H_*(\THH(MU)))$ has the Tate filtration given by
Definition~\ref{dfn:tate-filtration-rplus}.  Explicitly, for each
$\ell\geq 1$ and $\sigma m_\ell\in E$ we have
$\Fil \sigma m_\ell = -2\ell(p-1)$ by
Lemma~\ref{lemma:filtration-of-e}, which is strictly greater than
$\Fil f(\sigma m_\ell)=-(2\ell+1)(p-1)$.

Table~\ref{tab:f-on-generators} lists the values of~$f$ when applied
to a set of algebra generators of $r_+(H_*(MU))\otimes E$,
together with their Tate filtrations.

\begingroup
\setlength{\tabcolsep}{10pt} % Default value: 6pt
\renewcommand{\arraystretch}{1.5} % Default value: 1
\begin{table}[h!]
$$
  \begin{tabular}{|c|c|c|c|}
    \hline
    $z$ & $\Fil z$ & $f(z)$ & $\Fil f(z)$\\
    \hline
    \hline
    $u^it^r$ & $-i-2r$ & $u^it^r$ & $-i-2r$\\
    \hline
    $m_\ell$ & $-2\ell(p-1)$ & $m_\ell$ & $-2\ell(p-1)$\\
    \hline
    $\sigma m_\ell$ & $-2\ell(p-1)$ & $\sigma m_\ell$ & $-(2\ell+1)(p-1)$ \\
    \hline
  \end{tabular}
$$
  \caption{The homomorphism $f$ \label{tab:f-on-generators}}
\end{table}
\endgroup

\begin{lemma}\label{lemma:f}
  The homomorphism~$f$ of~\eqref{eq:f-and-g-mu} is
  filtration-preserving and an unfiltered isomorphism of right
  $\sA$-module algebras.  The completion $f^\wedge$ is an isomorphism
  of unfiltered graded $\bF_p$-vector spaces.
\end{lemma}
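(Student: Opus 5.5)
Under the identifications \eqref{eq:homology-of-mu} and \eqref{eq:homology-of-thh-of-mu}, both domain and codomain of $f$ are the same algebra \eqref{eq:domain-and-codomain-of-f}. I will first show that $f$ corresponds to the identity there. By construction $f = r_+(\eta_*)\cdot\epsilon$ is a product of two algebra homomorphisms into the graded commutative algebra $r_+(H_*(\THH(MU)))$. Hence it is a homomorphism of right $\sA$-module algebras, since $r_+(\eta_*)$ and $\epsilon$ are $\sA$-linear and the product on $r_+$ is $\sA$-linear by Proposition~\ref{prop:r-plus-is-monoidal}. On the generators it takes the values listed in Table~\ref{tab:f-on-generators}: $u^it^r\mapsto u^it^r$, $m_\ell\mapsto m_\ell$ via $r_+(\eta_*)$, and $\sigma m_\ell\mapsto 1\otimes\sigma m_\ell$ because $\sigma m_\ell$ is primitive. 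Multiplicativity then shows that $f$ is the identity on monomials $u^it^r\otimes m^I\otimes\sigma m_J$, so $f$ is an unfiltered isomorphism of right $\sA$-module algebras.

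Next I check that $f$ preserves filtration. The domain carries the convolution filtration of $r_+(H_*(MU))$ and of $E$, where $E$ has the pulled-back filtration of Lemma~\ref{lemma:filtration-of-e}. A monomial $u^it^r\otimes m^I\otimes \sigma m_J$ therefore has filtration $-i-2r-|m^I|(p-1)-2|J|(p-1)$, where $|J|=\sum_{j\in J} j$. The same monomial in $r_+(H_*(\THH(MU)))$ has filtration $-i-2r-(|m^I|+2|J|+\#J)(p-1)$, which is less than or equal to the former. Since the filtrations in question are spans of monomials, $f$ maps $F_n$ into $F_n$.

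For the completion, I would argue that $f^\wedge$ is bijective. The filtration defect is $\#J\,(p-1)$, and $\#J\le$ (number of distinct $\sigma m_\ell$ in a given degree), which is bounded in each fixed internal degree. More cleanly, the following suffices. Fix a degree $q$. The monomials of the domain of filtration $\le n$ all map to filtration $\le n$, and conversely every monomial of the codomain of filtration $\le n - c_q$ comes from a domain monomial of filtration $\le n$, where $c_q$ bounds $\#J\,(p-1)$ in degree $q$. Since $\sigma m_j$ has degree $2j+1\ge 3$ and the $m$'s and $\sigma m$'s have nonnegative degree while $u^it^r$ absorbs the rest, I need a degree bound on $\#J$. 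This is the main obstacle, because $t^r$ can have very negative degree. Instead I use that $E$-part filtrations are compared via $\#J(p-1)\le |J|(p-1)$, so the defect is at most $|\sigma m_J|(p-1)/2$.

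To finish, I would produce a pro-inverse. For each $n$ I show $f^{-1}(F_{n}) \supseteq$ the codomain's $F_{n}$ restricted suitably, so that $f^{-1}$ induces compatible maps $r_+(H_*\THH(MU))/F_{n'} \to (r_+(H_*MU)\otimes E)/F_n$ for some $n'(n)\to -\infty$. Because the defect is controlled monomial-by-monomial and the filtrations are monomial spans, these maps form a morphism of towers inverse to the tower map induced by $f$. Passing to limits gives $f^\wedge$ as an isomorphism of graded vector spaces, though not a filtered one. The remaining check, that $n'(n)\to-\infty$ uniformly in each degree, uses that $E$ is of finite type, so in each total degree only finitely many $J$ occur for elements of the relevant filtration range. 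Granting that, $f^\wedge$ is an unfiltered isomorphism.
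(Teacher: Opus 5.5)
Your first two steps are correct and match the paper. Under the identifications, $f$ is the identity on monomials, using that $\sigma m_\ell$ is primitive so that $\epsilon(\sigma m_\ell)=1\otimes\sigma m_\ell$. Comparing the monomial filtrations gives $\Fil f(z)=\Fil z-\#J(p-1)\le \Fil z$, so $f$ preserves filtration.

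The gap is in the completion step, which is the only nontrivial part of the lemma. You correctly observe that the defect $\#J(p-1)$ is unbounded in a fixed total degree $d$, because $t^r$ absorbs degree. But you then grant the needed cofinality $F_{n'}Q_d\subset F_nP_d$, and your justification, that $E$ is of finite type, does not work. In degree $d$ the monomials $u^it^r\otimes\sigma m_1\cdots\sigma m_k$, with $i,r$ chosen to make the degree $d$, involve infinitely many different $J$. So finite type of $E$ gives no finiteness by itself. Likewise, your bound of the defect by $|\sigma m_J|(p-1)/2$ is useless in fixed total degree unless you also bound $|\sigma m_J|$.

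The missing idea is to use the filtration constraint itself, via the quantity $|z|-\Fil z$, which is additive over monomial factors. Take a domain monomial $z=u^it^r\otimes m^I\otimes\sigma m_J$ of degree $d$, with your notation $|J|=\sum_{j\in J}j$. Then $d-\Fil z=p|m^I|+2p|J|+\#J$. If $z\notin F_nP_d$, this gives $(2p+1)\#J\le 2p|J|+\#J<d-n$, so the defect on the relevant monomials is less than $(p-1)(d-n)/(2p+1)$. Since all these filtrations are monomial spans, it follows that $F_{n'}Q_d\subset F_nP_d$ whenever $n'\le n-(p-1)(d-n)/(2p+1)$. For fixed $d$ this $n'$ tends to $-\infty$ with $n$, so the two filtrations are cofinal in each degree and $f^\wedge_d$ is bijective.

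The paper packages exactly this estimate as $|z|-\Fil z\le |f(z)|-\Fil f(z)\le \tfrac32(|z|-\Fil z)$ on generators, hence on all monomials. That yields $F_{\lfloor (3n-d)/2\rfloor}Q_d\subset F_nP_d\subset F_nQ_d\subset F_{\lceil (2n+d)/3\rceil}P_d$. With such an estimate in place, your pro-inverse argument goes through; without it, the key step is unproved.
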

\begin{proof}
  For brevity, let $P_* = r_+(H_*(MU)) \otimes E$ and
  $Q_* = r_+(H_*(THH(MU)))$.  We already pointed out that $f$
  corresponds to the identity homomorphism when $P_*$ and $Q_*$ are
  identified with \eqref{eq:domain-and-codomain-of-f}.  Hence~$f$ is
  an isomorphism of unfiltered objects.

  However, since $P_*$ and $Q_*$ have different filtrations, it
  remains to check that~$f$ is both filtration-preserving and induces
  an unfiltered isomorphism after completion.

  We refer to Table~\ref{tab:f-on-generators} when making the
  following observations: For $z = u^i t^r$ we have
  $|z| = \Fil z = \Fil f(z) = -i-2r$, so
  $|z| - \Fil z = |f(z)| - \Fil f(z) = 0$.

  For $z = m_\ell$ we have $|z| = 2\ell$, $\Fil z = -2\ell(p-1)$ and
  $\Fil f(z) = -2\ell(p-1)$, so
  $|z| - \Fil z = |f(z)| - \Fil f(z) = 2 \ell p$.

  For $z = \sigma m_\ell$ we have $|z| = 2\ell+1$,
  $\Fil z = -2 \ell (p-1)$ and $\Fil f(z) = - (2\ell+1) (p-1)$, so
  $|z| - \Fil z = 2 \ell p + 1$ and
  $|f(z)| - \Fil f(z) = 2 \ell p + p$.

  Since $1 < (2 \ell p + p)/(2 \ell p + 1) < 3/2$ for $\ell\ge1$, we
  have
  \[
    |z| - \Fil z \le |f(z)| - \Fil f(z) \le (3/2)(|z| - \Fil z)
  \]
  in each of these cases.  It follows that these relations also hold
  for any monomial generator $z \in P_*$ and its image $f(z) \in Q_*$.

  In any fixed degree~$d$, it then follows that
  \[
    F_{\lfloor\frac{3n-d}{2}\rfloor} Q_d \subset F_n P_d \subset F_n Q_d
    \subset F_{\lceil\frac{2n+d}{3}\rceil} P_d
  \]
  for each $n \in \bZ$.  In particular, $f$ is filtration-preserving,
  and the induced map
  \[
    f^\wedge_d \: P^\wedge_d = \lim_n P_d/F_n P_d
    \overset{\cong}\longto \lim_n Q_d/F_n Q_d = Q^\wedge_d
  \]
  is an isomorphism for each~$d \in \bZ$.
\end{proof}

To describe $\omega_{MU}\otimes 1$, we recall from
Proposition~\ref{prop:singer-tate-filtration} that the homological
$C_p$-Tate spectral sequence converging to $H_*^c(R_+(MU))$ collapses
with
\[
  \label{eq:e-infty-rplus-mu}
  \hat E^\infty (R_+(MU)) = \hat E^2 (R_+(MU)) =
  \hat H^{-*}(C_p; \bF_p)\otimes P(m_\ell^{\otimes p} \mid \ell\geq
1) \,.
\]
By Proposition~\ref{prop:omega-B}, a unit multiple of the class
$\omega_{MU}(u^it^r\otimes m_\ell)$ is detected by
$u^it^{r+\ell(p-1)}\otimes m_\ell^{\otimes p}$ in Tate filtration
$-i-2r-2\ell(p-1)$.  In other words,
up to a unit $\omega_{MU}(u^it^r\otimes m_\ell) \in
\{u^it^{r+\ell(p-1)}\otimes m_\ell^{\otimes p} \}$.
Table~\ref{tab:omega-1-on-generators} summarizes the situation for
$\omega_{MU}\otimes 1$.

\begingroup
\setlength{\tabcolsep}{10pt} % Default value: 6pt
\renewcommand{\arraystretch}{1.5} % Default value: 1
\begin{table}[h!]
  $$
  \begin{tabular}{|c|c|c|c|}
    \hline
    $z$ & $\Fil z$ & $(\omega_{MU} \otimes 1) (z)$ & $\Fil (\omega_{MU}\otimes 1)(z)$\\
    \hline
    \hline
    $u^it^r$ & $-i-2r$ & $\{u^it^r\}$ & $-i-2r$\\
    \hline
    $m_\ell$ & $-2\ell(p-1)$ & $\{t^{\ell(p-1)}\otimes m_\ell^{\otimes p}\}$ & $-2\ell(p-1)$\\
    \hline
    $\sigma m_\ell$ & $-2\ell(p-1)$ & $\sigma m_\ell$ & $-2\ell(p-1)$ \\
    \hline
  \end{tabular}
  $$
  \caption{The homomorphism $\omega_{MU}\otimes 1$ \label{tab:omega-1-on-generators}}
\end{table}
\endgroup

For any spectrum $B$, the map
$\eta^t = \eta_p^{tC_p} \: R_+(B)\to \THH(B)^{tC_p}$ induces the
morphism of homological $C_p$-Tate spectral sequences that on
$\hat{E}^2$-terms sends the class $u^it^r\otimes x^{\otimes p}$ to
$u^it^r\otimes (\eta_p)_*(x^{\otimes p}) = u^it^r\otimes x^p$.  When
$B=MU$, we deduce from the algebra structure of the
$\hat E^\infty$-term displayed in~\eqref{eq:e-infty-cp-tate-mu} that
$u^it^r\otimes x^p$ survives to the $\hat E^\infty$-term, for each
non-zero class $x\in H_*(MU)$.  The behavior of~$g$ is summarized in
Table~\ref{tab:g-on-generators}.

\begingroup
\setlength{\tabcolsep}{10pt} % Default value: 6pt
\renewcommand{\arraystretch}{1.5} % Default value: 1
\begin{table}[h!]
  $$
  \begin{tabular}{|c|c|c|c|}
    \hline
    $z$ & $\Fil z$ & $g (z)$ & $\Fil g(z)$\\
    \hline
    \hline
    $\{u^it^r\otimes x^{\otimes p}\}$ & $-i-2r$ & $\{u^it^r\otimes x^p\}$ & $-i-2r$\\
    \hline
    $\sigma m_\ell$ & $-2\ell(p-1)$ & $\{t^{\ell(p-1)}\otimes m_\ell^{p-1}\sigma m_\ell\}$ & $-2\ell(p-1)$ \\
    \hline
  \end{tabular}
  $$
  \caption{The homomorphism $g$ \label{tab:g-on-generators}}
\end{table}
\endgroup

\begin{lemma}\label{lemma:omega-and-g}
  Both~$\omega_{MU}\otimes 1$ and~$g$ of diagram~\eqref{eq:f-and-g-mu}
  are strictly filtration-preserving homomorphisms that induce
  isomorphisms of filtration quotients.  In particular, they both
  become isomorphisms in $\rbbfilamodc$ after passing to completions.
\end{lemma}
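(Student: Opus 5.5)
The plan is to apply Lemma~\ref{lemma:thin-layers} to each of the two maps. That lemma needs a morphism in $\rbbfilgrfpmod$ that induces isomorphisms on the filtration quotients $F_n/F_{n-1}$. Given this, completion yields an isomorphism in $\rbbfilgrfpmod$. This isomorphism is $\sA$-linear, and for $g$ also multiplicative, because both maps are already morphisms of rbb filtered right $\sA$-module algebras. Hence it is an isomorphism in $\rbbfilamodc$. The claim about strict filtration-preservation is then immediate from injectivity on filtration quotients, which is equivalent to strictness by the remark in Subsection~\ref{sec:filtered-r-modules}.

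For $\omega_{MU}\otimes 1$, I would use Lemma~\ref{lemma:convolution-filtration-isos}, with $f=\omega_{MU}$ and $g=\mathrm{id}_E$. This reduces the question to $\omega_{MU}\: r_+(H_*(MU))\to H^c_*(R_+(MU))$ inducing isomorphisms $F_n/F_{n-1}$. By Proposition~\ref{prop:omega-B}, $\omega_{MU}$ is strictly filtration-preserving. It sends the basis element $u^it^r\otimes x$, for $x$ a monomial basis element of $H_*(MU)$, to a class detected by a unit times $u^it^{r+mq}\otimes x^{\otimes p}$. These detecting classes form a basis of $\hat E^\infty=\hat E^2$, by Proposition~\ref{prop:singer-tate-filtration}. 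Therefore the associated graded map is the bijection of bases (up to units) onto $\hat E^\infty_{n,*}\cong F_n/F_{n-1}$.

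For $g$, I would exploit multiplicativity and the associated graded. The domain $H^c_*(R_+(MU))\otimes E$ carries the convolution filtration. Its associated graded is $\hat E^\infty(R_+(MU))\otimes \gr E$, and it has a basis given by the products $\{u^it^r\otimes x^{\otimes p}\}\cdot \sigma m_{\ell_1}\cdots\sigma m_{\ell_k}$, in filtration $-i-2r-2(\ell_1+\dots+\ell_k)(p-1)$, using Lemma~\ref{lemma:filtration-of-e}. The map $g=\eta^t_*\cdot\gamma^H_*$ is a product of algebra homomorphisms. By Table~\ref{tab:g-on-generators}, $\eta^t_*$ sends $\{u^it^r\otimes x^{\otimes p}\}$ to a class detected by $u^it^r\otimes x^p$ in the same filtration, and $\gamma^H_*(\sigma m_\ell)$ is detected by a unit times $t^{\ell(p-1)}\otimes m_\ell^{p-1}\sigma m_\ell$ in the same filtration. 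The Hesselholt--Madsen pairings are compatible with the pairing of $\hat E^\infty$-terms (Subsection~\ref{sec:hesselholt-madsen}). So each basis product maps into the same filtration and is detected there by the corresponding product in the $\hat E^\infty$-term~\eqref{eq:e-infty-cp-tate-mu}. In particular $g$ is filtration-preserving, and $\gr g$ is the algebra map
\[
\hat H^{-*}(C_p;\bF_p)\otimes P(m_\ell^{p})\otimes E(\sigma m_\ell)\longto \hat H^{-*}(C_p;\bF_p)\otimes P(m_\ell^p)\otimes E(m_\ell^{p-1}\sigma m_\ell),
\]
which sends generators to unit multiples of generators, twisted by powers of $t$.

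The main obstacle is to check that this associated graded map is bijective. For that I would verify that monomials go to unit multiples of distinct monomials. Since $(m_\ell^{p-1}\sigma m_\ell)^2=0$ and the $t$-powers are invertible, every basis monomial of the target is hit exactly once, with one fixed unit. This gives the isomorphism $F_n/F_{n-1}\cong F_n/F_{n-1}$, and Lemma~\ref{lemma:thin-layers} finishes the proof. Some care is needed to confirm that the $\hat E^\infty$-term of the target, with its given algebra structure, is exactly this tensor product in matching bidegrees; here I would rely on the collapse result~\eqref{eq:e-infty-cp-tate-mu}.
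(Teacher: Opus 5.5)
Your proposal is correct and follows the paper's own argument. You use Lemma~\ref{lemma:thin-layers} for both maps, and Lemma~\ref{lemma:convolution-filtration-isos} together with Proposition~\ref{prop:omega-B} for $\omega_{MU}\otimes 1$. For $g$ you use the multiplicative structure of the $\hat E^\infty$-terms and the generator-level description in Table~\ref{tab:g-on-generators}, spelling out the basis matching that the paper calls clear. One small point: $H^c_*(R_+(MU))$ is completed, so $F_n$ of the domain is not spanned by your basis products. The fact that $g$ is filtration-preserving is therefore better taken directly from three facts: $\eta^t_*$ preserves the Tate filtration, $E$ carries the pullback filtration, and the filtrations are multiplicative. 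Your associated-graded computation then gives the isomorphism on filtration quotients.
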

\begin{proof}
  A morphism $h\: P_*\to Q_*$ of rbb filtered graded $\bF_p$-vector
  spaces that induces isomorphisms
  $F_nP_*/F_{n-1}P_*\cong F_nQ_*/F_{n-1}Q_*$ of filtration quotients
  is in particular strictly filtration-preserving.  Furthermore, the
  completion $h^\wedge \: P_*^\wedge\to Q_*^\wedge$ is an isomorphism
  in $\rbbfilgrRmod$ by Lemma~\ref{lemma:thin-layers}.

  By Proposition~\ref{prop:omega-B}, the homomorphism $\omega_{MU}$
  induces isomorphisms of filtration quotients.  The lemma then
  follows for the filtered homomorphism $\omega_{MU}\otimes 1$ by an
  application of Lemma~\ref{lemma:convolution-filtration-isos} and the
  observation above.

  The homological $C_p$-Tate spectral sequences for
  $R_+(MU)$ and $\THH(MU)^{tC_p}$ both converge strongly, and we
  repeat their $\hat E^\infty$-terms here for convenience:
  \begin{align*}
    \hat E^\infty(R_+(MU))
    &= \hat H^{-*}(C_p; \bF_p) \otimes
      P(m_\ell^{\otimes p}\mid \ell\geq 1)\\
    \hat E^\infty(\THH(MU))
    &= \hat H^{-*}(C_p; \bF_p) \otimes
      P(m_\ell^p\mid \ell\geq 1) \otimes E(m_\ell^{p-1}\sigma m_\ell\mid \ell\geq 1)\,.
  \end{align*}
  It follows from strong convergence that the associated gradeds of
  $H_*^c(R_+(MU))$ and $H_*^c(\THH(MU))$ are given by these
  $\hat E^\infty$-terms.  From the algebra structure together
  with the explicit description of $g$ given in
  Table~\ref{tab:g-on-generators}, it is then clear that $g$ is
  strictly filtration-preserving and that it induces an isomorphism of
  associated graded $\bF_p$-vector spaces.  This proves the lemma for~$g$.
\end{proof}

Lemma~\ref{lemma:f} and Lemma~\ref{lemma:omega-and-g} imply that
\eqref{eq:f-and-g-mu} is a diagram in the category of rbb filtered
right $\sA$-module algebras.  In particular, it makes sense to
consider its completion.
\begin{proposition}\label{prop:theta}
  There is a commutative diagram
  \begin{equation}
    \begin{aligned}
      \label{eq:f-and-g-mu-completed}
      \xymatrix@C11mm{
      R_+ (H_*(MU)) \ctensor E
      \ar[d]^-{f^\wedge}
      \ar[r]^-{\omega_{MU}\ctensor 1}_-\cong
      & H_*^c (R_+(MU))\ctensor E
        \ar[d]^-{g^\wedge}_-\cong \\
      R_+(H_*(\THH(MU)))
      & H_*^c \left(\THH(MU)^{tC_p}\right)
        \ar[l]_-{\Theta_{MU}}
        }
    \end{aligned}
  \end{equation}
  in the category $\alg(\sC^\wedge, \ctensor)$ of rbb complete right
  $\sA$-module algebras.

  Moreover, each morphism in \eqref{eq:f-and-g-mu-completed} is an
  isomorphism of unfiltered graded $\bF_p$-vector spaces and, in
  particular, $\Theta_{MU}$ induces an isomorphism of continuous
  $\Ext$-algebras
  $$
  \cExt^*_{\sA_*}(\bF_p, H_*(\THH(MU)^{tC_p}))
  \cong
  \cExt^*_{\sA_*}(\bF_p, R_+(H_*(\THH(MU))))\,.
  $$
\end{proposition}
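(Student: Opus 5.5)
The plan is to define $\Theta_{MU} := f^\wedge \circ (\omega_{MU}\ctensor 1)^{-1} \circ (g^\wedge)^{-1}$, so that the diagram commutes by construction. Everything then reduces to checking that each arrow is a morphism in $\alg(\sC^\wedge,\ctensor)$ and an unfiltered isomorphism, after which the $\Ext$-statement is immediate.

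First, by Lemma~\ref{lemma:f} and Lemma~\ref{lemma:omega-and-g}, diagram~\eqref{eq:f-and-g-mu} consists of morphisms of rbb filtered right $\sA$-module algebras. We still need to see that $\omega_{MU}\otimes 1$ and $g$ are multiplicative and $\sA$-linear:
\begin{itemize}
\item for $\omega_{MU}$ this is Proposition~\ref{prop:r-plus-is-monoidal};
\item for $g=\eta^t_*\cdot\gamma^H_*$ it follows because $\eta^t$ and $\gamma^H$ are maps of $E_1$ ring spectra (the latter as the $H$-based cyclotomic structure map), and $H^c_*(\THH(MU)^{tC_p})$ is graded commutative;
\item for $f=r_+(\eta_*)\cdot\epsilon$ it follows from naturality of $r_+$, the $\sA$-linearity of $\epsilon$, and the fact that $\epsilon$ is multiplicative on the algebra $E$ (indeed $\epsilon(\sigma m_\ell)=1\otimes\sigma m_\ell$ by primitivity).
\end{itemize}
Applying the completion functor $c$ from~\eqref{eq:algebras} then produces the completed square in $\alg(\sC^\wedge,\ctensor)$. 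Here Lemma~\ref{lemma:completions-before-or-after-tensor} identifies $(r_+(H_*MU)\otimes E)^\wedge$ with $R_+(H_*MU)\ctensor E$. Also $H^c_*(\THH(MU)^{tC_p})$ is already complete Hausdorff by strong convergence of the Tate spectral sequence (Lemma~\ref{lemma:strong-convergence-homology}, since the spectral sequence collapses at $\hat E^3$), so its completion is itself.

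By Lemma~\ref{lemma:omega-and-g}, $\omega_{MU}\ctensor 1$ and $g^\wedge$ are isomorphisms in $\rbbfilamodc$, hence in the algebra category, so their inverses are filtered algebra morphisms. Composing with $f^\wedge$, which is a filtered algebra morphism and an unfiltered isomorphism by Lemma~\ref{lemma:f}, shows that $\Theta_{MU}$ is a morphism of rbb complete right $\sA$-module algebras and an unfiltered isomorphism. It need not be a filtered isomorphism, because $f^{-1}$ raises the filtration of the classes $\sigma m_\ell$.

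Finally, via Proposition~\ref{prop:alg-category-iso} we regard $\Theta_{MU}$ as a morphism in $\rbbfilacomodc$, and Proposition~\ref{prop:ext-iso} gives the isomorphism on $\cExt$. Compatibility with the cup product follows because $\Theta_{MU}$ is an algebra map, and the continuous cup product is natural in algebra morphisms by the explicit cobar formula~\eqref{eq:cobarcup}.

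The main subtle point is the multiplicativity of $g$ and $f$ on the tensor product over $H_*(MU)$. This relies on the commutativity of~\eqref{eq:epsilon-cyctomic-zeroskeleton}, which makes the two $H_*(MU)$-algebra structures compatible, so that the products $r_+(\eta_*)\cdot\epsilon$ and $\eta^t_*\cdot\gamma^H_*$ are algebra maps out of the relative tensor product. I would verify this by checking on generators, using graded commutativity of both targets.
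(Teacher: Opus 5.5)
Your proposal is correct and follows essentially the paper's route. You define $\Theta_{MU}=f^\wedge\circ(\omega_{MU}\ctensor 1)^{-1}\circ(g^\wedge)^{-1}$, use Lemmas~\ref{lemma:f} and~\ref{lemma:omega-and-g} to obtain the completed square in $\alg(\sC^\wedge,\ctensor)$ with every arrow an unfiltered isomorphism, and conclude with Proposition~\ref{prop:ext-iso}. Your extra checks (multiplicativity of $f$ and $g$ on the relative tensor product, completeness of $H^c_*(\THH(MU)^{tC_p})$, naturality of the cup product) spell out what the paper leaves implicit when it calls~\eqref{eq:f-and-g} a diagram of right $\sA$-module algebras.
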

\begin{proof}
  It follows from Lemma~\ref{lemma:f} and
  Lemma~\ref{lemma:omega-and-g} that
  diagram~\eqref{eq:f-and-g-mu-completed} exists as a diagram in the
  category $\alg(\sC^\wedge, \ctensor)$, and that all morphisms
  are unfiltered isomorphisms of graded  $\bF_p$-vector spaces.
  Proposition~\ref{prop:ext-iso} then implies that $\Theta_{MU}$ induces an
  isomorphism of continuous $\Ext$-algebras.
\end{proof}

\subsection{The differential graded structure of
  $H_*^c(\THH(MU)^{tC_p})$} \label{sec:thh-mu-dga}

Recall from the discussion in Section~\ref{sec:residual-circle-action}
that $\THH(MU)^{tC_p}$ carries a residual circle action by $\bar\bT = \bT/C_p$,
inducing a differential
$$
\bar\sigma\: \leftsuspension H_*^c(\THH(MU)^{tC_p})
\longto \sh_1 H_*^c (\THH(MU)^{tC_p}) \,.
$$
Proposition~\ref{prop:tate-dga} applied to $X=\THH(MU)$
states that $(H_*^c(\THH(MU)^{tC_p}), \bar\sigma)$ is an rbb filtered
differential graded right $\sA$-module algebra.

\begin{lemma}[\cite{LNR11}*{Thm.~6.4}]
  \label{lemma:thm-6.4}
  The differential $\bar\sigma$ maps
  $\eta^t_*(\omega_{MU}(1\otimes m_\ell))$ to
  $\gamma^H_*(\sigma m_\ell)$, for each $\ell\geq 1$.
\end{lemma}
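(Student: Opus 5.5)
We want to show that $\bar\sigma\bigl(\eta^t_*\omega_{MU}(1\otimes m_\ell)\bigr)=\gamma^H_*(\sigma m_\ell)$. The plan is to move the computation back to the $1$-skeleton approximation of Theorem~\ref{thm:0-1-skeleton-approx}. There, the residual $\bar\bT$-action on $THH(MU)^{tC_p}$ restricted along $\omega^t$ is the obvious action on $\bT/C_{p+}\wedge R_+(MU)$.

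First, by Proposition~\ref{prop:epsilon-is-epsilon} and~\eqref{eq:epsilon-cyctomic-zeroskeleton}, $\eta^t_*\omega_{MU}(\epsilon(m_\ell))=\gamma^H_*\eta_*(m_\ell)$. Since $\bar\sigma$ commutes with $\gamma^H_*$ up to the identification $\rho$ (the cyclotomic structure map intertwines $\bT$ with $\bT/C_p$ via the $p$-th root), we get
\[
\bar\sigma\gamma^H_*(\eta_* m_\ell)=\gamma^H_*(\sigma m_\ell).
\]
Here one checks that $\rho_*$ carries $e_1\in H_1(\bT)$ to $\bar e_1\in H_1(\bT/C_p)$, consistent with the orientation chosen in Lemma~\ref{lemma:ut-images}.

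Second, it therefore suffices to prove
\[
\bar\sigma\,\eta^t_*\omega_{MU}\bigl(\epsilon(m_\ell)-1\otimes m_\ell\bigr)=0 .
\]
Expand $\epsilon(m_\ell)$ using~\eqref{eq:epsilon-def-p-odd}. The $\beta_*$-terms vanish because $\beta_*$ is trivial on $H_*(MU)$, leaving
\[
\epsilon(m_\ell)=1\otimes m_\ell+\sum_{j\ge1}(-1)^j t^{-j(p-1)}\otimes \P^j_*(m_\ell).
\]
For $\ell\ne p^k-1$, $m_\ell$ is primitive, so the sum vanishes and we are done. For $m_\ell=\xibar_k$, use the coaction $\psi(\xibar_k)=\sum_i\xibar_{k-i}\otimes\xibar_i^{p^{k-i}}$ to obtain the recursion
\[
\epsilon(\xibar_k)=1\otimes\xibar_k+t^{-(p-1)}\cdot\epsilon(\xibar_{k-1}^p).
\]
Since $\epsilon$ is multiplicative, this gives $\epsilon(\xibar_k)-1\otimes\xibar_k=t^{-(p-1)}\epsilon(\xibar_{k-1})^p$. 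Its image under the algebra map $\eta^t_*\omega_{MU}$ is $t^{-(p-1)}\cdot y^p$ with $y=\gamma^H_*\eta_*(\xibar_{k-1})$. Because $\bar\sigma$ is a derivation and $\bar\sigma(t)=0$, we get $\bar\sigma(t^{-(p-1)}y^p)=p\,t^{-(p-1)}y^{p-1}\bar\sigma(y)=0$. The same argument works at $p=2$, reading $\bar\zeta_k^2$ and using $u^2$ in place of $t$.

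The main obstacle is the first step: justifying that $\gamma^H_*$ is compatible with the differentials, $\bar\sigma\circ\gamma^H_*=\gamma^H_*\circ\sigma$. This needs the $\bT$-equivariance of $\gamma\colon THH(B)\to THH(B)^{tC_p}$ with respect to $\rho$. It is built into the cyclotomic structure, and after smashing with $H$ it survives through $\kappa$, but one must track signs and the orientation convention. I would first try reading it off directly from diagram~\eqref{eq:0-1-skeleton-approx}, where $\gamma\circ\omega=\omega^t\circ(\rho_+\wedge\epsilon_B)$ displays the equivariance on the $1$-skeleton. I would also check there that the primitive-case and recursion computations are compatible with the filtration shift of $\bar\sigma$.
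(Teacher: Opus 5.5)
Your proposal is correct and follows essentially the same route as the paper. Both arguments identify $\eta^t_*\omega_{MU}\epsilon$ with $\gamma^H_*\eta_*$ and use the $\rho$-equivariance of $\gamma$. Both then settle primitive $m_\ell$ directly, and for $m_\ell=\xibar_k$ use the recursion $\epsilon(\xibar_k)=1\otimes\xibar_k+t^{-(p-1)}\epsilon(\xibar_{k-1}^p)$ together with $\bar\sigma(t)=0$. The differences are cosmetic: you derive the recursion from the coproduct rather than citing \cite{LNR11}, and you kill $\bar\sigma(y^p)$ by the Leibniz rule in the target rather than through $\gamma^H_*(\sigma(\xibar_{k-1}^p))=0$.
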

\begin{proof}
  Consider the following commutative diagram
  \[
    \xymatrix{
      & H_*(MU) \ar[r]^-{\eta_*} \ar[dd]^-{(\epsilon^H_{MU})_*} \ar[dl]_-{\epsilon}
      & H_*(THH(MU)) \ar[dd]^-{\gamma^H_*} \\
      r_+(H_*(MU)) \ar[dr]_-{\omega_{MU}} \\
      & H^c_*(R_+(MU)) \ar[r]^-{\eta^t_*} & H^c_*(THH(MU)^{tC_p})
      \rlap{\,.}
    }
  \]
  The left-hand triangle commutes by
  Proposition~\ref{prop:epsilon-is-epsilon}, and the right-hand square
  commutes by Theorem~\ref{thm:0-1-skeleton-approx}.  The cyclotomic
  structure map $\gamma$ is equivariant with respect to
  $\rho \: \bT \to \bar\bT$.  Thus the differential $\bar\sigma$ maps
  $\gamma^H_*(\eta_*(x)) = \eta^t_*(\omega_{MU}(\epsilon(x)))$ to
  $\gamma^H_*(\sigma(\eta_*(x)))$ for every class $x \in H_*(MU)$.

  We now specialize to $x = m_\ell$, noting that
  $\sigma(\eta_*(m_\ell)) = \sigma m_\ell \in H_*(THH(MU))$.  When
  $\ell \ne p^k-1$, the class $m_\ell$ is $\sA_*$-comodule primitive,
  so $\epsilon(m_\ell) = 1 \otimes m_\ell \in r_+(H_*(MU))$, and the
  lemma follows from the previous paragraph.

  Finally, when $\ell = p^k-1$ and $x = m_\ell = \xibar_k$,
  \if \anonym
    a calculation
  \else
    we calculated
  \fi
  in the proof of~\cite{LNR11}*{Thm.~6.4}
  \if \anonym
    shows
  \fi
  that
  $\epsilon(\xibar_k) = 1 \otimes \xibar_k + t^{-(p-1)} \cdot
  \epsilon(\xibar_{k-1}^p)$.  It follows that
  \[
    \eta^t_*(\omega_{MU}(1 \otimes \xibar_k))
	= \eta^t_*(\omega_{MU}(\epsilon(\xibar_k))) - t^{-(p-1)} \cdot \eta^t_*(\omega_{MU}(\epsilon(\xibar_{k-1}^p)))
  \]
  is mapped by $\bar\sigma$ to $\gamma^H_*(\sigma(\xibar_k)) -
  t^{-(p-1)} \cdot \gamma^H_*(\sigma(\xibar_{k-1}^p))$.  Here we
  use that $\eta^t_* \circ \omega_{MU}$ is $H^c_*(S^{tC_p})$-linear,
  and that $\bar\sigma(t) = 0$.  The lemma then follows by noting that
  $\sigma(\xibar_{k-1}^p) = 0$, since $\sigma$ is a derivation.
\end{proof}

\begin{proposition}\label{prop:theta-is-dga-morphism}
  The morphism
  \[
    \Theta_{MU}\: H_*^c(\THH(MU)^{tC_p})
    \longto
    R_+(H_*(\THH(MU))
  \]
  commutes with the differential $\bar\sigma$ and is therefore
  a morphism in $\dgalg(\sC^\wedge, \ctensor)$, of rbb complete
  differential graded right $\sA$-module algebras.
\end{proposition}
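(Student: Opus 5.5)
Since $\Theta_{MU} = f^\wedge \circ (\omega_{MU}\ctensor 1)^{-1}\circ (g^\wedge)^{-1}$ is already known by Proposition~\ref{prop:theta} to be a morphism in $\alg(\sC^\wedge,\ctensor)$, the only thing left to prove is $\Theta_{MU}\circ\bar\sigma = \bar\sigma\circ\Theta_{MU}$. Both differentials are derivations. On the source this is Proposition~\ref{prop:tate-dga}; on the target it is the proposition on $(R_+(M_*),\bar\sigma)$, which applies because $\beta_*$ acts trivially on $H_*(\THH(MU))$. Both differentials are also continuous, since they shift filtration by $+1$ and commute with completion. The plan is therefore to check commutativity on elements whose products are dense, i.e.\ on topological algebra generators, and then use Lemma~\ref{lemma:f-mnk}, or simply filtration-preservation together with Hausdorffness, to extend from the dense subalgebra to the completions. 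For a dense image I will use the image of $g\circ(\omega_{MU}\otimes 1)$ on the uncompleted algebra $r_+(H_*(MU))\otimes E$. This algebra is generated by $u$, $t^{\pm1}$, $1\otimes m_\ell$ and $\sigma m_\ell$. On these generators $\Theta_{MU}\circ g\circ(\omega_{MU}\otimes1)$ agrees with $f$, and $f$ sends them to $u$, $t^{\pm1}$, $1\otimes m_\ell$ and $1\otimes\sigma m_\ell$ in $r_+(H_*(\THH(MU)))$.

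The generators are checked one by one. For $u$ and $t$, the source classes are $\eta^t_*\omega_{MU}(u)$ and $\eta^t_*\omega_{MU}(t)$, i.e.\ the images of $u,t\in H^c_*(S^{tC_p})$ under the unit map. The unit map is compatible with the residual $\bar\bT$-actions, so by Lemma~\ref{lemma:u-and-t} we get $\bar\sigma(u)=1$ and $\bar\sigma(t)=0$. The algebraic formulas \eqref{eq:sigmaone-tr}--\eqref{eq:sigmaone-utr} give the same values. For $m_\ell$, Lemma~\ref{lemma:thm-6.4} gives $\bar\sigma(\eta^t_*\omega_{MU}(1\otimes m_\ell)) = \gamma^H_*(\sigma m_\ell)$, and $\Theta_{MU}$ sends the latter to $1\otimes\sigma m_\ell$. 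On the algebraic side, $\bar\sigma(1\otimes m_\ell)=1\otimes\sigma m_\ell$ by \eqref{eq:sigmaone-tr}, so the two agree. For $\sigma m_\ell$, the source class is $\gamma^H_*(\sigma m_\ell)=\bar\sigma(\eta^t_*\omega_{MU}(1\otimes m_\ell))$, so its $\bar\sigma$ vanishes because $\bar\sigma^2=0$. On the target, $\bar\sigma(1\otimes\sigma m_\ell)=1\otimes\sigma^2 m_\ell=0$. For $p=2$ the same checks go through, using \eqref{eq:sigmaone-tr-even-prime}--\eqref{eq:sigmaone-utr-even-prime} and $u$ in place of $t$.

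The Leibniz rule on both sides, together with multiplicativity of $\Theta_{MU}$, then gives commutativity on the whole uncompleted subalgebra. The main obstacle is passing to completions, because $\Theta_{MU}$ itself is not filtration-preserving in the inverse direction of $f$. I will avoid this by comparing the two maps $\bar\sigma\circ f^\wedge$ and $f^\wedge\circ\widetilde{\sigma}$ on $R_+(H_*(MU))\ctensor E$. Here $\widetilde\sigma$ is $\bar\sigma$ transported along the filtered isomorphism $g^\wedge\circ(\omega_{MU}\ctensor1)$. Both maps are filtration-preserving, up to a fixed shift, into the Hausdorff object $R_+(H_*(\THH(MU)))$, and they agree on the dense uncompleted subalgebra. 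Hence they agree, because a filtered map into a Hausdorff complete object is determined by its values on a dense subspace, in the same way as in Lemma~\ref{lemma:f-mnk}.
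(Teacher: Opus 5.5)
Your proposal is correct and is essentially the paper's proof. The paper also checks commutativity on the algebra generators $u$, $t^{\pm1}$, $m_\ell$, $\sigma m_\ell$, using naturality along $S\to MU$, Lemma~\ref{lemma:thm-6.4}, and $\bar\sigma^2=0$. It phrases this for the uncompleted map $\Phi = g\circ(\omega_{MU}\otimes 1)\circ f^{-1}$ and then passes to completions, a step your density argument makes more explicit. One small correction: $\Theta_{MU}$ itself is filtration-preserving, being $f^\wedge$ composed with a filtered isomorphism. It is $\Theta_{MU}^{-1}$ that fails to preserve filtrations, so the same density argument could be applied directly to $\Theta_{MU}\bar\sigma - \bar\sigma\Theta_{MU}$.
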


\begin{proof}
  Note that even before completion, the morphism $f$ of diagram
  \eqref{eq:f-and-g-mu} is an isomorphism of unfiltered graded $\bF_p$-vector
  spaces.  We claim that the filtration-increasing, algebra morphism
  $$
  \Phi := g\circ (\omega_{MU}\otimes 1)\circ f^{-1}\:
  r_+(H_*(\THH(MU))) \longto H_*^c(\THH(MU)^{tC_p})
  $$
  is a map of differential graded algebras.  Here the domain has the
  algebraic differential~$\bar\sigma$ from
  Section~\ref{sec:residual-differential-on-rplus}, while the codomain
  has the topological differential~$\bar\sigma$ from
  Section~\ref{sec:residual-circle-action}.

  Since~$f$ is an isomorphism of $\bF_p$-algebras, there is a unique
  differential graded structure on $r_+(H_*(MU))\otimes E$ that
  makes~$f$ a morphism in the category $\dgalg(\sC, \otimes)$ of rbb
  filtered differential graded right $\sA$-module algebras.  With this
  structure, the claim implies that~$g\circ(\omega_{MU}\otimes 1)$ is
  a morphism in $\dgalg(\sC,\otimes)$ and that
  $\Theta_{MU} = f^\wedge \circ (g^\wedge \circ (\omega_{MU}\ctensor
  1))^{-1}$ is a morphism in $\dgalg(\sC^\wedge, \ctensor)$.

  Since $\Phi$ is a morphism of $\bF_p$-algebras, it suffices to show
  that it commutes with~$\bar\sigma$ when applied to a set of algebra
  generators.  From the definition of $\Theta_{MU}$,~$f$ and~$g$, we get
  \begin{align}
    \label{eq:mu-classes}
    \Phi(u^it^r\otimes x) &= \eta^t_*(\omega_{MU}(u^it^{r}\otimes x)) \\
    \label{eq:thh-mu-classes}
    \Phi(1\otimes \sigma m_\ell) &= \gamma^H_*(\sigma m_\ell) \,,
  \end{align}
  for each $x\in H_*(MU)$ and $\ell \geq 1$.

  We have that
  $ \Phi(\bar\sigma(u\otimes 1)) = \Phi(1\otimes 1) = 1\,.  $
  Furthermore,
  $ \bar\sigma(\Phi(u\otimes 1)) = \bar\sigma(u\cdot 1) = 1\,, $ where
  the last equality follows from naturality with respect to $S\to MU$
  and Proposition \ref{prop:continuous-homology-of-s-tcp}.  This
  implies that
  $\Phi(\bar\sigma(u\otimes 1)) = 1 = \bar\sigma(\Phi(u\otimes 1))$.
  Similarly, $\Phi(\bar\sigma(t^r \otimes 1)) = \Phi(0) = 0$, while
  $\bar\sigma(\Phi(t^r \otimes 1)) = \bar\sigma(t^r \cdot 1) = 0$,
  again by naturality along $S \to MU$.  Hence
  $\Phi(\bar\sigma(t^r \otimes 1)) = 0 = \bar\sigma(\Phi(t^r \otimes
  1))$.

  $$
  \xymatrix{
    r_+(H_*(S)) \ar[r]^-{\omega_S} \ar[d]
	& H^c_*(R_+(S)) \ar[r]^-{\eta^t_*} \ar[d]
	& H^c_*(THH(S)^{tC_p}) \ar[d] \\
    r_+(H_*(MU)) \ar[r]^-{\omega_{MU}}
	& H^c_*(R_+(MU)) \ar[r]^-{\eta^t_*}
	& H^c_*(THH(MU)^{tC_p})
  }
  $$

  It follows from \eqref{eq:mu-classes} and Lemma~\ref{lemma:thm-6.4}
  that $\Phi(1\otimes m_\ell)$ is sent by $\bar\sigma$ to
  $\gamma^H_*(\sigma m_\ell)\in H_*^c(\THH(MU)^{tC_p})$.  From
  \eqref{eq:thh-mu-classes}, we then conclude that
  $\Phi(\bar\sigma(1\otimes m_\ell)) =
  \gamma^H_*(\sigma m_\ell) =
  \bar\sigma(\Phi(1\otimes
  m_\ell))$.

  For the remaining cases, we use that $\bar\sigma$ is a differential,
  in both the domain and codomain of $\Phi$, to compute that
  $\Phi(\bar\sigma(1\otimes \sigma m_\ell)) = 0 =
  \bar\sigma(\Phi(1\otimes \sigma m_\ell))$.
\end{proof}

A consequence of \cite{BBLNR14}*{Prop.~3.8} or
\cite{NS18}*{Lem.~II.4.2} is that if $X$ is a bounded below spectrum
with an action of the circle group $\bT$, then the canonical map
$$
G\: X^{t\bT}\cong (X^{tC_p})^{\bar\bT} \longto (X^{tC_p})^{h\bar\bT}
$$
can be identified with $p$-adic completion.

Let $F \: X^{t\bT} \to X^{tC_p}$
denote the restriction map of Tate constructions associated to $C_p
\subset \bT$.  It agrees with the composite of~$G$ with the forgetful
map $(X^{tC_p})^{h\bar\bT} \to X^{tC_p}$, i.e., the restriction map
associated to $\{e\} \subset \bT$.

These considerations also apply when
replacing $X$ by $H\wedge X$, and thus we can compute $H_*^c(X^{t\bT})$
by the $\bar\bT$-homotopy fixed point spectral sequence
$$
E^2 = H^{*}(\bar\bT; H_*^c(X^{tC_p})) \implies
H_*^c(X^{t\bT})\,.
$$
This is a left half-plane spectral sequence with entering
differentials $d^2(t^r\otimes x) = t^{r+1}\otimes \bar\sigma(x)$,
where $\bar\sigma$ is induced by the residual $\bar\bT$-action on
$H_*^c(X^{tC_p})$.  Its edge homomorphism $F_* \: H^c_*(X^{t\bT}) \to
H^c_*(X^{tC_p})$ agrees with the homomorphism induced
by~$F$ for $H \wedge X$.

In the case of $X=\THH(MU)$, Lemma~\ref{lemma:sigmaone-exact} and
Proposition~\ref{prop:theta-is-dga-morphism} imply that the
$\bar\bT$-homotopy fixed point spectral sequence collapses at the
$E^3$-term, with
$$
E^\infty_{s,*} = E^3_{s,*} =
\begin{cases}
  \ker(\bar\sigma) & \text{for $s=0$}\\
  0 & \text{for $s<0$}\,,
\end{cases}
$$
and that the edge homomorphism
$$
F_*\: H_*^c(\THH(MU)^{t\bT}) \longto H_*^c(\THH(MU)^{tC_p})
$$
is an injective morphism of rbb filtered right
$\sA$-module algebras, with image equal to $\ker(\bar\sigma)$.

Recall the injective
homomorphism~$F_+ \: C_+(M_*; \sigma) \to R_+(M_*)$ from
Definition~\ref{dfn:homological-C-singer}.

\begin{theorem} \label{thm:finale-MU}
  There is a commutative square of rbb complete filtered right
  $\sA$-module algebras (or rbb complete filtered left
  $\sA_*$-comodule algebras)
  \begin{equation*}
    \begin{aligned}
      \xymatrix@C16mm{
      H_*^c(\THH(MU)^{t\bT})
      \ar[r]^-{\Theta_{MU}^\bT}
      \ar@{ >->}[d]^{F_*}
      &
        C_+(H_*(\THH(MU)); \sigma)
        \ar@{ >->}[d]^{F_+}\\
      H_*^c(\THH(MU)^{tC_p})
      \ar[r]^-{\Theta_{MU}}
      &R_+(H_*(\THH(MU)))\,.
        }
    \end{aligned}
  \end{equation*}
  Both horizontal morphisms are isomorphisms of unfiltered
  graded $\bF_p$-vector spaces and induce $\Ext$-isomorphisms.
\end{theorem}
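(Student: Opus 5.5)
The plan is to define $\Theta^\bT_{MU}$ as the unique map making the square commute, and then check its properties. Proposition~\ref{prop:theta-is-dga-morphism} says that $\Theta_{MU}$ commutes with the differentials $\bar\sigma$. It is also an unfiltered isomorphism by Proposition~\ref{prop:theta}. Together these imply that $\Theta_{MU}$ restricts to an isomorphism of unfiltered graded $\bF_p$-vector spaces
\[
\ker\bigl(\bar\sigma \text{ on } H^c_*(\THH(MU)^{tC_p})\bigr) \overset{\cong}\longto \ker\bigl(\bar\sigma \text{ on } R_+(H_*(\THH(MU)))\bigr).
\]
By the discussion preceding the theorem, the collapse of the $\bar\bT$-homotopy fixed point spectral sequence identifies $F_*$ with an injective morphism of rbb filtered right $\sA$-module algebras whose image is the left-hand kernel. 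Since $\beta_*$ acts trivially on $H_*(\THH(MU))$, the exact sequence \eqref{eq:F+sigmabar} identifies $F_+$ with an injective morphism in $\rbbfilamodc$ whose image is the right-hand kernel. I will set $\Theta^\bT_{MU} := F_+^{-1}\circ \Theta_{MU}\circ F_*$. This is well defined and makes the square commute by construction, and it is an unfiltered isomorphism of graded vector spaces.

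Next I check that $\Theta^\bT_{MU}$ lies in the right category. It is a composite of algebra maps on images of injective algebra maps, so it is multiplicative and right $\sA$-linear, since $\Theta_{MU}$, $F_*$ and $F_+$ all are. For filtrations, I will use that $F_+$ is strictly filtration-preserving, meaning its domain carries the pullback filtration. This holds already before completion because $f_+$ is strictly filtration-preserving, and I need to check it survives completion. I will argue via Lemma~\ref{lemma:homology-commutes-with-completion}, or directly from the fact that \eqref{eq:F+sigmabar} is a product over $r$ of the sequences \eqref{eq:f+barsigma-r}. Granting this, $\Theta_{MU}\circ F_*$ is filtration-preserving with image in $\im(F_+)$, so $\Theta^\bT_{MU}$ is filtration-preserving.

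It remains to check that both objects are rbb complete Hausdorff. For $C_+$ this holds by definition. For $H^c_*(\THH(MU)^{t\bT})$ it follows from strong convergence of the $\bT$-Tate spectral sequence (Lemma~\ref{lemma:strong-convergence-homology}), since $\THH(MU)/p$ is bounded below and of finite type. I also need the Tate filtration on $H^c_*(\THH(MU)^{t\bT})$ to agree with the filtration pulled back along $F_*$; this agreement is the subtle point. If it fails, it suffices that $F_*$ is filtration-preserving for the Greenlees--May filtrations, because $E\bT$-compatible filtrations restrict along $C_p\subset\bT$. Then $\Theta^\bT_{MU}$ is still filtration-preserving, which is all that is needed.

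Finally, the $\Ext$-isomorphism follows from Proposition~\ref{prop:ext-iso}, since $\Theta^\bT_{MU}$ is a morphism in $\rbbfilacomodc$ (via Proposition~\ref{prop:categoryiso}) that is an unfiltered isomorphism. This applies equally to $\Theta_{MU}$. Propositions~\ref{prop:left-right-ext} and~\ref{prop:ext-cup} transport the statement to $\Ext_\sA$ with its algebra structure. The main obstacle I expect is the filtration bookkeeping for $F_*$ and $F_+$ after completion.
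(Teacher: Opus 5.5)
Your proposal is correct and follows the paper's proof: restrict $\Theta_{MU}$ to $\ker\bar\sigma$ using Proposition~\ref{prop:theta-is-dga-morphism}, then identify the two kernels with the images of $F_*$ and $F_+$. Your extra filtration bookkeeping fills in details the paper leaves implicit, namely that $C_+$ still carries the pullback filtration along $F_+$ after completion, and that $F_*$ only needs to be filtration-preserving, so no agreement of filtrations along $F_*$ is required.
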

\begin{proof}
  We proved in Proposition~\ref{prop:theta} that $\Theta_{MU}$ is a
  morphism in the category $\alg(\sC^\wedge, \ctensor)$ of rbb
  complete right $\sA$-module algebras, and an isomorphism of
  unfiltered graded $\bF_p$-vector spaces.  By
  Proposition~\ref{prop:theta-is-dga-morphism} we can restrict
  $\Theta_{MU}$ to the kernel of the differential in both its domain
  and codomain.  The theorem then follows by the identification of
  $$
  \ker \bar\sigma \subset H_*^c (\THH(MU)^{tC_p})
  $$
  with the image of $F_*$ and the identification of
  $$
  \ker \bar\sigma \subset R_+(H_*(\THH(MU)))
  $$
  with $C_+(H_*(\THH(MU)); \sigma)$.
\end{proof}

Theorem~\ref{thm:finale-MU} also holds after replacing~$MU$ by the
$p$-local Brown--Peterson spectrum~$BP$, which is known to admit an
$E_4$ ring spectrum structure by Basterra--Mandell~\cite{BM13}.

\begin{theorem} \label{thm:finale-BP}
  There is a commutative square of rbb complete filtered right
  $\sA$-module algebras (or rbb complete filtered left
  $\sA_*$-comodule algebras)
  \begin{equation*}
    \begin{aligned}
      \xymatrix@C16mm{
      H_*^c(\THH(BP)^{t\bT})
      \ar[r]^-{\Theta_{BP}^\bT}
      \ar@{ >->}[d]^{F_*}
      &
        C_+(H_*(\THH(BP)); \sigma)
        \ar@{ >->}[d]^{F_+}\\
      H_*^c(\THH(BP)^{tC_p})
      \ar[r]^-{\Theta_{BP}}
      &R_+(H_*(\THH(BP)))\,.
        }
    \end{aligned}
  \end{equation*}
  Both horizontal morphisms are isomorphisms of unfiltered graded
  $\bF_p$-vector spaces and induce $\Ext$-isomorphisms.
\end{theorem}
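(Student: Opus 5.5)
I will follow the proof of Theorem~\ref{thm:finale-MU} step by step, replacing $MU$ by $BP$ throughout, and check that each input used for $MU$ has a $BP$ analogue. Since $BP$ is an $E_4$ ring spectrum by Basterra--Mandell, $\THH(BP)$ is an $E_3$, in particular $E_1$, ring spectrum with compatible $\bT$-action. So Proposition~\ref{prop:tate-dga} applies, and $(H_*^c(\THH(BP)^{tC_p}),\bar\sigma)$ is an rbb filtered differential graded right $\sA$-module algebra. Theorem~\ref{thm:0-1-skeleton-approx} and diagram~\eqref{eq:epsilon-cyctomic-zeroskeleton} need only an $E_2$ structure on $B$, so we again get the square~\eqref{eq:f-and-g} of right $\sA$-module algebras. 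The algebraic input is $H_*(BP)\cong P(\xibar_k\mid k\ge1)\subset\sA_*$. The analogue of~\eqref{eq:homology-of-thh-of-mu} is $H_*(\THH(BP))\cong H_*(BP)\otimes E(\sigma\xibar_k\mid k\ge1)$ with $\sigma\xibar_k$ comodule primitive, via the B\"okstedt spectral sequence. The collapse~\eqref{eq:e-infty-cp-tate-mu} becomes $\hat E^\infty\cong \hat H^{-*}(C_p;\bF_p)\otimes P(\xibar_k^p)\otimes E(\xibar_k^{p-1}\sigma\xibar_k)$. Both are stated in~\cite{BR05} and~\cite{LNR11} for $BP$ as well as $MU$. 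Alternatively, $BP$ is a retract of $MU_{(p)}$ as a homotopy ring spectrum, and the $MU$ statements restrict to the generators $m_{p^k-1}=\xibar_k$.

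Next I set $E=E(\sigma\xibar_k)$ with the filtration pulled back along $\gamma^H_*$. The analogue of Lemma~\ref{lemma:filtration-of-e} requires that $\gamma^H_*(\sigma\xibar_k)$ be detected by a unit times $t^{(p^k-1)(p-1)}\otimes\xibar_k^{p-1}\sigma\xibar_k$. I take this from~\cite{LNR11}*{Thm.~6.4}, whose computation only involves the classes $\xibar_k$. One could also get it by naturality along the $E_2$-map $MU\to BP$, or along its retraction if one prefers. With this in hand, Lemmas~\ref{lemma:f} and~\ref{lemma:omega-and-g} go through verbatim, taking $\ell=p^k-1$ in Tables~\ref{tab:f-on-generators}--\ref{tab:g-on-generators}. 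The degree/filtration inequality $|z|-\Fil z\le |f(z)|-\Fil f(z)\le\frac32(|z|-\Fil z)$ is unchanged. This yields the analogue of Proposition~\ref{prop:theta}: an unfiltered isomorphism $\Theta_{BP}=f^\wedge\circ(g^\wedge\circ(\omega_{BP}\ctensor1))^{-1}$ in $\alg(\sC^\wedge,\ctensor)$, together with the continuous $\Ext$-isomorphism from Proposition~\ref{prop:ext-iso}.

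For the differential I redo Lemma~\ref{lemma:thm-6.4} using only the case $x=\xibar_k$. By Proposition~\ref{prop:epsilon-is-epsilon} and $\rho$-equivariance of $\gamma$, the class $\eta^t_*\omega_{BP}(\epsilon(x))$ is sent by $\bar\sigma$ to $\gamma^H_*(\sigma x)$. Combined with the identity $\epsilon(\xibar_k)=1\otimes\xibar_k+t^{-(p-1)}\epsilon(\xibar_{k-1}^p)$, the fact $\sigma(\xibar_{k-1}^p)=0$, and $\bar\sigma(t)=0$, this gives $\bar\sigma(\eta^t_*\omega_{BP}(1\otimes\xibar_k))=\gamma^H_*(\sigma\xibar_k)$. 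The generator check in Proposition~\ref{prop:theta-is-dga-morphism} then carries over word for word, so $\Theta_{BP}$ commutes with $\bar\sigma$.

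Finally, I argue as after Proposition~\ref{prop:theta-is-dga-morphism}. Lemma~\ref{lemma:sigmaone-exact} gives $\ker\bar\sigma=\im\bar\sigma$ in $R_+(H_*\THH(BP))$, hence also in $H^c_*(\THH(BP)^{tC_p})$ via $\Theta_{BP}$. So the $\bar\bT$-homotopy fixed point spectral sequence collapses at $E^3$. This uses that $\THH(BP)$ is bounded below, so that $G$ is $p$-completion, which is invisible in mod~$p$ homology. Consequently $F_*$ is injective with image $\ker\bar\sigma$. Since $\beta_*=0$ on $H_*\THH(BP)$, the sequence~\eqref{eq:F+sigmabar} identifies $\ker\bar\sigma\subset R_+$ with $F_+(C_+(H_*\THH(BP);\sigma))$. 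Restricting $\Theta_{BP}$ to kernels defines $\Theta^\bT_{BP}$, and Proposition~\ref{prop:ext-iso} gives the $\Ext$-isomorphism. I expect the main obstacle to be justifying the $BP$ analogues of~\eqref{eq:homology-of-thh-of-mu}, \eqref{eq:e-infty-cp-tate-mu} and the detection statement of~\cite{LNR11}*{Thm.~6.4}, including their multiplicativity, with only an $E_4$ structure available. I would first try to cite~\cite{BR05} and~\cite{LNR11}, which treat $BP$, and otherwise deduce these statements by naturality from $MU$.
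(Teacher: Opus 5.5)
Your proposal is correct and takes the same route as the paper. The paper's proof simply says to rerun the $MU$ argument with $m_{p^k-1}$ replaced by $\xibar_k$ (resp.\ $\bar\zeta_k^2$ for $p=2$), discarding the other $m_\ell$, and cites \cite{LNR11}*{Sec.~6.2} for the $BP$ inputs; you make each of those substitutions explicit, including the only nontrivial case of Lemma~\ref{lemma:thm-6.4}. One small caveat on your aside: a retraction of $MU_{(p)}$ onto $BP$ merely as homotopy ring spectra does not induce maps on $\THH$ or commute with the cyclotomic structure map, so the naturality alternative must use an $E_2$ ring map $MU \to BP$, as you in fact do later.
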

\begin{proof}
  Recall the isomorphisms
  \[
    H_*(BP)\cong
    \begin{cases}
      P(\xibar_k\mid k\geq 1) \text{\ \ for $p>2$, and} \\
      P(\bar\zeta^2_k\mid k\geq 1) \text{\ \ for $p=2$}
    \end{cases}
  \]
  of right $\sA$-module algebras, identifying $H_*(BP)$ as a sub left
  $\sA_*$-comodule algebra of the dual Steenrod algebra.

  The proof is similar to the~$MU$ case.  Essentially, we must repeat
  the arguments made for $MU$ after replacing every class
  $m_{p^k-1}\in H_*(MU)$ with $\xibar_k\in H_*(BP)$ for $p>2$ and by
  $\bar\zeta_k^2$ for $p=2$, and ignoring classes $m_\ell$ where
  $\ell\neq p^k-1$.  Cf.~\cite{LNR11}*{Sec.~6.2}.
\end{proof}

\begin{corollary}
  For each of $B = S$, $MU$ and~$BP$ there is a multiplicative and
  strongly convergent limit Adams spectral sequence
  \begin{align*}
    E_2^{s,t}
	&= \Ext_{\sA}^{s,t}(\bF_p, C_+(H_*(\THH(B)); \sigma)) \\
	&\cong \cExt_{\sA_*}^{s,t}(\bF_p, C_+(H_*(\THH(B)); \sigma)) \\
	&\Longrightarrow \pi_{t-s}(TP(B)^\wedge_p) \,.
  \end{align*}
\end{corollary}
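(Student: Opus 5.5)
The plan is to assemble the three cases from results already in hand, treating each $B$ uniformly. By Proposition~\ref{prop:inverse-limit-of-adams-ss}, applied with $G = \bT$ and $X = \THH(B)$, there is a strongly convergent limit Adams spectral sequence with $E_2 = \cExt_{\sA_*}(\bF_p, H^c_*(\THH(B)^{t\bT}))$ converging to $\pi_*(TP(B)^\wedge_p)$. This requires checking that $\THH(B)/p$ is bounded below and of finite type. For $B = S$, $MU$, $BP$ this follows from the explicit formula $H_*\THH(B) \cong H_*(B)\otimes E(\sigma m_\ell)$ (or $\THH(S)\simeq S$), whose homology is finite in each degree and connective. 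Multiplicativity then comes from Proposition~\ref{prop:ext-cup}, since $\THH(B)$ is an $E_1$ ring spectrum with $\bT$-action; for $MU$ and $BP$ this uses $E_\infty$ and $E_4$ structures respectively.

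Next, replace $H^c_*(\THH(B)^{t\bT})$ by the $\bT$-Singer construction. For $MU$ and $BP$, Theorems~\ref{thm:finale-MU} and~\ref{thm:finale-BP} give $\Theta^\bT_B$, a morphism of rbb complete left $\sA_*$-comodule algebras that is an unfiltered isomorphism. Proposition~\ref{prop:ext-iso} then shows it induces an isomorphism on $\cExt$. Since it is an algebra map and the continuous cup product is natural, this is an isomorphism of $E_2$-algebras.

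The second line of the display, identifying $\cExt$ with $\Ext_\sA$, is Proposition~\ref{prop:left-right-ext}, applied to the rbb complete comodule $C_+(H_*\THH(B);\sigma)$. The lemma following Proposition~\ref{prop:ext-cup} shows the cup products agree.

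The main obstacle is $B = S$, which is not covered by the theorems above. Here $H_*\THH(S) = \bF_p$ with $\sigma = 0$, so $C_+(\bF_p;0)$ is $P(t^{\pm1})$, completed, with the action from~\eqref{eq:Ps-C-plus}. I would argue directly that $H^c_*(S^{t\bT})$ has collapsing Tate spectral sequence $P(t^{\pm1})$, strongly convergent by Lemma~\ref{lemma:strong-convergence-homology}. I would then compare it with $C_+(\bF_p;0)$ via the edge map $F_*$ to $H^c_*(S^{tC_p}) \cong r_+(\bF_p)$, using Proposition~\ref{prop:continuous-homology-of-s-tcp}. The image is $\ker\bar\sigma = P(t^{\pm1})$, which is exactly $f_+(c_+(\bF_p;0))$ by Lemma~\ref{lemma:sigmaone-exact}. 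This step requires $F_*$ to be injective, which holds because the $\bar\bT$-homotopy fixed point spectral sequence collapses onto $\ker\bar\sigma$ just as for $MU$. The Steenrod action then matches because $f_+$ and $F_*$ are $\sA$-linear, and multiplicativity holds because both are algebra maps. The $E_2$-identification for $S$ follows as before.
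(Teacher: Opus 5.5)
Your proposal is correct and follows the paper's proof, which combines Propositions~\ref{prop:inverse-limit-of-adams-ss}, \ref{prop:left-right-ext} and~\ref{prop:ext-cup} with Theorems~\ref{thm:finale-MU} and~\ref{thm:finale-BP}. For $B=S$, your argument via the edge map $F_*$, Proposition~\ref{prop:continuous-homology-of-s-tcp} and Lemma~\ref{lemma:sigmaone-exact} simply spells out what the paper calls the standard calculation $H^c_*(\THH(S)^{t\bT}) \cong P(t^{\pm1}) \cong C_+(H_*(\THH(S)); 0)$.
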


\begin{proof}
  This follows by combining
  Propositions~\ref{prop:inverse-limit-of-adams-ss},
  \ref{prop:left-right-ext} and~\ref{prop:ext-cup} with
  Theorems~\ref{thm:finale-MU} and~\ref{thm:finale-BP}, together with
  the standard calculation
  $$
  H^c_*(THH(S)^{t\bT}) \cong P(t^{\pm1}) \cong C_+(H_*(THH(S)); 0) \,.
  $$
\end{proof}

\subsection{A Ravenel type short exact sequence}

In the special case $M_* = \bF_p$, when
$C_+(\bF_p; 0) \cong H^c_*(S^{t\bT}) \cong H^c_*(\Sigma^2 \bC
P^\infty_{-\infty})$, the cohomological version of the following short
exact sequence appeared in~\cite{Rav84}*{2.3}.  This relation to
$\bC P^\infty$, together with the role of the circle group, motivated
our choice of the letter~$C$ in the notation $C_+(M_*; \sigma)$.
\begin{lemma}
  Let $(M_*, \sigma)$ be a differential graded right $\sA$-module, and assume
  that $\beta_*$ acts trivially.  There is a natural short exact sequence of
  filtered right $\sA$-modules
  \begin{equation}
    \label{eq:f-plus-ses}
    0\longto C_+(M_*;\sigma) \overset{F_+}\longto
    R_+(M_*)
    \overset{T_+}\longto S^{-1}\sh_1 C_+(M_*;\sigma)\longto 0\,. 
  \end{equation}
\end{lemma}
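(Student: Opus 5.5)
The plan is to define $T_+$ as the differential $\bar\sigma$ itself, corestricted to its image. By Lemma~\ref{lemma:sigmaone-exact}, $\im(\bar\sigma) = \ker(\bar\sigma)$. The exact sequence \eqref{eq:F+sigmabar} identifies $\ker(\bar\sigma)$ with $F_+(C_+(M_*;\sigma))$. So $\bar\sigma \: S R_+(M_*) \to \sh_1 R_+(M_*)$ factors as a surjection onto $\sh_1 F_+(C_+(M_*;\sigma))$. Desuspending once and using $F_+$ to identify the target gives
\[
T_+ := (S^{-1}\sh_1 F_+)^{-1} \circ S^{-1}\bar\sigma \: R_+(M_*) \longto S^{-1}\sh_1 C_+(M_*;\sigma) \,.
\]
Exactness then reads off directly. $F_+$ is injective, $\ker T_+ = \ker\bar\sigma = \im F_+$, and $T_+$ is surjective because $\im\bar\sigma = \ker\bar\sigma$ after completion, again by Lemma~\ref{lemma:sigmaone-exact}.

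It is worth making this concrete at the uncompleted level for $p$ odd. Define $t_+ \: r_+(M_*) \to S^{-1}\sh_1 c_+(M_*;\sigma)$ by $t_+(t^r\otimes x) = S^{-1}(t^r \otimes \sigma(x))$ and $t_+(ut^r \otimes x) = S^{-1}(t^r\otimes x)$, up to the sign conventions for $S^{-1}$. One checks $f_+ \circ t_+ = S^{-1}\bar\sigma$ directly from \eqref{eq:sigmaone-tr}--\eqref{eq:sigmaone-utr} and \eqref{eq:f+odd}. For $p=2$ the analogue uses \eqref{eq:sigmaone-tr-even-prime}--\eqref{eq:sigmaone-utr-even-prime} and \eqref{eq:f+two}. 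Exactness of $0\to c_+ \to r_+ \to S^{-1}\sh_1 c_+ \to 0$ then reduces to the split exactness of the rank-two pieces \eqref{eq:f+barsigma-r}, one for each $r$. Naturality in $(M_*,\sigma)$ is clear from these formulas.

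The $\sA$-linearity of $T_+$ follows because $\bar\sigma$ is $\sA$-linear (Lemma~\ref{lemma:sigmaone-alinear}, which uses $\beta_*=0$) and $F_+$ is an injective $\sA$-linear map (Lemma~\ref{lemma:fplus-is-a-linear}). Since $f_+\circ t_+ = S^{-1}\bar\sigma$ with $f_+$ injective, $t_+$ commutes with the action, and the suspension signs are matched by the convention of Subsection~\ref{sec:dgas}.

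The main obstacle is the filtration statement: the maps must be filtered, and the sequence must remain short exact after completion. For the filtrations I would use \eqref{eq:subfiltration}. This says $\bar\sigma$ is strict with shift exactly $1$: an element of $\im\bar\sigma$ in filtration $n+1$ comes from filtration $n$. Since $f_+$ is strictly filtration-preserving, $t_+$ is filtration-preserving into $S^{-1}\sh_1 c_+$, and it is strict in the same sense. Hence the sequence of filtration quotients is again exact, being a sum of the pieces \eqref{eq:f+barsigma-r}. Passing to completions, I would apply Lemma~\ref{lemma:homology-commutes-with-completion}: its hypotheses are exactly this strictness. Alternatively, note that $\lim_n$ is exact on towers of surjections, since $\rlim$ vanishes. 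Either way the completed sequence \eqref{eq:f-plus-ses} is exact and consists of morphisms in $\rbbfilamodc$.
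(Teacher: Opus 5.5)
Your proposal is correct and is essentially the paper's proof. It uses the same map $t_+$ (with $t_+(ut^r\otimes x)=S^{-1}t^r\otimes x$ and $t_+(t^r\otimes x)=S^{-1}t^r\otimes\sigma(x)$), checks exactness on the rank-two pieces, uses the filtration shift by one to get exactness at each filtration level, and preserves exactness under completion because the towers $c_+/F_n c_+$ have surjective transition maps. The only difference is cosmetic: you obtain $\sA$-linearity of $t_+$ from the factorization $f_+\circ t_+ = S^{-1}\bar\sigma$ together with Lemmas~\ref{lemma:sigmaone-alinear} and~\ref{lemma:fplus-is-a-linear} and the injectivity of $f_+$, whereas the paper compares the Steenrod operation formulas directly.
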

\begin{proof}
  Consider the sequence
  \begin{equation} \label{eq:f+t+ses}
    0 \longto c_+(M_*; \sigma) \overset{f_+}\longto
	r_+(M_*) \overset{t_+}\longto S^{-1} \sh_{1} c_+(M_*; \sigma)
	\longto 0
  \end{equation}
  with $f_+$ the strictly filtration-preserving right $\sA$-linear
  injection defined in~\eqref{eq:f+odd}--\eqref{eq:f+two}, and $t_+$
  the surjection given by $t_+(ut^r \otimes x) = S^{-1} t^r \otimes x$
  and $t_+(t^r \otimes x) = S^{-1} t^r \otimes \sigma(x)$.  A
  comparison of~\eqref{eq:steenrod-singer-tr}--\eqref{eq:steenrod-singer-squares}
  with~\eqref{eq:Ps-C-plus}--\eqref{eq:beta-C-plus} shows that $t_+$ is
  right $\sA$-linear, using in particular that
  $P_* \sigma = \sigma P_*$ and $\beta_* S^{-1} = - S^{-1} \beta_*$.
  When $M_* = \bF_p\{x\}$ or $M_* = \bF_p\{x, \sigma(x)\}$ it is
  elementary to check that the sequence is short exact, and the
  general case follows from this.  Moreover, the downward filtration shift
  by~$1$ ensures that~\eqref{eq:f+t+ses} is in fact a short exact
  sequence of filtered right $\sA$-modules.  By passing to completions
  we obtain the asserted sequence~\eqref{eq:f-plus-ses}, with $T_+ := t_+^\wedge$.

  Since completion with respect to a filtration is given by the limit
  of a tower of surjections, the induced sequence of completions is
  also short exact.
\end{proof}

We write $H_*(M_*, \sigma)$ for the homology of $M_*$ with respect to
the differential $\sigma\: S M_* \to M_*$.  Furthermore, let
$H_*(M_*, \beta_*) = \ker \beta_* / \im \beta_*$ be the Margolis
homology of $M_*$ with respect to the Bockstein.
\begin{lemma}
  The short exact sequence \eqref{eq:f-plus-ses} splits as
  right $\sA$-modules only if\\ $H_*(M_*,\sigma)=0$.
\end{lemma}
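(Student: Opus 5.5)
Suppose \eqref{eq:f-plus-ses} admits a right $\sA$-linear splitting. Then the residual differential $\bar\sigma$ on $R_+(M_*)$ should force $H_*(M_*,\sigma)=0$. I would argue via the Bockstein. The strategy is to compare Margolis homologies with respect to $\beta_*$ of the three terms, since an $\sA$-linear splitting makes $R_+(M_*)\cong C_+(M_*;\sigma)\oplus S^{-1}\sh_1 C_+(M_*;\sigma)$ as $\beta_*$-complexes. In particular it gives
\[
H_*(R_+(M_*),\beta_*)\cong H_*(C_+,\beta_*)\oplus H_{*+1}(C_+,\beta_*).
\]

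First I compute $H_*(R_+(M_*),\beta_*)$. Since $\beta_*$ is trivial on $M_*$, formula \eqref{eq:steenrod-singer-beta} gives $\beta_*(ut^r\otimes x)=t^{r+1}\otimes x$ and $\beta_*(t^r\otimes x)=0$ for $p$ odd, and similarly for $p=2$ via \eqref{eq:steenrod-singer-squares}. This is an isomorphism from the $u$-part onto the $t$-part. Because $t$ is invertible, this remains true after completion, working degreewise with the products over $r$. Hence $H_*(R_+(M_*),\beta_*)=0$.

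Next I compute $H_*(C_+,\beta_*)$. By \eqref{eq:beta-C-plus}, $\beta_*(t^r\otimes x)=-t^{r+1}\otimes\sigma(x)$, so $(C_+,\beta_*)$ is, up to invertible shifts in $t$, isomorphic to a product over $r$ of copies of $(M_*,\sigma)$. Its homology is therefore $\hat H^{-*}(\bT;\bF_p)\ctensor H_*(M_*,\sigma)$, namely a Laurent, completed, product of copies of $H_*(M_*,\sigma)$. The completion should cause no trouble because the complex splits degreewise as a product over $r$ and products are exact. Here I would double check that the filtration condition $x_r=0$ for $r\ll0$ is compatible with the product decomposition. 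Since $M_*$ is bounded below, each degree involves only finitely many negative $r$, so this should be fine.

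Combining the two computations, a splitting would force $0\cong H(C_+,\beta_*)\oplus H(C_+,\beta_*)[1]$. This gives $H_*(C_+,\beta_*)=0$, hence $H_*(M_*,\sigma)=0$, since the $r=0$ summand contains a copy of it. The main obstacle I anticipate is the careful verification for $p=2$ with $\Sq^1_*$. There $\Sq^1_*(u^{2r+1}\otimes x)$ from \eqref{eq:steenrod-singer-squares} must be checked to equal $u^{2r+2}\otimes x$ when $\Sq^1_*x=0$, with no extra terms. This follows from $\binom{-2-2r-1}{1}\equiv1$ and $\binom{-1-2r-1}{1}\equiv0$ modulo $2$. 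The other point to check is that the Margolis homology of a direct sum splits as the direct sum; this is clear since the splitting is $\sA$-linear and hence commutes with $\beta_*$.
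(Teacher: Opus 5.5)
Your proof is correct and follows the paper's argument. Both compare Margolis homologies with respect to $\beta_*$: $H_*(R_+(M_*),\beta_*)=0$, while $H_*(C_+(M_*;\sigma),\beta_*)\cong C_+(H_*(M_*,\sigma);0)$ would have to split off from it under an $\sA$-linear splitting. The only difference is in how completions are handled: the paper invokes its lemma on kernels and images commuting with completion, whereas you use the explicit degreewise product decomposition over $r$, which works equally well.
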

\begin{proof}
  By an application of Lemma~9.4, the Margolis homology of
  $C_+(M_*;\sigma)$ with respect to $\beta_*$ is isomorphic to
  $C_+(H_*(M_*, \sigma); 0)$, which is trivial if and only if
  $H_*(M_*, \sigma)$ is trivial.

  A splitting of \eqref{eq:f-plus-ses} as right $\sA$-modules would
  imply that $H_*(C_+(M_*;\sigma), \beta_*)$ splits off
  $H_*(R_+(M_*), \beta_*)$.  However, the latter is trivial since the
  kernel and image of $\beta_*\: R_+(M_*)\to R_+(M_*)$ are both equal
  to the completion of the vector subspace spanned by the elements of
  the form $t^r \otimes x$, again by Lemma~9.4.
\end{proof}

\end{document}